\documentclass[10pt]{amsart}
\theoremstyle{plain}
\usepackage{amsmath, amssymb, amscd,graphicx}
\setlength{\textwidth}{5.5in}
\setlength{\oddsidemargin}{0.50in}
\setlength{\evensidemargin}{0.40in}

\newtheorem{claim}{Claim}
\newtheorem{lemma}{Lemma}[section]
\newtheorem{prop}[lemma]{Proposition}
\newtheorem{theorem}[lemma]{Theorem}
\newtheorem{cor}[lemma]{Corollary}
\newtheorem{definition}[lemma] {Definition}
\newtheorem{conv}{Convention}[section]
\newtheorem{conj}[lemma]{Conjecture}

\newcommand{\Q}{{{\mathbb Q}}}
\newcommand{\R}{{{\mathbb R}}}
\newcommand{\Z}{{{\mathbb Z}}}

\newcommand{\psibar}{{{\overline{\psi}}}}

\renewcommand{\mod}{{{\mathrm{mod\;}}}}
\DeclareMathOperator*{\minq}{{{{\mathrm{min}}^q\!}}}
\DeclareMathOperator*{\maxq}{{{ {\mathrm{max}}^q\!}}}

\newcommand{\len}{{{\mathrm{len}}}}

\renewcommand{\epsilon}{{\varepsilon}}

\title{A Number Theoretic Result for Berge's Conjecture}
\author{Sarah Dean Rasmussen}
\address{Harvard University Dept. of Mathematics, Cambridge, MA 02138}
\email{sarah@math.harvard.edu}
\thanks{The author was supported by an NSF Graduate Research fellowship.}

\begin{document}
\begin{abstract}
(Original version of PhD thesis, submitted in Spring 2009 to Harvard University.
Provides a solution of the $p > k^2$ case, corresponding to Berge families I--VI, of
the ``Lens space realization problem'' later solved in entirety by Greene \cite{GreeneBerge}.)
In the 1980's, Berge proved that a certain collection of knots in $S^3$ admitted
lens space surgeries, a list which Gordon conjectured was exhaustive.
More recently \cite{Rasmussen}, J. Rasmussen used techniques from Heegaard Floer homology
to translate the related problem of classifying simple knots in lens spaces
admitting L-space homology sphere surgeries into a combinatorial number theory
question about the triple $(p,q,k)$ associated to a knot of homology class
$k \in H_1\!\left(L(p,q)\right)$ in the lens space $L(p,q)$.
In the following paper, we solve this number theoretic problem
in the case of $p > k^2$.
\end{abstract}

\maketitle

\section{Introduction}

One of the most basic methods of transforming an old 3-manifold
into a new 3-manifold is by performing surgery along a knot in
the old manifold.  Given a 3-manifold $Y$ and a knot $K \subset Y$
(which is just an embedding of $S^1$ into $Y$), one performs
Dehn surgery by cutting out a solid torus neigborhood
$n_K \subset Y$ of $K$, performing a Dehn twist on $n_K$,
and then gluing this Dehn-twisted version of $n_K$
back into $Y \setminus n_K$ to form a new 3-manifold, $Y^{\prime}$.
One can perform a similar procedure with a link in $Y$ (an embedding
of a disjoint union of $S^1$'s into $Y$), but this is equivalent
to the iterated procedure of performing surgery along each knot
that forms a component of the link.

Any 3-manifold can be obtained via integer Dehn surgery on a link in $S^3$.
Much is still unknown, however, about the problem of classifying
non-hyperbolic integer surgeries on knots in $S^3$.
One of the early lines of progress towards answering this question was
initiated by Berge, who constructed a list of knots in $S^3$ admitting
integer lens space surgeries \cite{Berge},
a list which Gordon later conjectured was exhaustive.

Reversing our point of view, we could equivalently ask which knots,
in which lens spaces, have integer $S^3$ surgeries.  This change of perspective
sends Berge's knots in $S^3$ to the knot cores of their associated lens space fillings.
It turns out that all of these resulting knots are {\em simple}, where a
simple knot in a lens space $L(p,q)$
is a knot obtained by placing two basepoints inside the standard
genus one Heegaard diagram of $L(p,q)$.
Since there is a unique simple
knot in each homology class $k \in H_1(L(p,q)) \cong \Z/p$,
any simple knot in a lens space can be described by a triple
$(p, q, k)$ with $k \in \Z/p$.

While the conjectured simplicity of all lens space knots admitting integer $S^3$
surgeries is a key part of the Berge-Gordon conjecture,
and a difficult question of ongoing interest (see, {\em e.g.}, \cite{BGH}),
we shall not address it in this paper.
Instead, we focus on classifying which simple knots in lens spaces
admit integer $S^3$ surgeries.  From this standpoint, the conjecture of the
Berge and Gordon could be phrased as follows.
\begin{conj}[Berge, Gordon]
\label{conj: berge}
If a knot $K$ in a lens space $L(p,q)$ is simple,
hence parameterized by the triple $(p,q,k)$, then
$K$ admits an integer $S^3$ surgery if and only if
$q \equiv k^2 (\mod p)$
and one or more of
the following is true:

{\noindent{Solutions when $p > k^2$:}}
\begin{align*}
\text{ I and II:}\;\;
&
\begin{cases}
   p \equiv ik \pm 1 \; (\mod k^2),
       & \gcd (i,k) = 1, 2
\end{cases}
                  \\
\text{ III:}\;\;
&
\begin{cases}
   p \equiv \pm d(2k+1) \; (\mod k^2),
       & d \vert k-1,\;  2\!\! \not\vert\, \frac{k-1}{d}
            \\
   p \equiv \pm d(2k-1) \; (\mod k^2),
       & d \vert k+1,\; 2\!\! \not\vert\, \frac{k+1}{d}
\end{cases}
                  \\
\text{ IV:}\;\;
&
\begin{cases}
   p \equiv \pm d(k+1) \; (\mod k^2),
       & d \vert 2k-1,\; 2\!\! \not\vert\, \frac{2k-1}{d}
             \\
   p \equiv \pm d(k-1) \; (\mod k^2),
       & d \vert 2k+1,\; 2\!\! \not\vert\, \frac{2k+1}{d}
\end{cases}
                  \\
\text{ V:}\;\;
&
\begin{cases}
   p \equiv \pm d(k+1) \; (\mod k^2),
       & d \vert k+1,\; 2\!\! \not\vert\, d
              \\
   p \equiv \pm d(k-1) \; (\mod k^2),
       & d \vert k-1,\; 2\!\! \not\vert\, d
\end{cases}
                  \\
\text{ VI:}\;\;
&
\begin{cases}
   \text{Special case of  V}.
\end{cases}
\end{align*}

{\noindent{Solutions when $p < k^2$:}}
\begin{align*}
\text{ VII and VIII:}
&
\begin{cases}
    k^2\pm k \pm 1 \equiv 0 \; (\mod p)
\end{cases}
                  \\
\text{ IX, X, XI, and XII:}
&
\begin{cases}
p = \frac{1}{11}(2k^2 + k + 1).
\end{cases}
\end{align*}
\end{conj}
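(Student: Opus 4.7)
The plan is to convert Conjecture~\ref{conj: berge} into a combinatorial problem about the triple $(p,q,k)$ via J.\ Rasmussen's Heegaard Floer translation \cite{Rasmussen}, and then to treat the two regimes $p>k^2$ and $p<k^2$ separately. The ``if'' direction in every family I--XII is supplied by Berge's original construction \cite{Berge}: each family prescribes an explicit doubly-primitive knot in $S^3$ whose dual core in the resulting lens-space filling can be verified to be simple and to lie in the claimed homology class $k$, with the lens-space parameter $q\equiv k^2\pmod p$ forced by the linking-form constraint on $H_1(L(p,q))$. The real content is the ``only if'' direction, which, after the Rasmussen translation, asserts that a simple knot $(p,q,k)$ admits an integer $S^3$-surgery only when $(p,k)$ satisfies one of the listed congruences.

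For $p>k^2$ my plan is to encode the Rasmussen condition as a palindromic symmetry requirement on the sign sequence $\epsilon_i=\sign\!\left(ik\bmod p - p/2\right)$ for $i=1,\ldots,p-1$, and to analyze this sequence via the Euclidean algorithm on the pair $(p,k^2\bmod p)$. Iterating division produces a continued-fraction-style expansion whose partial quotients correspond to ``runs'' of constant sign; the palindromicity constraint forces this expansion to be short and of a restricted shape. I would show that the admissible shapes are in bijection with families I--VI: the linear families I--II correspond to length-one expansions $p\equiv ik\pm 1\pmod{k^2}$, while the bilinear families III--VI correspond to length-two expansions, with the parity and divisibility conditions $d\vert k\pm 1$, $d\vert 2k\pm 1$, $2\!\!\not\vert\,(k\pm 1)/d$, etc., tracking the center and width of the middle run.

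For $p<k^2$ the expansion of $p/k$ is short by default, and the surgery condition collapses into a small number of explicit algebraic possibilities. Families VII and VIII, captured by $k^2\pm k\pm 1\equiv 0\pmod p$, correspond to the two ``reflective'' single-pivot patterns and can be handled by direct manipulation of the Rasmussen palindromicity equation. The sporadic family $p=\tfrac{1}{11}(2k^2+k+1)$ covering IX--XII requires a bounded case analysis: I would show that any remaining solution must satisfy a specific quadratic Diophantine relation, whose only scaling consistent with both the symmetry constraint and integrality forces the coefficient $11$; this matches Berge's explicit sporadic constructions, which themselves arise from torus-knot-like examples.

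The main obstacle will be the $p>k^2$ completeness argument: showing that no arithmetic solutions lie outside families I--VI requires simultaneous control of palindromicity and divisibility across multiple steps of the Euclidean algorithm, ruling out longer admissible patterns without accidentally discarding a legitimate short one. A subordinate but delicate issue, relevant to both regimes, is reconciling overlaps between the families (the reason family VI is flagged as a special case of V) so that the disjunction in the statement is neither redundant nor incomplete; one must also verify that the parity constraints emerging from the number-theoretic analysis correspond exactly to Berge's descriptions, so that the final list neither produces spurious solutions nor omits genuine ones.
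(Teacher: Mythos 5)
There is a genuine gap --- in fact two. First, note that the paper itself does not prove this conjecture in full: Theorem \ref{thm: intro main thm} establishes only the case $p>k^2$, and the introduction explicitly states that the $p<k^2$ regime produces nearly twenty families and is not treated here. Your final paragraph on $p<k^2$ (families VII--XII) asserts that the condition ``collapses into a small number of explicit algebraic possibilities'' and that a ``bounded case analysis'' forces the coefficient $11$; nothing in your sketch supports this, and the actual classification in that regime is substantially harder than in the $p>k^2$ regime (it was only completed later, by Greene). You cannot dispose of it in a paragraph.

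Second, even restricted to $p>k^2$, your argument is a plan rather than a proof, and the plan skips the two steps that carry all the weight. (a) You never establish that the Rasmussen genus condition $G(p,q,k)<2p$ --- which is a bound on $\max_i f(i)-\min_i f(i)$ for the staircase function $f$ --- is equivalent to the ``palindromic symmetry requirement'' on your sign sequence $\epsilon_i$; this encoding is asserted, not derived, and it is not the paper's encoding (the paper works with the invariant $\bar G$, reduces $(p,k^{-2},k)$ to $(k^2,p^{-1},k)$ via Proposition \ref{prop:section q=k^-2, (p,k^-2,k) is like (k^2, p^-1, k)}, and then classifies genus-minimizing $q\in\Z/k^2$ through the mobile-point machinery of Section \ref{s:p=k2}). (b) The completeness claim --- that palindromicity forces the continued-fraction expansion to have length at most two and that the length-one and length-two shapes biject with families I--VI, with exactly the stated divisibility and parity side conditions --- is precisely the content of Propositions \ref{prop: type 0 genus-minimizing classification} through \ref{prop: bookkeeping for q genus-minimizing}, i.e.\ the bulk of the paper, and you give no argument for it. Without (a) and (b) the ``only if'' direction is unproved. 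The ``if'' direction via Berge's constructions is fine and matches what the paper does.
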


In \cite{Rasmussen}
Jacob Rasmussen analyzed the above conjecture by studying
the Heegaard Floer homology of knots in lens spaces
with L-space homology-sphere surgeries.  An L-space
is a 3-manifold whose Heegaard Floer homology
has the smallest possible rank, in a certain precise sense.
A homology sphere has the same ordinary homology as a sphere.
The only known 3-manifolds satisfying both of these properties
are the Poincar{\'e} sphere and $S^3$.

Rasmussen showed in \cite{Rasmussen}
that if $K \subset L(p,q)$
admits an L-space surgery, then the unique simple knot $K^{\prime}$
in the same homology class as $K$ satisfies 
\begin{equation}
\label{genus bound}
\mathrm{genus}(K^{\prime}) < \frac{p+1}{2}.
\end{equation}

On the other hand, the genus of a simple knot is easily calculated
\cite{Rasmussen},
\cite{OSLens}.
The standard doubly-pointed Heegaard diagram for a simple knot
provides a simple presentation for the fundamental group of the knot,
from which one can compute its Alexander polynomial using Fox calculus.
The simple knot $K^{\prime} \subset L(p,q)$ 
of homology class $k \in H_1(L(p,q)) \cong \Z/p$
has Alexander polynomial
\begin{equation}
  {\Delta}_{K^{\prime}}
= \left(\frac{t - 1}{t^p - 1}\right) {\bar{\Delta}}_{K^{\prime}},
\end{equation}
where
\begin{equation}
{\bar{\Delta}}_{K^{\prime}} = \sum_{i=0}^{p-1} t^{f(i)},
\end{equation}
and $f(i)$ is defined recursively by
\begin{equation}
   f(i+1) - f(i) 
:= \begin{cases}
      k-p
         & iq \in \{0, \ldots, k-1\} \subset \Z/p
             \\
      k
         & \text{otherwise}
   \end{cases}\;\;.
\end{equation}
The genus of $K^{\prime}$ is then given by
\begin{equation}
  \mathrm{genus}(K^{\prime})
= \frac{\mathrm{deg}{\bar{\Delta}}_{K^{\prime}} - p + 1}{2},
\end{equation}
where
\begin{equation}
  \mathrm{deg}{\bar{\Delta}}_{K^{\prime}}
= \max_{i \in \Z/p} f(i) - \min_{i \in \Z/p} f(i).
\end{equation}
Letting $G(p,q,k)$ denote this degree $\mathrm{deg}{\bar{\Delta}}_{K^{\prime}}$,
one can then translate the condition (\ref{genus bound}) on the genus of
$K^{\prime}$ into a condition on $p$, $q$, and $k$:
\begin{equation}
\mathrm{genus}(K^{\prime}) < \frac{p+1}{2}
\;\;\;\;\Leftrightarrow\;\;\;\;
G(p,q,k) < 2p.
\end{equation}

On the other hand, a knot determined by
$p$, $q$, $k$ only admits a homology-sphere surgery
if $q \equiv k^2\;(\mod p)$.
Thus, the knot determined by $p$, $q$, and $k$
admits an L-space homology-sphere surgery if and only if
the following two conditions hold:
\begin{itemize}
\item[(i)]$G(p,q,k) < 2p$,
\item[(ii)]$q \equiv k^2\;(\mod p)$.
\end{itemize}
This translates the topological problem of classifying which
simple knots in $L(p,q)$ admitting $S^3$ surgeries into
the number theoretical and combinatorial problem on which
the present paper focuses.
In this paper, we classify all solutions $(p,q,k)$
of conditions (i) and (ii)
in the case of $p > k^2$, and observe that they
match with Berge's prediction.  That is, we prove:
\begin{theorem}
\label{thm: intro main thm}
Conjecture \ref{conj: berge} holds in the case of $p > k^2$.
\end{theorem}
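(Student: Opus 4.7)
The plan is to translate conditions (i) $G(p,q,k) < 2p$ and (ii) $q \equiv k^2 \pmod{p}$ into a combinatorial problem about a lattice walk on $\Z$, and then classify the solutions with $p > k^2$. Expanding the recursion yields $f(n) = kn - p\, D(n)$, where $D(n) = \#\{0 \le j < n : jq \bmod p \in \{0, \ldots, k-1\}\}$. Condition (ii) together with $\gcd(p,q) = 1$ forces $\gcd(k,p) = 1$, so the down-step indices $0 = i_0 < i_1 < \cdots < i_{k-1} < p$ form an arithmetic progression in $\Z/p$ with common difference $q^{-1} \equiv k^{-2} \pmod{p}$. A direct computation then gives $f(i_j) = k i_j - jp$ just before each down-step and $f(i_j+1) = k(i_j+1) - (j+1)p$ just after, so $G$ becomes an explicit function of the gap sequence $g_j := i_{j+1} - i_j$ (with $i_k := p$).

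Next, the regime $p > k^2$ lets me apply a three-distance-theorem analysis to this arithmetic progression. Writing $p = ak^2 + r$ with $0 \le r < k^2$, the gaps $g_j$ take at most three consecutive integer values, with multiplicities determined by $r$ and $k$. Substituting into the expressions for the extrema of $f$ produces a closed form for $G$ in terms of $p \bmod k^2$ and, in the three-gap subcases, an auxiliary divisor $d$ of $k \pm 1$ or $2k \pm 1$ recording the subpartition of the gap sequence. The forward direction---verifying that each Berge residue class in families I--VI yields $G < 2p$---then reduces to a direct estimate from the explicit gap structure, with the two-gap subcase matching families I and II and the three-gap subcases matching III--VI.

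The main obstacle is the converse: showing that the Berge list is \emph{exhaustive}. For this I would induct on $k$, using a renormalization step that, in the three-gap case, replaces $(p,k)$ by a strictly simpler pair $(p', k')$ with $k' < k$ still satisfying $p' > (k')^2$, and then appeals to the induction hypothesis; this mirrors the Euclidean step $(p, k^2) \mapsto (k^2, r)$ underlying the three-distance theorem. Tracking how the inequality $G < 2p$ transforms under this reduction is delicate, and the divisibility and parity restrictions on $d$ in families III--V should emerge precisely as the conditions needed to avoid double-counting of solutions and to stay in the inductive regime. A small number of base cases ($k \le 2$ and the $p \approx k^2$ boundary) will need to be handled by hand, as should the degenerate family VI, which is stated to be a special case of V.
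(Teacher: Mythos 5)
Your opening reduction is sound and parallels the paper's: $f(n) = kn - pD(n)$, and the down-step indices are exactly $\{[jq^{-1}]_p : 0 \le j < k\}$ (compare the set $Q$ in Definition~\ref{def:v}, after the inversion $\bar G(p,q,k)=G(p,q^{-1},k)$). But the next two steps each contain a real gap. First, the three-distance theorem does not give gaps taking ``three \emph{consecutive} integer values''; it gives at most three distances with the largest equal to the sum of the other two, and the multiplicities are governed by where $k$ sits in the continued-fraction expansion of $q^{-1}/p = k^{-2}/p$, not directly by the remainder $r = p \bmod k^2$. (Concretely, $p=107$, $k=10$, $q=100$ gives $q^{-1}=61$, sorted down-steps $\{0,14,15,30,45,60,61,76,91,106\}$, gaps $\{1,14,15\}$ — the 3-distance relation $15=14+1$, not consecutive integers.) Second, and more seriously, the proposed induction on $k$ is not actually set up: the Euclidean renormalization $(p,k^2)\mapsto(k^2, p\bmod k^2)$ that underlies the three-distance theorem changes $p$, not $k$, and there is no evident recipe for producing a smaller $k'$ together with a $p'$ still satisfying both $p'>(k')^2$ and the crucial constraint $q'\equiv(k')^2\pmod{p'}$. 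You also need to track how the inequality $G<2p$ transforms, which you flag as ``delicate'' but do not resolve — and this is where the whole argument lives.

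The paper instead performs a \emph{single} reduction, Proposition~\ref{prop:section q=k^-2, (p,k^-2,k) is like (k^2, p^-1, k)}, showing $(p,k^{-2},k)$ is genus-minimizing iff $(k^2,p^{-1},k)$ is; this collapses all $p>k^2$ to the boundary case $p=k^2$ at once, using the symmetries of $\bar G$ (Proposition~\ref{prop:G properties}) rather than any continued-fraction descent. The entire content of the classification then lives in Section~\ref{s:p=k2}, where the set $Q_q\subset\Z/k^2$ is broken into the carefully ordered tuples ${\bf z}^j$ of~(\ref{eq: definition of z_i^j}) and analyzed via ``mobile points'' and ``pseudomobile points''; this is a genuinely different combinatorial device from the three-distance theorem, with the key arithmetic input being Lemma~\ref{lemma: q of positive type, xi lemma} on the quantity $\Xi^{d,\epsilon}_l(j)$. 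Your plan would need to (a) correct the gap-structure claim, (b) actually construct the descent and verify the invariance of $q\equiv k^2\pmod p$ and $p>k^2$ under it (which I doubt holds as stated), and (c) control $G$ through the descent; none of these is present. As a useful orientation for a fresh attempt: notice that the correct single-step reduction is to $p=k^2$ exactly (not $p>k^2$), after which $k$ is fixed and the classification is over primitive $q\in\Z/k^2$ with no further induction.
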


In the case of $p < k^2$, however, the above strategy generates
nearly 20 different families of lens space knots with Poincar{\'e} sphere
surgeries, making our methods less tractable.

On the other hand,
the case of $p > k^2$ is interesting in its own right.
For example,
one way to construct a knot in a lens space with an
$S^3$ surgery is to start with a knot $K^{\prime} \in S^1 \times D^2$
with a nontrivial $S^1 \times D^2$ surgery.
If $\alpha \in H_1\!\left( \partial\!\left(S^1 \times D^2\right)\right)$
is the class which bounds in the surgery,
then any Dehn filling of $S^1 \times D^2$
along a curve $\beta$ with $\beta \cdot \alpha = \pm 1$
gives a knot in a lens space with an $S^3$ surgery.
In \cite{GabaiTorus} and \cite{BergeTorus},
Gabai and Berge
classified all knots in $S^1 \times D^2$ with
nontrivial $S^1 \times D^2$ surgeries.
As observed by Berge in \cite{Berge},
the knots obtained by this construction are precisely 
the Berge knots of types I through VI
listed in Conjecture \ref{conj: berge}.
Thus, Theorem \ref{thm: intro main thm} implies the
the following result.

\begin{cor}
If $K \subset L(p,q)$ has an integral $S^3$ surgery, and $p > k^2$, where
$k \in H_1(L(p,q)) \cong \Z/p$ is the homology class of $K$,
then the unique simple knot in the same homology class
is obtained as described above from a knot $K^{\prime} \in S^1 \times D^2$ which
admits a nontrivial $S^1 \times D^2$ surgery.
\end{cor}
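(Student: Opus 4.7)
The plan is to deduce the Corollary as a direct consequence of Theorem \ref{thm: intro main thm} combined with Berge's observation and the Gabai-Berge classification of knots in $S^1 \times D^2$ admitting nontrivial $S^1 \times D^2$ surgeries. Since Theorem \ref{thm: intro main thm} is granted, essentially all of the hard labor has already been performed; what remains is to verify that the hypothesis places us in the setting of that theorem, and then to interpret its conclusion via Berge's observation.

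First I would let $K \subset L(p,q)$ be a knot satisfying the hypothesis, and set $K'$ to be the unique simple knot in the homology class $k \in H_1(L(p,q))$ of $K$. Since $S^3$ is both an integer homology sphere and an L-space, an integer $S^3$ surgery on $K$ is in particular an L-space homology sphere surgery. Applying Rasmussen's genus bound (\ref{genus bound}) to $K$, I would conclude that $\mathrm{genus}(K') < (p+1)/2$, which the excerpt translates into condition (i) on $(p,q,k)$. Likewise, the excerpt notes that any integer homology sphere surgery on a knot of homology class $k$ in $L(p,q)$ forces $q \equiv k^2 \;(\mod p)$, which is condition (ii). With both (i) and (ii) established and with the hypothesis $p > k^2$, I would invoke Theorem \ref{thm: intro main thm} to place $(p,q,k)$ into one of the Berge families I--VI of Conjecture \ref{conj: berge}.

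To finish, I would appeal to Berge's observation in \cite{Berge}, which, combined with the Gabai-Berge classification of \cite{GabaiTorus} and \cite{BergeTorus} recalled immediately before the Corollary, identifies the simple knots associated to families I--VI with exactly those knots in $L(p,q)$ obtained by Dehn-filling $S^1 \times D^2$ along a curve $\beta$ with $\beta \cdot \alpha = \pm 1$, starting from some $K'' \in S^1 \times D^2$ admitting a nontrivial $S^1 \times D^2$ surgery. Applying this identification to $K'$ delivers the Corollary. The genuine difficulty resides entirely inside Theorem \ref{thm: intro main thm} itself; granted that result, the proof of the Corollary is a short assembly of previously established ingredients, and I do not anticipate a substantive obstacle at any of the three steps above.
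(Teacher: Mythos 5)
Your proposal is correct and follows exactly the reasoning the paper intends: conditions (i) and (ii) plus $p > k^2$ feed into Theorem \ref{thm: intro main thm}, whose conclusion is then interpreted via Berge's observation and the Gabai-Berge classification recalled in the paragraph preceding the Corollary. The paper offers no separate formal proof of the Corollary beyond that paragraph, and your assembly of the ingredients matches it.
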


In Section \ref{s:defs},
we define an invariant $\bar{G}\left(p,q ,k\right)$ of
$(p,q,k)$ which is easier to work with, satisfying
\begin{equation}
G\left(p,q,k\right) = \bar{G}\left(p,q^{-1}(\mod p),k\right).
\end{equation}
We also state a few simple properties of $\bar{G}\left(p,q ,k\right)$.

The $G$--triple $(p,q,k)$ then satisfies $q = k^2 \in \Z/p$
if and only if the corresponding $\bar{G}$--triple satisfies
$q = k^{-2} \in \Z/p$.
In Proposition \ref{prop:section q=k^-2, (p,k^-2,k) is like (k^2, p^-1, k)}
of Section \ref{s:q=k-2}, we prove that
if $p > k^2$, then the $\bar{G}$--triple $(p, k^{-2}, k)$ 
is a solution if and only if
the $\bar{G}$--triple $(k^2, p^{-1}(\mod k^2), k)$ is solution,
thereby reducing our classification problem to the study of
$\bar{G}$--triples of the form $(k^2, q, k)$.

Thus, the bulk of the work in this paper resides in
Section \ref{s:p=k2}, which classifies $\bar{G}$--triples of the form
$(k^2, q, k)$ satisfying $\bar{G}(k^2,q,k) < 2p$.
The calculation of $\bar{G}$ requires the bookkeeping of
certain marked elements of $\Z/p$.  The main strategy of
Section \ref{s:p=k2} is to use a special choice (\ref{eq: definition of z_i^j})
of ordering of these marked elements, in order to exploit some
useful combinatorial properties of the problem at hand.
             \\

\noindent{\bf Acknowledgements:}

I would like to thank Jacob Rasmussen for introducing me to this problem,
Cliff Taubes for advising me from Harvard, Zoltan Szab{\'o} for advising
me from Princeton, Mike Hopkins for advising my minor thesis,
and Richard Taylor for his endless academic and moral support
as director of graduate studies.
I am very grateful to my readers, Cliff Taubes,
Peter Kronheimer, and Noam Elkies,
and to the extremely patient referee who volunteered to check my paper.

I would also like to thank Irene Minder---for
repeatedly going above and
beyond the call of duty to carry out my administrative obligations for me
in my absence---and Susan Gilbert for all her work in the past year
to help me meet graduation requirements in absentia.

Many people have supported me during my graduate school years.
At Princeton, Zoltan's students took me in as one of their own.
In my year between Harvard and Princeton, the Rutgers string theorists
gave me an office and let me participate in their seminars and discussions.
At Harvard, the students in my alcove, in addition to ``honorary'' alcove members,
were quick to include me in social gatherings.  In my year at the physics
department at Stanford, Simeon Hellerman was generous about
answering questions and suggesting papers to read, while
Jacob Shapiro, Kevin Purbhoo, and Tom Coates helped me to retain my
identity as a mathematician.
In the mean time, Catherine and Christian Le Cocq
stood in as surrogate parents and kept me well fed.
Of course, I am also grateful
to the National Defense Science and Engineering Fellowship and the
National Science Foundation's Graduate Research Fellowship for
financing my graduate study.

As an undergraduate, I greatly benefited from interactions with the 
mathematicians and string theorists at Duke.  From the mathematics
department, I would especially like to thank Chad Schoen,
Robert Bryant, David Morrison, Bill Pardon, David Kraines, Eric Sharpe, and my advisor,
Paul Aspinwall.  I would also like to thank my dear friends
Ilarion Melnikov, Sven Rinke, and Ronen Plesser from the physics department.

From my precollegiate years of education, I would like to thank
John Kolena, John Goebel, Dan Teague, and Kevin Bartkovich
from the NC School of Science and Mathematics,
Ken Collins, Rudine Marlowe, Jeff Knull, and Caroline Huggins
from Charlotte Latin School,
Bill Cross and Dana Mackenzie from Duke T.I.P.,
and Harold Reiter from the Charlotte Math Club.

Last but not least, I want to thank my family.
My parents and brother nurtured my love of
mental challenge from an early age,
and have unfailingly supported me through the years.
More recently, my little daughter Katie
has reminded me of how much fun can be found in
exploration and discovery.
Most of all, I am indebted to my husband, Jake.
He took care of me during my long recovery from catching fire
while cooking (oops), and helped to clean up after me during five
months of constant morning sickness, not to mention
all the cooking, cleaning, dishwashing, and
(more recently) baby care he has always done without being asked.
I cannot express how much his love, support, 
and patience have sustained me these last years.

\newpage

\section{General Case: Definitions and Basic Properties}
\label{s:defs}

In the following, we define an invariant $\bar{G}$ related to
the invariant $G$---defined in the Introduction---associated
to simple knots in lens spaces.  We also provide a minor shortcut
for calculating $\bar{G}$ and observe some of its basic symmetries.

\begin{definition}
\label{def:v}
Suppose $p \in \Z$ with $p \geq 2$, and $k, q \in \Z/p$ with $k \neq 0$
and $q$ primitive.  
Then the triple $(p,q,k)$ determines a map
$v_{(p,q,k)} : \Z/p \times \Z/p \rightarrow  \Z$, given by
\begin{equation*}
v_{(p,q,k)}(x,y) := \#\!\left(\Z\cap \left\langle\tilde{x},\tilde{y}\right]\right)\left[k\right]_p \;\; - \;\;
                               \#\!\left(\tilde{Q}\cap \left\langle\tilde{x},\tilde{y}\right]\right)p,
\end{equation*} 
where $\left[k\right]_p$ denotes the representative of $k$ in $\{0, \ldots, p-1\}$,
$\tilde{Q} := {\pi}^{-1}(Q)$ is the preimage under 
$\Z \stackrel{\pi}{\to} \Z/p$ of
\begin{equation*}
Q := \left\{aq\in\Z/p \left|\; a \in \{0, \ldots, \left[k\right]_p-1\} \right. \right\},
\end{equation*}
and  $\tilde{x}, \tilde{y} \in \Z$ are any $\pi$-lifts of $x$ and $y$ 
satisfying $x < y$.
\end{definition}

Since the difference between any two lifts of $(x,y)$ contributes a multiple of
$p [k]_p - [k]_p p = 0$ to $v_{(p,q,k)}(x,y)$, this definition is independent
of the choice of lift of $(x,y)$.  Note that $v_{(p,q,k)}$ is antisymmetric:
\begin{equation}
v_{(p,q,k)}(x,y) = -v_{(p,q,k)}(y,x).
\end{equation}

\begin{definition}
The triple $(p,q,k)$ determines a positive integer $\bar{G}$, given by
\begin{equation*}
\bar{G}(p,q,k) := \max_{x,y \in \Z/p} v_{(p,q,k)}(x,y).
\end{equation*}
\end{definition}
\noindent A little thought shows that $\bar{G}$ is related to the invariant
$G$ defined in the Introduction by
\begin{equation}
\bar{G}(p,q,k) = G(p,q^{-1},k),
\end{equation}
so that $\bar{G}\left(p,q,k\right)$ gives the degree of the rescaled
Alexander polynomial of the simple knot in $L(p, q^{-1}(\mod p))$
of homology class $k \in H_1\!\left(L(p, q^{-1})\right) \cong \Z/p$.
Thus, misleadingly, the $q$ used in most of this paper is {\em not}
the $\tilde{q}$ defining the lens space $L(p,\tilde{q})$ harboring
the simple knot, but is rather the inverse modulo $p$ of that $\tilde{q}$.
If confusion is likely to occur, we shall distinguish between the
arguments of $\bar{G}$ and $G$ by calling them $\bar{G}$--triples
and $G$--triples, respectively.

The following proposition somewhat simplifies the computation of $\bar{G}$:
\begin{prop}
\label{prop:G from aqs}
\begin{equation*}
\bar{G}(p,q,k) =
   \max_{x,y \in Q} \left|v_{(p,q,k)}(x,y)\right|
   + p - [k]_p.
\end{equation*}
\end{prop}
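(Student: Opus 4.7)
The strategy is to encode $v_{(p,q,k)}$ as the coboundary of a single-variable $p$-periodic function, then locate its extrema combinatorially.  Write $k:=[k]_p$ for brevity, and for $n\in\Z$ define
\[
h(n)\;:=\;\#\!\left(\Z\cap(0,n]\right)k\;-\;\#\!\left(\tilde Q\cap(0,n]\right)p,
\]
with signed counts for $n<0$.  Since $q$ is primitive, the elements $0,q,\ldots,(k-1)q$ of $Q$ are distinct in $\Z/p$, so any length-$p$ window in $\Z$ contains exactly $k$ elements of $\tilde Q$.  Consequently $h(n+p)-h(n)=pk-kp=0$, so $h$ descends to a function on $\Z/p$.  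By construction, $v_{(p,q,k)}(x,y)=h(\tilde y)-h(\tilde x)=h(y)-h(x)$ for any lifts $\tilde x<\tilde y$.

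Hence
\[
\bar G(p,q,k)\;=\;\max_{y\in\Z/p}h(y)\;-\;\min_{x\in\Z/p}h(x),
\]
and the task reduces to locating extrema of $h$.  The single-step increments of $h$ equal $k>0$ if the step lands outside $\tilde Q$ and $k-p<0$ if it lands inside $\tilde Q$; in particular (since $1\le k<p$) they are never zero.  A global minimum at $\check y$ therefore requires $h(\check y)<h(\check y-1)$, which forces $\check y\in\tilde Q$, i.e.\ $\check y\in Q$; symmetrically, a global maximum at $\hat y$ requires $h(\hat y+1)<h(\hat y)$, forcing $\hat y+1\in Q$.  Using $h(z)-h(z-1)=k-p$ for $z\in\tilde Q$, I then obtain
\[
\min_{\Z/p}h=\min_{x\in Q}h(x),\qquad \max_{\Z/p}h=\max_{z\in Q}h(z-1)=\max_{z\in Q}h(z)+(p-k).
\]

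Subtracting and invoking the antisymmetry $v_{(p,q,k)}(x,y)=-v_{(p,q,k)}(y,x)$---which gives $\max_{x,y\in Q}(h(y)-h(x))=\max_{x,y\in Q}|v_{(p,q,k)}(x,y)|$---yields the claimed identity
$\bar G(p,q,k)=\max_{x,y\in Q}|v_{(p,q,k)}(x,y)|+p-[k]_p$.  The only point requiring any care is the extremum-location step; once $h$ is recognized as a $p$-periodic walk whose only downward jumps occur precisely at the $k$ points of $\tilde Q$ in each period, the location of its minima on $Q$ and its maxima on $Q-1$ is essentially forced, and the rest is bookkeeping.
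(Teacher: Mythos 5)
Your proof is correct and takes essentially the same approach as the paper's: the paper argues directly that any maximizer $(x_*,y_*)$ of $v$ must have $x_*\in Q$ and $y_*+1\in Q$ (else a one-step shift increases $v$ by $[k]_p$), and your potential-function $h$ is precisely the cumulative form of $v$, with the location of its extrema forced by the same step-sign observation. The $h$-reformulation is a tidy repackaging of the identical combinatorial idea.
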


\begin{proof}
For brevity, write $v$ for $v_{(p,q,k)}$.
First, note that since $v(x,y) = -v(y,x)$, the above proposition would be
equivalent if we removed the absolute value sign, but the absolute value sign
will be convenient in later arguments.

Let $(x_*,y_*)$ denote an element of $\Z/p \times \Z/p$ at which a maximum of
$v$ (and thus also of $|v|$) occurs.
There must then exist $a_1\in\{0,\ldots,[k]_p-1\}$ such that
$x_* \equiv {a_1}q\; (\mod p)$.  Otherwise $v(x_* -1,y_*) = v(x_*,y_*) + [k]_p$,
contradicting the maximality of $v(x_*,y_*)$.
Similarly, there must exist $a_2\in\{0,\ldots,[k]_p-1\}$ such that
$y_* \equiv {a_2}q - 1\; (\mod p)$.  Otherwise, 
$v(x_*,y_*+1) = v(x_*,y_*) + [k]_p$.  Thus,
\begin{equation}
   v(x_*,y_*) = v({a_1}q,{a_2}q - 1) = v({a_1}q, {a_2}q) +  p - [k]_p.
\end{equation}
\end{proof}

\begin{definition}
We say that a triple $(p,q ,k)$ is 
{\em genus-minimizing} if 
\begin{equation*}
\bar{G}(p,q,k) < 2p.
\end{equation*}
\end{definition}
\noindent The choice of the term ``genus-minimizing'' is intended to reflect the
fact that $\bar{G}(p,q,k) < 2p$ if and only if the knot
determined by the $\bar{G}$--triple $(p,q,k)$ satisfies the
genus bound necessary for the knot to admit an $S^3$ surgery.

When actually trying to determine if $(p, q, k)$ is genus-minimizing, we
often exploit Proposition \ref{prop:G from aqs} to use the following
simpler criterion.
\begin{cor}
\label{cor: intro defs, genus-minimizing if v < p + k}
The triple $(p, q, k)$ is genus-minimizing if and only if
\begin{equation*}
\max_{x,y \in Q} \left|v_{(p,q,k)}(x,y)\right| < p + [k]_p.
\end{equation*}
\end{cor}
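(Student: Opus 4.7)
The plan is to derive the corollary as an immediate algebraic consequence of Proposition \ref{prop:G from aqs} combined with the definition of ``genus-minimizing.''

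First I would unfold the definition: $(p,q,k)$ is genus-minimizing precisely when $\bar{G}(p,q,k) < 2p$. Next I would invoke Proposition \ref{prop:G from aqs}, which gives the identity
\[
\bar{G}(p,q,k) \;=\; \max_{x,y \in Q} \left|v_{(p,q,k)}(x,y)\right| \;+\; p - [k]_p.
\]
(Recall that the proposition was stated without absolute value bars, but the antisymmetry $v_{(p,q,k)}(x,y) = -v_{(p,q,k)}(y,x)$ noted just after Definition \ref{def:v} means the maximum of $v$ over $Q \times Q$ coincides with the maximum of $|v|$ over $Q \times Q$, so the two formulations agree.) Substituting this expression into the inequality $\bar{G}(p,q,k) < 2p$ and rearranging the $p - [k]_p$ term across the inequality yields
\[
\max_{x,y \in Q} \left|v_{(p,q,k)}(x,y)\right| \;<\; p + [k]_p,
\]
which is exactly the criterion in the corollary. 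Running the same substitution in reverse gives the converse implication, completing the biconditional.

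There is no real obstacle here—the only subtlety worth a sentence of justification is the passage between Proposition \ref{prop:G from aqs}'s unsigned maximum and the absolute-value version that appears in the corollary statement, and this is handled by the antisymmetry of $v_{(p,q,k)}$ remarked upon in the preceding definition.
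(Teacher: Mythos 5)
Your argument is correct and is exactly the intended derivation: the corollary follows immediately by substituting the identity of Proposition~\ref{prop:G from aqs} into the defining inequality $\bar{G}(p,q,k) < 2p$ and rearranging. One small slip in your parenthetical: Proposition~\ref{prop:G from aqs} is in fact stated \emph{with} the absolute value bars (it is the remark in its proof that observes the unsigned version would be equivalent), so no translation between the two forms is needed here, though your reasoning about antisymmetry is of course correct.
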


Lastly, we observe a few basic symmetries of $\bar{G}$.
\begin{prop}
\label{prop:G properties}
The invariant $\bar{G}$ obeys the following three identities:
\begin{itemize}
\item[\em{(i)}] $\bar{G}(p,-q,k) = \bar{G}(p,q,k)$,
\item[\em{(ii)}] $\bar{G}(p,q,-k) = \bar{G}(p,q,k)$,
\item[\em{(iii)}] $\bar{G}(p,q^{-1},qk) = \bar{G}(p,q,k)$,
\end{itemize}
\end{prop}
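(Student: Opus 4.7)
The plan is to prove each of (i), (ii), (iii) by an explicit bijection of $\Z/p$ (or, for (iii), via discrete Fourier analysis on $\Z/p$) that transforms one $v$-function into the other, up to operations that preserve the maximum---namely reindexing, sign-flips absorbed by antisymmetry, and additive constants.

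For (i), I would use the reflection $\sigma(z) = -z - 1$ on $\Z/p$. Since $Q_{-q,k} = -Q_{q,k}$ as subsets of $\Z/p$, the lifts satisfy $\tilde{Q}_{-q,k} = -\tilde{Q}_{q,k}$; for any lifts with $\tilde{x} < \tilde{y}$, bookkeeping of the half-open endpoints yields $\#\bigl((-\tilde{Q}_{q,k}) \cap (\tilde{x}, \tilde{y}]\bigr) = \#\bigl(\tilde{Q}_{q,k} \cap (-\tilde{y}-1,\, -\tilde{x}-1]\bigr)$, which gives $v_{(p,-q,k)}(x,y) = v_{(p,q,k)}(-y-1,\, -x-1)$. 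Maximizing over $(x,y)$ yields (i).

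For (ii), I would use that $Q_{q,-k}$ is the integer translate of $\Z/p \setminus Q_{q,k}$ by $-[k]_p q$. Combining the complementary count $\#\bigl((\Z\setminus\tilde{Q})\cap(\tilde{x},\tilde{y}]\bigr) = (\tilde{y}-\tilde{x}) - \#\bigl(\tilde{Q}\cap(\tilde{x},\tilde{y}]\bigr)$ with translation-invariance of counts in $\Z$ produces $v_{(p,q,-k)}(x,y) = -\,v_{(p,q,k)}(x + [k]_p q,\; y + [k]_p q)$. Since $v_{(p,q,k)}$ is antisymmetric, $\max(-v) = \max v$, so $\bar{G}(p,q,-k) = \bar{G}(p,q,k)$.

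Identity (iii) is the deepest. Let $f(z) := \sum_{b \in Q_{q,k}} [z-b]_p$, writing $[z]_p \in \{0,\ldots,p-1\}$ for the standard integer representative of $z \bmod p$; then $v_{(p,q,k)}(x,y) = f(y) - f(x)$, so $\bar{G}(p,q,k) = \max f - \min f$. Viewing $f$ as the convolution on $\Z/p$ of the sawtooth $r(z) := [z]_p$ with $\mathbf{1}_{Q_{q,k}}$, the Fourier calculation $\hat{r}(a) = -1/(1-\omega_a)$ with $\omega_a := e^{-2\pi i a/p}$ (for $a \neq 0$), together with the geometric sum for $\widehat{\mathbf{1}_{Q_{q,k}}}(a)$, gives
\[
\hat{f}(a) \;=\; -\,\frac{1 - \omega_a^{[k]_p q}}{(1 - \omega_a)(1 - \omega_a^{q})} \qquad (a \neq 0).
\]
Writing $f'$ for the analogous function built from $Q_{q^{-1},qk}$, the same computation gives $\hat{f'}(a) = -(1 - \omega_a^{[qk]_p q^{-1}})/\bigl((1-\omega_a)(1-\omega_a^{q^{-1}})\bigr)$. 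Evaluating this at the shifted Fourier index $aq$ and invoking the congruences $q\cdot q^{-1} \equiv 1 \pmod{p}$ and $[qk]_p \cdot q^{-1} \equiv [k]_p \pmod{p}$ collapses the expression to exactly $\hat{f}(a)$. Hence $\hat{f}(a) = \hat{f'}(aq)$ for every $a \neq 0$, which is equivalent to $f(qz) = f'(z) + C$ for the constant $C = ([k]_p - [qk]_p)(p-1)/2$; since $z \mapsto qz$ is a bijection of $\Z/p$, $\max f - \min f = \max f' - \min f'$, establishing (iii). The main obstacle I anticipate is this Fourier-side bookkeeping, in particular tracking the index substitution $a \leftrightarrow aq$ and verifying the two congruences that make the expressions for $\hat{f}(a)$ and $\hat{f'}(aq)$ coincide.
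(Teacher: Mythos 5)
Your proofs of (i) and (ii) are correct and are in substance the same as the paper's: the paper proves (i) via the reflection $v_{(p,q,k)}(x,y) = v_{(p,-q,k)}(-y,-x)$, and (ii) via the complementary-count identity $\Z/p = Q^0_{p-k}\sqcup Q^{p-k}_k$ together with a translation of the domain of $v$; your $\sigma(z)=-z-1$ in (i) handles the half-open-endpoint bookkeeping a touch more precisely (the paper's stated pointwise identity is actually off by an indicator term when exactly one of $x,y$ lies in $Q$, harmless since the max is attained with both in $Q$), but the reflection idea is the same, and your (ii) reproduces the paper's $-v_{(p,q,k)}(x-(p-k)q,\,y-(p-k)q)$ exactly. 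Part (iii) is where you take a genuinely different route. The paper does not give a combinatorial argument at all: it cites Lemma 2.5 of Rasmussen's paper, which shows that under the homeomorphism $L(p,q)\cong L(p,q^{-1})$ obtained by exchanging the roles of $\alpha$ and $\beta$ in the genus-one Heegaard diagram, the $G$-triples $(p,q,k)$ and $(p,q^{-1},q^{-1}k)$ label the same simple knot, hence the same Alexander polynomial and the same degree. Your proof instead stays purely on the number-theoretic side: you introduce a potential $f(z)=\sum_{b\in Q}[z-b]_p$ with $v_{(p,q,k)}(x,y)=f(y)-f(x)$, compute $\hat f$ as a product of two geometric sums, and observe that the Fourier coefficients of the analogous $f'$ for $(q^{-1},qk)$ satisfy $\hat f'(aq)=\hat f(a)$ for $a\neq 0$, so $f(qz)=f'(z)+C$ and the oscillations $\max f-\min f$ agree. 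Both arguments are sound; yours has the advantage of being self-contained (no appeal to the Heegaard-diagram picture and Fox calculus behind the identity $\bar G(p,q,k)=G(p,q^{-1},k)$), while the paper's is shorter at the cost of importing a topological input. Your closing remark about "Fourier-side bookkeeping" being the main obstacle is apt but the two congruences you need are trivial: $qq^{-1}\equiv 1$ and $[qk]_pq^{-1}\equiv k\pmod p$, so the index substitution really does collapse cleanly.
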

\begin{proof}[Proof of (i)]

It is clear from the original definition of $v$ (Definition \ref{def:v}) that
\begin{equation}
v_{(p,q,k)}(x,y) = -v_{(p,-q,k)}(-x,-y) = v_{(p,-q,k)}(-y,-x).
\end{equation}
But $(x,y) \mapsto (-y,-x)$ is just an involution on $\Z/p \times \Z/p$,
so $\bar{G}(p,-q,k) = \bar{G}(p,q,k)$.
\end{proof}

\begin{proof}[Proof of (ii)]
We begin by remarking on the effect on $v$ of translating $Q$.
For any $a_0 \in \Z/p$, define $Q_k^0$ and $Q_k^{a_0}$ by
\begin{align}
    Q_k^0 
&:= \left\{aq\in\Z/p \left|\; a \in \{0, \ldots, \left[k\right]_p-1\} \right. \right\},
            \\
    Q_k^{a_0}
&:=  \left\{aq\in\Z/p \left|\; a \in \{a_0 + 0, \ldots, a_0+\left[k\right]_p-1\} \right. \right\},
\end{align}
so that $Q_k^{a_0} =  {a_0}q + Q_k^0$.  This translation of $Q_k^0$ has the
effect of translating the domain of $v$.  That is,
\begin{equation}
v^{Q_k^0}_{(p,q,k)}(x,y)  = v^{Q_k^{a_0}}_{(p,q,k)}(x+{a_0}q, y+{a_0}q),
\end{equation}
where $v^q$ denotes the result of replacing $Q$ with $q$
in the definition of $v$.

Returning to the problem at hand, set
\begin{equation}
Q_{p-k}^0 = \left\{ aq \in \Z/p | a \in \{0, \ldots, p-[k]_p - 1\}\right\}.
\end{equation}
Then $\Z/p = Q^0_{p-k} \coprod Q^{p-k}_k$, so if we let
${\tilde{Q}}^0_{p-k} := {\pi}^{-1}\left(Q^0_{p-k}\right)$ and 
${\tilde{Q}}^{p-k}_k := {\pi}^{-1}\left(Q^{p-k}_k\right)$
denote the preimages of $Q^0_{p-k}$ and $Q^{p-k}_k$ under 
$\Z \stackrel{\pi}{\to} \Z/p$, then we have
\begin{equation}
{\tilde{Q}}^0_{p-k}  = \Z \; \setminus \; {\tilde{Q}}^{p-k}_k.
\end{equation}
Applying this relation to the definition of $v$ gives
\begin{align}
v_{(p,q,-k)}(x,y) 
    :\!\!&=  \#\left(\Z\cap (\tilde{x},\tilde{y}]\right) (p\!-\![k]_p)\; - \;
                \#\!\left( {\tilde{Q}}^0_{p-k} \cap (\tilde{x},\tilde{y}]\right)p
                  \\ \nonumber
         &=   \#\left(\Z\cap (\tilde{x},\tilde{y}]\right)(p\!-\![k]_p) \; - \;
                 \left[ \#\left( \Z \cap (\tilde{x},\tilde{y}]\right) \;-\;
                          \#\left( {\tilde{Q}}^{p-k}_k \!\cap (\tilde{x},\tilde{y}]\right) \right]p
                  \\ \nonumber
         &= - \#\left(\Z\cap (\tilde{x},\tilde{y}]\right)[k]_p \; + \;
                 \#\!\left({\tilde{Q}}^{p-k}_k \!\cap (\tilde{x},\tilde{y}]\right)p
                  \\ \nonumber
         &= -v_{(p,q,k)}(x-(p\!-\!k)q,\;y-(p\!-\!k)q).
\end{align}
Then, since
\begin{equation}
\max_{x,y \in \Z/p} -v_{(p,q,k)}(x-(p\!-\!k)q,\;y-(p\!-\!k)q)
= \max_{x,y\in\Z/p} v_{(p,q,k)}(x,y),
\end{equation}
we have $\bar{G}(p,q,-k) = \bar{G}(p,q,k)$.
\end{proof}

\begin{proof}[Proof of (iii)]
In Lemma 2.5 of \cite{Rasmussen}, Jacob Rasmussen proves that
under the identification $L(p,q) \cong L(p,q^{-1})$
obtained by exchanging the roles of $\alpha$ and $\beta$
in the Heegaard diagram,
the $G$--triples $(p,q,k)$ and $(p,q^{-1}, q^{-1}k)$
specify the same simple knot.

This implies that the $G$--triples
$(p, q^{-1}, k)$ and $(p, q, qk)$ specify the same knot,
which in turn implies that the $\bar{G}$--triples
$(p, q, k)$ and $(p, q^{-1}, qk)$ specify the same knot
(recalling that $\bar{G}(p,q,k) = G(p,q^{-1},k)$ for all $p$, $q$, and $k$).
In particular, this means that the knots corresponding to the $\bar{G}$--triples
$(p, q, k)$ and $(p, q^{-1}, qk)$ have the same Alexander polynomial.
Thus $\bar{G}(p,q,k) = \bar{G}(p,q^{-1}, qk)$.
\end{proof}

From here on, we restrict our attention to special cases 
relevant to Berge's Conjecture.
Berge's Conjecture involves the classification of genus-minimizing
$\bar{G}$--triples for which $q = k^{-2}$ in $\Z/p$, and of those,
we are interested in the case in which $p > \left([k]_p\right)^2$.  As shown in
Section \ref{s:q=k-2}, this classification problem is equivalent to
classifying genus-minimizing $\bar{G}$--triples of the form $(k^2, q, k)$,
{\em i.e.}, with $p = k^2$, where here, we take $k$ to be a positive integer.
There is no loss of generality in taking $k$ to be positive, since
by Proposition \ref{prop:G properties}, $(k^2, q, -k)$ is genus-minimizing
if and only if $(k^2, q, k)$ is genus-minimizing.   We focus exclusively on
this latter case in Section \ref{s:p=k2}.

\section{Case $p=k^2$}
\label{s:p=k2}

For the entirety of this section, we take $k$ to be a fixed positive integer
and set $p = k^2$.  Then the fact that we need $k \in \{0, \ldots, p-1\}$
implies that $k < k^2$, so we know that $k \geq 2$.
Furthermore, any primitive element $q \in \Z/k^2$ defines an entire
triple $(k^2, q, k)$, so we shall often abbreviate notation
and write $v_q$ for $v_{(k^2, q, k)}$ and $\bar{G}(q)$ for
$\bar{G}(k^2, q, k)$.  We shall also say that $q$ is (or is not)
genus-minimizing to convey that the triple $(k^2, q, k)$
is (or is not) genus-minimizing.  Lastly, we shall write
$Q_q$ to denote the set 
$Q_q := \left\{ aq | a \in \{0, \ldots, k-1\}\right\} \subset \Z/k^2$,
and ${\tilde{Q}}_q := {\pi}^{-1}\left(Q_q\right) \subset \Z$ to denote the 
preimage of $Q_q$ under $\Z \stackrel{\pi}{\rightarrow} \Z/k^2$.

The goal of this section is to classify all genus-minimizing
elements $q \in \Z/k^2$.  To determine if a particular $q$ is genus-minimizing,
we shall apply the following criterion:
\begin{prop}
\label{prop: not minimizing if v >= k(k+1)}
For any primitive $q \in \Z/k^2$, 
$q$ is genus-minimizing if and only if
$v_q(x,y) \leq k(k-1)$ for all $x,y \in Q_q$, and 
$q$ is not genus-minimizing
if and only if
there exist $x_*, y_* \in Q_q$ such that $\left|v_q(x_*, y_*)\right| \geq k(k+1)$.
\end{prop}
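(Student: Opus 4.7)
The plan is to reduce the proposition to Corollary~\ref{cor: intro defs, genus-minimizing if v < p + k}, specialized to $p = k^2$ and $[k]_p = k$, and then eliminate the single forbidden value $\pm k^2$ of $v_q$ on $Q_q \times Q_q$. Writing $M := \max_{x, y \in Q_q}|v_q(x,y)|$, the corollary reads ``$q$ is genus-minimizing iff $M < k^2 + k$''. The second equivalence in the proposition is then immediate, since ``$M \geq k(k+1)$'' is literally the failure of this inequality. For the first equivalence, I will use antisymmetry of $v_q$ together with the symmetry of $Q_q \times Q_q$ under swapping coordinates to turn the pointwise condition ``$v_q(x,y) \leq k(k-1)$ on $Q_q \times Q_q$'' into ``$M \leq k(k-1)$''. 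At this point the whole proposition is reduced to showing that $M$ cannot lie in the open interval $(k(k-1), k(k+1))$.

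Next, I plan to observe that $v_q$ is always divisible by $k$: reading off Definition~\ref{def:v} with $p = k^2$ and $[k]_p = k$ gives $v_q(x, y) = k\bigl[(\tilde{y}-\tilde{x}) - k\cdot\#(\tilde{Q}_q \cap (\tilde{x}, \tilde{y}])\bigr]$. Since the only multiple of $k$ in the forbidden interval is $k^2$, this reduces the proposition further to showing that $v_q(x, y) \neq \pm k^2$ for all $x, y \in Q_q$.

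The main step will then be to rule out the value $k^2$; the case $-k^2$ follows by antisymmetry. Supposing $v_q(x, y) = k^2$ for some $x, y \in Q_q$, pick lifts $\tilde{x}, \tilde{y} \in \tilde{Q}_q$ with $\tilde{x} < \tilde{y}$. Setting $m := \#(\tilde{Q}_q \cap (\tilde{x}, \tilde{y}])$, the hypothesis rearranges to $\tilde{y} - \tilde{x} = k(m+1)$, so in particular $y \equiv x \pmod{k}$ inside $\Z/k^2$. But primitivity of $q$ in $\Z/k^2$ forces $\gcd(q, k) = 1$, so multiplication by $q$ is a bijection on $\Z/k$, and consequently the $k$ elements of $Q_q = \{iq \pmod{k^2} : 0 \leq i < k\}$ have pairwise distinct residues modulo $k$. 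Thus $x \equiv y \pmod{k}$ forces $x = y$, giving $v_q(x, y) = 0 \neq k^2$, a contradiction.

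The hard part will be noticing that primitivity of $q$---which is built into the standing hypotheses of the section---is exactly what excludes the exceptional value $\pm k^2$; once this is spotted, the proof is a short chain combining the $k$-divisibility of $v_q$ with the injectivity of multiplication by $q$ on $\Z/k$.
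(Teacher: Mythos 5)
Your proposal is correct and follows essentially the same approach as the paper's proof: both reduce the statement to Corollary~\ref{cor: intro defs, genus-minimizing if v < p + k} with $p + k = k^2 + k$, both exploit the fact that $v_q$ is always a multiple of $k$, and both rule out the exceptional value $k^2$ by noting that $v_q(a_1 q, a_2 q)/k \equiv 0 \pmod k$ forces $a_1 = a_2$ via primitivity of $q$. Your write-up is merely more explicit about the intermediate bookkeeping (antisymmetry, divisibility, the forbidden interval).
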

\begin{proof}
Corollary \ref{cor: intro defs, genus-minimizing if v < p + k} states that a
triple $(p, q, k)$ is genus-minimizing if and only if
\begin{equation}
\max_{x,y \in Q} \left|v_{(p,q,k)}(x,y)\right| < p + k.
\end{equation}
Here, $p+k = k^2 + k$, and so it remains to show that $v_q(x,y) \neq k^2$ for all $x, y \in Q_q$.
This is true because if there exist $a_1, a_2 \in \{0, \ldots, k-1\}$ such that
$\frac{v_q(a_1 q, a_2 q)}{k} \equiv 0 \;(\mod k)$, then $a_2 q - a_1 q \equiv 0\; (\mod k)$,
implying $a_1 = a_2$, so that $v_q(a_1 q, a_2 q) = 0$.

\end{proof}

\subsection{Notation}
Before proceeding further, we should establish some modular
arithmetic notation.  Most of these notations are completely
conventional.  For $x \in \Q$, we use ${\lfloor}x{\rfloor}$ to
indicate the greatest integer less than or equal to $x$, and ${\lceil}x{\rceil}$
to indicate the least integer greater than or equal to $x$.
Equally conventional, for $x,y,N \in \Z$, is our use of
a vertical bar to indicate divisibility ($x|y$ denotes that
$x$ divides $y$), and the notation 
$x \equiv y\; (\mod N)$ to indicate that $N|x - y$.
What is less conventional is our choice of notation to pick out
a particular representative of a congruence class modulo $N$.
For any $N >0$, we define the map
\begin{equation}
\left[\cdot\right]_N : \Z/N \rightarrow \{0, \ldots, N-1\} \subset \Z
\end{equation}
by setting $\left[x\right]_N$ equal to the unique integer in $\{0, \dots, N-1\}$
such that $\left[x\right]_N \equiv x\; (\mod N)$.  We shall also sometimes 
use $[\cdot]_{N}$ to denote the composition of the quotient map
$\Z \rightarrow \Z/N$ with the representative selection map 
$\Z/N \stackrel{[\cdot]_N}{\rightarrow} \Z$.

\subsection{Definition of Parameters}
\label{ss: definition of d, c, m, mu, gamma, alpha, and types}
From now on, we take $k$ to satisfy $k > 100$,
to avoid special cases that arise when $k$ is small.
It is easy to check by computer
that the genus-minimizing solutions for $q$ match those in 
Proposition \ref{prop: bookkeeping for q genus-minimizing}
when $2 \leq k \leq 100$.

As will soon become clear in the proof of Proposition \ref{prop: G = 2k(k-1) and unique}
and in the definition of mobile points, we shall spend much more
time using $[\pm q^{-1}]_k q$ than $q$ itself.
Our goal is therefore the following.
After choosing an appropriate
sign $\xi \in \{\pm1\}$, we want to choose integers
$d \in \{ [\pm q^{-1}]_k \}$,
$m, c \in \{0, \ldots, k-1\}$, and
$\alpha, \gamma, \mu \in \{\pm1\}$ to parameterize $\xi q$, such that  
\begin{equation}
\left[d{\xi}q\right]_{k^2} = ({\mu}m+ {\gamma}c)k + \alpha < \frac{k^2}{2}.
\end{equation}
For notational convenience, we first
define the function 
${\sigma}_n : \{\pm 1 \in \Z/n \} \rightarrow \{\pm 1 \in \Z \}$,
for $n\in{\Z}_{>0}$,
by ${\sigma}_n(+1)=+1$ and ${\sigma}_n(-1)=-1$ when
$n>2$, by ${\sigma}_2 \equiv -1$ when $n=2$, and by
${\sigma}_1 \equiv +1$ when $n=1$.  We then parameterize $q$ as follows.

\begin{definition}
\label{def: parameters assigned to q}
Suppose that $k>100$.
We then take the following steps to associate
the parameters $d, c, m, \xi, \alpha, \gamma, \mu \in \Z$
to any given primitive $q \in \Z/k^2$.  Set
\begin{align}
\label{eq: parameter definitions, d}
d :=& \min\left\{ \left[q^{-1}\right]_k, \left[-q^{-1}\right]_k \right\},
               \\
\label{eq: parameter definitions, xi}
\xi :=& \begin{cases}
                +1     & \left[dq\right]_{k^2} < \textstyle{\frac{k^2}{2}}
                                \\
                -1     & \left[dq\right]_{k^2} > \textstyle{\frac{k^2}{2}}
            \end{cases}
            \;\;\;\text{(so that}\;
            \left[d{\xi}q\right]_{k^2} = \min\left\{ \left[dq\right]_{k^2},  \left[-dq\right]_{k^2} \right\}
            \text{)},
               \\
\label{eq: parameter definitions, alpha}
\alpha :=& {\sigma}_k(dq)\xi,
               \\
\label{eq: parameter definitions, c}
c :=& \min\left\{ \left[k^{-1}\right]_d, \left[-k^{-1}\right]_d \right\},
               \\
\label{eq: parameter definitions, gamma}
\gamma :=& {\sigma}_d(-ck)\alpha,
               \\
\label{eq: parameter definitions, mu}
\mu :=& \begin{cases}
                +1     & m^{\prime} \geq 0
                                                \\
                -1     & m^{\prime} < 0
                            \end{cases},
               \;\;\;\;\text{where}\;m^{\prime} := \left[\frac{d{\xi}q - \alpha}{k}\right]_k  - {\gamma}c,
               \\
\label{eq: parameter definitions, m}
m :=& \left[{\mu}m^{\prime}\right]_k.
\end{align}
\end{definition}
It is then straightforward to show that the above definitions imply that
\begin{equation}
\xi q = \alpha\gamma\mu \frac{ck+\alpha\gamma}{d}\left(mk+\alpha\mu\right) \;\;\in\Z/k^2
\end{equation}
and that $\left[d{\xi}q\right]_{k^2} = ({\mu}m+ {\gamma}c)k + \alpha$.
The definition of $\xi$ in  (\ref{eq: parameter definitions, xi}) then implies that
$\left[d{\xi}q\right]_{k^2} < \frac{k^2}{2}$.
Note that, since (\ref{eq: parameter definitions, gamma}) implies
$d$ divides $ck +\alpha\gamma$, the fraction $\frac{ck + \alpha\gamma}{d}$
denotes an integer.  This also implies that 
$c = 0$ if and only if $d = 1$, which is true if and only if
$q \equiv \pm 1\;(\mod k)$.

We next consider the possible values of $c, d,$ and $m$
when $c \neq 0$.  So far, we know that $c>0$ and $d > 1$.
Moreover, (\ref{eq: parameter definitions, gamma})
and the definition of ${\sigma}_2$ imply that $d=2$ only if $\alpha\gamma = -1$.
Since $k > 2$, (\ref{eq: parameter definitions, d}) tells us that
$d < \frac{k}{2}$.  Similarly, (\ref{eq: parameter definitions, c}) 
tells us that $c=1$ when $d=2$, and otherwise $1 \leq c < \frac{d}{2}$.
This fact, combined with (\ref{eq: parameter definitions, gamma}),
implies that $0 < \frac{ck+\alpha\gamma}{d} < \frac{k}{2}$.
In addition, since $1 < d < \frac{k}{2}$, we know that
$\frac{ck+\alpha\gamma}{d} \neq 1$, and so
$\frac{ck+\alpha\gamma}{d} \geq 2$.  The possible values for $m$
depend on $(\mu, \gamma) \in \{(1,1), (1,-1), (-1,1)\}$ (with $(-1,-1)$ excluded
because (\ref{eq: parameter definitions, mu}) implies $\mu = +1$ when
$\gamma = -1$).
Since (\ref{eq: parameter definitions, xi}) and (\ref{eq: parameter definitions, alpha}) 
ensure that $\left[\frac{d\xi q - \alpha}{k}\right]_k \leq \frac{k}{2}$, we have
$0 \leq m \leq \frac{k}{2} - c < \frac{k}{2}$
when $(\mu,\gamma) = (1,1)$, and
$1 \leq c \leq m \leq \frac{k}{2} + c < \frac{3k}{4}$
when $(\mu, \gamma) = (1,-1)$.
When
$(\mu, \gamma) = (-1,1)$, we have $m = c - \left[\frac{d\xi q - \alpha}{k}\right]_k > 0$,
and so $0 < m \leq c < \frac{k}{4}$.

It turns out that many properties of $v_q$ depend on whether
$c=0$ and on the value of $(\mu, \gamma)$, motivating the
following definitions.

\begin{definition}
Suppose that $k>100$ and $q$ is primitive in $\Z/k^2$.
If $q \equiv \pm 1\; (\mod k)$, or equivalently, if $c=0$,
then we say that $q$ is of {\em type 0}.
\end{definition}

\begin{definition}
Suppose that $k>100$, that $q$ is primitive in $\Z/k^2$, and that
$c \neq 0$, so that $q$ is not of type 0.
We then say that $q$ is of {\em positive type} if 
$q = +\xi q$, and of {\em negative type} if $q = -\xi q$.
In either case, $[d\xi q]_{k^2} = (\mu m+ \gamma c)k + \alpha$,
with $(\mu, \gamma) \in \{(1,1), (1,-1), (-1,1)\}$.
\end{definition}

Note that Proposition \ref{prop:G properties} implies that $q$ is
genus-minimizing if and only if $\xi q$ is genus-minimizing.

\begin{prop}
\label{prop: properties of parameters d, m, c, alpha, mu, gamma}
Suppose that $q$ is genus-minimizing and of positive or negative type.
Then $d$, $m$, $c$, $\mu$, $\gamma$, and $\alpha \in \Z$
satisfy the following properties.
\begin{itemize}
\item[(i)]
$\xi q = \alpha\gamma\mu \frac{ck+\alpha\gamma}{d}\left(mk+\alpha\mu\right) \in \Z/k^2$,
with $\frac{ck+\alpha\gamma}{d} \in \Z$.

\item[(ii)]
$\left[d\xi q\right]_{k^2} = ({\mu}m+ {\gamma}c)k + \alpha < \frac{k^2}{2}$.

\item[(iii)]
$2 \leq d < \frac{k}{2}$, and $d \geq 3$ when $\alpha\gamma = +1$.

\item[(iv)]
$1 \leq c < \frac{d}{2}$ (unless $d=2$, in which case $c=1$).

\item[(v)]
$2 \leq  \frac{ck+\alpha\gamma}{d} < \frac{k}{2}$.

\item[(vi)]
$1 \leq m \leq \frac{k}{2} - c < \frac{k}{2}$ when $(\mu,\gamma) = (1,1)$, 
{\em i.e.}, when $d\xi q = (m+c)k + \alpha$.

\noindent $1 \leq c <  m \leq \frac{k}{2} + c < \frac{3k}{4}$ when $(\mu,\gamma) = (1,-1)$,
{\em i.e.}, when $d\xi q = (m-c)k + \alpha$.

\noindent $1 \leq m < c < \frac{k}{4}$ when $(\mu,\gamma) = (-1,1)$,
{\em i.e.}, when $d\xi q = (c-m)k + \alpha$.
\end{itemize}
\end{prop}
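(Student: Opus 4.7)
The plan is to derive (i)--(vi) essentially directly from Definition \ref{def: parameters assigned to q}, formalizing the observations already sketched in the paragraphs following that definition. The only substantive input beyond the definitions is that $q$ is of positive or negative type, i.e.\ $c\neq 0$; the genus-minimizing hypothesis is held in reserve to sharpen a handful of nonstrict boundary inequalities. Throughout, $q$ primitive in $\Z/k^2$ gives $\gcd(q,k)=1$, and hence $\gcd(d,k)=1$ (since $dq\equiv\pm 1\;(\mod k)$) and $\gcd(c,d)=1$ (since $ck\equiv\mp 1\;(\mod d)$).

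For (i) and (ii), the two key divisibilities are $k\mid d\xi q-\alpha$ and $d\mid ck+\alpha\gamma$. The first holds because $\alpha={\sigma}_k(dq)\xi$ records exactly the sign of $d\xi q\;(\mod k)$; the second is the rearrangement of $\gamma={\sigma}_d(-ck)\alpha$ into $-ck\equiv\alpha\gamma\;(\mod d)$. Writing $[d\xi q]_{k^2}=Mk+\alpha$ with $M=[(d\xi q-\alpha)/k]_k\in\{0,\ldots,k-1\}$, the definitions of $m'$, $\mu$, and $m$ give $M=\mu m+\gamma c$ in each of the three admissible $(\mu,\gamma)$ patterns — the bound $M+c<k$ established below guarantees no further reduction of $m'$ modulo $k$ is needed. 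The inequality $[d\xi q]_{k^2}<k^2/2$ is exactly (\ref{eq: parameter definitions, xi}), completing (ii). For (i), I would expand $\alpha\gamma\mu\tfrac{ck+\alpha\gamma}{d}(mk+\alpha\mu)$ modulo $k^2$; using $\alpha^2=\gamma^2=\mu^2=1$ the $k^2$ term drops out, and the remaining three terms collect to $d^{-1}[(\mu m+\gamma c)k+\alpha]=\xi q$ in $\Z/k^2$.

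For (iii)--(v), the bounds come from reading off the definitions. Since $d=\min\{[q^{-1}]_k,[-q^{-1}]_k\}\in\{1,\ldots,\lfloor k/2\rfloor\}$ and $c\neq 0$ rules out $d=1$, we have $2\leq d<k/2$; when $d=2$, the convention ${\sigma}_2\equiv-1$ forces $\alpha\gamma=-1$, so $\alpha\gamma=+1$ implies $d\geq 3$. The bound $1\leq c<d/2$ (with $c=1$ when $d=2$) is the analogous statement for $c=\min\{[k^{-1}]_d,[-k^{-1}]_d\}$, using $\gcd(k,d)=1$ to guarantee $c\geq 1$. For (v), integrality of $(ck+\alpha\gamma)/d$ was built into (i); the upper bound $<k/2$ follows from $2c\leq d-1$ by multiplying through, while the lower bound $\geq 2$ uses $ck+\alpha\gamma\geq k-1\geq 2d$, valid since $d<k/2$ with $d,k\in\Z$ forces $2d\leq k-1$.

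For (vi), I would case-split on $(\mu,\gamma)\in\{(1,1),(1,-1),(-1,1)\}$; the pattern $(-1,-1)$ is excluded by (\ref{eq: parameter definitions, mu}) because $\gamma=-1$ forces $m'\geq c>0$ and hence $\mu=+1$. In each case, substituting $[d\xi q]_{k^2}=(\mu m+\gamma c)k+\alpha$ into $[d\xi q]_{k^2}<k^2/2$, together with $\alpha\in\{\pm 1\}$ and the integrality of $m,c$, gives the asserted upper bound on $m$, and the lower bound $m\geq 1$ (as well as the strict $m<c$ in the $(-1,+1)$ pattern) is immediate except in the degenerate boundary configurations $m=0$ in $(1,1)$ and $m=c$ in $(-1,+1)$. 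I would rule these out using the genus-minimizing hypothesis via Proposition \ref{prop: not minimizing if v >= k(k+1)}: in each degenerate case $[d\xi q]_{k^2}$ is forced to be very small ($ck+\alpha$ or $\alpha$ respectively), which pins $q$ to an explicit exceptional residue for which a direct computation produces a pair $(a_1q,a_2q)\in Q_q\times Q_q$ with $|v_q(a_1q,a_2q)|\geq k(k+1)$, contradicting the genus-minimizing assumption. This last verification is the only place where any real effort beyond bookkeeping is expected.
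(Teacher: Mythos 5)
Your approach is essentially the same as the paper's: (i)--(v) and the nonstrict versions of (vi) are read directly off Definition \ref{def: parameters assigned to q}---this is exactly the content of the paragraphs between that definition and the proposition, which the paper's proof simply refers back to---and the sharpenings in (vi), namely excluding $m=0$ when $(\mu,\gamma)=(1,1)$ and $m=c$ when $(\mu,\gamma)\in\{(1,-1),(-1,1)\}$, are obtained by exhibiting a pair $(x,y)\in Q_q\times Q_q$ with $|v_q(x,y)|\geq k(k+1)$ and invoking Proposition \ref{prop: not minimizing if v >= k(k+1)}, which is exactly what the paper does.

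Two small slips to fix. First, your justification of the upper bound in (v) via ``$2c\leq d-1$'' fails when $d=2$ (there $c=1$, so $2c=2>d-1=1$); treat $d=2$ separately, where $\alpha\gamma=-1$ forces $\frac{ck+\alpha\gamma}{d}=\frac{k-1}{2}<\frac{k}{2}$. Second, you list the degenerate configurations as ``$m=0$ in $(1,1)$ and $m=c$ in $(-1,+1)$'' but omit $m=c$ in the $(1,-1)$ pattern, which is needed to sharpen $c\leq m$ to $c<m$; your own observation that $[d\xi q]_{k^2}=\alpha$ (hence $dq'=1$ for $q'=\alpha\xi q$) applies verbatim in that pattern too, and the paper indeed handles both $m=c$ cases by a single computation.
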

\begin{proof}
We have already discussed the properties as listed above, except
for some changes made to the inequalities in (vi), due to the fact that
we now know that $q$ is genus-minimizing.  The only extra information
we added is the fact that
$m\neq c$ if $q$ is genus-minimizing and $(\mu, \gamma) \in \{(1,-1), (-1,1)\}$,
and the fact that $q$ is not genus-minimizing when $m=0$.

Suppose that $m=c$ and $(\mu, \gamma) \in \{(1,-1), (-1,1)\}$.
Then, setting $q^{\prime}:= \alpha\xi q$, we have
$(0q^{\prime}, dq^{\prime}, 2dq^{\prime}) = (0,1,2)$.  Thus
\begin{align}
    -v_{q^{\prime}}(0{q^{\prime}},2d{q^{\prime}})
&= -(2-0)k \;\;+\;\;
      \# \left({\tilde{Q}}_{q^{\prime}} \cap \left\langle 0,2\right] \right) k^2
      \\ \nonumber
&= -2k + 2k^2
      \\ \nonumber
&\geq k(k+1) \;\;\mathrm{when}\;\; k \geq 3,
\end{align}
and so, by Proposition 
\ref{prop: not minimizing if v >= k(k+1)},
$q^{\prime}$, hence $q$,
is not genus-minimizing when $m=c$ and $(\mu, \gamma) \in \{(1,-1), (-1,1)\}$.

Suppose $m = 0$.
Then setting $q^{\prime} := \gamma\xi q = \frac{ck+{\alpha\gamma}}{d}$ makes
$[q^{\prime}]_{k^2} < \frac{k}{2}$, and so
$[aq^{\prime}]_{k^2} = a[q^{\prime}]_{k^2}$
for all $a \in \{0, \ldots, k-1\}$.  Thus
\begin{align}
   \left|v_{q^{\prime}}(0{q^{\prime}},(k-1){q^{\prime}})\right|
&= \left|\left((k-1)[q^{\prime}]_{k^2} - 0[q^{\prime}]_{k^2}\right)k \;\;-\;\; 
      \# \left({\tilde{Q}}_q \cap \left\langle 0, (k-1)[q^{\prime}]_{k^2}\right] \right) k^2\right|
      \\ \nonumber
&= \left| (k-1)[q^{\prime}]_{k^2}k \;\;-\;\; (k-1) k^2\right|
      \\ \nonumber
&= k(k-1)(k-[q^{\prime}]_{k^2})
      \\ \nonumber
&> k(k-1)\textstyle{\frac{k}{2}}
      \\ \nonumber
&\geq k(k+1) \;\;\mathrm{when}\;\; k \geq 4,
\end{align}
and so, by Proposition 
\ref{prop: not minimizing if v >= k(k+1)},
$q^{\prime}$, hence $q$,
is not genus-minimizing when $m=0$.
\end{proof}

\subsection{Genus-Minimizing $q$ of Type $0$}
\label{ss: q of type 0}

We begin by classifying the genus-minimizing solutions for
$q$ of type 0, since this requires no additional machinery.
Recall that $q$ is of type 0 if and only if
$q \equiv \pm 1\; (\mod k)$.  Thus we may write
$q = nk \pm 1$, for some $n \in \{0, \ldots, k-1\} \subset \Z$.

\begin{prop}
\label{prop: type 0 genus-minimizing classification}
If $q$ is of type 0, so that we may write $q=nk\pm1$,
then $q$ is genus-minimizing if and only if $\gcd(n,k) \in \{1,2\}$.
If such $q$ is genus-minimizing, then $\bar{G}(q) = 2k(k-1)$,
and the maximum is attained uniquely.
\end{prop}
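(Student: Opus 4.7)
The plan is to reduce to $q = nk + 1$ by symmetry, parameterize $Q_q$ as a grid, and then directly compute $v_q$ on pairs in $Q_q$. By Proposition \ref{prop:G properties}(i), $\bar{G}(k^2, q, k) = \bar{G}(k^2, -q, k)$, and replacing $q = nk - 1$ by $-q$ preserves $\gcd(n, k)$, so I may assume $q = nk + 1$ with $n \in \{0, \ldots, k-1\}$; set $g := \gcd(n, k)$. Since $[aq]_{k^2} = [an]_k \cdot k + a$, every element of $Q_q$ has the form $bk + a$ with $b = [an]_k$, and the map $a \mapsto b$ is a $g$-to-$1$ surjection onto $g\Z \cap [0, k)$, with the $g$ preimages of a fixed $b$ forming an arithmetic progression with common difference $k/g$. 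Sorting $Q_q = \{r_{a_1} < r_{a_2} < \cdots < r_{a_k}\}$ therefore splits it into $k/g$ consecutive blocks of size $g$. Using natural integer lifts, I obtain for any $j < j'$ the identity
\[
v_q(r_{a_j}, r_{a_{j'}}) = (\epsilon_{j'} - \epsilon_j) k^2 + (a_{j'} - a_j) k,
\]
where $\epsilon_j := b_j - j \in \{-1, -2, \ldots, -g\}$ records the within-block position of $j$.

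I then split by $g$. If $g = 1$, every block has size one, so $\epsilon_j \equiv -1$ and $v_q = (a_{j'} - a_j)k$; since $a_j = [(j-1)n^{-1}]_k$ is a permutation of $\{0, \ldots, k-1\}$, $\max |v_q| = k(k-1)$, attained uniquely at the pair with $a$-values $\{0, k-1\}$. If $g = 2$, $\epsilon_j$ alternates $-1, -2$, and the hypothesis $\gcd(n/2, k/2) = 1$ forces the block leaders $\{a^*_\ell\}_{\ell=1}^{k/2}$ to be a permutation of $\{0, \ldots, k/2 - 1\}$ with $a^*_1 = 0$; a short subcase analysis (same-parity $j, j'$ give $|a_{j'} - a_j| \leq k/2 - 1$, and opposite-parity pairs yield $v_q = \pm k^2/2 + (a^*_{\ell'} - a^*_\ell)k$) then shows $\max |v_q| = k(k-1)$ with unique attainment at the unordered pair whose block leaders realize $\{0, k/2 - 1\}$. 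If $g \geq 3$ with $g < k$, the boundary pair $j = g, j' = g+1$ gives $\epsilon_{j'} - \epsilon_j = g - 1$ and $a_{j'} - a_j = a^*_2 - (g-1)k/g$ (using $a^*_1 = 0$), so
\[
v_q = (g-1)^2 k^2 / g + a^*_2 k \geq (g-1)^2 k^2/g + k > k^2 + k
\]
because $(g-1)^2/g \geq 4/3$ for $g \geq 3$; the degenerate remaining case $g = k$ (i.e., $n = 0$, $q = 1$) is handled directly by $|v_q(0, k-1)| = (k-1)^2 k > k(k+1)$. In both sub-cases of $g \geq 3$, $|v_q|$ exceeds $k(k+1)$, so Proposition \ref{prop: not minimizing if v >= k(k+1)} rules out genus-minimality.

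Combining the cases, $q$ is genus-minimizing if and only if $g \in \{1, 2\}$, and in those cases $\max_{x, y \in Q_q} |v_q(x, y)| = k(k-1)$ with unique attainment; Proposition \ref{prop:G from aqs} then gives $\bar{G}(q) = k(k-1) + k^2 - k = 2k(k-1)$, and the proof of that proposition translates the unique maximizer over $Q_q$ into a unique maximizer on all of $\Z/p \times \Z/p$. The main obstacle is the $g = 2$ case: the alternating $\epsilon$-pattern breaks the argument into several subcases, and both the sharp bound $k(k-1)$ and its unique attainment depend on using $\gcd(n/2, k/2) = 1$ to extract the permutation structure of the block leaders $\{a^*_\ell\}$, so that the extremal configurations among block leaders $\{0, k/2 - 1\}$ occur in exactly one unordered pair.
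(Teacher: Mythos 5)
Your proof is correct and takes a genuinely different, more unified route than the paper's. The paper treats $\gcd(n,k)\geq 3$, $=1$, and $=2$ as three separate computations: for $\gcd\geq 3$ it exhibits the single pair $\left(0q, 2\frac{k}{\delta}q\right)$ (a within-block triple), and for $\gcd\in\{1,2\}$ it counts elements of $\tilde Q_q$ per interval of length $k$ (resp.\ $2k$) and checks the finitely many types of pairs by hand. You instead build a uniform block decomposition of the sorted $Q_q$ and derive the single formula $v_q = (\epsilon_{j'}-\epsilon_j)k^2 + (a_{j'}-a_j)k$ valid for all $g$, so the $g\geq 3$ case falls to one boundary pair across adjacent blocks with the clean lower bound $(g-1)^2k^2/g + a^*_2 k$, and the $g=2$ case becomes a transparent parity analysis of $\epsilon_j$. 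The paper's approach is a bit more self-contained for $g\in\{1,2\}$; yours makes structurally visible why those two values are special (they are exactly the block sizes for which the $\epsilon$-jump contributes at most $\pm k^2/2$).

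Two minor remarks. First, in your $g=k$ branch, $(k-1)^2k>k(k+1)$ is strict only for $k\geq 4$; at $k=3$ it is an equality, but equality already suffices for Proposition \ref{prop: not minimizing if v >= k(k+1)}, and the ambient hypothesis $k>100$ makes this moot. Second, the paper's proof identifies the unique maximizer in the $\gcd=2$ case as $(0q,(k-1)q)$, which appears to be a slip carried over from the $\gcd=1$ case: a direct count gives $v_q(0q,(k-1)q)=-k$ when $\gcd=2$, whereas your block-leader criterion $\{0,k/2-1\}$ correctly singles out $\left(\frac{k}{2}q,\left(\frac{k}{2}-1\right)q\right)$. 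This does not affect the value $\bar G(q)=2k(k-1)$ or the uniqueness statement, but your description of the maximizer is the accurate one.
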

\begin{proof}
As observed in the proof of Proposition \ref{prop:G properties}.(i),
$v_{-q}(x,y) = v_q(-y,-x)$.  Thus $\bar{G}(q) := \max_{x,y \in \Z/k^2} v_q(x,y)$
satisfies $\bar{G}(q) = \bar{G}(-q)$, and the maximum is attained
uniquely for $q$ if and only if it is attained uniquely for $-q$.
It therefore suffices to take $q = nk + 1$,
since any $q^{\prime} = n^{\prime}k -1$ satisfies
$-q^{\prime} = (k-n^{\prime})k + 1$.

We first show that $q$ is not genus-minimizing when $\gcd(n,k) \geq 3$.
Let $\delta = \gcd(n,k)$, so that $q = \textstyle{\frac{n}{\delta}}{\delta}k + 1$,
and suppose that $\delta \geq 3$.  Then
\begin{equation}
    \left(0q, {\textstyle{\frac{k}{\delta}}}q, 2{\textstyle{\frac{k}{\delta}}}q \right)
=  \left(0, {\textstyle{\frac{k}{\delta}}}, 2{\textstyle{\frac{k}{\delta}}}\right)
\end{equation}
in $\left(\Z/k^2\right)^3$, and so
\begin{align}
       \left| v_q \left(0q, 2{\textstyle{\frac{k}{\delta}}}q \right) \right|
&=  \left|\left( 2{\textstyle{\frac{k}{\delta}}} - 0\right)k \;\;-\;\;
       \#\left({\tilde{Q}}_q \cap \left\langle 0,  2{\textstyle{\frac{k}{\delta}}}\right] \right)k^2\right|
           \\ \nonumber
&\geq  -2{\textstyle{\frac{k}{\delta}}}k \;\;+\;\; 2k^2
           \\ \nonumber
&=     k(2k - 2{\textstyle{\frac{k}{\delta}}})
           \\ \nonumber
&\geq   k(2k - (k-1)) \;\;\;(\mathrm{since}\; \delta \geq 3)
           \\ \nonumber
&=     k(k+1).
\end{align}
Thus, by Proposition \ref{prop: not minimizing if v >= k(k+1)},
$q$ is not genus-minimizing.

Next, suppose that $\gcd(n,k)=1$.
Since $n^{-1} \in \Z/k$ exists, we have
\begin{equation}
    {\left[ jn^{-1} \right]_k} q 
=  jk + \left[ jn^{-1} \right]_k \;\; \in \Z/k^2
   \forall j \in \{0, \ldots, k-1\}.
\end{equation}
Thus for each $j \in \{0, \ldots, k-1\}$, 
we observe that ${\tilde{Q}}_q \cap \left[jk, (j+1)k\right\rangle$ contains
precisely one element, namely, $\left[{\left[ jn^{-1} \right]_k}q\right]_{k^2}$.
This means that, for any $j_1 < j_2$ with $j_1, j_2 \in \{0, \ldots, k-1\}$,
\begin{equation}
\# \left({\tilde{Q}}_q \cap
   \left\langle \left[\left[{j_1}n^{-1} \right]_k q\right]_{k^2} ,
                \left[\left[{j_2}n^{-1} \right]_k q\right]_{k^2} \right] \right)
   \;=\; j_2 - j_1.
\end{equation}
We now compute $v_q\left({a_1}q, {a_2}q\right)$ for an arbitrary
pair $a_1, a_2 \in \{0, \ldots, k-1\}$, ordered such that
$\left[{a_1}n\right]_k < \left[{a_2}n\right]_k$:
\begin{align}
      v_q\left({a_1}q, {a_2}q\right)
&=    k  \cdot \left[ \left({\left[{a_2}n\right]_k}k + a_2\right) - 
                     \left({\left[{a_1}n\right]_k}k + a_1\right) \right]  \;\;-\;\;
      k^2\cdot \left({\left[{a_2}n\right]_k} - {\left[{a_1}n\right]_k} \right)
          \\ \nonumber
&= k\left(a_2-a_1\right).
\end{align}
Thus $\left|v_q\left({a_1}q, {a_2}q\right)\right| \leq k(k-1)$, and the maximum
value of $v_q\left({a_1}q, {a_2}q\right)$ is attained uniquely when 
$(a_1,a_2) = (0,k-1)$, yielding $v_q\left(0q, (k-1)q\right) = k(k-1)$.
Thus by Proposition \ref{prop:G from aqs},
\begin{equation}
\bar{G}(q) = k(k-1) + k^2 - k = 2k(k-1),
\end{equation}
and the maximum is attained uniquely.

Finally, suppose that $\gcd(n,k)=2$.
Let $s = \frac{n}{2}$, so that $q = 2sk + 1$.
Then $\gcd\left(s, \frac{k}{2}\right) = 1$ implies
$s^{-1} \in \Z/\frac{k}{2}$ exists, and so
for each $j \in \{0, \ldots, \frac{k}{2}-1\}$, we have
\begin{align}
          \left[ js^{-1} \right]_{\frac{k}{2}} q 
&\equiv   2jk + \left[ js^{-1} \right]_{\frac{k}{2}} \; (\mod k^2),
                 \\
          \left(\left[ js^{-1} \right]_{\frac{k}{2}} 
                + \textstyle{\frac{k}{2}} \right) q 
&\equiv   2jk + \left[ js^{-1} \right]_{\frac{k}{2}} + \textstyle{\frac{k}{2}} \; (\mod k^2).
\end{align}
Thus for each $j \in \{0, \ldots, \frac{k}{2}-1\}$, the set 
${\tilde{Q}}_q \cap \left[2jk, 2(j +1)k\right\rangle$ contains
precisely two elements, namely, 
$\left[ \left[ js^{-1} \right]_{\frac{k}{2}} q \right]_{k^2}$ and
$\left[ \left(\left[ js^{-1} \right]_{\frac{k}{2}} + \frac{k}{2} \right)q \right]_{k^2}$.
This is similar to the case in which $\gcd(n,k)=1$, but this time
there are two distinct types of element in each interval of length $2k$,
so when we go to compute $v_q(x,y)$ for arbitrary elements $x,y\in Q_q$,
there will be four types of pairs $(x,y)$ to consider.

Consider an arbitrary pair $a_1,a_2 \in \{0, \ldots, \frac{k}{2}-1\}$,
ordered such that 
$\left[{a_1}s\right]_{\frac{k}{2}} < \left[{a_2}s\right]_{\frac{k}{2}}$.
In order to exhaust all possible pairs of elements in $Q_q$, we need
to consider all four pairs,
\begin{equation}
\left({a_1}q, {a_2}q\right),\;
\left(\left({a_1} + \textstyle{\frac{k}{2}}\right)\!q, 
         \left({a_2} + \textstyle{\frac{k}{2}}\right)\!q\right),\;
\left({a_1}q, \left({a_2} + \textstyle{\frac{k}{2}}\right)\!q\right),\;\mathrm{and}\;
\left(\left({a_1} + \textstyle{\frac{k}{2}}\right)\!q, {a_2}q\right).
\end{equation}
By arguments similar to those used in the case of $\gcd(n,k)=1$, we have
\begin{equation}
   v_q\left({a_1}q, {a_2}q\right)
=  v_q\left(\left({a_1} + \textstyle{\frac{k}{2}}\right)\!q, 
            \left({a_2} + \textstyle{\frac{k}{2}}\right)\!q\right)
=  k\left(a_2-a_1\right),
\end{equation}
so that
\begin{equation}
\left|v_q\left({a_1}q, {a_2}q\right)\right|,
\left|v_q\left(\left({a_1} + \textstyle{\frac{k}{2}}\right)\!q, 
            \left({a_2} + \textstyle{\frac{k}{2}}\right)\!q\right)\right|
\leq k\left(\textstyle{\frac{k}{2}}-1\right).
\end{equation}
We compute the two remaining cases by hand.
\begin{align}
      v_q \left({a_1}q, \left({a_2} + \textstyle{\frac{k}{2}}\right)\!q\right)
&=    k\cdot\left[ \left( 2\left[{a_2}s\right]_{\frac{k}{2}}k 
                          + a_2 + \textstyle{\frac{k}{2}} \right)   -
                   \left( 2\left[{a_1}s\right]_{\frac{k}{2}}k + a_1\right) \right]
                    \\ \nonumber
&\;\;\;\;\;\;\;\;-\; k^2\cdot \left(2\left(\left[{a_2}s\right]_{\frac{k}{2}}
                         -  \left[{a_1}s\right]_{\frac{k}{2}}\right) + 1 \right)
      \\ \nonumber
&=   k\left(a_2 - a_1 - \textstyle{\frac{k}{2}} \right),
\end{align}
so that 
$\left|v_q \left({a_1}q, \left({a_2} + \textstyle{\frac{k}{2}}\right)\!q\right)\right| 
\leq k(k-1)$, and
\begin{align}
      v_q \left(\left({a_1} + \textstyle{\frac{k}{2}}\right)\!q, {a_2}q\right)
&=    k\cdot\left[ \left( 2\left[{a_2}s\right]_{\frac{k}{2}}k + a_2\right) - 
                   \left( 2\left[{a_1}s\right]_{\frac{k}{2}}k + a_1 
                     + \textstyle{\frac{k}{2}} \right) \right]
                    \\ \nonumber
&\;\;\;\;\;\;\;\;-\; k^2\cdot \left(2\left(\left[{a_2}s\right]_{\frac{k}{2}}
                         -  \left[{a_1}s\right]_{\frac{k}{2}}\right) - 1 \right)
      \\ \nonumber
&=   k\left(a_2 - a_1 + \textstyle{\frac{k}{2}} \right),
\end{align}
so that
$\left|v_q \left(\left({a_1} + \textstyle{\frac{k}{2}}\right)\!q, {a_2}q\right)\right| 
\leq k(k-1)$.

Thus $\left|v_q(x,y)\right| \leq k(k-1)$ for all $x,y \in Q_q$, and the
maximum value of $v_q(x,y)$ is attained uniquely when 
$(x,y) = \left(0q,(k-1)q\right)$.  Proposition \ref{prop:G from aqs} then gives
\begin{equation}
\bar{G}(q) = k(k-1) + k^2 - k = 2k(k-1).
\end{equation}
\end{proof}

\subsection{$\bar{G}(q) = 2k(k-1)$ for Genus-Minimizing $q$}

In the previous section, we learned that 
any genus-minimizing $q$ of type 0 satisfies
$\bar{G}(q) = 2k(k-1)$, and the corresponding maximum
is uniquely attained.  In fact, this result is true for
{\em any} genus-minimizing $q \in \Z/k^2$, regardless of 
the form $q$ or $-q$ takes.

\begin{prop}
\label{prop: G = 2k(k-1) and unique}
If $q$ is genus-minimizing, then $\bar{G} = 2k(k-1)$ and
the corresponding maximum is uniquely attained. 
\end{prop}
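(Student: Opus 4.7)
The plan is to express $v_q$ on $Q_q \times Q_q$ as a difference of simple ``offsets'' and then conclude via a pigeonhole argument. First, list the elements of $Q_q$ in increasing order of their representatives in $[0, k^2)$ as $0 = y_0 < y_1 < \cdots < y_{k-1}$, and write $y_j = kj + e_j$ for offsets $e_j \in \Z$. Since $q$ is primitive in $\Z/k^2$, we have $\gcd(q, k) = 1$, so $\{aq \bmod k : a = 0, \ldots, k-1\} = \Z/k$; equivalently, the $e_j$ are pairwise distinct modulo $k$.

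Next, for $i < j$, using the lifts $\tilde{y}_i = y_i$, $\tilde{y}_j = y_j$, a direct computation gives
\[
v_q(y_i, y_j) \;=\; k(y_j - y_i) - k^2(j - i) \;=\; k(e_j - e_i),
\]
since $\tilde{Q}_q \cap (y_i, y_j] = \{y_{i+1}, \ldots, y_j\}$ has exactly $j - i$ elements. By antisymmetry of $v_q$, this identity extends to all $i, j$, so $\max_{x, y \in Q_q} |v_q(x, y)| = k \bigl(\max_j e_j - \min_j e_j\bigr)$. The genus-minimizing hypothesis combined with Proposition \ref{prop: not minimizing if v >= k(k+1)} gives $|v_q(x, y)| \leq k(k-1)$ on $Q_q \times Q_q$, whence $\max e - \min e \leq k - 1$. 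Conversely, the $k$ integers $e_0, \ldots, e_{k-1}$ lie in the closed interval $[\min e, \max e]$ and are pairwise distinct modulo $k$ (hence pairwise distinct as integers), so this interval must contain at least $k$ integers, forcing $\max e - \min e \geq k - 1$. Equality holds throughout, and Proposition \ref{prop:G from aqs} yields $\bar{G}(q) = k(k-1) + (k^2 - k) = 2k(k-1)$.

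For uniqueness, $\max e - \min e = k - 1$ forces $[\min e, \max e]$ to contain exactly $k$ integers, and the $e_j$---being $k$ integers in that interval, pairwise distinct modulo $k$---must be exactly those $k$ consecutive integers, each appearing once. In particular, there are unique indices $i_*, j_*$ with $e_{i_*} = \min e$ and $e_{j_*} = \max e$, and $(y_{i_*}, y_{j_*})$ is the unique maximizer of $v_q$ on $Q_q \times Q_q$. The proof of Proposition \ref{prop:G from aqs} then identifies $\bigl(y_{i_*},\, [y_{j_*} - 1]_{k^2}\bigr)$ as the unique global maximizer of $v_q$ on $\Z/k^2 \times \Z/k^2$, attaining the value $\bar{G}(q) = 2k(k-1)$.

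There is no substantial obstacle: the core is a one-line pigeonhole observation (that $k$ integers pairwise distinct modulo $k$ span a range of at least $k-1$) combined with the upper bound from the genus-minimizing hypothesis. The only mild subtlety is transferring uniqueness from $Q_q \times Q_q$ to $\Z/k^2 \times \Z/k^2$, which follows immediately from the shift $(a_1 q, a_2 q) \mapsto (a_1 q, a_2 q - 1)$ established in the proof of Proposition \ref{prop:G from aqs}.
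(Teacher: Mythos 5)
Your proof is correct, but it takes a genuinely different route from the paper's. The paper's argument is modular-arithmetic in flavor: it fixes a residue $l \in \Z/k$, characterizes the $k$ pairs $(x,y) \in Q_q \times Q_q$ with $\frac{v_q(x,y)}{k} \equiv l \pmod k$, observes that these pairs form a cycle so that $\sum v_q = 0$, and then (using the genus-minimizing bound to force $v_q(x,y) \in \{kl, k(l-k)\}$) solves a two-variable linear equation to pin down exactly $k-l$ pairs with $v_q = kl$ and $l$ pairs with $v_q = k(l-k)$; taking $l = k-1$ yields existence and uniqueness simultaneously. Your argument instead sorts the representatives of $Q_q$ in $[0,k^2)$ as $y_0 < \cdots < y_{k-1}$, defines offsets $e_j := y_j - kj$, verifies the clean identity $v_q(y_i, y_j) = k(e_j - e_i)$, and then runs a pigeonhole argument: the $e_j$ are $k$ integers pairwise distinct mod $k$, and the genus-minimizing bound caps their spread at $k-1$, forcing them to be exactly $k$ consecutive integers. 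This is more elementary (sorting plus pigeonhole, no telescoping-sum bookkeeping), and it delivers a sharper structural conclusion -- the offsets form a literal run of consecutive integers -- which the paper's proof does not make explicit. What the paper's approach buys in exchange is the full counting formula (\ref{k-l guys = l}) for every primitive $l$, which is quoted and reused downstream (it underlies the whole ``mobile point'' bookkeeping of Section \ref{ss: Notation and Definitions for q of Positive Type}), whereas your proof, tailored to $l = k-1$, does not directly supply that. Your transfer of uniqueness from $Q_q \times Q_q$ to $\Z/k^2 \times \Z/k^2$ via the shift $(a_1 q, a_2 q) \mapsto (a_1 q, a_2 q - 1)$ is handled correctly and matches how Proposition \ref{prop:G from aqs} is actually used.
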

\begin{proof}
It is instructive to begin with a more general question:
Given $l \in \{0, \ldots, k-1\}$, what are all the pairs
$x,y \in Q_q$ for which
$\frac{v_q(x,y)}{k} \equiv l \;(\mod k)$?
The answer is straight-forward.
Write $(x,y) = \left({a_1}q, {a_2}q\right)$, with
$a_1, a_2 \in \{0, \ldots, k-1\}$.  Then
\begin{align}
         \frac{v_q\left({a_1}q,{a_2}q\right)}{k} 
&\equiv l \;(\mod k)
                \\
         {a_2}q - {a_1}q
&\equiv l \;(\mod k)
                \\
        {a_2}
&\equiv {a_1} + q^{-1}l \;(\mod k),
\end{align}
so there are exactly $k$ such pairs, 
$(x,y) \in
\left\{ \left. \left(aq, {\left[a +  q^{-1}l\right]_k}q\right) \right|
a \in \{0, \ldots, k-1\} \right\}$.

Note that this answer implies that if $\frac{v_q(x,y)}{k} \equiv 0\;(\mod k)$,
then $v_q(x,y) = 0$.
In particular, $k(k) \notin \left\{\left. v_q(x,y) \right| x,y \in Q_q\right\}$.

On the other hand, if $\frac{v_q(x,y)}{k} \equiv l \;(\mod k)$ for some
$l \neq 0$, and $q$ is genus-minimizing,
then $v_q(x,y) \in \{k(l), k(l-k)\}$.  If, in addition, $l$ is primitive in $\Z/k$,
then it is easy to determine how many of these pairs $(x,y)$ satisfy $v_q(x,y) = k(l)$
and how many satisfy $v_q(x,y) = k(l-k)$.  First, note that is clear
from the definition of $v$ that $v_q(u,v) + v_q(v,w) = v_q(u,w)$
for any $u,v,w \in \Z/k^2$.  Thus
\begin{align}
   \sum_{j\in \{0,\dots, k-1\}} 
   v_q\left( {\left[j(q^{-1}l)\right]_k}q, {\left[(j+1)(q^{-1}l)\right]_k}q  \right)
&= v_q\left( {\left[0(q^{-1}l)\right]_k}q, {\left[k(q^{-1}l)\right]_k}q \right)
          \\ \nonumber
&=  v_q(0,0)
          \\ \nonumber
&= 0, 
\end{align}
but we also know that $(x,y) \in
\left\{\left.\left( {\left[j(q^{-1}l)\right]_k}q, {\left[(j+1)(q^{-1}l)\right]_k}q \right)
           \right| j \in \{0, \ldots k-1\} \right\}$
if and only if $v_q(x,y) \in \{k(l), k(l-k)\}$.
Let $t:= \#\left\{(x,y)\in Q_q \times Q_q | v_q(x,y) = k(l)\right\}$ 
(which implies $k-t$ is the number of pairs $(x,y)$ with $v_q(x,y) = k(l-k)$).
Then we can rewrite the above sum as
\begin{equation}
t\cdot k(l) + (k-t)\cdot k(l-k) = 0.
\end{equation}
This linear equation has solution $t = k-l$.  Thus 
\begin{align}
\label{k-l guys = l}
   \#\left\{(x,y)\in Q_q \times Q_q | v_q(x,y) = k(l)\right\}
&= k-l,\;\;\; \mathrm{and}
        \\
   \#\left\{(x,y)\in Q_q \times Q_q | v_q(x,y) = k(l-k)\right\}
&= l.
\end{align}

At last, we address the proposition at hand.
Suppose $q$ is genus-minimizing.  Thus
by Proposition \ref{prop: not minimizing if v >= k(k+1)},
$v_q(x,y) \leq k(k-1)$ for all $x, y \in Q_q$.
Now, $\frac{k(k-1)}{k} = k-1$ is primitive in $\Z/k$,
so by Equation (\ref{k-l guys = l}) the number of pairs 
$(x,y) \in Q_q \times Q_q$ satisfying $v_q(x,y) = k(k-1)$
is precisely $k - (k-1) = 1$.  Thus
\begin{equation}
\max_{(x,y) \in Q_q \times Q_q} v_q(x,y) = k(k-1),
\end{equation}
this maximum is attained uniquely, and by Proposition \ref{prop:G from aqs},
\begin{equation}
\bar{G}(q) \;=\; k(k-1) \;+\; k^2 - k \;=\; 2k(k-1).
\end{equation}
\end{proof}

\subsection{Notation and Definitions for $q$ of Positive Type}
\label{ss: Notation and Definitions for q of Positive Type}

Since Proposition \ref{prop:G properties} implies that $q$ is
genus-minimizing if and only if $\xi q$ is genus-minimizing,
we lose no information by restricting ourselves to the case in which
$q = \xi q$, and we gain the advantage of being able to write
$q$ instead of $\xi q$ for the next several dozen pages.
We therefore take $q$ to be of positive type
({\em i.e.}, with $q = +\xi q$) for the following section.
We also take $q$ to be genus-minimizing.

The strategy in the preceding proof of focusing on $v_q(x,y)$ for pairs
\begin{equation}
\label{eq: pairs of [jq^-1]_k q terms}
(x,y) \in \left\{\left.\left( {\left[j(q^{-1}l)\right]_k}q, {\left[(j+1)(q^{-1}l)\right]_k}q \right)
           \right| j \in \{0, \ldots k-1\} \right\}
\end{equation}
turns out to be quite powerful.  Indeed, this idea serves as the foundation
for a combinatorial framework that helps us to classify the 
genus-minimizing solutions for $q$ of positive type.
We use a minor modification of (\ref{eq: pairs of [jq^-1]_k q terms}),
in that we replace $[q^{-1}]_k$ with the parameter $d$ assigned to $q$
in Definition \ref{def: parameters assigned to q};
that is, $d := \min\left\{\left[q^{-1}\right]_{\!k},\, \left[-q^{-1}\right]_{\!k}\right\}$.
It is for this reason that we chose to parameterize $q$ of in a way
that gives $[dq]_{k^2}$ the convenient form
\begin{equation}
[dq]_{k^2} = (\mu m + \gamma c)k + \alpha < \textstyle{\frac{k^2}{2}}
\end{equation}
when $q$ is of positive type.
See Definition \ref{def: parameters assigned to q}
for definitions of
$\mu, \gamma, \alpha \in \{\pm 1\}$ and $d,m,c \in\Z$,
and see Proposition \ref{prop: properties of parameters d, m, c, alpha, mu, gamma}
for a list of properties the parameters satisfy.

We now begin the construction
inspired by the proof of Proposition \ref{prop: G = 2k(k-1) and unique}.
We start by associating to $q$ a $k$-tuple
${\bf{z}} = \left(z_0, \ldots, z_{k-1}\right) \in \left(\Z/k^2\right)^k$,
defined by  $z_r := \left[rd\right]_k\! q \in \Z/k^2$.
Note that this makes $Q_q = \{z_0, \ldots, z_{k-1}\}$.
This k-tuple then satisfies
\begin{equation}
\label{v_qprime (z) sums to zero}
\sum_{r\in \{0, \ldots, k-1\}} v_q\!\left(z_r, z_{r+1}\right) = 0,
\end{equation}
where we define $z_k := z_0$.  Moreover, since
$dq \equiv \alpha\;(\mod k)$, we know that
\begin{equation}
\label{v_q = alpha k (mod k^2)}
v_q\!\left(z_r, z_{r+1}\right) \equiv \alpha k \; (\mod k^2)
\;\;\;
\forall \; r \in \{0, \ldots, k-1\}.
\end{equation}
Thus, following the reasoning
used in the proof of Proposition \ref{prop: G = 2k(k-1) and unique},
we deduce the following.

\begin{prop}
\label{prop: unique v_q = alpha(k - k^2), and the rest are v_q = alpha (k)}
When of positive type, $q$ is genus-minimizing 
if and only if there exists $r_* \in \{0, \ldots, k-1\}$ for which
\begin{align*}
   v_q\!\left(z_{r_*}, z_{r_*+1}\right) 
&= {\alpha} (k-k^2), \;\;\;\mathrm{but}
      \\
   v_q\!\left(z_r, z_{r+1}\right) 
&= {\alpha} (k) \;\;\;
\forall  r \in \{0, \ldots, k-1\} \setminus \left\{r_*\right\}.
\end{align*}
\end{prop}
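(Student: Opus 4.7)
The plan is to leverage the two structural facts already displayed just before the proposition: the congruence $v_q(z_r, z_{r+1}) \equiv \alpha k \;(\mod k^2)$ and the telescoping identity $\sum_{r=0}^{k-1} v_q(z_r, z_{r+1}) = 0$. Writing each
\begin{equation*}
v_q(z_r, z_{r+1}) = \alpha k + n_r k^2
\end{equation*}
for a unique integer $n_r$, these two facts combine to yield $\sum_{r=0}^{k-1} n_r = -\alpha$.

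For the forward implication, I would invoke Proposition \ref{prop: not minimizing if v >= k(k+1)}: if $q$ is genus-minimizing then $|v_q(z_r, z_{r+1})| \leq k(k-1) < k^2$ for every $r$. Substituting $v_q(z_r, z_{r+1}) = \alpha k + n_r k^2$ into this bound pins each $n_r$ down to the set $\{0, -\alpha\}$. Combined with $\sum_r n_r = -\alpha$, exactly one index $r_*$ must carry $n_{r_*} = -\alpha$ with all remaining $n_r = 0$, which is precisely the dichotomy $v_q(z_{r_*}, z_{r_*+1}) = \alpha(k - k^2)$ versus $v_q(z_r, z_{r+1}) = \alpha k$ claimed.

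For the reverse implication, I would exploit the additivity $v_q(u, w) = v_q(u, v) + v_q(v, w)$ noted in the proof of Proposition \ref{prop: G = 2k(k-1) and unique}. Since $Q_q = \{z_0, \ldots, z_{k-1}\}$, every pair $(x, y) \in Q_q \times Q_q$ equals some $(z_i, z_j)$, and deleting the single exceptional edge $(z_{r_*}, z_{r_*+1})$ from the cyclic sequence leaves a path running from $z_{r_*+1}$ around to $z_{r_*}$. Telescoping $v_q(z_i, z_j)$ along the arc that avoids this edge expresses it as a sum of at most $k-1$ terms, each equal to $\alpha k$, giving $|v_q(z_i, z_j)| \leq (k-1)k < k(k+1)$. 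Proposition \ref{prop: not minimizing if v >= k(k+1)} then certifies that $q$ is genus-minimizing.

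I do not anticipate any serious obstacle. The only subtlety worth flagging is the avoidance step in the reverse direction, but that is immediate from the observation that removing one edge from a cycle leaves a path. Everything else is a direct consequence of the congruence modulo $k^2$, the telescoping identity around the cycle, and the $|v_q| < k(k+1)$ criterion already in hand.
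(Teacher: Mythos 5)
Your argument is correct and follows essentially the same route as the paper: the forward direction pins each $v_q(z_r,z_{r+1})$ to $\{\alpha k, \alpha(k-k^2)\}$ via the bound from Proposition \ref{prop: not minimizing if v >= k(k+1)} and the congruence mod $k^2$, then uses the zero-sum identity to force exactly one exceptional index; the reverse direction telescopes $v_q(z_{i_1},z_{i_2})$ along the cycle and bounds it by $k(k-1)$. The only cosmetic difference is that you phrase the reverse bound in terms of the arc avoiding the exceptional edge, while the paper splits into the two cases $r_* \in \{i_1,\ldots,i_2-1\}$ or not and computes $v_q = \alpha k(i_2-i_1)$ versus $\alpha k(i_2-i_1-k)$ directly; both yield the same estimate.
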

\begin{proof}
The ``only if'' statement follows from the reasoning used in the proof of
Proposition \ref{prop: G = 2k(k-1) and unique}.  That is,
for all $r \in \{0, \ldots, k-1\}$,
Proposition \ref{prop: not minimizing if v >= k(k+1)} tells us that
$|v_q\!\left(z_r, z_{r+1}\right)| \leq k(k-1)$,
and (\ref{v_q = alpha k (mod k^2)}) tells us that
$v_q\!\left(z_r, z_{r+1}\right) \equiv \alpha k \; (\mod k^2)$.
From this, we deduce that, for all $r \in \{0, \ldots, k-1\}$, either
$v_q\!\left(z_r, z_{r+1}\right) = \alpha (k)$ or
$v_q\!\left(z_r, z_{r+1}\right) = \alpha(k - k^2)$.
Thus, equation (\ref{v_qprime (z) sums to zero}), and the fact that
it has $k$ summands, gives us two linear equations in two variables,
for which the solution is that $\alpha(k - k^2)$ occurs once, and
$\alpha (k)$ occurs $k-1$ times.

For the ``if'' statement, 
suppose that there exists such an $r_*$.
Then for any $z_{i_1}, z_{i_2} \in Q_q$ with, say, $i_1 < i_2$, we have
\begin{align}
   v_q\!\left( z_{i_1}, z_{i_2}\right)
&= \sum_{r=\{i_1, \ldots, i_2-1\}} v_q\!\left( z_r, z_{r+1}\right)
       \\
&= \begin{cases}
      k(i_2 - i_1)
      & r_* \notin \{i_1, \ldots, i_2-1\}
           \\
      k(i_2 - i_1 - k)
      & r_* \in \{i_1, \ldots, i_2-1\}
   \end{cases},
\end{align}
so that $\left\vert v_q\!\left( z_{i_1}, z_{i_2}\right)\right\vert \leq k(k-1)$.
Thus, by Proposition \ref{prop: not minimizing if v >= k(k+1)},
$q$ is genus-minimizing.

\end{proof}

Given such control over the value of $v_q\left(z_j, z_{j+1}\right)$,
it will be useful to focus our attention on the intervals
$\left\langle {\tilde{z}}_j, {\tilde{z}}_{j+1}\right]$, for appropriate lifts
${\tilde{z}}_j$ and ${\tilde{z}}_{j+1}$ of $z_j$ and $z_{j+1}$ to the integers.  
Indeed, we shall make such frequent use of intervals of
integer lifts of elements of $\Z/k^2$ that, for brevity, we establish
the following conventions for notational abuses.
\begin{conv}
For any $x, y \in \Z/k^2$, we shall write ``$\left\langle x, y \right\rangle$''
(or ``$\left\langle x, y \right]$'', ``$\left[ x, y \right\rangle$'', or ``$\left[ x, y \right]$'')
for the interval $\left\langle \tilde{x}, \tilde{y} \right\rangle$
(or $\left\langle \tilde{x}, \tilde{y} \right]$, et cetera), where $\tilde{x}, \tilde{y} \in \Z$
are respective lifts of $x$ and $y$ to the integers.  If the precise
lifts intended are not clear from context, then any lifts satisfying
$0 < \tilde{y}-\tilde{x} < k^2$ will suffice.
For $w \in \Z/k^2$, we shall write ``$w \in \left\langle x, y \right\rangle$"
(or ``$w \in \left\langle x, y \right]$," et cetera)
to signify that there exists a lift $\tilde{w} \in \Z$ of $w$ satisfying
$\tilde{w} \in \left\langle x, y \right\rangle$ (or $\tilde{w} \in \left\langle x, y \right]$, et cetera).
Similarly, for $w \in \Z/k^2$ and $s,t \in \Z$, we shall write
``$s < w < t$'' (or ``$s < w \leq t$'', et cetera) to signify that there exists a lift
$\tilde{w} \in \Z$ of $w$ satisfying $s < \tilde{w} < t$ (or $s < \tilde{w} \leq t$, et cetera).
If we only write ``$w < t$'' or ``$s < w$'', then the precise lift of $w$ intended
should be clear from context.
\end{conv}

Carrying on, we begin by examining the length of an interval
$\left\langle z_j, z_{j+1} \right]$.
For any $r \in \{0, \ldots, k-1\}$, we have
\begin{equation}
  z_{r+1} - z_r 
= \begin{cases}
    dq       &   \left[rd\right]_k <   k-d
                    \\
    (d-k)q   &   \left[rd\right]_k \ge k-d
  \end{cases},
\end {equation}
where 
$q= \alpha\gamma\mu\frac{ck + \alpha\gamma}{d}(mk + \alpha\mu)$
implies that $kq = \gamma\frac{ck + \alpha\gamma}{d}k$.

It is instructive to break $\bf{z}$ up into $d$ consecutive 
sub-tuples ${\bf{z}} = \left({\bf{z}}^0, \ldots, {\bf{z}}^{d-1}\right)$
in a manner that reflects the above structure of the differences
of consecutive entries of $\bf{z}$, {\em i.e.}, such that
\begin{align}
   z^j_{i+1} - z^j_i 
&= dq \;\;\; 
   \forall\; j \in \{0, \ldots, d-1\},\;
             i \in \{0, \ldots, \len({\bf{z}}^j) - 2 \};
        \\
   z^{j+1}_0 - z^j_{\len({\bf{z}}^j) - 1}
&= (d-k)q\;\;\;
   \forall \; j \in \{0, \ldots, d-2\}.
\end{align}
More explicitly, setting $\epsilon := \left[-k\right]_d$ (and noting that this
implies $c = \left[\alpha\gamma{\epsilon}^{-1}\right]_{d}$),
for each $j \in \{0, \ldots, d-1\}$, we define
$z^j_i := \left(\left[j\epsilon\right]_d + id\right)\!q \in \Z/k^2$,
where we restrict
the domain of $i$ so that 
$\left[j\epsilon\right]_d + id \in \{0, \ldots, k-1\}$.  Thus
\begin{align}
\label{eq: definition of z_i^j}
   {\bf{z}}^0
&= \left(\;  \left(\left[0\epsilon\right]_d + 0d\right)\!q,\;
             \left(\left[0\epsilon\right]_d + 1d\right)\!q,\; \ldots,\;
             \left(\left[0\epsilon\right]_d
              + \left({\left\lceil\!\frac{k - \left[0\epsilon\right]_d\!}{d}
                       \right\rceil} \!-\! 1\!\right)\!d
             \right)\! q
   \;\right),
         \\ \nonumber
   {\bf{z}}^1
&= \left(\;  \left(\left[1\epsilon\right]_d + 0d\right)\!q,\;
             \left(\left[1\epsilon\right]_d + 1d\right)\!q,\; \ldots,\;
             \left(\left[1\epsilon\right]_d
              + \left({\left\lceil\!\frac{k - \left[1\epsilon\right]_d\!}{d}
                       \right\rceil} \!-\! 1\!\right)\!d
             \right)\! q
   \;\right),
         \\ \nonumber
&\;\;\vdots
         \\ \nonumber
   {\bf{z}}^{d-1}
&= \left(\; \left(\left[(d\!-\!1)\epsilon\right]_d + 0d\right)\!q,\;
            \left(\left[(d\!-\!1)\epsilon\right]_d + 1d\right)\!q,\; \ldots,
   \right.
                         \\ \nonumber
   &\left.\;\;\;\;\;\;\;\;\;\;\;\;\;\;\;\;\;\;\;\;\;\;
          \;\;\;\;\;\;\;\;\;\;\;\;\;\;\;\;\;\;\;\;\;\;
             \left(\left[(d\!-\!1)\epsilon\right]_d
              + \left({\left\lceil\!\frac{k - \left[(d\!-\!1)\epsilon\right]_d\!}{d}
                       \right\rceil} \!-\! 1\!\right)\!d
             \right)\! q
   \;\right).
\end{align}
Such ${\bf{z}}^j$ then has length
\begin{align}
   \len({\bf{z}}^j)
&= \left(\left(\left\lceil \frac{k - \left[j\epsilon\right]_d}{d}\right\rceil - 1\right)
   - 0\right) + 1
          \\ \nonumber
&= \left\lceil \frac{k}{d} \right\rceil
   - \begin{cases}
        1   &\left[k\right]_d - \left[j\epsilon\right]_d  \leq   0
                \\
        0   &\left[k\right]_d - \left[j\epsilon\right]_d  > 0
     \end{cases}
          \\ \nonumber
&= \frac{k+\epsilon}{d}
   - \begin{cases}
        1   &\left[j\epsilon\right]_d  \geq   d - \epsilon
                \\
        0   &\left[j\epsilon\right]_d  < d - \epsilon
     \end{cases}
          \\ \nonumber
&= \frac{k+\epsilon}{d} - {\theta}^{d, \epsilon}(j),
\end{align}
where, for any relatively prime positive integers $d$ and $\epsilon$
with $d \geq 2$ and $\epsilon < d$, and for any $j \in \Z$,
${\theta}^{\cdot, \cdot}(\cdot)$ is a function
defined by
\begin{equation}
\label{eq: def of theta}
   {\theta}^{d, \epsilon}(j)
:= \begin{cases}
        1   &\left[j\epsilon\right]_d  \geq   d - \epsilon
                \\
        0   &\left[j\epsilon\right]_d  <      d - \epsilon
   \end{cases},
\end{equation}
or, equivalently, by
\begin{equation}
\label{eq: def 2 of theta}
    \theta^{d, \epsilon}(j)
=  \left\lfloor \frac{(j+1)\epsilon}{d}\right\rfloor
   -\left\lfloor \frac{j\epsilon}{d}\right\rfloor.
\end{equation}

Since the definition of ${\bf{z}}^j$ depends only on the value of
$j$ modulo $d$, it is natural to extend the definition
of ${\bf{z}}^j$ to be valid for all $j \in \Z$,
setting ${\bf{z}}^j := {\bf{z}}^{\left[j\right]_d}$.
We therefore henceforward regard the collection $\left\{{\mathbf{z}}^j\right\}_{j\in\Z}$ 
as being indexed by elements of $\Z/d$.
For any $j \in\Z/d$, we then compute
$z^{j+1}_0 - z^j_0 \in \Z/k^2$, as follows:
\begin{align}
   \label{eq:  z^{j+1}_0 - z^j_0}
   z^{j+1}_0 - z^j_0
&= \left(z^{j+1}_0 - z^j_{\len({\bf{z}}^j)-1}\right)
   + \left(z^j_{\len({\bf{z}}^j)-1} - z^j_0\right)
            \\ \nonumber
&= (d-k)q + \left(\len({\bf{z}}^j)-1\right)dq
            \\ \nonumber
&= -kq + \len({\bf{z}}^j)dq
            \\ \nonumber
&= -\gamma\frac{ck + \alpha\gamma}{d}k
   +\left(\frac{k+\epsilon}{d} - {\theta}^{d, \epsilon}(j)\right)\left(({\mu}m+{\gamma}c)k+\alpha\right)
            \\ \nonumber
&= {\mu}m\frac{k^2}{d}
   + \left(\frac{\epsilon}{d} - {\theta}^{d, \epsilon}(j)\right)\left(({\mu}m+{\gamma}c)k+\alpha\right)
            \\ \nonumber
&= {\mu}m\frac{k^2}{d}
   + \left(\frac{\epsilon}{d} - {\theta}^{d, \epsilon}(j)\right)[dq]_{k^2}
\end{align}
Of course, the summand ``$\mu m\frac{k^2}{d}$''
does not make much sense as an element of $\Z/k^2$.
To make sense of the last two lines of (\ref{eq:  z^{j+1}_0 - z^j_0}),
and of similar expressions, one must first interpret the summands as elements of $\frac{1}{d}\Z$,
next interpret their sum as an element of $\Z$, and then take the image
of this element of $\Z$ in $\Z/k^2$.
Equation (\ref{eq:  z^{j+1}_0 - z^j_0}) further abuses notation in its usage
of ${\theta}^{d, \epsilon}$.  Since ${\theta}^{d, \epsilon}$ is periodic modulo $d$,
it descends to a function on $\Z/d$, which we denote in the same way as the
original function on $\Z$.

Given, in addition, any $l \in \Z/d$, we can use
(\ref{eq:  z^{j+1}_0 - z^j_0})
to compute $z^{j+l}_0 - z^j_0 \in \Z/k^2$.
Let $\tilde{j}, \tilde{l} \in \Z$ denote arbitrary lifts of $j$ and $l$ to the integers.  Then
\begin{align}
\label{eq: z_0^j+l - z_0^j = mu ml k^2/d + xi dq}
   z^{j+l}_0 - z^j_0
&= \sum_{i=\tilde{j}}^{\tilde{j} + \tilde{l}-1} \left(z^{i+1}_0 - z^i_0\right)
         \\ \nonumber
&=\sum_{i=\tilde{j}}^{\tilde{j} + \tilde{l}-1}
     \left(  {\mu}m\frac{k^2}{d}
             + \left(\frac{\epsilon}{d} - {\theta}^{d, \epsilon}(i)\right) [dq]_{k^2}
     \right)
         \\ \nonumber
&= {{\mu}m\tilde{l}}\frac{k^2}{d}
   +\left(\frac{\tilde{l}\epsilon}{d}
   - \sum_{i=\tilde{j}}^{\tilde{j} + \tilde{l}-1} {\theta}^{d, \epsilon}(i)\right) [dq]_{k^2}
         \\ \nonumber
&= \left[{\mu}ml\right]_d\frac{k^2}{d}
   \;+\; \Xi^{d, \epsilon}_l(j)\, [dq]_{k^2},
\end{align}
where the function $\Xi^{\cdot, \cdot}_{\cdot}(\cdot)$ is defined such that,
for any relatively prime positive integers $d$ and $\epsilon$ with
$d \ge 2$ and $\epsilon < d$, and for any
$j, l \in \Z/d$, we have
\begin{equation}
         \Xi^{d, \epsilon}_l(j) 
\;:=\; \frac{\tilde{l}\epsilon}{d}
         -\sum_{i=\tilde{j}}^{\tilde{j} + \tilde{l}-1} {\theta}^{d, \epsilon}(i)\;\;
         \in \textstyle{\frac{1}{d}}\Z,
\end{equation}
where $\tilde{j}, \tilde{l} \in \Z$ denote arbitrary lifts of $j$ and $l$ to $\Z$.
The value of $\Xi^{d, \epsilon}_l(j)$ does not depend on the choice of lift.
Note that since $\sum_{j\in\Z/d} {\theta}^{d, \epsilon}(j) = \epsilon$,
$\Xi^{d, \epsilon}_l$ has mean value zero.
The following lemma shows that $\Xi^{d, \epsilon}_l(j)$
(and hence $z^{j+l}_0 - z^j_0$)
stays as close to its mean value as possible.

\begin{lemma}
\label{lemma: q of positive type, xi lemma}
Suppose that $d$ and $\epsilon$ are relatively prime positive integers with
$d \ge 2$ and $\epsilon < d$,
and that $l \in \Z/d$.  Then, for any $j \in \Z/d$, we have
\begin{equation}
\nonumber
     \Xi^{d, \epsilon}_l(j) 
\in  \left\{ \frac{[l\epsilon]_d}{d}, \frac{-[-l\epsilon]_d}{d}\right\}.
\end{equation}
When $l \neq 0$, the sets
$\left\{ j \in \Z/d \left\vert\; \Xi^{d, \epsilon}_l(j) = \frac{[l\epsilon]_d}{d}\right.\right\}$
and 
$\left\{ j \in \Z/d \left\vert\; \Xi^{d, \epsilon}_l(j) = \frac{-[-l\epsilon]_d}{d}\right.\right\}$
have $\left[-l\epsilon\right]_d$ elements and 
$\left[l\epsilon\right]_d$ elements, respectively.
\end{lemma}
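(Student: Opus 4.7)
The plan is to reduce $\Xi^{d,\epsilon}_l(j)$ to a difference of fractional parts of multiples of $\epsilon/d$, after which the two stated values become the only possibilities and the counts follow by a direct bijection.

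First, I would invoke the telescoping form of $\theta^{d,\epsilon}$ given in equation (\ref{eq: def 2 of theta}). Summing it gives
\begin{equation*}
\sum_{i=\tilde{j}}^{\tilde{j}+\tilde{l}-1}\theta^{d,\epsilon}(i)
= \left\lfloor\frac{(\tilde{j}+\tilde{l})\epsilon}{d}\right\rfloor - \left\lfloor\frac{\tilde{j}\epsilon}{d}\right\rfloor,
\end{equation*}
so substituting into the definition of $\Xi^{d,\epsilon}_l(j)$ and using $x - \lfloor x\rfloor = \{x\}$, I obtain
\begin{equation*}
\Xi^{d,\epsilon}_l(j)
= \left\{\frac{(\tilde{j}+\tilde{l})\epsilon}{d}\right\} - \left\{\frac{\tilde{j}\epsilon}{d}\right\}
= \frac{[(\tilde{j}+\tilde{l})\epsilon]_d - [\tilde{j}\epsilon]_d}{d}.
\end{equation*}
This representation has the bonus of making independence from the choice of lifts manifest.

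Next, setting $a := [\tilde{j}\epsilon]_d$ and $r := [l\epsilon]_d$, I have $[(\tilde{j}+\tilde{l})\epsilon]_d = [a+r]_d$, so the numerator above equals $r$ if $a+r < d$ and equals $r - d = -[-l\epsilon]_d$ if $a+r \geq d$ (the case $r=0$ when $l=0$ is trivial). This immediately gives the inclusion
\begin{equation*}
\Xi^{d,\epsilon}_l(j) \in \left\{\frac{[l\epsilon]_d}{d},\; \frac{-[-l\epsilon]_d}{d}\right\}.
\end{equation*}

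For the counting statement, I would use that since $\gcd(\epsilon,d)=1$, the map $j \mapsto [\tilde{j}\epsilon]_d$ is a bijection from $\Z/d$ to $\{0,1,\ldots,d-1\}$. Under this bijection, the condition $a+r<d$ corresponds to $a \in \{0,\ldots,d-r-1\}$, which has $d-r = [-l\epsilon]_d$ elements (using $l \neq 0$ so $r \neq 0$), while the complement has $r = [l\epsilon]_d$ elements. This matches exactly the cardinalities claimed. There is no substantive obstacle here: once the telescoping collapse is made, everything reduces to elementary modular bookkeeping; the only thing to be careful about is the case distinction based on whether a ``carry'' occurs mod $d$, and the mild degeneracy when $l=0$ (in which case both proposed values coincide at $0$).
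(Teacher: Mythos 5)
Your proof is correct. Both you and the paper begin by collapsing the telescoping sum $\sum_{i=\tilde{j}}^{\tilde{j}+\tilde{l}-1}\theta^{d,\epsilon}(i) = \lfloor(\tilde{j}+\tilde{l})\epsilon/d\rfloor - \lfloor\tilde{j}\epsilon/d\rfloor$, and for the inclusion the two arguments are essentially cosmetic variants of one another: the paper keeps the expression as $\tfrac{\tilde{l}\epsilon}{d}$ minus a floor-difference and invokes the membership $\lfloor a+b\rfloor - \lfloor a\rfloor \in \{\lfloor b\rfloor, \lceil b\rceil\}$, while you rewrite $\Xi^{d,\epsilon}_l(j)$ as a difference of fractional parts $\frac{1}{d}\bigl([(\tilde{j}+\tilde{l})\epsilon]_d - [\tilde{j}\epsilon]_d\bigr)$ and then analyze the carry. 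Where the two proofs genuinely diverge is in the counting step. The paper exploits the fact that $\Xi^{d,\epsilon}_l$ has mean value zero over $\Z/d$ (noted just before the lemma, since $\sum_{j\in\Z/d}\theta^{d,\epsilon}(j)=\epsilon$) and obtains the two cardinalities as the unique solution of a $2\times2$ linear system in $n_+,n_-$. You instead use the bijectivity of $j\mapsto[\tilde{j}\epsilon]_d$ on $\Z/d$ and directly count the set $\{a : a+r<d\}$, which has $d-r=[-l\epsilon]_d$ elements. Your route is self-contained and somewhat more transparent (and it also makes the constancy under choice of lifts visible at a glance), whereas the paper's argument leans on the separately-verified mean-zero property; both are sound, and the counts agree. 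The one hypothesis you should state explicitly rather than leave implicit is that $\gcd(\epsilon,d)=1$ forces $r=[l\epsilon]_d\neq 0$ when $l\neq 0$, which you do use to identify $d-r$ with $[-l\epsilon]_d$.
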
 

\begin{proof}
Let $\tilde{l}, \tilde{j} \in \Z$ denote any lifts of $j, l \in \Z/d$
to the integers.  Then, using (\ref{eq: def 2 of theta}), we have
\begin{align}
         \Xi^{d, \epsilon}_l(j) 
&=   \frac{\tilde{l}\epsilon}{d}
         -\sum_{i=\tilde{j}}^{\tilde{j} + \tilde{l}-1}
         \left(\left\lfloor \frac{(i+1)\epsilon}{d}\right\rfloor
               -\left\lfloor \frac{i\epsilon}{d}\right\rfloor\right)
                          \\ \nonumber
&=   \frac{\tilde{l}\epsilon}{d}
         -\left(\left\lfloor \frac{(\tilde{j} + \tilde{l})\epsilon}{d}\right\rfloor
                 -\left\lfloor \frac{\tilde{j}\epsilon}{d}\right\rfloor\right)
                          \\ \nonumber
&\in \left\{  \frac{\tilde{l}\epsilon}{d} - 
                    \left\lfloor \frac{\tilde{l}\epsilon}{d}\right\rfloor,\;
                    \frac{\tilde{l}\epsilon}{d} - 
                    \left\lceil\frac{\tilde{l}\epsilon}{d}\right\rceil \right\}
                          \\ \nonumber
&=  \left\{ \frac{[l\epsilon]_d}{d}, \frac{-[-l\epsilon]_d}{d}\right\}.
\end{align}

Next, suppose that $l \neq 0$, and let
$n_+$ (respectively, $n_-$) denote the number of
values of $j \in \Z/d$ for which
$\Xi^{d, \epsilon}_l(j) = \frac{[l\epsilon]_d}{d}$
(respectively, $\Xi^{d, \epsilon}_l(j) = \frac{-[-l\epsilon]_d}{d}$).
Then $n_+$ and $n_-$ satisfy two independent linear equations,
\begin{align}
          d 
&\;=\; n_+ \;+\; n_-, \; \text{and}
                 \\
          0
&\;=\; \sum_{j \in \Z/d} \Xi^{d, \epsilon}_l(j)
   \;=\;       n_+ \!\left\lfloor\frac{l\epsilon}{d}\right\rfloor
          \;+\; n_- \!\left\lceil\frac{l\epsilon}{d}\right\rceil,
\end{align}
with unique solution
$n_+ = \left[-l\epsilon\right]_d$ and
$n_-   = \left[l\epsilon\right]_d$.

\end{proof}

Before proceeding further, we need some new notation.
First, we introduce the operations $\minq$ and $\maxq$,
which are only defined on two-element subsets of $\Z/k^2$
differing by $dq$.  For any $x \in \Z/k^2$, we say that
\begin{align}
      \minq\, \left\{x,\; x + dq\right\} 
&= x,
           \\
      \maxq\, \left\{x,\; x + dq\right\}
&= x + dq.
\end{align}
Next, for brevity, set
\begin{equation}
n_j := \len({\mathbf{z}}^j) - 1 = \left\lfloor\frac{k}{d}\right\rfloor - {\theta}^{d, \epsilon}(j)
\end{equation}
for each $j\in \Z/d$, so that ${\mathbf{z}}^j = (z_0^j, \ldots, z_{n_j}^j)$.
That is, for each $j\in\Z/d$, $n_j$ counts the number of intervals 
$\left\langle z_i^j, z_{i+1}^j\right]$ for which $z_i^j$ and $z_{i+1}^j$ are defined.
The combination of 
(\ref{eq: z_0^j+l - z_0^j = mu ml k^2/d + xi dq})
and Lemma \ref{lemma: q of positive type, xi lemma}
then provides the following result.

\begin{cor}
\label{cor: q of positive type: combo of lemma and difference eq}
If $q$ is of positive type, then for any $l \in \Z/d$ with $l \neq 0$,
the sets $\left\{z_0^{j+l} - z_i^j\right\}_{j\in\Z/d}$ and 
$\left\{z_{n_{j-l}}^{j-l} - z^j_{n_j - i}\right\}_{j \in \Z/d}$
each contain precisely two elements, which differ by $dq$.
More specifically,
\begin{align*}
      \minq_{j\in\Z/d} \left(z_0^{j+l} - z_i^j\right)
&= \left[{\mu}ml\right]_d\frac{k^2}{d} + 
      \left(\frac{\left[l\epsilon\right]_d}{d} - i - 1\right)[dq]_{k^2},
              \\
      \maxq_{j\in\Z/d} \left(z_{n_{j-l}}^{j-l} - z^j_{n_j - i}\right)
&= -\left[{\mu}ml\right]_d\frac{k^2}{d} -
      \left(\frac{\left[l\epsilon\right]_d}{d} - i - 1\right)[dq]_{k^2}.
\end{align*}
For any $l \in \Z/d$ with $l \neq 1$, the sets
$\left\{z_0^{j+l} - z_{n_j - (i+1)}^j\right\}_{j \in \Z/d}$ and
$\left\{z_{n_{j-l}}^{j-l} - z^j_{i+1}\right\}_{j \in \Z/d}$
each contain precisely two elements, which differ by $dq$.
More specifically,
\begin{align*}
      \minq_{j\in\Z/d} \left(z_0^{j+l} - z_{n_j - (i+1)}^j\right)
&= \left[{\mu}m(l-1)\right]_d\frac{k^2}{d} + 
      \left(\frac{\left[(l-1)\epsilon\right]_d}{d} + i\right)[dq]_{k^2}  + \psi,
              \\
      \maxq_{j\in\Z/d} \left(z_{n_{j-l}}^{j-l} - z^j_{i+1}\right)
&= -\left[{\mu}m(l-1)\right]_d\frac{k^2}{d} - 
      \left(\frac{\left[(l-1)\epsilon\right]_d}{d} + i\right)[dq]_{k^2}  - \psi,
\end{align*}
where $\psi := \left[z^{j+1}_0 - z_{n_j}^j\right]_{k^2} 
= \left[dq - kq\right]_{k^2} =
 \left[(({\mu}m + {\gamma}c)k + \alpha) - \gamma\frac{ck+\alpha\gamma}{d}k\right]_{k^2}$.
\end{cor}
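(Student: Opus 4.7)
The plan is to reduce each of the four identities directly to Lemma \ref{lemma: q of positive type, xi lemma} and equation (\ref{eq: z_0^j+l - z_0^j = mu ml k^2/d + xi dq}) by telescoping along the two types of steps that make up $\mathbf{z}$. Within a single block $\mathbf{z}^j$, consecutive entries satisfy $z_{i+1}^j - z_i^j = dq$, so $z_i^j = z_0^j + i[dq]_{k^2}$ in $\Z/k^2$ for $0 \le i \le n_j$. Across a block boundary, $z_0^{j+1} - z_{n_j}^j = (d-k)q$, which in $\Z/k^2$ equals the constant $\psi = [dq - kq]_{k^2}$ (independent of $j$). These two facts let me rewrite each of the four differences appearing in the corollary as $z_0^{j+l'} - z_0^{j'}$ for some shift $l'$ and some $j'$, plus an explicit integer combination of $[dq]_{k^2}$ and $\psi$.

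For the first pair of identities (the case $l \neq 0$), I write $z_0^{j+l} - z_i^j = (z_0^{j+l} - z_0^j) - i[dq]_{k^2}$. Applying (\ref{eq: z_0^j+l - z_0^j = mu ml k^2/d + xi dq}) and Lemma \ref{lemma: q of positive type, xi lemma} shows that as $j$ varies over $\Z/d$, this takes exactly two values, corresponding to $\Xi^{d,\epsilon}_l(j) = [l\epsilon]_d/d$ or $\Xi^{d,\epsilon}_l(j) = -[-l\epsilon]_d/d$. Since $\gcd(d,\epsilon) = 1$ and $l \neq 0$, the identity $[l\epsilon]_d + [-l\epsilon]_d = d$ implies these two values differ by $[dq]_{k^2} = dq$; rewriting $-[-l\epsilon]_d = [l\epsilon]_d - d$ identifies the smaller (i.e., $\minq$) value as $[\mu m l]_d (k^2/d) + ([l\epsilon]_d/d - i - 1)[dq]_{k^2}$, matching the stated formula. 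For the companion expression $z_{n_{j-l}}^{j-l} - z^j_{n_j - i}$, I substitute $z^j_{n_j} = z_0^{j+1} - \psi$ and $z_{n_{j-l}}^{j-l} = z_0^{j-l+1} - \psi$ to rewrite the difference as the negation of a quantity of the type just handled (after an index shift), so that the sign reversal swaps $\minq$ and $\maxq$ and yields the second formula.

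For the second pair of identities (the case $l \neq 1$), I telescope through a block boundary: $z_0^{j+l} - z^j_{n_j - (i+1)} = (z_0^{j+l} - z_0^{j+1}) + \psi + (i+1)[dq]_{k^2}$. Reindexing $j+1 \mapsto j'$ reduces the first summand to an $l' = l-1$ case of (\ref{eq: z_0^j+l - z_0^j = mu ml k^2/d + xi dq}); since $l - 1 \neq 0$, Lemma \ref{lemma: q of positive type, xi lemma} again yields exactly two values differing by $dq$, and the smaller one gives the claimed $\minq$ formula (with $[(l-1)\epsilon]_d$ and $[\mu m(l-1)]_d$ in place of the previous $[l\epsilon]_d$ and $[\mu m l]_d$, plus the extra $+\psi$). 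The fourth identity follows by the same telescoping applied with the opposite orientation, reducing to the negated quantity.

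The only real obstacle is purely combinatorial bookkeeping: tracking which of the two $\Xi$-values corresponds to $\minq$ versus $\maxq$ in each of the four formulas (these alternate as signs and shifts of $l$ by $\pm 1$ vary across the cases), and verifying that $[l\epsilon]_d + [-l\epsilon]_d = d$ is available in each instance so that the two values collapse to a pair differing by exactly $[dq]_{k^2}$. All four formulas are otherwise direct algebraic consequences of the telescoping together with the two cited results.
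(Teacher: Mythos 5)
Your proof is correct and matches the paper's intended (but unwritten) derivation: the paper states the corollary immediately after Lemma \ref{lemma: q of positive type, xi lemma} and equation (\ref{eq: z_0^j+l - z_0^j = mu ml k^2/d + xi dq}), and the remark following the corollary records precisely the identity $\frac{-[-l\epsilon]_d}{d} = \frac{[l\epsilon]_d}{d} - 1$ (for $l \neq 0$) on which your telescoping argument turns. You have simply made explicit the block-internal step $z_{i+1}^j - z_i^j = dq$, the block-boundary step $z_0^{j+1} - z_{n_j}^j = \psi$, and the reindexing $j \mapsto j-l+1$ or $j \mapsto j+1$ that reduces each of the four differences to an instance of (\ref{eq: z_0^j+l - z_0^j = mu ml k^2/d + xi dq}).
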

This result also uses the fact that
$\frac{-[-l\epsilon]_d}{d} = \frac{[l\epsilon]_d}{d}-1$ when $l \neq 0$.
Again, note that to make sense of the four above right-hand expressions
as elements of $\Z/k^2$, one
must first interpret each expression as an integer, and then
take the image of that integer under the quotient
$\Z \rightarrow \Z/k^2$.

The above corollary plays an important role in studying any set of the form
${\tilde{Q}}_q \cap \left\langle z_i^j, z_{i+1}^j\right]$ or
${\tilde{Q}}_q \cap \left\langle z_{n_j - (i+1)}^j, z_{n_j - i}^j\right]$,
for fixed $i \in \{0, \ldots, \left\lfloor\frac{k}{d}\right\rfloor -2\}$,
as a function of $j \in \Z/d$.  (We shall also use the
corollary to study sets of the form ${\tilde{Q}}_q \cap \left\langle z_{n_{j-1}}^{j-1}, z_0^j\right]$
or ${\tilde{Q}}_q \cap \left\langle z_0^j, z_{n_{j-1}}^{j-1} \right]$
as functions of $j \in \Z/d$, but more on that later.)
Suppose that for some $j^{\prime} \in \Z/d$ and
$i \in \{0, \ldots, n_{j^{\prime}} \}$ there is an element of 
${\tilde{Q}}_q$ contained in the interval
$\left\langle z_i^{j^{\prime}}, z_{i+1}^{j^{\prime}}\right]$.
Then we can write $z_{i^{\prime}}^{j^{\prime}+l}$ for the image of this element in ${\tilde{Q}}_q$,
for some $l \in \Z/d$ and $i^{\prime} \in \{0, \ldots, n_{j^{\prime}+l} \}$.
Let $x := z_{i^{\prime}}^{j^{\prime}+l} - z_i^{j^{\prime}}$, so that
$z_{i^{\prime}}^{j^{\prime}+l} = z_i^{j^{\prime}} + x$.
Then if $i^{\prime} \notin \{0, n_{j^{\prime}+l} \}$,
Corollary \ref{cor: q of positive type: combo of lemma and difference eq}
implies that
$z_i^j + x \in \{z_{i-1}^j, z_i^j, z_{i+1}^j\} \subset {\tilde{Q}}_q$ for all $j \in \Z/d$.
That is, an element of ${\tilde{Q}}_q$ is contained
at the same relative position in $\left\langle z_i^j, z_{i+1}^j\right]$
for every $j \in \Z/d$.
Thus, if {\em every} $z_{i^{\prime}}^{j+l}$ contained in
$\left\langle z_i^j, z_{i+1}^j \right]$
satisfied $i^{\prime} \notin \{0, n_{j+l}\}$, for every $j \in \Z/d$, then
$\#\!\left({\tilde{Q}}_q \cap \left\langle z_i^j, z_{i+1}^j \right]\right)$
would be constant in $j \in \Z/d$.

This means that the only way for $\#\!\left({\tilde{Q}}_q \cap \left\langle z_i^j, z_{i+1}^j \right]\right)$
to change as $j$ varies in $\Z/d$ is for there to exist some
$l \in \Z/d$ for which either $z_0^{j^{\prime}+l} \in
\left\langle z_i^{j^{\prime}}, z_{i+1}^{j^{\prime}}\right]$
or $z_{n_{j^{\prime}-l}}^{j^{\prime}-l} \in
\left\langle z_i^{j^{\prime}}, z_{i+1}^{j^{\prime}}\right]$, for some $j^{\prime} \in \Z/d$.
This is the idea behind {\em mobile points}, which we define as follows.
For any $i \in \{0, \ldots, \left\lfloor\frac{k}{d}\right\rfloor -2\}$
and $l \in \Z/d$ with $l \neq 0$,
we say that $z_0^{j+l}$ is an R-mobile point in the interval
$\left\langle z_i^j, z_{i+1}^j \right]$ if
\begin{equation}
0 < \minq_{j\in\Z/d} \left(z_0^{j+l} - z_i^j\right) < \left[dq\right]_{k^2},
\end{equation}
and that
$z_{n_{j-l}}^{j-l}$ is an L-mobile point in the interval
$\left\langle z_{n_j - (i+1)}^j, z_{n_j - i}^j \right]$ if
\begin{equation}
-\left[dq\right]_{k^2} < \maxq_{j\in\Z/d} \left(z_{n_{j-l}}^{j-l} - z_{n_j - i}^j\right) < 0.
\end{equation}

Note that since we take $i$ to be constant in $j$, we must demand
$i \in \{0, \ldots, \left\lfloor\frac{k}{d}\right\rfloor -2\}$,
as opposed to, for example, $i \in \{0, \ldots, n_{j^{\prime}}-1\}$.
The ``R'' and ``L'' correspond to the fact that when an R-mobile point
``escapes,'' it does so by a distance of $dq$ to the right, whereas, when an
L-mobile point ``escapes,'' it does so by a distance of $dq$ to the left.
We say that the R-mobile point described in the above paragraph is
R-mobile ``rel $z_0^j$'', since positions are measured relative to $z_0^j$.
Likewise, we say that the above-described L-mobile point is L-mobile
``rel $z_{n_j}^j$'', since positions are measured relative to $z_{n_j}^j$.
Recall that Corollary  \ref{cor: q of positive type: combo of lemma and difference eq} shows that
\begin{equation}
\label{eq: mirror relation 1}
    \minq_{j\in\Z/d} \left(z_0^{j+l} - z_i^j\right)
=  -\maxq_{j\in\Z/d} \left(z_{n_{j-l}}^{j-l} - z^j_{n_j - i}\right)
\end{equation}
for all $l\in \Z/d$ with $l\neq 0$.  This equation establishes an isomorphism
which we call a {\em mirror relation}
between R-mobile points rel $z_0^j$ and L-mobile points rel $z_{n_j}^j$.
In particular, 
$z_0^{j+l}$ is an R-mobile point in $\left\langle z_i^j, z_{i+1}^j \right]$
if and only if $z_{n_{j-l}}^{j-l}$ is L-mobile point in
$\left\langle z_{n_j - (i+1)}^j, z_{n_j - i}^j \right]$.
We call such pairs of mobile points {\em mirror} mobile points.

One can also define R-mobile points rel $z_{n_j}^j$ and L-mobile points 
rel $z_0^j$, as follows.  For any $l \in \Z$/d with $l \neq 1$, we say that $z_0^{j+l}$ 
is R-mobile in
$\left\langle z_{n_j - (i+1)}^j, z_{n_j - i}^j \right]$ 
(hence is R-mobile rel $z_{n_j}^j$) if
\begin{equation}
0 < \minq_{j\in\Z/d} \left(z_0^{j+l} - z_{n_j - (i+1)}^j\right) < \left[dq\right]_{k^2}.
\end{equation}
Likewise, $z_{n_{j-l}}^{j-l}$ is L-mobile in
$\left\langle z_i^j, z_{i+1}^j \right]$ (hence is L-mobile rel $z_0^j$) if
\begin{equation}
-\left[dq\right]_{k^2} < \maxq_{j\in\Z/d} \left(z_{n_{j-l}}^{j-l} - z_{i+1}^j\right) < 0.
\end{equation}
Again, Corollary \ref{cor: q of positive type: combo of lemma and difference eq} shows that
\begin{equation}
\label{eq: mirror relation 2}
      \minq_{j\in\Z/d} \left(z_0^{j+l} - z_{n_j - (i+1)}^j\right)
= - \maxq_{j\in\Z/d} \left(z_{n_{j-l}}^{j-l} - z^j_{i+1}\right)
\end{equation}
for all $l \in \Z/d$ with $l \neq 1$.  Thus 
$z_0^{j+l}$ is R-mobile in $\left\langle z_{n_j - (i+1)}^j, z_{n_j - i}^j \right]$ 
if and only if $z_{n_{j-l}}^{j-l}$ is L-mobile in
$\left\langle z_i^j, z_{i+1}^j \right]$.  We call these pairs mirror as well.

We shall use the term ``mobile point'' to describe a point which is either
R-mobile or L-mobile.  We say that a point is mobile (respectively
R-mobile or L-mobile) in ${\bf{z}}^j$ if it is mobile (respectively
R-mobile or L-mobile) rel $z_0^j$ or rel $z_{n_j}^j$.  Note that
it is possible for a single mobile point to have both a ``rel $z_0^j$''
and a ``rel $z_{n_j}^j$'' description.  Indeed, this is always the case
when $i \notin \left\{0, \left\lfloor\frac{k}{d}\right\rfloor - 2\right\}$ and $l \notin \{0,1\}$.

Let us pause here to explain what we mean by
measuring positions relative to $z_0^j$ or to $z_{n_j}^j$.
Suppose that for some nonzero $l \in\Z/d$, we know that
$z_0^{j+l}$ is R-mobile rel $z_0^j$.  Then there exists some
$i \in \{0, \ldots, \left\lfloor\frac{k}{d}\right\rfloor -2\}$ such that
$z_0^{j+l}$ is R-mobile in $\left\langle z_i^j, z_{i+1}^j\right]$, and so
\begin{equation}
  i\left[dq\right]_{k^2}
< \minq_{j\in\Z/d} \left(z_0^{j+l} - z_0^j\right) 
< (i+1)\left[dq\right]_{k^2}.
\end{equation}
Note that if $\left\lfloor\frac{k}{d}\right\rfloor \left((m\!+\!c)k\!+\!1\right) > k^2$,
then more than one value of $i$ could satisfy this property.
Likewise, if for some nonzero $l\in\Z/d$ we know that
$z_{n_{j-l}}^{j-l}$ is L-mobile rel $z_{n_j}^j$, then there exists some
$i \in \{0, \ldots, \left\lfloor\frac{k}{d}\right\rfloor -2\}$ such that
$z_{n_{j-l}}^{j-l}$ is L-mobile in
$\left\langle z_{n_j-(i+1)}^j, z_{n_j-i}^j \right]$, and so
\begin{equation}
  -(i+1)\left[dq\right]_{k^2}
< \maxq_{j\in\Z/d} \left(z_{n_{j-l}}^{j-l} - z_{n_j}^j\right)
< -i\left[dq\right]_{k^2}.
\end{equation}
Again, it is possible for more than one value of $i$ to satisfy this property.

One could justifiably argue that a ``mobile point''
should really be called a ``collection of points,''
but it is better to think of a mobile point $z_0^{j+l}$
as an element of $\Z/k^2$ that is a function of $j$,
just as the position $x(t)$ of a particle on a line
is viewed as an element of $\R$ that is a function of $t$.
Indeed, it is perhaps useful to think of $j$ as a discrete
time variable.  For example, suppose that $z_0^{j+l}$ is
R-mobile rel $z_0^j$, hence R-mobile in some interval
$\left\langle z_i^j, z_{i+1}^j \right]$.  This means
that as $j$ varies, the point $z_0^{j+l} - z_0^j$
hops back and forth between $x$ and $x + dq$,
for some fixed $x \in 
\left\langle i\,dq, (i+1)dq \right\rangle$.
If $z_0^{j+l}$ is R-mobile rel $z_{n_j}^j$,
hence R-mobile in some interval
$\left\langle z_{n_j-(i+1)}^j, z_{n_j-i}^j \right]$,
then as $j$ varies, the point $z_0^{j+l} - z_{n_j}^j$
hops back and forth between $x$ and $x+dq$
for some fixed $x \in
\left\langle -(i+1)dq, -i\,dq \right\rangle$.
This is the type of motion indicated in the term ``mobile,''
and the type of ``worldline'' viewpoint by which we call the collection
$\{z_0^{j+l}\}_{j\in{\Z/d}}$ a ``point.''

We shall use the terms {\em active} and {\em inactive} to describe
whether such an R-mobile point is occupying position $x$ or position
$x + dq$ at a particular time $j=j^{\prime}\in\Z/d$.
That is, we say that an R-mobile point 
 $z_0^{j+l}$ in $\left\langle z_i^j, z_{i+1}^j \right]$
 is active at time $j=j^{\prime} \in \Z/d$ if
\begin{equation}
      z_0^{j^{\prime}+l} - z_i^{j^{\prime}}
\;=\; \minq_{j\in\Z/d} \left(z_0^{j+l} - z_i^j\right)
\end{equation}
and inactive otherwise.  Likewise, we say that
an L-mobile point $z_{n_{j-l}}^{j-l}$ in $\left\langle z_{n_j - (i+1)}^j, z_{n_j - i}^j \right]$
is active at time $j = j^{\prime} \in \Z/d$ if
\begin{equation}
z_{n_{j^{\prime}-l}}^{j^{\prime}-l} - z_{n_{j^{\prime}} - i}^{j^{\prime}}
= \maxq_{j\in\Z/d} \left(z_{n_{j-l}}^{j-l} - z_{n_j - i}^j\right)
\end{equation}
and is inactive otherwise.
The obvious analogous definitions for active and inactive
hold for an R-mobile point rel $z_{n_j}^j$
and an L-mobile point rel $z_0^j$.

The fact that $[dq]_{k^2} < \frac{k^2}{2}$ for $q$ of positive type
implies that an R-mobile point $z_0^{j+l}$ in $\left\langle z_i^j, z_{i+1}^j \right]$
satisfies
\begin{equation}
\label{eq: active means in for R-mobile}
z_0^{j+l}\;\text{is active at time}\;j=j^{\prime}
\;\;\;\;\;\Leftrightarrow\;\;\;\;\;
z_0^{j^{\prime}+l} \in \left\langle z_i^{j^{\prime}}, z_{i+1}^{j^{\prime}} \right]
\end{equation}
for all $j^{\prime}\in\Z/d$, and that an L-mobile point 
$z_{n_{j-l}}^{j-l}$ in $\left\langle z_{n_j - (i+1)}^j, z_{n_j - i}^j \right]$ satisfies
\begin{equation}
\label{eq: active means in for L-mobile}
z_{n_{j-l}}^{j-l}\;\text{is active at time}\;j=j^{\prime}
\;\;\;\;\;\Leftrightarrow\;\;\;\;\;
z_{n_{j^{\prime}-l}}^{j^{\prime}-l}
\in \left\langle z_{n_{j^{\prime}}-(i+1)}^{j^{\prime}}, z_{n_{j^{\prime}}-i}^{j^{\prime}} \right]
\end{equation}
for all $j^{\prime}\in\Z/d$ (and similarly for an R-mobile point rel $z_{n_j}^j$
or an L-mobile point rel $z_0^j$).  This is because, for example, if $z_0^{j+l}$ is
R-mobile in $\left\langle z_i^j, z_{i+1}^j\right]$, then
$\minq_{j\in\Z/d}\left(z_0^{j+l}-z_0^j\right) \in \left\langle 0, [dq]_{k^2} \right\rangle$,
implying $\maxq_{j\in\Z/d}\left(z_0^{j+l}-z_0^j\right) \in 
\left\langle [dq]_{k^2}, 2[dq]_{k^2} \right\rangle$.
Ordinarily, there would be nothing to prevent the intersection
$\left\langle [dq]_{k^2}, 2[dq]_{k^2} \right\rangle \cap
\left\langle 0, [dq]_{k^2} \right\rangle$ from being nonempty,
but since $[dq]_{k^2} < \frac{k^2}{2}$, we know that
$\maxq_{j\in\Z/d}\left(z_0^{j+l}-z_0^j\right) \notin \left\langle 0, [dq]_{k^2} \right\rangle$.
Thus $z_0^{j^{\prime}+l} \in \left\langle z_i^{j^{\prime}}, z_{i+1}^{j^{\prime}} \right]$
when $z_0^{j+l}$ is inactive at time $j=j^{\prime}$.

The notion of active versus inactive is important because it determines how
a mobile point contributes to the intersection of ${\tilde{Q}}_q$ with the
interval in question at a particular time.
\begin{prop}
For any $i \in \left\{0, \ldots, \left\lfloor\frac{k}{d}\right\rfloor-2\right\}$,
there is some $C_i \in \Z$, constant in $j^{\prime} \in \Z/d$, such that
\begin{equation}
\label{eq: contribution of active mobile points to Q_q cap < z_i^j, z_{i+1}^j ] }
      \#\left({\tilde{Q}}_q \cap \left\langle z_i^{j^{\prime}}, 
               z_{i+1}^{{j^{\prime}}} \right] \right)
\;=\; C_i \;+\; 
      \#\!\left\{\begin{array}{cc}
            \mathrm{mobile\; points\; in}\; \left\langle z_i^j, z_{i+1}^j \right]
                 \\
            \mathrm{which\; are\; active\; at\; time}\;j^{\prime}
          \end{array}
      \right\}
\end{equation}
for each $j^{\prime} \in \Z/d$.
Likewise, 
for any $i \in \left\{0, \ldots, \left\lfloor\frac{k}{d}\right\rfloor-2\right\}$,
there is some ${\bar{C}}_i \in \Z$, constant in $j^{\prime} \in \Z/d$, such that
\begin{equation}
\label{eq: contribution of active mobile points to Q_q cap < z_{n_j-(i+1)}^j, etc}
      \#\left({\tilde{Q}}_q \cap \left\langle z_{n_{j^{\prime}}-(i+1)}^{j^{\prime}}, 
               z_{n_{j^{\prime}}-i}^{j^{\prime}} \right] \right)
\;=\; {\bar{C}}_i \;+\; 
      \#\!\left\{\begin{array}{cc}
            \mathrm{mobile\; points\; in}\; \left\langle z_{n_j-(i+1)}^j, z_{n_j - i}^j \right]
                 \\
            \mathrm{which\; are\; active\; at\; time}\;j^{\prime}
          \end{array}
      \right\}
\end{equation}
for each $j^{\prime} \in \Z/d$.
\end{prop}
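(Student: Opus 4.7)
The plan is to decompose the count $\#(\tilde{Q}_q \cap \langle z_i^{j'}, z_{i+1}^{j'}])$ into a sum over $l \in \Z/d$ of contributions $N^l_{j'} := \#\{i'' \in \{0,\ldots, n_{j'+l}\} : z_{i''}^{j'+l} \in \langle z_i^{j'}, z_{i+1}^{j'}]\}$ and show that each $N^l_{j'}$ equals a $j'$-independent constant $c^l_i$ plus the number of active mobile points from $\mathbf{z}^{j'+l}$ inside the interval; summing the $c^l_i$ over $l$ yields the required constant $C_i$. For $l=0$, the displacements $z_{i''}^{j'} - z_i^{j'} = (i''-i)dq$ depend only on $i''-i$, so $N^0_{j'}$ is manifestly $j'$-independent, and by definition no mobile points are produced by $\mathbf{z}^{j'}$.

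For $l\ne 0$, Corollary \ref{cor: q of positive type: combo of lemma and difference eq} shows that the anchor $z_0^{j+l}-z_i^j$ assumes exactly two values in $\Z/k^2$ differing by $dq$, indexed by the two values of $\Xi^{d,\epsilon}_l(j)$. Hence the whole sub-tuple $\mathbf{z}^{j+l}$, which is an arithmetic progression with common difference $dq$ and length $n_{j+l}+1$, sits in one of two configurations at each time $j'$; upon a configuration flip, the anchor shifts by $dq$ and the length changes by $\pm 1$ via the possible change of $\theta^{d,\epsilon}(j+l)$. I would then argue that the interior elements $z_{i''}^{j+l}$ with $0<i''<n_{j+l}$ contribute a $j'$-independent amount to $N^l_{j'}$: when the anchor shifts by $dq$ and the length changes by $\pm 1$, any interior representative that leaves the interval on one side is compensated by a neighboring representative entering from the other side, yielding zero net change in the interior count. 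The only residual $j'$-dependence lives at the endpoints $z_0^{j+l}$ and $z_{n_{j+l}}^{j+l}$.

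By the definitions of R-mobile rel $z_0^j$ and L-mobile rel $z_0^j$ (the latter reindexed via $l \mapsto -l$ to express $z_{n_{j-l}}^{j-l}$), an endpoint that is \emph{not} mobile in our interval either stays in the interval for all $j$ or stays out of it for all $j$, contributing a $j'$-independent $0$ or $1$ that can be absorbed into $c^l_i$; an endpoint that \emph{is} mobile in our interval contributes $+1$ at time $j'$ precisely when it is active, by (\ref{eq: active means in for R-mobile}) and (\ref{eq: active means in for L-mobile}). Summing these contributions over $l$ produces (\ref{eq: contribution of active mobile points to Q_q cap < z_i^j, z_{i+1}^j ] }). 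The companion equation (\ref{eq: contribution of active mobile points to Q_q cap < z_{n_j-(i+1)}^j, etc}) follows by running the same argument on the mirror interval $\langle z_{n_{j'}-(i+1)}^{j'}, z_{n_{j'}-i}^{j'}]$, replacing ``rel $z_0^j$'' with ``rel $z_{n_j}^j$'' throughout and invoking the mirror relations (\ref{eq: mirror relation 1}) and (\ref{eq: mirror relation 2}) to convert the bookkeeping.

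The main obstacle will be the careful verification of the interior-invariance claim: when the anchor shifts by $dq$ and simultaneously the sub-tuple gains or loses a boundary slot via $\theta^{d,\epsilon}$, one must check case-by-case (on the sign of $\Xi^{d,\epsilon}_l(j)$ and on which of the two endpoints is the one that changes status) that the reshuffling of representatives within the interval cancels exactly, so that the entire $j'$-dependence of $N^l_{j'}$ is concentrated in the mobile-point contributions rather than being diluted by any more subtle reindexing inside $\mathbf{z}^{j+l}$.
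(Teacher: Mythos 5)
Your decomposition by sub-tuple index $l$ --- fixing $l$ and tracking the count $N^l_{j'}$ of members of $\mathbf{z}^{j'+l}$ that land in the interval --- is a genuinely different bookkeeping from the paper's. The paper instead fixes a \emph{position} $x \in \langle 0, dq] \cap \Z$ and classifies whether $z_i^{j'}+x$ is never, always, or sometimes in $Q_q$ as $j'$ varies: it uses Corollary \ref{cor: q of positive type: combo of lemma and difference eq} to show that if $z_i^{j_0}+x = z_{i_0}^{j_0+l}$ with $i_0 \notin\{0, n_{j_0+l}\}$, then for every $j'$ one has $z_i^{j'}+x \in \{z_{i_0}^{j'+l}, z_{i_0-1}^{j'+l}\}$ or $\{z_{i_0}^{j'+l}, z_{i_0+1}^{j'+l}\}$ (depending on which of $\minq, \maxq$ equals $x$), so the position is always filled; the only leftover possibilities are the two boundary cases $i_0=0$ and $i_0=n_{j_0+l}$, which are identified exactly with R- and L-mobile points. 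The virtue of the per-position decomposition is that it never needs to confront the $j'$-dependence of $n_{j'+l}$ head-on: each position is analyzed individually, and the needed neighbor $z_{i_0\pm1}^{j'+l}$ is available precisely because $i_0$ is not at a boundary.

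The gap in your sketch is exactly the interior-invariance claim you defer, and it is not a routine verification. The set of ``interior'' indices $\{1,\dots,n_{j'+l}-1\}$ is itself $j'$-dependent (via $\theta^{d,\epsilon}(j'+l)$), and the anchor shift and the length change are governed by different functions ($\Xi^{d,\epsilon}_l$ vs.\ $\theta^{d,\epsilon}$) and need not flip at the same $j'$; so the compensation you describe --- one interior representative leaves as a neighbor enters --- is not a simple translation of a fixed progression by $dq$. At a length change, $z_{\lfloor k/d\rfloor-1}^{j'+l}$ trades status between interior and endpoint while $z_{\lfloor k/d\rfloor}^{j'+l}$ trades between endpoint and nonexistent, so the partition of contributions into the ``constant pot'' $c_i^l$ and the ``mobile pot'' is itself shifting under you. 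This can be reconciled, but the reconciliation is the content of the proposition, not a footnote, and without it your argument is incomplete. Note also that your use of (\ref{eq: active means in for R-mobile}) and (\ref{eq: active means in for L-mobile}) imports the hypothesis $[dq]_{k^2} < k^2/2$; the paper's proof deliberately avoids this, as it remarks at the end.
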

\begin{proof}
We restrict ourselves to the proof of
(\ref{eq: contribution of active mobile points to Q_q cap < z_i^j, z_{i+1}^j ] }),
since
(\ref{eq: contribution of active mobile points to Q_q cap < z_{n_j-(i+1)}^j, etc})
then follows from a mirror argument.
Equation (\ref{eq: contribution of active mobile points to Q_q cap < z_i^j, z_{i+1}^j ] })
counts the number of elements of $Q_q$
contained in the interval $\left\langle  z_i^{j^{\prime}}, z_{i+1}^{j^{\prime}} \right]$,
for any given $j^{\prime} \in \Z/d$.  Thinking of this interval as
$z_i^{j^{\prime}} + \left\langle 0, dq \right]$,
we can describe all the elements of $\left\langle 0, dq \right] \cap \Z$
as belonging to one of three categories.  For any
$x \in \left\langle 0, dq \right] \cap \Z$, one of the following is true:
\begin{itemize}
\item[(i)]
   {$z_i^{j^{\prime}} + x$ is never an element of $Q_q$,
  regardless of the value of $j^{\prime} \in \Z/d$.}

\item[(ii)]
   {$z_i^{j^{\prime}} + x$ is always an element of $Q_q$,
  for any value of $j^{\prime} \in \Z/d$.}

\item[(iii)]
  {$z_i^{j^{\prime}} + x$ is sometimes an element of $Q_q$,
   and sometimes not, depending on $j^{\prime} \in \Z/d$.}
\end{itemize}
Equation (\ref{eq: contribution of active mobile points to Q_q cap < z_i^j, z_{i+1}^j ] })
then records the contribution of $x$ to
$\#\left({\tilde{Q}}_q \cap \left\langle z_i^{j^{\prime}}, z_{i+1}^{{j^{\prime}}} \right] \right)$
as follows.
There are no contributions from $x$ of type (i);
we define $C_i$ to count the 
contributions from $x$ of type (ii); and
as explained below, the second summand
of (\ref{eq: contribution of active mobile points to Q_q cap < z_i^j, z_{i+1}^j ] })
counts the contributions from $x$ of type (iii).

The point of Corollary 3.10 is that, generically speaking, most $x$ which are not of
type (i) are of type (ii).  That is, suppose, for a given
$x \in \left\langle 0, dq \right] \cap \Z$, that there exists some $j_0 \in \Z/d$ for which
         $z_i^{j_0} + x  \in  Q_q$.
Then there exist $l \in \Z/d$ and $i_0 \in \left\{0, ..., n_{j_0+l}\right\}$ for which
         $z_i^{j_0} + x = z_{i_0}^{j_0+l}$.
The following argument, which holds in all but two exceptional cases
described below, shows that $x$ must be of type (ii).
By Corollary 3.10, the fact that $z_i^{j_0} + x = z_{i_0}^{j_0+l}$ implies that either
   $x = \minq_{j \in \Z/d}\left( z_{i_0}^{j+l} - z_i^j\right)$,
in which case
       $z_i^{j^{\prime}} + x  \in \left\{ z_{i_0}^{j^{\prime}+l}, z_{i_0-1}^{j^{\prime}+1} \right\}$
for all $j^{\prime} \in \Z/d$, or
   $x = \maxq_{j \in \Z/d}\left( z_{i_0}^{j+l} - z_i^j\right)$,
in which case
       $z_i^{j^{\prime}} + x \in \left\{ z_{i_0}^{j^{\prime}+l}, z_{i_0+1}^{j^{\prime}+1} \right\}$
for all $j^{\prime} \in \Z/d$.
Thus  $z_i^{j^{\prime}} + x \in Q_q$ for all $j^{\prime} \in \Z/d$,
and so $x$ is of type (ii).

The above argument fails in precisely two cases,
in each of which, $x$ is of type (iii).
\begin{itemize}
\item[Case R.]
{If $i_0 = 0$, and $x = \minq_{j \in \Z/d}\left( z_{i_0}^{j+l} - z_i^j\right)$,
then $z_{0-1}^{j^{\prime}-1}$ does not exist.  Thus
       $z_i^{j^{\prime}} + x \in Q_q$
if and only if
       $z_i^{j^{\prime}} + x = z_0^{j^{\prime}+l}$,
which is true if and only if
   $z_0^{j^{\prime}+l} - z_i^{j^{\prime}}
             = \minq_{j \in \Z/d}\left( z_0^{j+l} - z_i^j\right)$,
which, by definition, is true if and only if
   $z_0^{j+l}$ is active as an R-mobile point
in $\left\langle z_i^j, z_{i+1}^j \right]$ at time $j=j^{\prime}$.}

\item[Case L.]
{If $i_0 = n_{j_0+l}$, and $x = \maxq_{j \in \Z/d}\left( z_{i_0}^{j+l} - z_i^j\right)$,
and there exists $j''$ for which
     $z_i^{j''} + x = z_{n_{j''+l}}^{j''+l} + dq$ (which is not in $Q_q$).
We then say that $z_{n_{j+l}}^{j+l}$ is L-mobile
in $\left\langle z_i^j, z_{i+1}^j \right]$.  In this case,
       $z_i^{j^{\prime}} + x \in Q_q$
if and only if
       $z_i^{j^{\prime}} + x = z_{n_{j^{\prime}+l}}^{j^{\prime}+l}$,
which is true if and only if
  $z_{n_{j^{\prime}+l}}^{j^{\prime}+l} - z_i^{j^{\prime}}
             = \maxq_{j \in \Z/d}\left( z_{i_0}^{j+l} - z_i^j\right)$,
which, by definition, is true if and only if
   $z_{n_{j+l}}^{j+l}$ is active as an L-mobile point
in $\left\langle z_i^j, z_{i+1}^j \right]$ at time $j=j^{\prime}$.}
\end{itemize}
Thus, the second summand of
(\ref{eq: contribution of active mobile points to Q_q cap < z_i^j, z_{i+1}^j ] })
counts all $x \in \left\langle 0, dq \right] \cap \Z$ of type (iii).

Note that the above argument makes no use of equations
(\ref{eq: active means in for R-mobile}) and 
(\ref{eq: active means in for L-mobile}).
In particular, the above argument would hold
even if $[dq]_{k^2}$ failed to satisfy the condition $[dq]_{k^2} < \frac{k^2}{2}$.
\end{proof}

Corollary \ref{cor: q of positive type: combo of lemma and difference eq}
implies that for each mobile point, there must be at least one time when
the mobile point is active and at least one time when the mobile point is inactive.
In fact, we can specify precisely how many times a given mobile point is active.

\begin{prop}
\label{prop: mobile point is active [lepsilon] times or [(l-1)epsilon] times}
Suppose that $q$ of positive type is genus-minimizing.
If $z_0^{j+l}$ is R-mobile in
$\left\langle z_i^j, z_{i+1}^j \right]$ (and hence
$z_{n_{j-l}}^{j-l}$ is L-mobile in
$\left\langle z_{n_j-(i+1)}^j, z_{n_j-i}^j \right]$), for some $l \neq 0 \in \Z/d$
and $i \in \left\{0, \ldots, \left\lfloor\frac{k}{d}\right\rfloor - 2\right\}$,
then each of the two mobile points is active precisely
$[l\epsilon]_d$ times.
If $z_0^{j+l}$ is R-mobile in
$\left\langle z_{n_j-(i+1)}^j, z_{n_j-i}^j \right]$
(and hence $z_{n_{j-l}}^{j-l}$ is L-mobile in
$\left\langle z_i^j, z_{i+1}^j \right]$), for some $l \neq 1 \in \Z/d$
and $i \in \left\{0, \ldots, \left\lfloor\frac{k}{d}\right\rfloor - 2\right\}$,
then each of the two mobile points is active precisely
$[(l-1)\epsilon]_d$ times.
\end{prop}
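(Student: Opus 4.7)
The plan is to rewrite each activeness condition as an equation on the function $\Xi^{d,\epsilon}_l$ from Lemma~\ref{lemma: q of positive type, xi lemma}, so that counting active times reduces to applying that lemma. For the R-mobile point $z_0^{j+l}$ in $\left\langle z_i^j, z_{i+1}^j\right]$, being active at $j'$ means $z_0^{j'+l} - z_i^{j'}$ attains the $\minq$ value given in Corollary~\ref{cor: q of positive type: combo of lemma and difference eq}. Using equation~(\ref{eq: z_0^j+l - z_0^j = mu ml k^2/d + xi dq}) and the identity $z_i^j - z_0^j = i\,[dq]_{k^2}$ in integer lifts, I would compute
\[
z_0^{j+l} - z_i^j = [\mu m l]_d \frac{k^2}{d} + \left(\Xi^{d,\epsilon}_l(j) - i\right)[dq]_{k^2}.
\]
Matching this against the $\minq$ expression forces $\Xi^{d,\epsilon}_l(j) = -[-l\epsilon]_d/d$, and Lemma~\ref{lemma: q of positive type, xi lemma} then counts exactly $[l\epsilon]_d$ such $j \in \Z/d$.

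For the mirror L-mobile point $z_{n_{j-l}}^{j-l}$ in $\left\langle z_{n_j - (i+1)}^j, z_{n_j - i}^j\right]$, being active at $j'$ means equality with the $\maxq$ value. I would decompose
\[
z_{n_{j-l}}^{j-l} - z_{n_j - i}^j = \left(z_0^{j-l} - z_0^j\right) + \left(n_{j-l} - n_j + i\right)[dq]_{k^2},
\]
apply~(\ref{eq: z_0^j+l - z_0^j = mu ml k^2/d + xi dq}) with $j$ replaced by $j-l$, and use the floor identity~(\ref{eq: def 2 of theta}) to collapse the resulting $\theta^{d,\epsilon}$ and $\Xi^{d,\epsilon}_l(j-l)$ contributions into $-\Xi^{d,\epsilon}_l(j-l+1)$. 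The activeness condition then becomes a single equation on $\Xi^{d,\epsilon}_l$ evaluated at the shifted argument $j-l+1$, and Lemma~\ref{lemma: q of positive type, xi lemma} again yields $[l\epsilon]_d$ solutions. For the second half of the proposition, the same strategy applies with $l$ replaced by $l-1$: reorganizing $z_0^{j+l} - z_{n_j-(i+1)}^j$ so that~(\ref{eq: z_0^j+l - z_0^j = mu ml k^2/d + xi dq}) applies with parameters $(l-1, j+1)$ produces $\Xi^{d,\epsilon}_{l-1}$ and hence the count $[(l-1)\epsilon]_d$.

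The main obstacle is the index-shift bookkeeping in the L-mobile computation: one must verify that $\theta^{d,\epsilon}(j) - \theta^{d,\epsilon}(j-l) - \Xi^{d,\epsilon}_l(j-l) = -\Xi^{d,\epsilon}_l(j-l+1)$ and keep integer lifts consistent, since Lemma~\ref{lemma: q of positive type, xi lemma} returns values in $\tfrac{1}{d}\Z$ rather than in $\Z/k^2$. Once the correct shift is identified, everything else is routine application of~(\ref{eq: z_0^j+l - z_0^j = mu ml k^2/d + xi dq}) and Corollary~\ref{cor: q of positive type: combo of lemma and difference eq}.
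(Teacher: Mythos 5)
Your proposal is correct and takes essentially the same approach as the paper: reduce the activeness condition to a condition on $\Xi^{d,\epsilon}_l$ (or $\Xi^{d,\epsilon}_{l-1}$) at a shifted argument, then count via Lemma~\ref{lemma: q of positive type, xi lemma}. The only cosmetic difference is in the L-mobile bookkeeping: you route through $z_0^{j-l} - z_0^j$ and the $\theta$--$\Xi$ shift identity, whereas the paper arrives at the same expression $-[\mu m l]_d\frac{k^2}{d} - (\Xi^{d,\epsilon}_l(j-l+1) - i)[dq]_{k^2}$ directly via $z_{n_j}^j = z_0^{j+1} - \psi$, which cancels the $\psi$'s and immediately produces the $j-l+1$ shift.
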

\begin{proof}
Suppose the hypothesis of the first statement is true.
Then for all $j\in \Z/d$, 
(\ref{eq: z_0^j+l - z_0^j = mu ml k^2/d + xi dq}) implies that
\begin{align}
\label{prop: mobile active le times. eq: explicit equation for z_0^j+l -z_0^j}
      z^{j+l}_0 - z^{j}_i
&= \left[{\mu}ml\right]_d\frac{k^2}{d}
       \;+\; \left(\Xi^{d, \epsilon}_l(j)-i\right) [dq]_{k^2},
                  \\ \nonumber
      z_{n_{j-l}}^{j-l} - z_{n_j-i}^j
&= -\left[{\mu}ml\right]_d\frac{k^2}{d}
       \;-\; \left(\Xi^{d, \epsilon}_l(j-l+1)-i\right) [dq]_{k^2},
\end{align}
where, by
Lemma \ref{lemma: q of positive type, xi lemma},
$\Xi^{d, \epsilon}_l(j) \in \left\{ \frac{[l\epsilon]_d}{d},  \frac{[l\epsilon]_d}{d}-1 \right\}$
for all $j\in\Z/d$, with $\frac{[l\epsilon]_d}{d}$ occurring
$[-l\epsilon]_d$ times and $\frac{[l\epsilon]_d}{d} - 1$
occurring $[l\epsilon]_d$ times.
Since (\ref{prop: mobile active le times. eq: explicit equation for z_0^j+l -z_0^j})
implies $z_0^{j+l}$ is active in
$\left\langle z_i^j, z_{i+1}^j\right]$
(respectively $z_{n_{j-l}}^{j-l}$ is active in
$\left\langle z_{n_j-(i+1)}^j, z_{n_j-i}^j \right]$)
at time $j = j^{\prime}$ if and only if
$\Xi^{d, \epsilon}_l(j^{\prime}) = \frac{[l\epsilon]_d}{d}-1$
(respectively $\Xi^{d, \epsilon}_l(j^{\prime}-l+1) = \frac{[l\epsilon]_d}{d}-1$),
we conclude that each of the two mobile points is active
precisely $[l\epsilon]_d$ times.

Next, suppose the hypothesis of the second statement is true.
Then for all $j\in \Z/d$, 
(\ref{eq: z_0^j+l - z_0^j = mu ml k^2/d + xi dq}) implies that
\begin{align}
\label{prop: mobile active le times. eq: explicit equation for z_0^j+l -z_n_j^j}
      z^{j+l}_0 - z_{n_j -(i+1)}^j
&= \left[{\mu}m(l-1)\right]_d\frac{k^2}{d}
       \;+\; \left(\Xi^{d, \epsilon}_{l-1}(j+1)+(i+1)\right) [dq]_{k^2} + \psi,
                  \\ \nonumber
      z_{n_{j-l}}^{j-l} - z_{i+1}^j
&= -\left[{\mu}m(l-1)\right]_d\frac{k^2}{d}
       \;-\; \left(\Xi^{d, \epsilon}_{l-1}(j-l+1)+(i+1)\right) [dq]_{k^2} - \psi.
\end{align}
Here,
$\Xi^{d, \epsilon}_{l-1}(j) \in \left\{ \frac{[(l-1)\epsilon]_d}{d},  \frac{[(l-1)\epsilon]_d}{d}-1 \right\}$
for all $j\in\Z/d$, with $\frac{[(l-1)\epsilon]_d}{d}$ occurring
$[-(l-1)\epsilon]_d$ times and $\frac{[(l-1)\epsilon]_d}{d} - 1$
occurring $[(l-1)\epsilon]_d$ times.
Since (\ref{prop: mobile active le times. eq: explicit equation for z_0^j+l -z_n_j^j})
implies $z_0^{j+l}$ is active in
$\left\langle z_{n_j-(i+1)}^j, z_{n_j-i}^j \right]$
(respectively $z_{n_{j-l}}^{j-l}$ is active in
$\left\langle z_i^j, z_{i+1}^j\right]$)
at time $j = j^{\prime}$ if and only if
$\Xi^{d, \epsilon}_{l-1}(j^{\prime}+1) = \frac{[(l-1)\epsilon]_d}{d}-1$
(respectively $\Xi^{d, \epsilon}_{l-1}(j^{\prime}-l+1) = \frac{[(l-1)\epsilon]_d}{d}-1$),
we conclude that each of the two mobile points is active
precisely $[(l-1)\epsilon]_d$ times.

\end{proof}

We next define what it means for a mobile point to be {\em neutralized}.
An R-mobile point $z_0^{j+l}$ in 
$\left\langle z_i^j, z_{i+1}^j\right]$, for some $l \neq 0 \in \Z/d$
(respectively in $\left\langle z_{n_j-(i+1)}^j, z_{n_j-i}^j\right]$,
for some $l \neq 1 \in \Z/d$),
is called neutralized if
$z_{n_{j+l-1}}^{j+l-1}$ is also mobile in 
$\left\langle z_i^j, z_{i+1}^j\right]$
(respectively in $\left\langle z_{n_j-(i+1)}^j, z_{n_j-i}^j\right]$).
Likewise, an L-mobile point $z_{n_{j-l}}^{j-l}$ in
$\left\langle z_{n_j-(i+1)}^j, z_{n_j-i}^j\right]$, for some $l \neq 0 \in \Z/d$
(respectively in $\left\langle z_i^j, z_{i+1}^j\right]$,
for some $l \neq 1 \in \Z/d$),
is called neutralized if
$z_0^{j-l+1}$ is also mobile in 
$\left\langle z_{n_j-(i+1)}^j, z_{n_j-i}^j\right]$
(respectively in $\left\langle z_i^j, z_{i+1}^j\right]$).
A pair $z_0^{j+l}$, $z_{n_{j+l-1}}^{j+l-1}$
of neutralized mobile points in
$\left\langle z_i^j, z_{i + 1}^j\right]$
(respectively in $\left\langle z_{n_j-(i+1)}^j, z_{n_j-i}^j\right]$)
 is so called because,
as shown in Proposition \ref{prop: active iff inactive true iff neutralized},
\begin{equation}
\label{main prop, part (i), neutralizing def, active iff inactive}
z_{n_{j+l}}^{j+l}\;\text{is active}
\;\;\;\;\;\Leftrightarrow\;\;\;\;\;
z_0^{j+l+1}\;\text{is inactive}
\end{equation}
in $\left\langle z_i^j, z_{i + 1}^j\right]$
(respectively in $\left\langle z_{n_j-(i+1)}^j, z_{n_j-i}^j\right]$)
at every time $j=j^{\prime} \in \Z/d$.
Thus, their combined contribution to
$\#\!\left({\tilde{Q}}_q \cap \left\langle z_i^{j^{\prime}}, 
               z_{i+1}^{j^{\prime}} \right] \right)$
(respectively to $\#\!\left({\tilde{Q}}_q \cap \left\langle z_{n_{j^{\prime}}-(i+1)}^{j^{\prime}}, 
               z_{n_{j^{\prime}}-i}^{j^{\prime}} \right] \right)$)
is always constant in $j^{\prime} \in \Z/d$.
In other words, each member of the neutralized pair
``neutralizes'' the nonconstancy of the other member's contribution
to $\#\!\left({\tilde{Q}}_q \cap \left\langle z_i^{j}, 
z_{i+1}^{j} \right] \right)$
(respectively to 
$\#\!\left({\tilde{Q}}_q \cap \left\langle z_{n_j-(i+1)}^{j}, 
z_{n_j-i}^{j} \right] \right)$).

Property 
(\ref{main prop, part (i), neutralizing def, active iff inactive})
is also sufficient condition for two mobile points to form a neutralized pair,
as we now demonstrate.
\begin{prop}
\label{prop: active iff inactive true iff neutralized}
Suppose that $q$ is of positive type, and that
$z_0^{j+l_1}$ and $z_{n_{j-l_2}}^{j-l_2}$ are mobile in
$\left\langle z_i^j, z_{i+1}^j\right]$,
for some $l_1 \neq 0, l_2\neq 1 \in \Z/d$
(respectively in $\left\langle z_{n_j-(i+1)}^j, z_{n_j-i}^j\right]$,
for some $l_1 \neq 1, l_2 \neq 0 \in \Z/d$).  Then
$z_0^{j+l_1}$ and $z_{n_{j-l_2}}^{j-l_2}$ form a neutralized pair
({\em i.e.}, $l_1 + l_2 = 1$) if and only if they satisfy
\begin{equation}
\nonumber
z_{n_{j-l_2}}^{j-l_2}\;\text{is active}
\;\;\;\;\;\Leftrightarrow\;\;\;\;\;
z_0^{j+l_1}\;\text{is inactive}
\end{equation}
in $\left\langle z_i^j, z_{i+1}^j\right]$
(respectively in $\left\langle z_{n_j-(i+1)}^j, z_{n_j-i}^j\right]$)
at all times $j=j^{\prime} \in \Z/d$.
\end{prop}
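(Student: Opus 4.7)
The plan is to translate both the activity of $z_0^{j+l_1}$ and the activity of $z_{n_{j-l_2}}^{j-l_2}$ into explicit conditions on the function $\Xi^{d,\epsilon}_\cdot(\cdot)$ of Lemma \ref{lemma: q of positive type, xi lemma}, and then to compare them directly. The necessary inputs are extracted from the proof of Proposition \ref{prop: mobile point is active [lepsilon] times or [(l-1)epsilon] times}. For the first case (both points mobile in $\left\langle z_i^j, z_{i+1}^j\right]$), the R-mobile point $z_0^{j+l_1}$ is active at time $j'$ iff $\Xi^{d,\epsilon}_{l_1}(j') = \tfrac{[l_1\epsilon]_d}{d}-1$, while the L-mobile point $z_{n_{j-l_2}}^{j-l_2}$ is active iff $\Xi^{d,\epsilon}_{l_2-1}(j'-l_2+1) = \tfrac{[(l_2-1)\epsilon]_d}{d}-1$. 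The mirror case uses the analogous criteria with $l_1$ and $l_2-1$ swapped.

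For the forward direction, I would first record the elementary identity
\begin{equation*}
\Xi^{d,\epsilon}_{-l}(j) \;=\; -\,\Xi^{d,\epsilon}_l(j-l) \qquad (l\in\Z/d,\ l\neq 0),
\end{equation*}
which is immediate from the closed form $\Xi^{d,\epsilon}_l(j) = \tfrac{\tilde l\epsilon}{d} - \bigl(\lfloor (\tilde j+\tilde l)\epsilon/d\rfloor - \lfloor \tilde j\epsilon/d\rfloor\bigr)$ used in the proof of Lemma \ref{lemma: q of positive type, xi lemma}. Assuming $l_1 + l_2 = 1$ in $\Z/d$, we have $l_1 \equiv -(l_2-1)$, so the identity gives $\Xi^{d,\epsilon}_{l_1}(j') = -\Xi^{d,\epsilon}_{l_2-1}(j'-l_2+1)$. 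Since $l_1, l_2-1 \neq 0$ and $\gcd(\epsilon,d)=1$, we also have $[l_1\epsilon]_d + [(l_2-1)\epsilon]_d = d$. Combining these two facts, the condition $\Xi^{d,\epsilon}_{l_1}(j') = \tfrac{[l_1\epsilon]_d}{d}-1$ rewrites as $\Xi^{d,\epsilon}_{l_2-1}(j'-l_2+1) = \tfrac{[(l_2-1)\epsilon]_d}{d}$, which is precisely the condition that $z_{n_{j-l_2}}^{j-l_2}$ is \emph{inactive}. This establishes the biconditional at every $j' \in \Z/d$.

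For the converse, the cleanest route is a counting comparison. If $z_0^{j+l_1}$ is active iff $z_{n_{j-l_2}}^{j-l_2}$ is inactive at every $j'\in \Z/d$, then
\begin{equation*}
\#\{j' : z_0^{j+l_1}\text{ active}\} \;=\; \#\{j' : z_{n_{j-l_2}}^{j-l_2}\text{ inactive}\}.
\end{equation*}
By Proposition \ref{prop: mobile point is active [lepsilon] times or [(l-1)epsilon] times}, the left side equals $[l_1\epsilon]_d$ and the right side equals $d - [(l_2-1)\epsilon]_d = [(1-l_2)\epsilon]_d$. Because multiplication by $\epsilon$ is a bijection of $\Z/d$, the equality $[l_1\epsilon]_d = [(1-l_2)\epsilon]_d$ forces $l_1 \equiv 1 - l_2 \pmod{d}$, i.e., $l_1 + l_2 = 1$ in $\Z/d$.

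The mirror-interval case, where $l_1 \neq 1$ and $l_2\neq 0$, goes through verbatim after replacing $l_1$ by $l_1-1$ and $l_2-1$ by $l_2$ in the activity criteria; the same sign identity $\Xi^{d,\epsilon}_{-l}(j)=-\Xi^{d,\epsilon}_l(j-l)$ and the same counting match give the claim. Alternatively one can simply invoke the mirror relations (\ref{eq: mirror relation 1}) and (\ref{eq: mirror relation 2}) to reduce directly to the first case. The only real obstacle is notational: one must carefully track the shifts $j'\mapsto j'+1$, $j'\mapsto j'-l+1$ and the two different relative origins $z_0^j$ versus $z_{n_j}^j$ so that the $\Xi$-identity lands exactly on the intended active/inactive threshold.
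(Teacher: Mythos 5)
Your proof is correct, and it takes a genuinely different route from the paper's. The paper's proof is a direct interval-arithmetic argument: for the ``only if'' direction, it computes $z_{n_{j'+l-1}}^{j'+l-1} - z_{i+1}^{j'} = (z_0^{j'+l}-z_i^{j'}) - \psi - dq$ and observes this difference is constant in $j'$, so one difference achieves its $\maxq$ exactly when the other does; for the ``if'' direction, it evaluates $z_0^{j'+l_1} - z_{n_{j'-l_2}}^{j'-l_2}$ in both the active/inactive and inactive/active cases, finds the same value each time, concludes the difference is constant in $j'$, and then applies Corollary~\ref{cor: q of positive type: combo of lemma and difference eq} to force $l_1+l_2 = 1$. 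You instead translate activity into $\Xi$-threshold conditions and (a) for the forward direction exploit the clean antisymmetry $\Xi^{d,\epsilon}_{-l}(j) = -\Xi^{d,\epsilon}_l(j-l)$ together with $[l_1\epsilon]_d + [(l_2-1)\epsilon]_d = d$ to flip one threshold into the other's complement, and (b) for the converse compare the cardinalities $[l_1\epsilon]_d$ and $d - [(l_2-1)\epsilon]_d$ from Proposition~\ref{prop: mobile point is active [lepsilon] times or [(l-1)epsilon] times} and use that $\epsilon$ is a unit mod $d$. Your forward direction is arguably more conceptual (it isolates the exact algebraic reason the thresholds match up), and your converse replaces the paper's two-case computation with a one-line count. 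The tradeoff is dependence: your argument routes through Proposition~\ref{prop: mobile point is active [lepsilon] times or [(l-1)epsilon] times}, whose statement carries the genus-minimizing hypothesis of the section (although its proof does not actually use it), whereas the paper's proof of this proposition is self-contained given only Corollary~\ref{cor: q of positive type: combo of lemma and difference eq} and the definition of $\psi$. Within the standing assumptions of Section~\ref{ss: Notation and Definitions for q of Positive Type} this makes no difference, so both proofs serve equally well.
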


\begin{proof}
We begin with the ``only if'' statement.
Suppose, for some $l \neq 0 \in \Z/d$,
that $z_0^{j+l}$ and $z_{n_{j+l-1}}^{j+l-1}$ are mobile in
$\left\langle z_i^j, z_{i+1}^j\right]$.
Then for any $j^{\prime}\in\Z/d$, we have
\begin{align}
      z_{n_{j^{\prime}+l-1}}^{j^{\prime}+l-1} - z_{i+1}^{j^{\prime}}
&= \left(z_0^{j^{\prime}+l}-\psi\right) 
     -\left(z_i^{j^{\prime}}+ dq\right)
               \\ \nonumber
&= \left(z_0^{j^{\prime}+l} - z_i^{j^{\prime}}\right)
     -\psi - dq.
\end{align}
In particular, $\left(z_{n_{j^{\prime}+l-1}}^{j^{\prime}+l-1} - z_{i+1}^{j^{\prime}}\right)
-\left(z_0^{j^{\prime}+l} - z_i^{j^{\prime}}\right)$ is constant in $j^{\prime}\in \Z/d$.
Thus, at any time $j=j^{\prime} \in \Z/d$,
\begin{equation}
z_{n_{j^{\prime}+l-1}}^{j^{\prime}+l-1} - z_{i+1}^{j^{\prime}}
\!= \maxq_{j\in\Z/d}\left(z_{n_{j+l-1}}^{j+l-1} - z_{i+1}^j\right)
\;\;\;\Leftrightarrow\;\;\;
   z_0^{j^{\prime}+l} - z_i^{j^{\prime}}
\!= \maxq_{j\in\Z/d}\left(z_0^{j+l}\! - z_i^j\right),
\end{equation}
which means that at any time $j=j^{\prime} \in \Z/d$,
$z_{n_{j+l-1}}^{j+l-1} $ is active in $\left\langle z_i^j, z_{i+1}^j\right]$
if and only if $z_0^{j+l}$ is inactive
in $\left\langle z_i^j, z_{i+1}^j\right]$.

Next, we prove the ``if'' statement.
Suppose we know, for every $j^{\prime} \in \Z/d$, that
$z_{n_{j-l_2}}^{j-l_2}$ is active
in $\left\langle z_i^j, z_{i+1}^j\right]$
at time $j=j^{\prime}$ if and only if
$z_0^{j+l_1}$ is inactive
in $\left\langle z_i^j, z_{i+1}^j\right]$
at time $j=j^{\prime}$.
Then at any given time $j=j^{\prime}\in\Z/d$, either
$z_0^{j+l_1}$ is active and $z_{n_{j-l_2}}^{j-l_2}$ is inactive, so that
\begin{align}
       z_0^{j^{\prime}+l_1}  - z_{n_{j^{\prime}-l_2}}^{j^{\prime}-l_2}
&=  \left(z_0^{j^{\prime}+l_1} - z_i^{j^{\prime}}\right)
     - \left(z_{n_{j^{\prime}-l_2}}^{j^{\prime}-l_2} - z_{i+1}^{j^{\prime}}\right)
     - \left(z_{i+1}^{j^{\prime}} - z_i^{j^{\prime}}\right)
               \\ \nonumber
&=  \minq_{j\in\Z/d} \left(z_0^{j+l_1} - z_i^j\right)
      -\minq_{j\in\Z/d} \left(z_{n_{j-l_2}}^{j-l_2} - z_{i+1}^j\right)
      - dq,
\end{align}
or $z_0^{j+l_1}$ is inactive and $z_{n_{j-l_2}}^{j-l_2}$ is active, so that
\begin{align}
       z_0^{j^{\prime}+l_1}  - z_{n_{j^{\prime}-l_2}}^{j^{\prime}-l_2}
&=  \left(z_0^{j^{\prime}+l_1} - z_i^{j^{\prime}}\right)
     - \left(z_{n_{j^{\prime}-l_2}}^{j^{\prime}-l_2} - z_{i+1}^{j^{\prime}}\right)
     - \left(z_{i+1}^{j^{\prime}} - z_i^{j^{\prime}}\right)
               \\ \nonumber
&=  \maxq_{j\in\Z/d} \left(z_0^{j+l_1} - z_i^j\right)
      -\maxq_{j\in\Z/d} \left(z_{n_{j-l_2}}^{j-l_2} - z_{i+1}^j\right)
      - dq
               \\ \nonumber
&=  \minq_{j\in\Z/d} \left(z_0^{j+l_1} - z_i^j\right)
      -\minq_{j\in\Z/d} \left(z_{n_{j-l_2}}^{j-l_2} - z_{i+1}^j\right)
      - dq.
\end{align}
Thus  $z_0^{j+l_1}  - z_{n_{j-l_2}}^{j-l_2}$ is constant in $j\in\Z/d$,
which means that $z_0^{j+l_1}  - z_0^{j-l_2+1}$ is constant in $j\in\Z/d$,
which, by Corollary \ref{cor: q of positive type: combo of lemma and difference eq},
implies that $(j+l_1)  - (j-l_2+1) = 0$, and so $l_1+l_2 = 1$.
             \\

The analogous proof for mobile points in
$\left\langle z_{n_j-(i+1)}^j, z_{n_j-i}^j\right]$ is the same, but with
$l_1 \neq 0$, $l_2 \neq 1$, $z_i^j$, $z_{i+1}^j$, $z_i^{j^{\prime}}$, and
$z_{i+1}^{j^{\prime}}$ replaced with
$l_1 \neq 1$, $l_2 \neq 0$, $z_{n_j-(i+1)}^j$, $z_{n_j-i}^j$, 
$z_{n_{j^{\prime}}-(i+1)}^{j^{\prime}}$, and
$z_{n_{j^{\prime}}-i}^{j^{\prime}}$, respectively.
\end{proof}

Lastly, we introduce the notion of the {\em pseudomobile point},
which plays a role analogous to that of an ordinary mobile point,
but in the interval
$\left\langle z_{n_{j-1}}^{j-1}, z_0^j \right]$.
{\em A pseudomobile point is not a mobile point!}
The term mobile point {\em only} pertains to intervals of the form
$\left\langle z_i^j, z_{i+1}^j \right]$ or
$\left\langle z_{n_j - (i+1)}^j, z_{n_j - i}^j \right]$
for some $i \in \{0, \ldots, \left\lfloor\frac{k}{d}\right\rfloor - 2 \}$.
Thus, the question of whether ${\bf{z}}^j$ is said to have mobile points
has nothing to do with whether
$\left\langle z_{n_{j-1}}^{j-1}, z_0^j \right]$, which straddles 
${\bf{z}}^{j-1}$ and ${\bf{z}}^j$, is said to have
pseudomobile points.

With that caveat out of the way, we commence with a definition.
First, for the remainder of Section \ref{s:p=k2}, we fix
\begin{align}
   \psi 
:=& \left[z_0^j - z_{n_{j-1}}^{j-1}\right]_{k^2}\;\;\;\text{(which is constant in}\;j\in\Z/d\text{)}
        \\ \nonumber
=&  \left[dq - kq\right]_{k^2}
        \\ \nonumber
=& \left[(({\mu}m+{\gamma}c)k+\alpha) 
        \;-\;  \textstyle{\gamma\frac{ck+\alpha\gamma}{d}}k\right]_{k^2}.
\end{align}
Then, for any $l \neq 0 \in \Z/d$, we say that
$z_0^{j+l}$ is an R-pseudomobile point in
$\left\langle z_{n_{j-1}}^{j-1}, z_0^j \right]$ if
\begin{equation}
      0
\;<\; \minq_{j\in\Z/d} \left( z_0^{j+l} - z_{n_{j-1}}^{j-1} \right)
\;<\; \psi,
\end{equation}
and that $z_{n_{j-1-l}}^{j-1-l}$ is an
L-pseudomobile point in
$\left\langle z_{n_{j-1}}^{j-1}, z_0^j \right]$ if
\begin{equation}
      -\psi
\;<\; \maxq_{j\in\Z/d} \left( z_{n_{j-1-l}}^{j-1-l}- z_0^j \right)
\;<\; 0.
\end{equation}
We say that a point is pseudomobile in
$\left\langle z_{n_{j-1}}^{j-1}, z_0^j \right]$ if it is
R-pseudomobile or L-pseudomobile in
$\left\langle z_{n_{j-1}}^{j-1}, z_0^j \right]$.

Using Corollary \ref{cor: q of positive type: combo of lemma and difference eq},
it is easy to calculate that
\begin{align}
      \minq_{j\in\Z/d}\left(z_0^{j+l} - z_{n_{j-1}}^{j-1}\right)
&= [{\mu}ml]_d\frac{k^2}{d} + \left(\frac{[l\epsilon]_d}{d}-1\right)[dq]_{k^2}   +  \psi,
                 \\
      \maxq_{j\in\Z/d}\left(z_{n_{j-1-l}}^{j-1-l}-z_0^j\right)
&= -[{\mu}ml]_d\frac{k^2}{d} - \left(\frac{[l\epsilon]_d}{d}-1\right)[dq]_{k^2}   -  \psi,
\end{align}
for all nonzero $l\in\Z/d$.  Thus
\begin{equation}
    \minq_{j\in\Z/d} \left( z_0^{j+l} - z_{n_{j-1}}^{j-1} \right)
= -\maxq_{j\in\Z/d} \left( z_{n_{j-1-l}}^{j-1-l} - z_0^j \right)
\end{equation}
for all nonzero $l\in\Z/d$.
In particular, $z_0^{j+l}$ is R-pseudomobile in
$\left\langle z_{n_{j-1}}^{j-1}, z_0^j \right]$ if and only if
$z_{n-1-l}^{j-1-l}$ is L-pseudomobile in
$\left\langle z_{n_{j-1}}^{j-1}, z_0^j \right]$.
We therefore say that $z_0^{j+l}$ and
$z_{n_{j-1-l}}^{j-1-l}$ are mirror pseudomobile points
in $\left\langle z_{n_{j-1}}^{j-1}, z_0^j \right]$.

Next, we discuss the notion of active and inactive
pseudomobile points.
If $z_0^{j+l}$ is R-pseudomobile in
$\left\langle z_{n_{j-1}}^{j-1}, z_0^j \right]$,
then we say that $z_0^{j+l}$ is active
at time $j=j^{\prime} \in \Z/d$ if 
\begin{equation}
      z_0^{j^{\prime}+l} - z_{n_{j^{\prime}-1}}^{j^{\prime}-1}
\;=\; \minq_{j\in\Z/d} \left( z_0^{j+l} - z_{n_{j-1}}^{j-1} \right),
\end{equation}
and is inactive otherwise.  Likewise,
if $z_{n_{j-1-l}}^{j-1-l}$ is L-pseudomobile in
$\left\langle z_{n_{j-1}}^{j-1}, z_0^j \right]$,
then $z_{n_{j-1-l}}^{j-1-l}$ is active at time $j = j^{\prime}\in\Z/d$ if
\begin{equation}
      z_{n_{j^{\prime}-1-l}}^{j^{\prime}-1-l} - z_0^{j^{\prime}}
\;=\; \maxq_{j\in\Z/d} \left( z_{n_{j-1-l}}^{j-1-l} - z_0^j \right),
\end{equation}
and is inactive otherwise.
Similar to the case of mobile points,
there exists $C \in \Z$ constant in $j^{\prime} \in \Z/d$
such that, for any $j^{\prime} \in \Z/d$, we have
\begin{equation}
      \#\left({\tilde{Q}}_q \cap \left\langle z_{n_{j^{\prime}-1}}^{j^{\prime}-1}, 
               z_0^{{j^{\prime}}} \right] \right)
\;=\; C \;+\; 
      \#\!\left\{\begin{array}{cc}
         \text{pseudomobile points in}\;
         \left\langle z_{n_{j-1}}^{j-1}, z_0^j \right]
                  \\
         \text{which are active at time}\; j=j^{\prime} \end{array}
      \right\}.
\end{equation}
The following proposition specifies how many times a given
pseudomobile point is active.

\begin{prop}
\label{prop: pseudomobile point is active [lepsilon] times}
Suppose that $q$ of positive type is genus-minimizing.
If $z_0^{j+l}$ is R-pseudomobile (and hence
$z_{n_{j-1-l}}^{j-1-l}$ is L-pseudomobile)
in $\left\langle z_{n_{j-1}}^{j-1}, z_0^j \right]$,
then each of the two pseudomobile points is active precisely
$[l\epsilon]_d$ times.
\end{prop}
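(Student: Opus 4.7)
The plan is to mirror the argument of Proposition \ref{prop: mobile point is active [lepsilon] times or [(l-1)epsilon] times}, replacing the intervals $\left\langle z_i^j, z_{i+1}^j\right]$ with the straddling interval $\left\langle z_{n_{j-1}}^{j-1}, z_0^j \right]$ and using the explicit formulas for the $\minq$ and $\maxq$ that are displayed immediately before the statement of this proposition.

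First, I would write $z_{n_{j-1}}^{j-1} = z_0^j - \psi$, so that for every $j \in \Z/d$,
\begin{equation*}
   z_0^{j+l} - z_{n_{j-1}}^{j-1}
=  \left(z_0^{j+l} - z_0^j\right) + \psi
=  \left[\mu m l\right]_d \frac{k^2}{d}
   + \Xi^{d,\epsilon}_l(j)\,[dq]_{k^2}
   + \psi,
\end{equation*}
where the last equality is (\ref{eq: z_0^j+l - z_0^j = mu ml k^2/d + xi dq}). By Lemma \ref{lemma: q of positive type, xi lemma}, $\Xi^{d,\epsilon}_l(j) \in \{[l\epsilon]_d/d,\; [l\epsilon]_d/d - 1\}$, with the larger value occurring $[-l\epsilon]_d$ times and the smaller value $[l\epsilon]_d$ times. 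Comparing with the formula stated just before the proposition, the R-pseudomobile point $z_0^{j+l}$ is active at time $j=j'$ exactly when $\Xi^{d,\epsilon}_l(j')$ attains its smaller value $[l\epsilon]_d/d - 1$. Hence $z_0^{j+l}$ is active precisely $[l\epsilon]_d$ times.

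Next, for the L-pseudomobile partner, I would apply the same procedure to $z_{n_{j-1-l}}^{j-1-l} - z_0^j = (z_0^{j-l} - z_0^j) - \psi$, obtaining
\begin{equation*}
   z_{n_{j-1-l}}^{j-1-l} - z_0^j
=  -\left[\mu m l\right]_d \frac{k^2}{d}
   - \Xi^{d,\epsilon}_l(j-l)\,[dq]_{k^2}
   - \psi.
\end{equation*}
Comparing with the $\maxq$ formula, this point is active at time $j=j'$ exactly when $\Xi^{d,\epsilon}_l(j'-l)$ attains its smaller value. Since $j \mapsto j-l$ is a bijection on $\Z/d$, this again happens $[l\epsilon]_d$ times, as claimed.

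The only delicate point is verifying the sign conventions and the shift by $\psi$ when re-expressing differences relative to $z_{n_{j-1}}^{j-1}$ rather than $z_0^{j-1}$ or $z_0^j$; everything else is a direct translation of the proof of Proposition \ref{prop: mobile point is active [lepsilon] times or [(l-1)epsilon] times}. Note that no genus-minimizing hypothesis is actually invoked beyond ensuring that the parameters $(d,m,c,\alpha,\gamma,\mu)$ and the positive-type assumption make the formulas in Corollary \ref{cor: q of positive type: combo of lemma and difference eq} valid, so the hypothesis is used only to justify the background setup.
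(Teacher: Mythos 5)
Your proof is correct and is essentially identical to the paper's: both rewrite $z_0^{j+l}-z_{n_{j-1}}^{j-1}$ and $z_{n_{j-1-l}}^{j-1-l}-z_0^j$ via equation (\ref{eq: z_0^j+l - z_0^j = mu ml k^2/d + xi dq}), invoke Lemma \ref{lemma: q of positive type, xi lemma} to count how often $\Xi^{d,\epsilon}_l$ takes its smaller value, and use the bijection $j\mapsto j-l$ for the L-pseudomobile count. The closing observation that the genus-minimizing hypothesis is only used to justify the background setup is accurate but does not affect the argument.
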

\begin{proof}
For all $j\in \Z/d$, 
(\ref{eq: z_0^j+l - z_0^j = mu ml k^2/d + xi dq}) implies that
\begin{align}
\label{prop: pseudomobile active le times. eq: explicit equation for z_0^j+l -z_0^j}
      z^{j+l}_0 - z_{n_{j-1}}^{j-1}
&= \left[{\mu}ml\right]_d\frac{k^2}{d}
       \;+\; \left(\Xi^{d, \epsilon}_l(j)\right) [dq]_{k^2} +\psi,
                  \\ \nonumber
      z_{n_{j-1-l}}^{j-1-l} - z_0^j
&= -\left[{\mu}ml\right]_d\frac{k^2}{d}
       \;-\; \left(\Xi^{d, \epsilon}_l(j-l)\right) [dq]_{k^2} - \psi,
\end{align}
where, by
Lemma \ref{lemma: q of positive type, xi lemma},
$\Xi^{d, \epsilon}_l(j) \in \left\{ \frac{[l\epsilon]_d}{d},  \frac{[l\epsilon]_d}{d}-1 \right\}$
for all $j\in\Z/d$, with $\frac{[l\epsilon]_d}{d}$ occurring
$[-l\epsilon]_d$ times and $\frac{[l\epsilon]_d}{d} - 1$
occurring $[l\epsilon]_d$ times.
Since (\ref{prop: pseudomobile active le times. eq: explicit equation for z_0^j+l -z_0^j})
implies $z_0^{j+l}$ (respectively $z_{n_{j-1-l}}^{j-1-l}$)
is active in $\left\langle z_{n_{j-1}}^{j-1}, z_0^j \right]$
at time $j = j^{\prime}$ if and only if
$\Xi^{d, \epsilon}_l(j^{\prime}) = \frac{[l\epsilon]_d}{d}-1$
(respectively $\Xi^{d, \epsilon}_l(j^{\prime}-l) = \frac{[l\epsilon]_d}{d}-1$),
we conclude that each of the two pseudomobile points is active
precisely $[l\epsilon]_d$ times.
\end{proof}

Finally, we say that an R-pseudomobile point $z_0^{j+l}$ in
$\left\langle z_{n_{j-1}}^{j-1}, z_0^j \right]$
is {\em neutralized} if
$z_{n_{j+l-1}}^{j+l-1}$ is L-pseudomobile in
$\left\langle z_{n_{j-1}}^{j-1}, z_0^j \right]$,
and that an L-pseudomobile point $z_{n_{j+l}}^{j+l}$ in
$\left\langle z_{n_{j-1}}^{j-1}, z_0^j \right]$
is neutralized if
$z_0^{j+l+1}$ is R-pseudomobile in
$\left\langle z_{n_{j-1}}^{j-1}, z_0^j \right]$.
We say a pseudomobile point is {\em non-neutralized}
if it is not neutralized.
The reason for this terminology is as follows.
As proven in
Proposition \ref{prop: active iff inactive means neutralized for pseudomobile}
below, if $z_0^{j+l}$ and $z_{n_{j+l-1}}^{j+l-1}$ are pseudomobile in
$\left\langle z_{n_{j-1}}^{j-1}, z_0^j \right]$, then
at any time $j^{\prime} \in \Z/d$,
\begin{equation}
z_0^{j^{\prime}+l}\;\text{is active}
\;\;\;\;\;\Leftrightarrow\;\;\;\;\;
z_{n_{j^{\prime}+l-1}}^{j^{\prime}+l-1}\;\text{is inactive}.
\end{equation}
Thus, their combined associated contribution to
$\#\!\left({\tilde{Q}}_q \cap \left\langle z_{n_{j^{\prime}-1}}^{j^{\prime}-1}, 
               z_0^{{j^{\prime}}} \right] \right)$
is always constant in $j^{\prime} \in \Z/d$.
              \\

\begin{prop}
\label{prop: active iff inactive means neutralized for pseudomobile}
Suppose that $q$ of positive type is genus-minimizing, and
that $z_{n_{j+l_1}}^{j+l_1}$ and $z_0^{j+l_2}\!$ are pseudomobile in
$\left\langle z_{n_{j-1}}^{j-1}, z_0^j \right]$
for some $l_1, l_2 \in \Z/d$.  Then
$z_{n_{j+l_1}}^{j+l_1}$ and $z_0^{j+l_2}$ form a neutralized pair
({\em i.e.}, $l_1 + 1 = l_2$) if and only if they satisfy
\begin{equation}
\nonumber
z_{n_{j+l_1}}^{j+l_1}\;\text{is active}
\;\;\;\;\;\Leftrightarrow\;\;\;\;\;
z_0^{j+l_2}\;\text{is inactive}
\end{equation}
in $\left\langle z_{n_{j-1}}^{j-1}, z_0^j \right]$ at all times $j=j^{\prime} \in \Z/d$.
\end{prop}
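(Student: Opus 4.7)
The plan is to mirror the structure of the proof of Proposition~\ref{prop: active iff inactive true iff neutralized} for ordinary mobile points, exploiting the single identity $z_0^j = z_{n_{j-1}}^{j-1} + \psi$ (which is constant in $j \in \Z/d$) to propagate between L- and R-pseudomobiles in each direction of the biconditional. In both directions I would reduce everything to comparisons of the form $(\text{L-term}) - (\text{R-term})$, use the fact that each term ranges over only two values differing by $[dq]_{k^2}$, and finally apply Lemma~\ref{lemma: q of positive type, xi lemma} to extract the congruence $l_1 + 1 \equiv l_2 \pmod d$.

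For the ``only if'' direction, I would assume $l_2 = l_1 + 1$, set $l := l_2$ so that $z_{n_{j+l_1}}^{j+l_1} = z_{n_{j+l-1}}^{j+l-1}$, and shift the identity $z_{n_{j-1}}^{j-1} = z_0^j - \psi$ by $l$ in $j$ to obtain $z_{n_{j+l-1}}^{j+l-1} = z_0^{j+l} - \psi$. Subtracting gives
\[
    \bigl(z_{n_{j+l-1}}^{j+l-1} - z_0^j\bigr) - \bigl(z_0^{j+l} - z_{n_{j-1}}^{j-1}\bigr) = -2\psi,
\]
which is constant in $j$. Hence $z_{n_{j'+l-1}}^{j'+l-1} - z_0^{j'}$ attains its $\maxq$ exactly when $z_0^{j'+l} - z_{n_{j'-1}}^{j'-1}$ attains its $\maxq$; since ``active'' means attaining $\maxq$ for the L-pseudomobile and attaining $\minq$ (equivalently, failing to attain $\maxq$) for the R-pseudomobile, this is precisely the asserted equivalence.

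For the ``if'' direction, I would start from the decomposition
\[
    z_{n_{j+l_1}}^{j+l_1} - z_0^{j+l_2}
   = \bigl(z_{n_{j+l_1}}^{j+l_1} - z_0^j\bigr) - \bigl(z_0^{j+l_2} - z_{n_{j-1}}^{j-1}\bigr) + \psi.
\]
At any $j' \in \Z/d$, exactly one of two cases occurs under the hypothesis: in Case A ($z_0^{j'+l_2}$ active, $z_{n_{j'+l_1}}^{j'+l_1}$ inactive) both parenthesized terms attain their respective $\minq$s, while in Case B (the roles reversed) both attain their $\maxq$s. Because passing from $\minq$ to $\maxq$ adds the same shift $[dq]_{k^2}$ to each of the two terms, the two cases yield the same value of $z_{n_{j+l_1}}^{j+l_1} - z_0^{j+l_2}$. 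Adding $\psi$ then shows $z_0^{j+l_1+1} - z_0^{j+l_2}$ is constant in $j \in \Z/d$, and by equation~(\ref{eq: z_0^j+l - z_0^j = mu ml k^2/d + xi dq}) together with Lemma~\ref{lemma: q of positive type, xi lemma}, the quantity $z_0^{j+l} - z_0^j$ is constant in $j$ only when $l \equiv 0 \pmod d$. This forces $l_1 + 1 \equiv l_2 \pmod d$, as required.

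The main obstacle I anticipate is the bookkeeping in the ``if'' direction, specifically verifying that Cases A and B really produce the \emph{same} constant. This works precisely because the $\maxq - \minq$ gap equals $[dq]_{k^2}$ for \emph{each} of the two pseudomobiles, so the simultaneous $\minq \to \maxq$ transition cancels inside the parenthesized difference; an asymmetric version of this identity would break the argument. I would double-check the cancellation against the explicit formulas for the pseudomobile $\minq$s and $\maxq$s displayed just before Proposition~\ref{prop: pseudomobile point is active [lepsilon] times}, and make sure the ``active = $\minq$ for R, $\maxq$ for L'' convention is applied consistently throughout.
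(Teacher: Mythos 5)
Your proposal is correct and follows essentially the same path as the paper's own proof: the ``only if'' direction uses the constancy of the difference $(z_{n_{j+l-1}}^{j+l-1}-z_0^j)-(z_0^{j+l}-z_{n_{j-1}}^{j-1}) = -2\psi$ to transfer the $\maxq$/$\minq$ alternation, and the ``if'' direction uses the same decomposition and the $\minq$-versus-$\maxq$ cancellation to show $z_0^{j+l_1+1}-z_0^{j+l_2}$ is constant, then invokes Corollary~\ref{cor: q of positive type: combo of lemma and difference eq} (equivalently, equation~(\ref{eq: z_0^j+l - z_0^j = mu ml k^2/d + xi dq}) plus Lemma~\ref{lemma: q of positive type, xi lemma}) to force $l_1+1=l_2$. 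The only differences are a relabeling $l \leftrightarrow l+1$ and your choice to cite the underlying lemma rather than its corollary; the cancellation you flagged as the ``main obstacle'' is exactly the step the paper's proof spells out.
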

\begin{proof}
We begin with the ``only if'' statement.
For any $j^{\prime}\in\Z/d$, since
\begin{align}
      z_{n_{j^{\prime}+l}}^{j^{\prime}+l} - z_0^{j^{\prime}}
&= \left(z_0^{j^{\prime}+l+1}-\psi\right) 
     -\left(z_{n_{j^{\prime}-1}}^{j^{\prime}-1}+\psi\right)
               \\ \nonumber
&= \left(z_0^{j^{\prime}+l+1} - z_{n_{j^{\prime}-1}}^{j^{\prime}-1}\right) -2\psi,
\end{align}
we know that
\begin{equation}
z_{n_{j^{\prime}\!+l}}^{j^{\prime}\!+l}\! - z_0^{j^{\prime}}
\!= \maxq_{j\in\Z/d}\left(z_{n_{j+l}}^{j+l} - z_0^{j}\right)
\;\;\;\Leftrightarrow\;\;\;
   z_0^{j^{\prime}\!+l+1}\! - z_{n_{j^{\prime}-1}}^{j^{\prime}-1}
\!= \maxq_{j\in\Z/d}\left(z_0^{j+l+1}\! - z_{n_{j-1}}^{j-1}\right).
\end{equation}
Thus, at any time $j=j^{\prime} \in \Z/d$,
$z_{n_{j+l}}^{j+l}$ is active in $\left\langle z_{n_{j-1}}^{j-1}, z_0^j \right]$
if and only if $z_0^{j+l+1}$ is inactive
in $\left\langle z_{n_{j-1}}^{j-1}, z_0^j \right]$.

Next, we prove the ``if'' statement.
Suppose that at any given time $j=j^{\prime}\in\Z/d$, either
$z_0^{j+l_2}$ is active and $z_{n_{j+l_1}}^{j+l_1}$ is inactive
in $\left\langle z_{n_{j-1}}^{j-1}, z_0^j \right]$, so that
\begin{align}
       z_0^{j^{\prime}+l_2}  - z_{n_{j^{\prime}+l_1}}^{j^{\prime}+l_1}
&=  \left(z_0^{j^{\prime}+l_2} - z_{n_{j^{\prime}-1}}^{j^{\prime}-1}\right)
     - \left(z_{n_{j^{\prime}+l_1}}^{j^{\prime}+l_1} - z_0^{j^{\prime}}\right)
     - \left(z_0^{j^{\prime}} - z_{n_{j^{\prime}-1}}^{j^{\prime}-1}\right)
               \\ \nonumber
&=  \minq_{j\in\Z/d} \left(z_0^{j+l_2} - z_{n_{j-1}}^{j-1}\right)
      -\minq_{j\in\Z/d} \left(z_{n_{j+l_1}}^{j+l_1} - z_0^j\right)
      - \psi,
\end{align}
or $z_0^{j+l_2}$ is inactive and $z_{n_{j+l_1}}^{j+l_1}$ is active, so that
\begin{align}
       z_0^{j^{\prime}+l_2}  - z_{n_{j^{\prime}+l_1}}^{j^{\prime}+l_1}
&=  \left(z_0^{j^{\prime}+l_2} - z_{n_{j^{\prime}-1}}^{j^{\prime}-1}\right)
     - \left(z_{n_{j^{\prime}+l_1}}^{j^{\prime}+l_1} - z_0^{j^{\prime}}\right)
     - \left(z_0^{j^{\prime}} - z_{n_{j^{\prime}-1}}^{j^{\prime}-1}\right)
               \\ \nonumber
&=  \maxq_{j\in\Z/d} \left(z_0^{j+l_2} - z_{n_{j-1}}^{j-1}\right)
      -\maxq_{j\in\Z/d} \left(z_{n_{j+l_1}}^{j+l_1} - z_0^j\right)
      - \psi
               \\ \nonumber
&=  \minq_{j\in\Z/d} \left(z_0^{j+l_2} - z_{n_{j-1}}^{j-1}\right)
      -\minq_{j\in\Z/d} \left(z_{n_{j+l_1}}^{j+l_1} - z_0^j\right)
      - \psi,
\end{align}
Thus  $z_0^{j+l_2}  - z_{n_{j+l_1}}^{j+l_1}$ is constant in $j\in\Z/d$,
which means that $z_0^{j+l_2}  - z_0^{j+l_1+1}$ is constant in $j\in\Z/d$,
which, by Corollary \ref{cor: q of positive type: combo of lemma and difference eq},
implies that $(j+l_2) - (j+l_1+1) = 0$, and so $l_1+1 = l_2$.
\end{proof}

\subsection{Properties of ${\mathbf{z}}^j$ for Genus-Minimizing $q$ of Positive Type}
\label{ss: Properties of z^j for Genus-Minimizing q of Positive Type}

We have finally introduced enough terminology to be able to
state some results.  For an initial reading, the reader might
obtain a clearer picture of the overall argument if he or she skips 
all treatments of the special case in which
$\left\lfloor\frac{k}{d}\right\rfloor = 2$.


\begin{prop}
\label{prop: positive type, main prop}
Suppose $q$ of positive type is genus-minimizing,
and let $x_*, y_*$ denote the unique elements of $Q_q$ for which
$v_q(x_*, y_*) = {\alpha}(k-k^2)$.  Then the following are true:
\begin{itemize}
\item[(i)]
If an interval $\left\langle z_i^j, z_{i+1}^j\right]$
(respectively $\left\langle z_{n_j-(i+1)}^j, z_{n_j - i}^j\right]$)
has any mobile points, then there exists a unique $j_* \in \Z/d$
such that $\left(z_i^{j_*}, z_{i+1}^{j_*}\right) = \left(x_*, y_*\right)$
(respectively $\left(z_{n_{j_*}-(i+1)}^{j_*}, z_{n_{j_*} - i}^{j_*}\right) = \left(x_*, y_*\right)$).

\item[(i$\psi$)]
If the interval
$\left\langle z_{n_{j-1}}^{j-1}, z_0^j \right]$
has any non-neutralized pseudomobile points, then
there exists a unique $j_* \in \Z/d$ such that
$\left(z_{n_{j_*-1}}^{j_*-1}, z_0^{j_*}\right) = \left(x_*, y_*\right)$.

\item[(ii)]
If $v_q(z_{n_{j-1}}^{j-1}, z_0^j)$ is constant in $j\in\Z/d$, then
there are precisely two mobile points in ${\bf{z}}^j$,
namely, $z_0^{j+l}$ R-mobile in $\left\langle z_{i_*}^j, z_{i_*+1}^j \right]$ and
$z_0^{j-l}$ L-mobile in
$\left\langle z_{n_j - (i_*+1)}^j, z_{n_j - i_*}^j \right]$, for some nonzero $l\in\Z/d$,
where $i_*$ is the unique element of  
$\left\{0, \ldots, \left\lfloor\frac{k}{d}\right\rfloor -2 \right\}$ satisfying
$\left(x_*,y_*\right) = \left(z_{i_*}^{j_*},z_{i_*+1}^{j_*}\right) 
= \left(z_{n_{j_*} - (i_*+1)}^{j_*}, z_{n_{j_*} - i_*}^{j_*}\right)$.

\item[(ii$\psi$)]
If $v_q(z_{n_{j-1}}^{j-1}, z_0^j)$ is nonconstant in $j\in\Z/d$, then
$\left\langle z_{n_{j-1}}^{j-1}, z_0^j \right]$ has precisely one
non-neutralized R-pseudomobile point and precisely one
non-neutralized L-pseudomobile point, namely,
$z_0^{j+l}$ and $z_{n_{j-1-l}}^{j-1-l}$ for some nonzero $l\in \Z/d$.

\item[(iii)]
If $(\mu,\gamma) = (1,1)$, then
$v_q(z_{n_{j-1}}^{j-1}, z_0^j) \equiv {\alpha}k$ for all $j \in \Z/d$.

\item[(iv)]
All mobile points are non-neutralized.
Moreover, $\psi > 2\left[dq\right]_{k^2}$
unless $(\mu,\gamma) = (1,1)$, $\alpha = -1$, $m=2$, $c=1$,
and $d= \frac{k-1}{2} \equiv 0\; (\mod 2)$. 
\end{itemize}
\end{prop}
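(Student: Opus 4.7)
The proof will rest on Proposition~\ref{prop: unique v_q = alpha(k - k^2), and the rest are v_q = alpha (k)}, which asserts a unique ``big gap'' $(x_*, y_*) = (z_{r_*}, z_{r_*+1})$ with $v_q = \alpha(k-k^2)$, while every other consecutive pair has $v_q = \alpha k$. All four parts of the proposition will follow from tracking how this uniqueness interacts with the active/inactive and neutralization bookkeeping for (pseudo)mobile points.

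For (i) and (i$\psi$), fix $i$ and observe that the pairs $(z_i^j, z_{i+1}^j)$ as $j$ ranges over $\Z/d$ are $d$ distinct consecutive pairs in $\mathbf{z}$, each of fixed spacing $[dq]_{k^2}$. By Corollary~\ref{cor: q of positive type: combo of lemma and difference eq} and the active/inactive framework, the presence of any mobile point in $\langle z_i^j, z_{i+1}^j]$ forces $\#(\tilde{Q}_q \cap \langle z_i^j, z_{i+1}^j])$, and hence $v_q(z_i^j, z_{i+1}^j)$, to be nonconstant in $j$. Proposition~\ref{prop: unique v_q = alpha(k - k^2), and the rest are v_q = alpha (k)} then forces this value to realize both $\alpha k$ and $\alpha(k-k^2)$, and uniqueness of $(x_*, y_*)$ yields a unique $j_*$; the mirror interval is handled identically. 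For (i$\psi$), Proposition~\ref{prop: active iff inactive means neutralized for pseudomobile} shows that a neutralized pseudomobile pair contributes constantly to the bridge count, so only non-neutralized pseudomobile points produce variation, and the same argument then gives the unique $j_*$.

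Parts (ii) and (ii$\psi$) split on whether $v_q(z_{n_{j-1}}^{j-1}, z_0^j)$ is constant in $j$. In (ii), constancy combined with the contrapositive of (i$\psi$) implies no non-neutralized pseudomobile points, so the big gap lies in some $\langle z_{i_*}^{j_*}, z_{i_*+1}^{j_*}]$; applying (i) both to this interval and to its mirror interval, and invoking uniqueness of $(x_*, y_*)$, forces the symmetric identification and pins down $i_*$ and $j_*$. To show there are exactly two mobile points, I invoke Proposition~\ref{prop: mobile point is active [lepsilon] times or [(l-1)epsilon] times}: an R-mobile $z_0^{j+l}$ is active $[l\epsilon]_d$ times, and the requirement that the count transition between $\alpha k$ and $\alpha(k-k^2)$ at a single $j_*$ forces this active count to be $1$ when $\alpha = 1$ and $d-1$ when $\alpha = -1$; this pins $l$ uniquely, and the mirror relation~(\ref{eq: mirror relation 1}) supplies the paired L-mobile. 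Case (ii$\psi$) is analogous with Proposition~\ref{prop: pseudomobile point is active [lepsilon] times} replacing Proposition~\ref{prop: mobile point is active [lepsilon] times or [(l-1)epsilon] times}. Part (iii) is a direct computation: when $(\mu,\gamma) = (1,1)$, the explicit expression for $\psi$ and the parameter ranges of Proposition~\ref{prop: properties of parameters d, m, c, alpha, mu, gamma} show each bridge $\langle z_{n_{j-1}}^{j-1}, z_0^j]$ contains the same number of elements of $\tilde{Q}_q$, forcing $v_q \equiv \alpha k$ on bridges.

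Part (iv) is the main obstacle. The non-neutralization claim reduces to showing that the mirror pair from (ii) is not simultaneously a neutralized pair: a neutralized pair requires the index relation $l_R + l_L \equiv 1 \pmod d$, while the mirror-pair indices satisfy a different relation (arising from the reindexing between the rel $z_0^j$ and rel $z_{n_j}^j$ descriptions of the same L-mobile), and the two coincide only in specific degenerate configurations ruled out by the genus-minimizing hypothesis together with the uniqueness of $l$ just established. The inequality $\psi > 2[dq]_{k^2}$ reduces to an explicit estimate of $\psi = [(\mu m + \gamma c)k + \alpha - \gamma\tfrac{ck+\alpha\gamma}{d}k]_{k^2}$ using Proposition~\ref{prop: properties of parameters d, m, c, alpha, mu, gamma}, carried out case-by-case on $(\mu,\gamma) \in \{(1,1),(1,-1),(-1,1)\}$ and $\alpha \in \{\pm 1\}$; the single exceptional configuration $(\mu,\gamma,\alpha,m,c,d) = (1,1,-1,2,1,\tfrac{k-1}{2})$ with $d$ even arises as the unique boundary case where the estimate becomes tight. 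Isolating this configuration and verifying no other boundary cases occur is the most delicate step of the entire proof.
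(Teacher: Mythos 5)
Your proposal has a genuine logical gap at the very start that propagates through the whole argument. You claim for (i) that ``the presence of any mobile point in $\langle z_i^j, z_{i+1}^j]$ forces $\#(\tilde{Q}_q\cap\langle z_i^j, z_{i+1}^j])$, and hence $v_q$, to be nonconstant in $j$.'' This is false: a \emph{neutralized} pair of mobile points contributes constantly to the count precisely because one is active exactly when the other is inactive (this is the content of Proposition~\ref{prop: active iff inactive true iff neutralized}). So mobile points can be present with $v_q(z_i^j,z_{i+1}^j)$ constant. The paper handles this by proving a modified statement (i$'$), which asserts the conclusion only for \emph{non-neutralized} mobile points, then separately proves (iv) (no neutralized mobile points exist), and finally recovers (i) from the conjunction. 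Your proposal never addresses this circularity: (i) and (ii) as literally stated depend on (iv), and your sketch of (iv) in turn invokes ``the mirror pair from (ii),'' so the dependency loop is not broken.

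Beyond the circularity, several of the core technical devices in the paper's proof are missing from your outline. The paper's proof of (i$'$) and (ii$'$) does not deduce nonconstancy directly; it introduces the ordered lists $L_{\mathrm{R}}$, $L_{\mathrm{L}}$ of R- and L-mobile points, the order relations $<_{\mathrm{R}}$, $<_{\mathrm{L}}$, and the nesting Claim~\ref{claim: structure of L_R^j and L_L^j}, and then runs a counting argument on $|L_{\mathrm{R}}^{j'}| + |L_{\mathrm{L}}^{j'}|$ to show that constancy of $v_q$ forces every mobile point into a neutralized pair. In (ii$'$) this counting argument splits into three subcases on $(|L_{\mathrm{R}}|,|L_{\mathrm{L}}|)$ for each sign of $\alpha$, showing at most one non-neutralized point of each handedness. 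Your substitute --- invoking Proposition~\ref{prop: mobile point is active [lepsilon] times or [(l-1)epsilon] times} to claim the active count must be $1$ or $d-1$ and thereby ``pin $l$ uniquely'' --- assumes what it is trying to prove, namely that there is only one relevant mobile point; with several mobile points present (neutralized or not) the active counts sum, and the transition at $j_*$ does not directly constrain $[l\epsilon]_d$ for any individual point. Similarly, calling (iii) ``a direct computation'' misses the structure entirely: the paper's proof of (iii) itself splits into three regimes for $\psi$, uses a covering of $\langle z_0^j, z_{n_{j-1}}^{j-1}]$ by subintervals whose mobile-point content is controlled by (i$'$), and in the intermediate regime $[dq]_{k^2}<\psi<2[dq]_{k^2}$ derives an algebraic contradiction from the parameter bounds --- it is not a one-line estimate on $\psi$. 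Finally, (iv) is the longest and most delicate part of the paper's proof, with explicit $v_q$-computations that rule out $m=3$, $m=2$ under various congruence conditions on $k$; your one-paragraph sketch that it ``reduces to an explicit estimate'' and isolates the exceptional configuration as ``the unique boundary case'' does not match the actual argument, which proceeds by contradiction through several subcases rather than by tightness of a single inequality.
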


Before commencing with the proof,
we pause for a brief discussion that will facilitate the proofs
of Parts (i), (i$\psi$), (ii), and (ii$\psi$).

We introduce some notation used to make lists of R-mobile points and L-mobile points,
and sublists of mobile points which are active at a given time.  In particular,
we show in Claim \ref{claim: structure of L_R^j and L_L^j}
that an ordering on the list of R-mobile points endows the list of
active R-mobile points with a particular structure, and similarly for 
L-mobile points.  The tools and terminology introduced in this discussion
are not needed outside the proofs of Parts (i), (i$\psi$), (ii), and (ii$\psi$),
since the results of (i), (i$\psi$), (ii), and (ii$\psi$)
obviate their utility.

For the following discussion and the proofs of Parts (i) and (ii), 
fix any $i \in \{0, \ldots, \left\lfloor\frac{k}{d}\right\rfloor -2\}$,
and let $a_j$ denote one of two functions of $j\in\Z/d$; either
$a_j \equiv i$ for all $j \in \Z/d$, or $a_j = n_j - (i+1)$ for each $j \in \Z/d$.
We do this so that we can speak of the interval
$\left\langle z_{a_j}^j, z_{a_j + 1}^j\right]$
without needing to specify whether we are measuring positions 
rel $z_0^j$ or rel $z_{n_j}^j$.
Note that when $a_j \equiv i$,
any mobile point $z_0^{j+l}$ (respectively $z_{n_{j-l}}^{j-l}$) in
$\left\langle z_{a_j}^j, z_{a_j + 1}^j\right]$ must satisfy $l \neq 0$
(respectively $l\neq 1$),
whereas when $a_j = n_j - (i+1)$ for each $j \in \Z/d$,
any mobile point $z_0^{j+l}$ (respectively $z_{n_{j-l}}^{j-l}$) in
$\left\langle z_{a_j}^j, z_{a_j + 1}^j\right]$ must satisfy $l \neq 1$
(respectively $l\neq 0$).

Let us begin by listing the mobile points, if any, in 
$\left\langle z_{a_j}^j, z_{a_j + 1}^j\right]$.
Define $L_{\mathrm{R}}$ and $L_{\mathrm{L}}$ by
\begin{align}
L_{\mathrm{R}} := \left\{l \in \Z/d \left\vert\;
       z_0^{j+l}\;\text{is R-mobile in}\;
       \left\langle z_{a_j}^j, z_{a_j + 1}^j\right]\right.\right\},
              \\ \nonumber
L_{\mathrm{L}} := \left\{l \in \Z/d \left\vert\;
       z_{n_{j+l}}^{j+l}\;\text{is L-mobile in}\;
       \left\langle z_{a_j}^j, z_{a_j + 1}^j\right]\right.\right\}.
\end{align}
We then define an order relation $<_{\mathrm{R}}$ on $L_{\mathrm{R}}$ as follows.
For any two distinct elements $l_1, l_2 \in L_{\mathrm{R}}$, we say that
$l_1 <_{\mathrm{R}} l_2$ if there exists $j^{\prime} \in \Z/d$ such that
$z_0^{j+l_1}$ is active and
$z_0^{j+l_2}$ is inactive in $\left\langle z_{a_j}^j, z_{a_j + 1}^j\right]$
at time $j=j^{\prime}$, or in other words, if
\begin{align}
      z_0^{j^{\prime\!}+l_1} - z_{a_{j^{\prime}}}^{j^{\prime}} 
&= \minq_{j\in\Z/d} \left( z_0^{j+l_1}\! - z_{a_j}^j\right),
              \\ \nonumber
      z_0^{j^{\prime\!}+l_2} - z_{a_{j^{\prime}}}^{j^{\prime}} 
&= \maxq_{j\in\Z/d} \left( z_0^{j+l_2}\! - z_{a_j}^j\right).
\end{align}
Similarly, for any two distinct elements $l_1, l_2 \in L_{\mathrm{L}}$, we say that
$l_1 <_{\mathrm{L}} l_2$ if there exists $j^{\prime} \in \Z/d$ such that
$z_{n_{j+l_1}}^{j+l_1}$ is inactive and 
$z_{n_{j+l_2}}^{j+l_2}$ is active
in $\left\langle z_{a_j}^j, z_{a_j + 1}^j\right]$ at time $j=j^{\prime}$.

We claim that the relations $<_{\mathrm{R}}$ and $<_{\mathrm{L}}$ define
valid orderings on $L_{\mathrm{R}}$ and $L_{\mathrm{L}}$, respectively.
We begin by showing that $<_{\mathrm{R}}$ and $<_{\mathrm{L}}$
are well defined.
Focusing on the case of $<_{\mathrm{R}}$, assume that
$\left| L_{\mathrm{R}}\right| \geq 2$ (since otherwise the claim is trivial),
and choose two arbitrary distinct elements $l_1, l_2 \in L_{\mathrm{R}}$.
Suppose that there is no time $j=j^{\prime}\in\Z/d$ at which
$z_0^{j+l_1}$ is active and
$z_0^{j+l_2}$ is inactive in $\left\langle z_{a_j}^j, z_{a_j+1}^j \right]$, 
and that there is also no time
$j=j^{\prime}\in\Z/d$ at which
$z_0^{j+l_1}$ is inactive and
$z_0^{j+l_2}$ is active in $\left\langle z_{a_j}^j, z_{a_j+1}^j \right]$.
Then for every $j^{\prime} \in \Z/d$, either
$z_0^{j+l_1}$ and $z_0^{j+l_2}$ are both active
in $\left\langle z_{a_j}^j, z_{a_j+1}^j \right]$ at time $j=j^{\prime}$, in which case
\begin{align}
      z_0^{j^{\prime}\!+l_1} - z_0^{j^{\prime}\!+l_2} 
&=  \left(z_0^{j^{\prime}\!+l_1} - z_{a_{j^{\prime}}}^{j^{\prime}}\right)
      -\left(z_0^{j^{\prime}\!+l_2} - z_{a_{j^{\prime}}}^{j^{\prime}}\right)
              \\ \nonumber
&=  \minq_{j\in\Z/d} \left(z_0^{j+l_1} - z_{a_j}^j\right)
     - \minq_{j\in\Z/d} \left(z_0^{j+l_2} - z_{a_j}^j\right),
\end{align}
or $z_0^{j+l_1}$ and $z_0^{j+l_2}$ are both inactive
in $\left\langle z_{a_j}^j, z_{a_j+1}^j \right]$ at time $j=j^{\prime}$, in which case
\begin{align}
      z_0^{j^{\prime}\!+l_1} - z_0^{j^{\prime}\!+l_2} 
&=  \left(z_0^{j^{\prime}\!+l_1} - z_{a_{j^{\prime}}}^{j^{\prime}}\right)
      -\left(z_0^{j^{\prime}\!+l_2} - z_{a_{j^{\prime}}}^{j^{\prime}}\right)
              \\ \nonumber
&=  \maxq_{j\in\Z/d} \left(z_0^{j+l_1} - z_{a_j}^j\right)
     - \maxq_{j\in\Z/d} \left(z_0^{j+l_2} - z_{a_j}^j\right)
              \\ \nonumber
&=  \minq_{j\in\Z/d} \left(z_0^{j+l_1} - z_{a_j}^j\right)
     - \minq_{j\in\Z/d} \left(z_0^{j+l_2} - z_{a_j}^j\right).
\end{align}
Thus 
$z_0^{j^{\prime}\!+l_1} - z_0^{j^{\prime}\!+l_2}$
is constant in $j^{\prime}\in\Z/d$,
contradicting Corollary \ref{cor: q of positive type: combo of lemma and difference eq}.

On the other hand, suppose that there exist $j_1, j_2 \in \Z/d$ such that
$z_0^{j+l_1}$ is active and
$z_0^{j+l_2}$ is inactive 
in $\left\langle z_{a_j}^j, z_{a_j+1}^j \right]$ at time $j=j_1$,
but
$z_0^{j+l_1}$ is inactive and
$z_0^{j+l_2}$ is active
in $\left\langle z_{a_j}^j, z_{a_j+1}^j \right]$ at time $j=j_2$.
Then the fact that $z_0^{j+l_1}$ is active when $j=j_1$
but inactive when $j=j_2$ implies that
\begin{equation}
        \left(z_0^{j_1+l_1} - z_{a_{j_1}}^{j_1}\right)
       -\left(z_0^{j_2+l_1} - z_{a_{j_2}}^{j_2}\right)
= -dq,
\end{equation}
whereas the fact that $z_0^{j+l_2}$ is inactive when $j=j_1$
but active when $j=j_2$ implies that
\begin{equation}
        \left(z_0^{j_1+l_2} - z_{a_{j_1}}^{j_1}\right)
       -\left(z_0^{j_2+l_2} - z_{a_{j_2}}^{j_2}\right)
= dq.
\end{equation}
Subtracting these two equations gives
\begin{equation}
  \left(z_0^{j_1+l_1} - z_0^{j_1+l_2}\right)
- \left(z_0^{j_2+l_1} - z_0^{j_2+l_2}\right)
= -2dq,
\end{equation}
contradicting Corollary \ref{cor: q of positive type: combo of lemma and difference eq}.
Thus $<_{\mathrm{R}}$ is well-defined.
A similar argument shows that $<_{\mathrm{L}}$ is well-defined.

Next, we show that $<_{\mathrm{R}}$ defines a valid order relation on $L_{\mathrm{R}}$.
Suppose there exist distinct $l_1, l_2, l_3 \in L_{\mathrm{R}}$ such that
$l_1 <_{\mathrm{R}} l_2$, $l_2 <_{\mathrm{R}} l_3$,
and $l_3 <_{\mathrm{R}} l_1$.
Then there exists $j^{\prime} \in \Z$ such that
$z_0^{j+l_3}$ is active and $z_0^{j+l_1}$ is inactive 
in $\left\langle z_{a_j}^j, z_{a_j+1}^j \right]$ at time $j=j^{\prime}$.
Now, if $z_0^{j+l_2}$ is active in $\left\langle z_{a_j}^j, z_{a_j+1}^j \right]$
when $j=j^{\prime}$, then this
contradicts our assumption that $l_1 <_{\mathrm{R}} l_2$,
but if $z_0^{j+l_2}$ is inactive in $\left\langle z_{a_j}^j, z_{a_j+1}^j \right]$
when $j=j^{\prime}$,
then this contradicts our assumption that $l_2 <_{\mathrm{R}} l_3$.
Thus, if $l_1 <_{\mathrm{R}} l_2$ and $l_2 <_{\mathrm{R}} l_3$, 
then we must have $l_1 <_{\mathrm{R}} l_3$,
and so $<_{\mathrm{R}}$ defines an ordering on $L_{\mathrm{R}}$.
A similar argument shows that
$<_{\mathrm{L}}$ defines an ordering on $L_{\mathrm{L}}$.

We therefore write 
$l_1^{\mathrm{R}}, l_2^{\mathrm{R}}, \ldots, l_{|L_{\mathrm{R}}|}^{\mathrm{R}}$
for the elements of $L_{\mathrm{R}}$ such that
$l_1^{\mathrm{R}} 
<_{\mathrm{R}} l_2^{\mathrm{R}}
<_{\mathrm{R}} \ldots <_{\mathrm{R}} l_{|L_{\mathrm{R}}|}^{\mathrm{R}}$,
and likewise write
$l_1^{\mathrm{L}}, l_2^{\mathrm{L}}, \ldots, l_{|L_{\mathrm{L}}|}^{\mathrm{L}}$
for the elements of $L_{\mathrm{L}}$ such that
$l_1^{\mathrm{L}} 
<_{\mathrm{L}} l_2^{\mathrm{L}}
<_{\mathrm{L}} \ldots <_{\mathrm{L}} l_{|L_{\mathrm{L}}|}^{\mathrm{L}}$.
Thus when, for each ${j^{\prime}} \in \Z/d$, we define
\begin{align}
    L^{{j^{\prime}}}_{\mathrm{R}}
&:= \left\{l \in L_{\mathrm{R}} \left\vert\; 
     z_0^{j+l}\;\text{is active in}
     \;\left\langle z_{a_j}^j, z_{a_j+1}^j \right]\;
     \text{when}\;j=j^{\prime}
    \right.\right\},
              \\ \nonumber
    L^{{j^{\prime}}}_{\mathrm{L}}
&:= \left\{l \in L_{\mathrm{L}} \left\vert\; 
     z_{n_{j+l}}^{j+l}\;\text{is active in}
     \;\left\langle z_{a_j}^j, z_{a_j+1}^j \right]\;
     \text{when}\;j=j^{\prime}
    \right.\right\},
\end{align}
the order relations $<_{\mathrm{R}}$ on $L_{\mathrm{R}}$ and
$<_{\mathrm{L}}$ on $L_{\mathrm{L}}$ make the elements of
$L^{j^{\prime}}_{\mathrm{R}}$ and $L^{j^{\prime}}_{\mathrm{L}}$
easy to enumerate:
\begin{equation}
  L^{j^{\prime}}_{\mathrm{R}}
= \left\{ l_1^{\mathrm{R}}, \ldots, l_{|L_{\mathrm{R}}^{j^{\prime}}|}^{\mathrm{R}} \right\}
     \;\;\mathrm{and}\;\;
  L^{j^{\prime}}_{\mathrm{L}}
= \left\{ l^{\mathrm{L}}_{|L_{\mathrm{L}}| -  |L_{\mathrm{L}}^{j^{\prime}}|+1} \ldots,
          l_{|L_{\mathrm{L}}|}^{\mathrm{L}} \right\}
\end{equation}
for any $j^{\prime}\in\Z/d$.  In particular, the following is true.
\begin{claim}
\label{claim: structure of L_R^j and L_L^j}
For any $j^{\prime} \in \Z/d$ and $s\in\{1, \ldots, |L_{\mathrm{R}}|\}$
such that $ l_s^{\mathrm{R}} \in  L^{{j^{\prime}}}_{\mathrm{R}}$, we have
\begin{equation*}
l_1^{\mathrm{R}}, \ldots, l_s^{\mathrm{R}}  \in  L^{{j^{\prime}}}_{\mathrm{R}}.
\end{equation*}
For any $j^{\prime} \in \Z/d$ and $s\in\{1, \ldots, |L_{\mathrm{L}}|\}$
such that $ l_s^{\mathrm{L}} \in  L^{{j^{\prime}}}_{\mathrm{L}}$, we have
\begin{equation*}
l_s^{\mathrm{L}}, \ldots, l_{|L_{\mathrm{L}}|}^{\mathrm{L}}
\in L^{{j^{\prime}}}_{\mathrm{L}}.
\end{equation*}
\end{claim}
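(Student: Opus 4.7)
The plan is to derive this claim as an immediate consequence of the well-definedness arguments for $<_{\mathrm{R}}$ and $<_{\mathrm{L}}$ that precede the claim. In particular, from those arguments I would extract the following \emph{contrapositive} characterization of each order relation: for any two distinct $l_1, l_2 \in L_{\mathrm{R}}$, if $l_1 <_{\mathrm{R}} l_2$, then there is no time $j=j^{\prime}\in\Z/d$ at which $z_0^{j+l_1}$ is inactive while $z_0^{j+l_2}$ is active in $\left\langle z_{a_j}^j, z_{a_j+1}^j \right]$; otherwise, $l_2 <_{\mathrm{R}} l_1$ would also hold by the very definition of $<_{\mathrm{R}}$, violating the ambiguity-exclusion already proved (the argument showing that the scenarios ``active/inactive both ways'' and ``always same status'' are each impossible). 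The analogous statement holds for $<_{\mathrm{L}}$.

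Given this extraction, the proof is essentially a one-line argument. For the first assertion, suppose $l_s^{\mathrm{R}} \in L_{\mathrm{R}}^{j^{\prime}}$, and fix any $t \in \{1, \ldots, s-1\}$. By definition of the enumeration, $l_t^{\mathrm{R}} <_{\mathrm{R}} l_s^{\mathrm{R}}$. Were $l_t^{\mathrm{R}} \notin L_{\mathrm{R}}^{j^{\prime}}$, then at time $j = j^{\prime}$ the point $z_0^{j+l_t^{\mathrm{R}}}$ would be inactive while $z_0^{j+l_s^{\mathrm{R}}}$ is active, which the characterization above forbids. Hence $l_t^{\mathrm{R}} \in L_{\mathrm{R}}^{j^{\prime}}$, proving $l_1^{\mathrm{R}}, \ldots, l_s^{\mathrm{R}} \in L_{\mathrm{R}}^{j^{\prime}}$.

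The second assertion follows by the mirror argument: if $l_s^{\mathrm{L}} \in L_{\mathrm{L}}^{j^{\prime}}$ and $t \in \{s+1, \ldots, |L_{\mathrm{L}}|\}$, then $l_s^{\mathrm{L}} <_{\mathrm{L}} l_t^{\mathrm{L}}$ forbids $z_{n_{j+l_t^{\mathrm{L}}}}^{j+l_t^{\mathrm{L}}}$ being inactive while $z_{n_{j+l_s^{\mathrm{L}}}}^{j+l_s^{\mathrm{L}}}$ is active at time $j=j^{\prime}$ (the asymmetry compared to the R-case is exactly the asymmetry built into the definitions of $<_{\mathrm{R}}$ and $<_{\mathrm{L}}$: in the L-case, ``$l_1 <_{\mathrm{L}} l_2$'' is declared when the larger element $l_2$ is the active one). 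Thus $l_t^{\mathrm{L}} \in L_{\mathrm{L}}^{j^{\prime}}$, giving the desired upward-closed structure.

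I do not foresee a genuine obstacle here; the only subtlety is keeping the asymmetric conventions for $<_{\mathrm{R}}$ and $<_{\mathrm{L}}$ straight, so that the R-claim is downward-closed in the order and the L-claim is upward-closed. Once that bookkeeping is right, the proof is a direct appeal to the already-established dichotomy that distinct mobile points cannot simultaneously satisfy both $l_1 <_{\scriptscriptstyle\bullet} l_2$ and $l_2 <_{\scriptscriptstyle\bullet} l_1$.
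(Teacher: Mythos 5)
Your proof is correct and takes essentially the same approach as the paper, which dispatches the claim with the single sentence ``The result follows directly from the definitions of $<_{\mathrm{R}}$ and $<_{\mathrm{L}}$''; you have simply unpacked that one-liner into the explicit contrapositive/trichotomy argument it implicitly relies on, keeping the asymmetry between the R- and L-conventions straight.
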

\begin{proof}
The result follows directly from the definitions of $<_{\mathrm{R}}$ and $<_{\mathrm{L}}$.
\end{proof}

\begin{proof}[Proof of (i)]
The proof of Part (i) relies on the result of Part (iv)
that neutralized mobile points do not exist when
$q$ of positive type is genus-minimizing.
We therefore prove a modified result, which we call (i'),
and which satisfies the property that the combined statements
of (i') and (iv) imply Part (i).
\begin{itemize}
\item[(i')]
{\em If the interval
$\left\langle z_{a_j}^j, z_{a_j + 1}^j\right]$
has any non-neutralized mobile points, then
there exists $j_* \in \Z/d$ such that
$\left(z_{a_{j_*}}^{j_*}, z_{a_{j_*+1}}^{j_*}\right) = (x_*, y_*)$.}
\end{itemize}

\noindent{\em{Proof of (i').}}\;
Suppose that $v_q(z_{a_j}^j, z_{a_j + 1}^j)$ is constant in $j \in \Z/d$.
This implies that there is some constant $M \in \Z_{\geq 0}$ for which
\begin{equation}
         \left\vert L^{j^{\prime}}_{\mathrm{R}}\right\vert
       + \left\vert L^{j^{\prime}}_{\mathrm{L}}\right\vert
\equiv M\;\;\;\;\text{for all}\; {j^{\prime}}\in \Z/d.
\end{equation}
Now, since $z_0^{j+l_{|L_{\mathrm{R}}|}}$
is R-mobile in $\left\langle z_{a_j}^j, z_{a_j + 1}^j\right]$,
there must be some time $j=j_{\mathrm{R}} \in \Z/d$ at which
$z_0^{j+l_{|L_{\mathrm{R}}|}}$ is active 
in $\left\langle z_{a_j}^j, z_{a_j+1}^j \right]$.
Thus $l^{\mathrm{R}}_{|L_{\mathrm{R}}|} \in L_{\mathrm{R}}^{j_{\mathrm{R}}}$,
which, by Claim \ref{claim: structure of L_R^j and L_L^j}, means that
$l_1^{\mathrm{R}}, \ldots, 
l^{\mathrm{R}}_{|L_{\mathrm{R}}|} \in L_{\mathrm{R}}^{j_{\mathrm{R}}}$,
or in other words, $L_{\mathrm{R}}^{j_{\mathrm{R}}} = L_{\mathrm{R}}$,
implying
$M \geq \left\vert L^{j_{\mathrm{R}}}_{\mathrm{R}}\right\vert
   =    \left\vert L_{\mathrm{R}}\right\vert$.
On the other hand, since $z_0^{j+l_1}$ is R-mobile in
$\left\langle z_{a_j}^j, z_{a_j + 1}^j\right]$,
we know there must be some time $j=j_{\mathrm{R}}^{\emptyset} \in \Z/d$
at which $z_0^{j+l_1}$ is inactive in $\left\langle z_{a_j}^j, z_{a_j+1}^j \right]$.
Thus $l^{\mathrm{R}}_1 \notin L_{\mathrm{R}}^{j_{\mathrm{R}}^{\emptyset}}$,
which, by the contrapositive of Claim \ref{claim: structure of L_R^j and L_L^j}, 
means that $l_1^{\mathrm{R}}, \ldots, 
l^{\mathrm{R}}_{|L_{\mathrm{R}}|} \notin L_{\mathrm{R}}^{j_{\mathrm{R}}}$,
or in other words,
$L_{\mathrm{R}}^{j_{\mathrm{R}}^{\emptyset}} = \emptyset$,
implying
$M =    \left\vert L^{j_{\mathrm{R}}^{\emptyset}}_{\mathrm{L}}\right\vert
   \leq \left\vert L_{\mathrm{L}}\right\vert$.
By similar reasoning, there exists $j_{\mathrm{L}} \in \Z/d$ such that
$L_{\mathrm{L}}^{j_{\mathrm{L}}} = L_{\mathrm{L}}$, implying
$M \geq \left\vert L^{j_{\mathrm{L}}}_{\mathrm{L}}\right\vert
   =    \left\vert L_{\mathrm{L}}\right\vert$,
and there exists $j_{\mathrm{L}}^{\emptyset} \in \Z/d$ such that
$L_{\mathrm{L}}^{j_{\mathrm{L}}^{\emptyset}} = \emptyset$, implying
$M =    \left\vert L^{j_{\mathrm{L}}^{\emptyset}}_{\mathrm{R}}\right\vert
   \leq \left\vert L_{\mathrm{R}}\right\vert$.
Thus
\begin{equation}
     \left\vert L_{\mathrm{R}} \right\vert
\leq M
\leq \left\vert L_{\mathrm{R}} \right\vert
\;\;\;\mathrm{and}\;\;\;
     \left\vert L_{\mathrm{L}} \right\vert
\leq M
\leq \left\vert L_{\mathrm{L}} \right\vert,
\end{equation}
and so
$\left\vert L_{\mathrm{R}} \right\vert = \left\vert L_{\mathrm{L}} \right\vert = M$.

Thus, recalling that 
$\left\vert L^{j^{\prime}}_{\mathrm{R}}\right\vert
+ \left\vert L^{j^{\prime}}_{\mathrm{L}}\right\vert = M$ for all $j^{\prime}\in \Z/d$,
we deduce that for any $j^{\prime} \in \Z/d$, we have
\begin{align}
   l_1^{\mathrm{L}}, \ldots, 
      l_{\left\vert L_{\mathrm{R}}^{j^{\prime}} \right\vert}^{\mathrm{L}}
      \notin L^{j^{\prime}}_{\mathrm{L}},&
&l_1^{\mathrm{R}}, \ldots, 
      l_{\left\vert L_{\mathrm{R}}^{j^{\prime}} \right\vert}^{\mathrm{R}}
       \in L^{j^{\prime}}_{\mathrm{R}},&
              \\
  l_{\left\vert L_{\mathrm{R}}^{j^{\prime}} \right\vert + 1}^{\mathrm{L}},
      \ldots, l_M^{\mathrm{L}} \in L^{j^{\prime}}_{\mathrm{L}},&
&l_{\left\vert L_{\mathrm{R}}^{j^{\prime}} \right\vert + 1}^{\mathrm{R}},
      \ldots, l_M^{\mathrm{R}} \notin L^{j^{\prime}}_{\mathrm{R}}.&
\end{align}
This means that, at any give time $j=j^{\prime}\in\Z/d$
and for any $s \in \{1, \ldots, M\}$,
$z_{n_{j+l_s^{\mathrm{L}}}}^{j+l_s^{\mathrm{L}}}$ is active
in $\left\langle z_{a_j}^j, z_{a_j+1}^j \right]$
if and only if 
$z_0^{j+l_s^{\mathrm{R}}}$
is inactive, and so
Proposition \ref{prop: active iff inactive true iff neutralized} tells us that
$z_{n_{j+l_s^{\mathrm{L}}}}^{j+l_s^{\mathrm{L}}}$ and 
$z_0^{j+l_s^{\mathrm{R}}}$ form a neutralized pair in 
$\left\langle z_{a_j}^j, z_{a_j+1}^j \right]$.
In other words, all mobile points in $\left\langle z_{a_j}^j, z_{a_j+1}^j \right]$
must be neutralized.

We have shown that if $v_q(z_{a_j}^j, z_{a_j + 1}^j)$ is constant
in $j \in \Z/d$, then all mobile points in
$\left\langle z_{a_j}^j, z_{a_j + 1}^j\right]$, if any,
are neutralized.  Thus, if
$\left\langle z_{a_j}^j, z_{a_j + 1}^j\right]$ has any
{\em{non}}-neutralized mobile points, then 
$v_q(z_{a_j}^j, z_{a_j + 1}^j)$ is {\em not} constant
in $j \in \Z/d$, and so, by
Proposition \ref{prop: unique v_q = alpha(k - k^2), and the rest are v_q = alpha (k)},
there must exist a unique
$j_* \in \Z/d$ for which
$v_q(z_{a_{j_*}}^{j_*}, z_{a_{j_*} + 1}^{j_*}) = \alpha(k-k^2)$.
\end{proof}

\begin{proof}[Proof of (i$\psi$)]
The proof of Part (i') works for Part (i$\psi$), with only a few minor changes.
One must replace the word ``mobile'' with the word ``pseudomobile''
and replace $z_{a_j}^j$ and $z_{a_{j+1}}^j$ with
$z_{n_{j-1}}^{j-1}$ and $z_0^j$, respectively---both in the actual proof,
and in the discussion of $L_{\mathrm{R}}$, $L_{\mathrm{L}}$, {\em et cetera},
preceding the proof of Part (i).  One must also replace the reference to
Proposition \ref{prop: active iff inactive true iff neutralized}
with a reference to 
Proposition \ref{prop: active iff inactive means neutralized for pseudomobile}.
\end{proof}

\begin{proof}[Proof of (ii)]
Here again, the result of Part (ii) relies on the
nonexistence of neutralized mobile points for genus-minimizing
$q$ of positive type, proven in Part (iv).
We therefore prove a modified claim, Part (ii'), which
does not rely on Part (iv), but which when combined with Part (iv)
implies (ii).
\begin{itemize}
\item[(ii')]
{\em If $v_q(z_{n_{j-1}}^{j-1}, z_0^j)$ is constant in $j\in\Z/d$, then
there are precisely two non-neutralized mobile points in ${\bf{z}}^j$,
namely, $z_0^{j+l}$ non-neutralized R-mobile in 
$\left\langle z_{i_*}^j, z_{i_*+1}^j \right]$ and
$z_0^{j+l}$ non-neutralized L-mobile in
$\left\langle z_{n_j - (i_*+1)}^j, z_{n_j - i_*}^j \right]$, for some $l\in\Z/d$,
where $i_*$ is the unique element of  
$\left\{0, \ldots, \left\lfloor\frac{k}{d}\right\rfloor -2 \right\}$ satisfying
$(x_*,y_*) = \left(z_{i_*}^{j_*},z_{i_*+1}^{j_*}\right) 
= \left(z_{n_{j_*} - (i_*+1)}^{j_*}, z_{n_{j_*} - i_*}^{j_*}\right)$.}
\end{itemize}

\noindent{\em{Proof of (ii').}}\;
Since $v_q\!\left(z_{n_{j-1}}^{j-1}, z_0^j\right)$ is constant in $j \in \Z/d$,
Proposition \ref{prop: unique v_q = alpha(k - k^2), and the rest are v_q = alpha (k)} tells us that
there exist unique $j_* \in \Z/d$ and $i_* \in \{0, \ldots, n_{j_*}-1\}$ for which
$v_q\!\left(z_{i_*}^{j_*}, z_{i_*+1}^{j_*}\right) = {\alpha}(k-k^2)$.
Thus, for the function $a_j$ of $j \in \Z/d$, we choose either
$a_j \equiv i_*$ for all $j \in \Z/d$,
or $a_j = n_j - (n_{j_*} - i_*)$ for each $j \in \Z/d$,
since these are the only two valid choices for $a_j$ for which
$\left(z_{i_*}^{j_*}, z_{i_*+1}^{j_*}\right) \in 
\left\{\left( z_{a_j}^j, z_{a_j+1}^j\right)\right\}_{j \in \Z/d}$.
We then have
\begin{equation}
  v_q\!\left(z_{a_j}^j, z_{a_j+1}^j\right)
= \begin{cases}
    {\alpha}(k-k^2)
       & j =j_*
             \\
    {\alpha}k
       & j \neq j_*
  \end{cases},
\end{equation}
and so there exists $M \in \Z$ with $M \geq 1$ such that
\begin{equation}
  \left\vert L_{\mathrm{R}}^j \right\vert
+ \left\vert L_{\mathrm{L}}^j \right\vert
= \begin{cases}
    M + \alpha
       & j = j_*
             \\
    M
       & j \neq j_*
  \end{cases}.
\end{equation}
We next argue that $\left\langle z^j_{a_j}, z^j_{a_j + 1} \right]$ has at most
one non-neutralized R-mobile point and at most one non-neutralized L-mobile point,
treating the cases of $\alpha = +1$ and $\alpha = -1$ separately.
        \\

{\noindent\bf{Case \!$\mathbf{\alpha = +1}$.}\;}
By the same reasoning as used in Part (i'),
one can show there exist
$j_{\mathrm{R}}\in\Z/d$ such that $L_{\mathrm{R}}^{j_{\mathrm{R}}} = L_{\mathrm{R}}$,
$j_{\mathrm{R}}^{\emptyset}\in\Z/d$ such that 
$L_{\mathrm{R}}^{j_{\mathrm{R}}^{\emptyset}} = \emptyset$,
$j_{\mathrm{L}}\in\Z/d$ such that $L_{\mathrm{L}}^{j_{\mathrm{L}}} = L_{\mathrm{L}}$,
and $j_{\mathrm{L}}^{\emptyset}\in\Z/d$ such that 
  $L_{\mathrm{L}}^{j_{\mathrm{L}}^{\emptyset}} = \emptyset$,
from which we conclude, respectively, that
$M+1 \geq \left\vert L_{\mathrm{R}} \right\vert$,
$M \leq \left\vert L_{\mathrm{L}} \right\vert$,
$M+1 \geq \left\vert L_{\mathrm{L}} \right\vert$, and
$M \leq \left\vert L_{\mathrm{R}} \right\vert$.
Now, if 
$\left\vert L_{\mathrm{R}} \right\vert = \left\vert L_{\mathrm{L}} \right\vert = M+1$,
then $j_{\mathrm{R}} = j_{\mathrm{L}} = j_*$,
but this implies that
$M+1 = \left\vert L_{\mathrm{R}}^{j_*} \right\vert
+ \left\vert L_{\mathrm{L}}^{j_*} \right\vert = 2(M+1)$, a contradiction.
Thus we are left with three possibilities:
\begin{equation}
\left\vert L_{\mathrm{R}} \right\vert = \left\vert L_{\mathrm{L}} \right\vert = M;
\;\;\;\;\;\;
\left\vert L_{\mathrm{R}} \right\vert = M+1,\; \left\vert L_{\mathrm{L}} \right\vert = M;
\;\;\;\;\mathrm{or}\;\;\;\;
\left\vert L_{\mathrm{R}} \right\vert = M,\; \left\vert L_{\mathrm{L}} \right\vert = M+1.
\end{equation}

First, consider the case in which
$\left\vert L_{\mathrm{R}} \right\vert 
= \left\vert L_{\mathrm{L}} \right\vert = M$.
Since
$ \left\vert L_{\mathrm{R}}^{j_*} \right\vert
+ \left\vert L_{\mathrm{L}}^{j_*} \right\vert = M+1 > M$,
there must exist some $s_0 \in \{1, \ldots, M\}$ for which
$l_{s_0}^{\mathrm{R}} \in L^{j_*}_{\mathrm{R}}$ and
$l_{s_0}^{\mathrm{L}} \in L^{j_*}_{\mathrm{L}}$.
Claim \ref{claim: structure of L_R^j and L_L^j} then tells us that
$l_1^{\mathrm{R}}, \ldots, l_{s_0}^{\mathrm{R}} \in L^{j_*}_{\mathrm{R}}$ and
$l_{s_0}^{\mathrm{L}}, \ldots, l_M^{\mathrm{L}} \in L^{j_*}_{\mathrm{L}}$.  But
\begin{equation}
    \left|\left\{ l_1^{\mathrm{R}}, \ldots, l_{s_0}^{\mathrm{R}}\right\}\right|
 + \left|\left\{  l_{s_0}^{\mathrm{L}}, \ldots, l_M^{\mathrm{L}}\right\}\right|
= M+1,
\end{equation}
and so we must have 
$L^{j_*}_{\mathrm{R}} = \left\{l_1^{\mathrm{R}}, \ldots, l_{s_0}^{\mathrm{R}}\right\}$ and
$L^{j_*}_{\mathrm{L}} = \left\{l_{s_0}^{\mathrm{L}}, \ldots, l_M^{\mathrm{L}}\right\}$, implying
$\left|L^{j_*}_{\mathrm{R}}\right| = s_0$ and
$\left|L^{j_*}_{\mathrm{R}}\right| = M+1-s_0$.
Note that this argument can also be used to show that if there is any
$s \in \{1, \ldots, M\}$ and any $j^{\prime} \in \Z/d$ for which
$l_{s}^{\mathrm{R}} \in L^{j^{\prime}}_{\mathrm{R}}$ and
$l_{s}^{\mathrm{L}} \in L^{j^{\prime}}_{\mathrm{L}}$,
then $\left|L^{j^{\prime}}_{\mathrm{R}}\right| = s$ and
$\left|L^{j^{\prime}}_{\mathrm{R}}\right| = M+1-s$,
so that $j^{\prime} = j_*$.  
There are also, however, no $s\in \{1, \ldots, M\}$ and $j^{\prime} \in \Z/d$ for which 
$l_{s}^{\mathrm{R}} \notin L^{j^{\prime}}_{\mathrm{R}}$ and
$l_{s}^{\mathrm{L}} \notin L^{j^{\prime}}_{\mathrm{L}}$, 
since this would imply
$\left\vert L_{\mathrm{R}}^{j^{\prime}} \right\vert
+ \left\vert L_{\mathrm{L}}^{j^{\prime}} \right\vert
< M$ when $j^{\prime} \neq j_*$, and since this would contradict the fact that 
$L^{j_*}_{\mathrm{R}} = \left\{l_1^{\mathrm{R}}, \ldots, l_{s_0}^{\mathrm{R}}\right\}$ and
$L^{j_*}_{\mathrm{L}} = \left\{l_{s_0}^{\mathrm{L}}, \ldots, l_M^{\mathrm{L}}\right\}$
when $j^{\prime} = j_*$.
Thus, for any $s \in \{1, \ldots, M\} \setminus \{s_0\}$,
and at any time $j=j^{\prime}\in\Z/d$, we know that
$z_{n_{j+l_s^{\mathrm{L}}}}^{j+l_s^{\mathrm{L}}}$
is active in $\left\langle z_{a_j}^j, z_{a_j+1}^j \right]$
if and only if
$z_0^{j+l_s^{\mathrm{R}}}$ is inactive.
Proposition \ref{prop: active iff inactive true iff neutralized}
then tells us that 
$z_{n_{j+l_s^{\mathrm{L}}}}^{j+l_s^{\mathrm{L}}}$ and 
$z_0^{j+l_s^{\mathrm{R}}}$ form a neutralized pair in
$\left\langle z_{a_j}^j, z_{a_j+1}^j \right]$, and so
all mobile points in 
$\left\langle z_{a_j}^j, z_{a_j + 1}^j \right]$
are neutralized except
except $z_0^{j+l_{s_0}^{\mathrm{R}}}$ and
$z_{n_{j+l_{s_0}^{\mathrm{L}}}}^{j+l_{s_0}^{\mathrm{L}}}$.
Thus, we have 1 non-neutralized R-mobile point
and 1 non-neutralized L-mobile point in
$\left\langle z_{a_j}^j, z_{a_j + 1}^j \right]$
when $\left\vert L_{\mathrm{R}} \right\vert 
= \left\vert L_{\mathrm{L}} \right\vert = M$.

Next, consider the case in which
$\left\vert L_{\mathrm{R}} \right\vert = M+1$ and 
$\left\vert L_{\mathrm{L}} \right\vert = M$.
Since $z_0^{j + l^{\mathrm{R}}_{M+1}}$ is R-mobile in
$\left\langle z_{a_j}^j, z_{a_j+1}^j\right]$, there must be some time 
when it is active, so 
there must exist some $j_{L_{\mathrm{R}}} \in \Z/d$ for which
$l_{M+1}^{\mathrm{R}} \in L_{\mathrm{R}}^{j_{L_{\mathrm{R}}}}\!$.
But this implies $L_{\mathrm{R}}^{j_{L_{\mathrm{R}}}} = L_{\mathrm{R}}$,
which means 
$\left\vert L_{\mathrm{R}}^{j_{L_{\mathrm{R}}}} \right\vert 
+ \left\vert L_{\mathrm{L}}^{j_{L_{\mathrm{R}}}} \right\vert \geq M+1$,
and so it must be the case that $j_{L_{\mathrm{R}}} = j_*$.
Thus $L_{\mathrm{R}}^{j_*} = L_{\mathrm{R}} = 
\left\{l_1^{\mathrm{R}}, \ldots, l_{M+1}^{\mathrm{R}}\right\}$ and 
$L_{\mathrm{L}}^{j_*} = \emptyset$.
Now, suppose there exist $s\in \{1, \ldots, M\}$ and $j^{\prime} \in \Z/d$ for which 
$l_{s}^{\mathrm{R}} \in L^{j^{\prime}}_{\mathrm{R}}$ and
$l_{s}^{\mathrm{L}} \in L^{j^{\prime}}_{\mathrm{L}}$.
Then, just as in the $\left\vert L_{\mathrm{R}} \right\vert 
= \left\vert L_{\mathrm{L}} \right\vert = M$ case, we have
$l_1^{\mathrm{R}}, \ldots, l_{s}^{\mathrm{R}} \in L^{j^{\prime}}_{\mathrm{R}}$ and
$l_{s}^{\mathrm{L}}, \ldots, l_M^{\mathrm{L}} \in L^{j^{\prime}}_{\mathrm{L}}$, and then since
$ \left|\left\{ l_1^{\mathrm{R}}, \ldots, l_{s}^{\mathrm{R}}\right\}\right|
 + \left|\left\{  l_{s}^{\mathrm{L}}, \ldots, l_M^{\mathrm{L}}\right\}\right|
= M+1$,
we conclude that $j^{\prime} = j_*$.
But this contradicts the fact that $L_{\mathrm{L}}^{j_*} = \emptyset$.
Thus there are no $s\in \{1, \ldots, M\}$ and $j^{\prime} \in \Z/d$ for which 
$l_{s}^{\mathrm{R}} \in L^{j^{\prime}}_{\mathrm{R}}$ and
$l_{s}^{\mathrm{L}} \in L^{j^{\prime}}_{\mathrm{L}}$.
There are also, however, no $s\in \{1, \ldots, M\}$ and $j^{\prime} \in \Z/d$ for which 
$l_{s}^{\mathrm{R}} \notin L^{j^{\prime}}_{\mathrm{R}}$ and
$l_{s}^{\mathrm{L}} \notin L^{j^{\prime}}_{\mathrm{L}}$, 
since this would imply
$\left\vert L_{\mathrm{R}}^{j^{\prime}} \right\vert
+ \left\vert L_{\mathrm{L}}^{j^{\prime}} \right\vert
< M$ when $j^{\prime} \neq j_*$, and since this would contradict
the fact that $L_{\mathrm{R}}^{j_*} = L_{\mathrm{R}} = 
\left\{l_1^{\mathrm{R}}, \ldots, l_{M+1}^{\mathrm{R}}\right\}$
when $j^{\prime} = j_*$.
Thus, for any $s \in \{1, \ldots, M\}$,
and at any time $j=j^{\prime}\in\Z/d$, we know that
$z_{n_{j+l_s^{\mathrm{L}}}}^{j+l_s^{\mathrm{L}}}$
is active in $\left\langle z_{a_j}^j, z_{a_j+1}^j \right]$
if and only if
$z_0^{j+l_s^{\mathrm{R}}}$ is inactive.
Proposition \ref{prop: active iff inactive true iff neutralized}
then tells us that 
$z_{n_{j+l_s^{\mathrm{L}}}}^{j+l_s^{\mathrm{L}}}$ and 
$z_0^{j+l_s^{\mathrm{R}}}$ form a neutralized pair in
$\left\langle z_{a_j}^j, z_{a_j+1}^j \right]$, and so
all mobile points in 
$\left\langle z_{a_j}^j, z_{a_j + 1}^j \right]$
are neutralized except $z_0^{j+l_{M+1}^{\mathrm{R}}}$.
Thus, we have 1 non-neutralized R-mobile point
and 0 non-neutralized L-mobile points in
$\left\langle z_{a_j}^j, z_{a_j + 1}^j \right]$
when $\left\vert L_{\mathrm{R}} \right\vert = M+1$ and 
$\left\vert L_{\mathrm{L}} \right\vert = M$.

Lastly, if
$\left\vert L_{\mathrm{R}} \right\vert = M$ and 
$\left\vert L_{\mathrm{L}} \right\vert = M+1$,
then an argument similar to that used in the preceding paragraph
shows that all mobile points in
$\left\langle z_{a_j}^j, z_{a_j + 1}^j \right]$
are neutralized except for one L-mobile point.
Thus, in all of the above three cases,
there is at most one non-neutralized R-mobile point
and at most one non-neutralized L-mobile point
in $\left\langle z_{a_j}^j, z_{a_j + 1}^j \right]$.
           \\

{\noindent\bf{Case \!$\mathbf{\alpha = -1}$.}\;}
Following a strategy similar to that used in the case of $\alpha = +1$,
we observe that, since $z_0^{j+l_{|L_{\mathrm{R}}|}}$
is R-mobile in $\left\langle z_{a_j}^j, z_{a_j + 1}^j\right]$,
there must be some time $j=j_{\mathrm{R}} \in \Z/d$ at which
$z_0^{j+l_{|L_{\mathrm{R}}|}}$ is active 
in $\left\langle z_{a_j}^j, z_{a_j+1}^j \right]$.
Thus $l^{\mathrm{R}}_{|L_{\mathrm{R}}|} \in L_{\mathrm{R}}^{j_{\mathrm{R}}}$,
which, by Claim \ref{claim: structure of L_R^j and L_L^j}, means that
$l_1^{\mathrm{R}}, \ldots, 
l^{\mathrm{R}}_{|L_{\mathrm{R}}|} \in L_{\mathrm{R}}^{j_{\mathrm{R}}}$,
or in other words, $L_{\mathrm{R}}^{j_{\mathrm{R}}} = L_{\mathrm{R}}$, implying
$M \geq \left\vert L^{j_{\mathrm{R}}}_{\mathrm{R}}\right\vert
   =    \left\vert L_{\mathrm{R}}\right\vert$.
On the other hand, since $z_0^{j+l_1}$ is R-mobile in
$\left\langle z_{a_j}^j, z_{a_j + 1}^j\right]$,
we know there must be some time $j=j_{\mathrm{R}}^{\emptyset} \in \Z/d$
at which $z_0^{j+l_1}$ is inactive in $\left\langle z_{a_j}^j, z_{a_j+1}^j \right]$.
Thus $l^{\mathrm{R}}_1 \notin L_{\mathrm{R}}^{j_{\mathrm{R}}^{\emptyset}}$,
which, by the contrapositive of Claim \ref{claim: structure of L_R^j and L_L^j}, 
means that $l_1^{\mathrm{R}}, \ldots, 
l^{\mathrm{R}}_{|L_{\mathrm{R}}|} \notin L_{\mathrm{R}}^{j_{\mathrm{R}}}$,
or in other words,
$L_{\mathrm{R}}^{j_{\mathrm{R}}^{\emptyset}} = \emptyset$,
implying
$M-1 =    \left\vert L^{j_{\mathrm{R}}^{\emptyset}}_{\mathrm{L}}\right\vert
   \leq \left\vert L_{\mathrm{L}}\right\vert$.
By similar reasoning, there exist
$j_{\mathrm{L}}\in\Z/d$ such that $L_{\mathrm{L}}^{j_{\mathrm{L}}} = L_{\mathrm{L}}$,
and $j_{\mathrm{L}}^{\emptyset}\in\Z/d$ such that 
  $L_{\mathrm{L}}^{j_{\mathrm{L}}^{\emptyset}} = \emptyset$,
from which we conclude, respectively, that
$M \geq \left\vert L_{\mathrm{L}} \right\vert$, and
$M-1 \leq \left\vert L_{\mathrm{R}} \right\vert$.
Now, if
$\left\vert L_{\mathrm{R}} \right\vert = \left\vert L_{\mathrm{L}} \right\vert = M-1$,
then $j_{\mathrm{R}} = j_{\mathrm{L}} = j_*$.  But this implies
$M-1 = \left\vert L_{\mathrm{R}}^{j_*} \right\vert
+ \left\vert L_{\mathrm{L}}^{j_*} \right\vert = 2(M-1)$, which means that
$\left\vert L_{\mathrm{R}} \right\vert = \left\vert L_{\mathrm{L}} \right\vert = M-1 = 0$,
contradicting our assumption that at least one of
$L_{\mathrm{R}}$ and $L_{\mathrm{L}}$ is nonempty.
Thus, we are left with three possibilities:
\begin{equation}
\left\vert L_{\mathrm{R}} \right\vert = \left\vert L_{\mathrm{L}} \right\vert = M;
\;\;\;\;\;\;
\left\vert L_{\mathrm{R}} \right\vert = M - 1,\; \left\vert L_{\mathrm{L}} \right\vert = M;
\;\;\;\;\mathrm{or}\;\;\;\;
\left\vert L_{\mathrm{R}} \right\vert = M,\; \left\vert L_{\mathrm{L}} \right\vert = M - 1.
\end{equation}

First, consider the case in which
$\left\vert L_{\mathrm{R}} \right\vert 
= \left\vert L_{\mathrm{L}} \right\vert = M$.
Since
$ \left\vert L_{\mathrm{R}}^{j_*} \right\vert
+ \left\vert L_{\mathrm{L}}^{j_*} \right\vert = M-1<M$,
there must exist some $s_0 \in \{1, \ldots, M\}$ for which
$l_{s_0}^{\mathrm{R}} \notin L^{j_*}_{\mathrm{R}}$ and
$l_{s_0}^{\mathrm{L}} \notin L^{j_*}_{\mathrm{L}}$.
The contrapositive of Claim \ref{claim: structure of L_R^j and L_L^j}
then tells us that
$l_{s_0}^{\mathrm{R}}, \ldots, l_M^{\mathrm{R}} \notin L^{j_*}_{\mathrm{R}}$ and
$l_1^{\mathrm{L}}, \ldots, l_{s_0}^{\mathrm{L}} \notin L^{j_*}_{\mathrm{L}}$.  But
\begin{equation}
    \left|\left\{ l_1^{\mathrm{R}}, \ldots, l_{s_0-1}^{\mathrm{R}}\right\}\right|
 + \left|\left\{  l_{s_0+1}^{\mathrm{L}}, \ldots, l_M^{\mathrm{L}}\right\}\right|
= M-1,
\end{equation}
and so we must have 
$L^{j_*}_{\mathrm{R}} = \left\{l_1^{\mathrm{R}}, \ldots, l_{s_0-1}^{\mathrm{R}}\right\}$ and
$L^{j_*}_{\mathrm{L}} = \left\{l_{s_0+1}^{\mathrm{L}}, \ldots, l_M^{\mathrm{L}}\right\}$, implying
$\left|L^{j_*}_{\mathrm{R}}\right| = s_0-1$ and
$\left|L^{j_*}_{\mathrm{R}}\right| = M-s_0$.
Note that this argument can also be used to show that if there is any
$s \in \{1, \ldots, M\}$ and any $j^{\prime} \in \Z/d$ for which
$l_{s}^{\mathrm{R}} \notin L^{j^{\prime}}_{\mathrm{R}}$ and
$l_{s}^{\mathrm{L}} \notin L^{j^{\prime}}_{\mathrm{L}}$,
then $\left|L^{j^{\prime}}_{\mathrm{R}}\right| = s-1$ and
$\left|L^{j^{\prime}}_{\mathrm{R}}\right| = M-s$,
so that $j^{\prime} = j_*$.
There are also, however, no $s\in \{1, \ldots, M\}$ and $j^{\prime} \in \Z/d$ for which 
$l_{s}^{\mathrm{R}} \in L^{j^{\prime}}_{\mathrm{R}}$ and
$l_{s}^{\mathrm{L}} \in L^{j^{\prime}}_{\mathrm{L}}$, 
since this would imply
$\left\vert L_{\mathrm{R}}^{j^{\prime}} \right\vert
+ \left\vert L_{\mathrm{L}}^{j^{\prime}} \right\vert
> M$ when $j^{\prime} \neq j_*$, 
and since this would contradict the fact that
$L^{j_*}_{\mathrm{R}} = \left\{l_1^{\mathrm{R}}, \ldots, l_{s_0-1}^{\mathrm{R}}\right\}$ and
$L^{j_*}_{\mathrm{L}} = \left\{l_{s_0+1}^{\mathrm{L}}, \ldots, l_M^{\mathrm{L}}\right\}$
when $j^{\prime} = j_*$.
Thus, for any $s \in \{1, \ldots, M\} \setminus \{s_0\}$,
and at any time $j=j^{\prime}\in\Z/d$, we know that
$z_{n_{j+l_s^{\mathrm{L}}}}^{j+l_s^{\mathrm{L}}}$
is active in $\left\langle z_{a_j}^j, z_{a_j+1}^j \right]$
if and only if
$z_0^{j+l_s^{\mathrm{R}}}$ is inactive.
Proposition \ref{prop: active iff inactive true iff neutralized}
then tells us that 
$z_{n_{j+l_s^{\mathrm{L}}}}^{j+l_s^{\mathrm{L}}}$ and 
$z_0^{j+l_s^{\mathrm{R}}}$ form a neutralized pair in
$\left\langle z_{a_j}^j, z_{a_j+1}^j \right]$, and so
all mobile points in 
$\left\langle z_{a_j}^j, z_{a_j + 1}^j \right]$
are neutralized except
except $z_0^{j+l_{s_0}^{\mathrm{R}}}$ and
$z_{n_{j+l_{s_0}^{\mathrm{L}}}}^{j+l_{s_0}^{\mathrm{L}}}$.
Thus, we have 1 non-neutralized R-mobile point
and 1 non-neutralized L-mobile point in
$\left\langle z_{a_j}^j, z_{a_j + 1}^j \right]$
when $\left\vert L_{\mathrm{R}} \right\vert 
= \left\vert L_{\mathrm{L}} \right\vert = M$.

Next, consider the case in which
$\left\vert L_{\mathrm{R}} \right\vert = M-1$ and 
$\left\vert L_{\mathrm{L}} \right\vert = M$.
Since 
$z_{n_{j + l^{\mathrm{L}}_M}}^{j + l^{\mathrm{L}}_M}$
is L-mobile in
$\left\langle z_{a_j}^j, z_{a_j+1}^j\right]$,
there must be some time when it is inactive, so 
there must exist some $j_{L_{\mathrm{L}}}^{\emptyset} \in \Z/d$ for which
$l_M^{\mathrm{L}} \notin L_{\mathrm{L}}^{j_{L_{\mathrm{L}}}^{\emptyset}}\!$.
The contrapositive of Claim \ref{claim: structure of L_R^j and L_L^j}
then tells us that $L_{\mathrm{L}}^{j_{L_{\mathrm{L}}}^{\emptyset}} = \emptyset$,
which means that
$\left\vert L_{\mathrm{R}}^{j_{L_{\mathrm{L}}}^{\emptyset}} \right\vert 
+ \left\vert L_{\mathrm{L}}^{j_{L_{\mathrm{L}}}^{\emptyset}} \right\vert \leq M-1$,
and so it must be the case that $j_{L_{\mathrm{L}}}^{\emptyset} = j_*$.
Thus $L_{\mathrm{L}}^{j_*} = \emptyset$,
and $l_M^{\mathrm{L}} \in L_{\mathrm{L}}^{j^{\prime}}$ whenever $j^{\prime}\neq j_*$.
Now, suppose there exist $s\in \{1, \ldots, M-1\}$ and $j^{\prime} \in \Z/d$ for which 
$l_{s}^{\mathrm{R}} \notin L^{j^{\prime}}_{\mathrm{R}}$ and
$l_{s}^{\mathrm{L}} \notin L^{j^{\prime}}_{\mathrm{L}}$.
The contrapositive of Claim \ref{claim: structure of L_R^j and L_L^j}
then tells us that 
$l_{s}^{\mathrm{R}}, \ldots, l_{M-1}^{\mathrm{R}} \notin L^{j^{\prime}}_{\mathrm{R}}$ and
$l_1^{\mathrm{L}}, \ldots, l_{s}^{\mathrm{L}} \notin L^{j^{\prime}}_{\mathrm{L}}$,
and then since
$ \left|\left\{ l_1^{\mathrm{R}}, \ldots, l_{s-1}^{\mathrm{R}}\right\}\right|
 + \left|\left\{  l_{s+1}^{\mathrm{L}}, \ldots, l_M^{\mathrm{L}}\right\}\right|
= M-1$,
we conclude that $j^{\prime} = j_*$.
But this contradicts the fact that $L_{\mathrm{L}}^{j_*} = \emptyset$.
Thus there are no $s\in \{1, \ldots, M-1\}$ and $j^{\prime} \in \Z/d$ for which 
$l_{s}^{\mathrm{R}} \notin L^{j^{\prime}}_{\mathrm{R}}$ and
$l_{s}^{\mathrm{L}} \notin L^{j^{\prime}}_{\mathrm{L}}$.
There are also, however, no $s\in \{1, \ldots, M-1\}$ and $j^{\prime} \in \Z/d$ for which 
$l_{s}^{\mathrm{R}} \in L^{j^{\prime}}_{\mathrm{R}}$ and
$l_{s}^{\mathrm{L}} \in L^{j^{\prime}}_{\mathrm{L}}$, 
since this would imply
$\left\vert L_{\mathrm{R}}^{j^{\prime}} \right\vert
+ \left\vert L_{\mathrm{L}}^{j^{\prime}} \right\vert
> M$ when $j^{\prime} \neq j_*$,
and since this would contradict
the fact that $L_{\mathrm{L}}^{j_*} = \emptyset$
when $j^{\prime} = j_*$.
Thus, for any $s \in \{1, \ldots, M-1\}$,
and at any time $j=j^{\prime}\in\Z/d$, we know that
$z_{n_{j+l_s^{\mathrm{L}}}}^{j+l_s^{\mathrm{L}}}$
is active in $\left\langle z_{a_j}^j, z_{a_j+1}^j \right]$
if and only if
$z_0^{j+l_s^{\mathrm{R}}}$ is inactive.
Proposition \ref{prop: active iff inactive true iff neutralized}
then tells us that 
$z_{n_{j+l_s^{\mathrm{L}}}}^{j+l_s^{\mathrm{L}}}$ and 
$z_0^{j+l_s^{\mathrm{R}}}$ form a neutralized pair in
$\left\langle z_{a_j}^j, z_{a_j+1}^j \right]$, and so
all mobile points in 
$\left\langle z_{a_j}^j, z_{a_j + 1}^j \right]$
are neutralized except $z_{n_{j+l_M^{\mathrm{L}}}}^{j+l_M^{\mathrm{L}}}$.
Thus, we have 0 non-neutralized R-mobile points
and 1 non-neutralized L-mobile point in
$\left\langle z_{a_j}^j, z_{a_j + 1}^j \right]$
when $\left\vert L_{\mathrm{R}} \right\vert = M-1$ and 
$\left\vert L_{\mathrm{L}} \right\vert = M$.

Lastly, if
$\left\vert L_{\mathrm{R}} \right\vert = M$ and 
$\left\vert L_{\mathrm{L}} \right\vert = M-1$,
then an argument similar to that used in the preceding paragraph
shows that all mobile points in
$\left\langle z_{a_j}^j, z_{a_j + 1}^j \right]$
are neutralized except for one R-mobile point.
In conclusion, whether $\alpha = +1$ or $\alpha = -1$, there is always
at most one non-neutralized R-mobile point and at most one
non-neutralized L-mobile point in $\left\langle z_{a_j}^j, z_{a_j + 1}^j \right]$.
      \\

We next consider the existence of non-neutralized mobile points 
in all of ${\bf{z}}^j$, as opposed to in just an interval in ${\bf{z}}^j$.
Since $v_q(z_{n_{j-1}}^{j-1}, z_0^j)$ is constant in $j\in\Z/d$, we know
by Part (i') that  ${\bf{z}}^j$ must have at least one non-neutralized
mobile point.  However, the existence of a non-neutralized R-mobile point
implies the existence of its mirror non-neutralized L-mobile point,
and {\em vice versa}, and so, in fact, ${\bf{z}}^j$ must have at least one 
non-neutralized R-mobile point and at least one non-neutralized L-mobile point.

Write $z_0^{j+l}$ for an arbitrary non-neutralized R-mobile point in ${\bf{z}}^j$,
and suppose that $z_0^{j+l}$ is {\em not} non-neutralized R-mobile rel $z_0^j$.
Then $z_0^{j+l}$ is non-neutralized R-mobile in 
$\left\langle z_{n_j - (i_*+1)}^j, z_{n_j - i_*}^j \right]$
for some $i_* \in \left\{0, \ldots, \left\lfloor\frac{k}{d}\right\rfloor -2 \right\}$,
and $l \neq 1$.  Consider first the case in which $l \neq 0$.
If $i_* > 0$, then
$\left\lfloor\frac{k}{d}\right\rfloor - (i_*+1),
\left\lfloor\frac{k}{d}\right\rfloor - (i_*+2)
\in \left\{0, \ldots, \left\lfloor\frac{k}{d}\right\rfloor -2 \right\}$, and so
$z_0^{j+l}$ is also R-mobile in either
$\left\langle z_{\left\lfloor\!\frac{k}{d}\!\right\rfloor - (i_*+1)}^j,
z_{\left\lfloor\!\frac{k}{d}\!\right\rfloor - i_*}^j\right]$ or
$\left\langle z_{\left\lfloor\!\frac{k}{d}\!\right\rfloor - (i_*+2)}^j,
z_{\left\lfloor\!\frac{k}{d}\!\right\rfloor - (i_*+1)}^j\right]$,
hence is R-mobile rel $z_0^j$, a contradiction.
Thus we must have $i_*=0$,
with $z_0^{j+l}$ R-mobile in 
$\left\langle z_{n_j - 1}^j, z_{n_j}^j \right]$.
This, in turn, implies the existence of 
a mirror non-neutralized L-mobile point, namely,
$z_{n_{j-l}}^{j-l}$ in $\left\langle z_0^j, z_1^j \right]$.
From Part (i'), we know that the existence of these non-neutralized 
mobile points implies that the sets
$\left\{\left(z_{n_j - 1}^j, z_{n_j}^j\right)\right\}_{j\in\Z/d}$
and
$\left\{\left(z_0^j, z_1^j\right)\right\}_{j\in\Z/d}$
each contain the pair $(x_*, y_*) \in Q_q \times Q_q$ for which
$v_q\!\left(x_*, y_*\right) = \alpha(k-k^2)$.
In particular, these two sets must intersect, 
and so there exists some $j_*\in \Z/d$ for which
$n_{j_*} - 1 = 0$.  Thus $n_{j_*}=1$ and
$\left\lfloor\frac{k}{d}\right\rfloor = 2$.
We can therefore express $\Z/d$ as the disjoint union of
$J_0$, $J_2$, and $J_3$, where
\begin{align}
       J_0
&:= \left\{j^{\prime} \in \Z/d \left|\;  
       n_{j^{\prime}} = 1;\;
       z_0^{j+l}\;\text{is inactive in}\;\!
       \left\langle z_{n_j-1}^j, z_{n_j}^j\right]
       \!\;\text{when}\;j=j^{\prime}
       \right.\!\right\}\!,
           \\
       J_2
&:= \left\{j^{\prime} \in \Z/d \left|\;  
       n_{j^{\prime}} = 2;\;
       z_0^{j+l}\;\text{is inactive in}\;\!
       \left\langle z_{n_j-1}^j, z_{n_j}^j\right]
       \!\;\text{when}\;j=j^{\prime}
       \right.\!\right\}\!,
           \\
       J_3
&:= \left\{j^{\prime} \in \Z/d \left|\;  
       n_{j^{\prime}} = 2;\;
       z_0^{j+l}\;\text{is active in}\;\!
       \left\langle z_{n_j-1}^j, z_{n_j}^j\right]
       \!\;\text{when}\;j=j^{\prime}
       \right.\!\right\}\!.
\end{align}
Note that we omitted the only other possibility,
\begin{equation}
       J_1
:= \left\{j^{\prime} \in \Z/d \left|\;  
       n_{j^{\prime}} = 1;\;
       z_0^{j+l}\;\text{is active in}\;\!
       \left\langle z_{n_j-1}^j, z_{n_j}^j\right]
       \!\;\text{when}\;j=j^{\prime}
       \right.\!\right\}\!,
\end{equation}
because the nonemptiness of $J_1$ would imply that 
$z_0^{j+l}$ was R-mobile in $\left\langle z_0^j, z_1^j\right]$.
We know, however, that $J_0$, $J_2$, and $J_3$ are nonempty.
$J_0$ is nonempty because $n_{j^{\prime}}=1$ only if $j^{\prime} \in J_0$;
$J_2$ is nonempty because otherwise $z_0^{j+l}-z_0^j$ would be constant
in $j\in\Z/d$, contradicting our assumption that $l \neq 0$; and
$J_3$ is nonempty because otherwise $z_0^{j+l}$ would never be active
in $\left\langle z_{n_j-1}^j, z_{n_j}^j\right]$.
Now, the fact that $z_0^{j+l}$ is not R-mobile in
$\left\langle z_0^j, z_1^j \right]$ implies that its mirror mobile point,
$z_{n_{j-l}}^{j-l}$, is not L-mobile in $\left\langle z_{n_j-1}^j, z_{n_j}^j\right]$,
and so $z_0^{j+l}$ is the only non-neutralized
mobile point in $\left\langle z_{n_j-1}^j, z_{n_j}^j\right]$.
Since $v_q\!\left(z_{n_{j_*}-1}^{j_*}, z_{n_{j_*}}^{j_*}\right) = \alpha(k-k^2)$,
and $v_q\!\left(z_{n_{j^{\prime}}-1}^{j^{\prime}}, z_{n_{j^{\prime}}}^{j^{\prime}}\right) = \alpha(k)$
for all $j^{\prime} \neq j_* \in \Z/d$, 
and since any non-neutralized mobile point active in 
$\left\langle z_{n_j - (i_*+1)}^j, z_{n_j - i_*}^j \right]$
at time $j=j^{\prime}$ contributes $-k^2$ to
$v_q\!\left(z_{n_{j^{\prime}}-1}^{j^{\prime}}, z_{n_{j^{\prime}}}^{j^{\prime}}\right)$,
this means that
$z_0^{j+l}$ is inactive in $\left\langle z_{n_j-1}^j, z_{n_j}^j\right]$
precisely once when $\alpha = -1$, and active precisely once
when $\alpha = +1$.  The fact that $J_0$ and $J_2$ are each nonempty
makes it impossible for $z_0^{j+l}$ to be inactive in 
$\left\langle z_{n_j-1}^j, z_{n_j}^j\right]$ precisely once, so we must have $\alpha = +1$,
implying $j_* \in J_3$, since that is the only time when $z_0^{j+l}$ is active.
But this contradicts the fact that $n_{j_*} = 1$.
Thus, when $l \neq 0$, any non-neutralized R-mobile point $z_0^{j+l}$ in
${\mathbf{z}}^j$ must be non-neutralized R-mobile rel $z_0^j$.

That leaves us with the case in which $l=0$: suppose that
$z_0^j$ is non-neutralized R-mobile in ${\mathbf{z}}^j$,
hence is non-neutralized R-mobile in
$\left\langle z_{n_j - (i_*+1)}^j, z_{n_j - i_*}^j \right]$
for some $i_* \in \left\{0, \ldots, \left\lfloor\frac{k}{d}\right\rfloor -2 \right\}$.
Then its mirror mobile point, $z_{n_j}^j$, is non-neutralized
L-mobile in $\left\langle z_{i_*}^j, z_{i_*+1}^j \right]$
and is not L-mobile in $\left\langle z_{n_j - (i_*+1)}^j, z_{n_j - i_*}^j \right]$,
so that $z_0^j$ is the only non-neutralized mobile point in
$\left\langle z_{n_j - (i_*+1)}^j, z_{n_j - i_*}^j \right]$.
Again, Part (i') implies there exists some $j_* \in \Z/d$ for which
\begin{equation}
\label{main prop, part (ii'), eq: x,y = z_i,z_i+1, etc}
\left(z_{n_{j_*} - (i_*+1)}^{j_*}, z_{n_{j_*} - i_*}^{j_*}\right)
= \left(z_{i_*}^{j_*}, z_{i_*+1}^{j_*}\right) = (x_*, y_*),
\end{equation}
so that $v_q\!\left(z_{n_{j_*} - (i_*+1)}^{j_*}, z_{n_{j_*} - i_*}^{j_*}\right) = \alpha(k-k^2)$, and
$v_q\!\left(z_{n_{j^{\prime}} - (i_*+1)}^{j^{\prime}}, z_{n_{j^{\prime}} - i_*}^{j^{\prime}}\right) = \alpha(k)$
for all $j^{\prime} \neq j_* \in \Z/d$.
If $\alpha = -1$, then
$z_0^j$ is inactive in $\left\langle z_{n_j - (i_*+1)}^j, z_{n_j - i_*}^j \right]$
at time $j=j_*$, and so
$z_0^{j_*} \in \left\langle z_{i_*}^{j_*}+dq, z_{i_*+1}^{j_*}+dq \right]$.
Now, the fact that 
$2[dq]_{k^2} < k^2$ implies that $i_* \geq 2$,
and (\ref{main prop, part (ii'), eq: x,y = z_i,z_i+1, etc})
implies that $n_{j_*}-(i_*+1)=i_*$.  Thus
$i_*+2 \leq 2i_* = n_{j_*}-1 \leq \left\lfloor\frac{k}{d}\right\rfloor - 1$,
so that $z_{i_*+2}^j$ is defined for all $j\in \Z/d$,
and so
$z_0^{j_*} \in \left\langle z_{i_*+1}^{j_*}, z_{i_*+2}^{j_*} \right]$
if $\alpha = -1$.
On the other hand, if $\alpha = +1$, then
$z_0^j$ is active in $\left\langle z_{n_j - (i_*+1)}^j, z_{n_j - i_*}^j \right]$
at time $j=j_*$, and so
$z_0^{j_*} \in \left\langle z_{i_*}^{j_*}, z_{i_*+1}^{j_*} \right]$.
We can summarize these two statements by saying that
$z_0^{j_*}\in \left\langle z_{i_*+\frac{1-\alpha}{2}}^{j_*}, z_{i_*+\frac{3-\alpha}{2}}^{j_*}\right]$,
which implies that
$z_0^{j^{\prime}}\in 
\left\langle z_{i_*+\frac{1-\alpha}{2}}^{j^{\prime}}, z_{i_*+\frac{3-\alpha}{2}}^{j^{\prime}}\right]$
for all $j^{\prime}\in \Z/d$.
Thus, for any $j_0 \in \Z/d$,
\begin{equation}
\label{main prop, part (ii'), all of Q_q between the z_0^js}
\left[z_0^{j_0}\right]_{k^2}
\;\;<\;\; x \;\;<\;\; \left[z_0^{j_0}\right]_{k^2} + \left(\textstyle{{i_*+\frac{3-\alpha}{2}}}\right)[dq]_{k^2}
\end{equation}
for {\em every} $x \neq z_0^{j^{\prime}} \in Q_q$.
In particular, (\ref{main prop, part (ii'), all of Q_q between the z_0^js})
holds for all $x \in \left\{ z_{n_{j-1}}^{j-1} \right\}_{j\in\Z/d}$.
Since $z_{n_{j-1}}^{j-1} -z_0^j$ is constant in $j\in\Z/d$, this implies that
$z_{n_{j^{\prime}-1}}^{j^{\prime}-1}
\in \left\langle z_{i^{\prime}}^{j^{\prime}}, z_{{i^{\prime}}+1}^{j^{\prime}} \right]$
for some $i^{\prime} \in \left\{0, \ldots, i_*+\frac{1-\alpha}{2} \right\}$
and for all $j^{\prime} \in \Z/d$,
where we recall that 
$i_*+\frac{1-\alpha}{2} \leq \left\lfloor\frac{k}{d}\right\rfloor-2$.
If $i^{\prime} \neq 0$, this implies that $z_{n_{j-1}}^{j-1}$ is
L-mobile rel $z_{n_j}^j$.
If $i^{\prime}=0$, then this implies that
$z_{n_{j^{\prime}-1}}^{j^{\prime}-1}
\in \left\langle z_{i_0}^{j^{\prime}}, z_{i_0}^{j^{\prime}} +dq \right]$
for some $i_0 \in \left\{i_* + \frac{1-\alpha}{2}, i_* + \frac{3-\alpha}{2} \right\}$,
so that, again, $z_{n_{j-1}}^{j-1}$ is L-mobile rel $z_{n_j}^j$.
In either case,
$z_{n_{j-1}}^{j-1}$ is L-mobile rel $z_{n_j}^j$, hence is L-mobile
in $\left\langle z_{n_j-(i+1)}^j, z_{n_j-i}^j\right]$
for some $i \in  \left\{0, \ldots, \left\lfloor\frac{k}{d}\right\rfloor -2 \right\}$.
Thus either $i=i_*$, so that
$z_0^j$ is {\em neutralized} R-mobile in
$\left\langle z_{n_j - (i_*+1)}^j, z_{n_j - i_*}^j \right]$,
or $i \neq i_*$, so that
$z_{n_{j-1}}^{j-1}$ is non-neutralized L-mobile
in $\left\langle z_{n_j-(i+1)}^j, z_{n_j-i}^j\right]$,
contradicting Part (i').

Thus any non-neutralized R-mobile point in ${\bf{z}}^j$
is non-neutralized R-mobile rel $z_0^j$, which also means that
the mirror statement must be true.  That is, 
any non-neutralized L-mobile point in ${\bf{z}}^j$
is non-neutralized L-mobile rel $z_{n_j}^j$.

We therefore can express the unique non-neutralized R-mobile point
in ${\bf{z}}^j$ as $z_0^{j+l}$ in $\left\langle z_{i_*}^j, z_{i_*+1}^j\right]$,
for some $l \neq 0 \in \Z/d$, where, as discussed earlier, 
$i_* \in \left\{0, \ldots, \left\lfloor\frac{k}{d}\right\rfloor -2 \right\}$
satisfies $v_q\!\left(z_{i_*}^{j_*}, z_{i_*+1}^{j_*}\right) = {\alpha}(k-k^2)$
for some unique $j_* \in \Z/d$.  The unique non-neutralized L-mobile point
is then the mirror mobile point,
$z_{n_{j-l}}^{j-l}$ non-neutralized L-mobile in
$\left\langle z_{n_j - (i_*+1)}^j, z_{n_j - i_*}^j \right]$.
Part (i') then tells us that
$(x_*,y_*) = \left(z_{i_*}^{j_*},z_{i_*+1}^{j_*}\right) 
= \left(z_{n_{j_*} - (i_*+1)}^{j_*}, z_{n_{j_*} - i_*}^{j_*}\right)$.

\end{proof}

\begin{proof}[Proof of (ii$\psi$)]
First, to prove that there is at most one non-neutralized R-pseudomobile point
and at most one non-neutralized L-pseudomobile point in
$\left\langle z_{n_{j-1}}^{j-1}, z_0^j \right]$, take the proof of the comparable
statement in Part (ii'), and make the following adaptations.
Replace the word ``mobile'' with the word ``pseudomobile,''
and replace $z_{a_j}^j$ and $z_{a_{j+1}}^j$ with
$z_{n_{j-1}}^{j-1}$ and $z_0^j$, respectively---both in the actual proof of Part (ii'),
and in the discussion of $L_{\mathrm{R}}$, $L_{\mathrm{L}}$, {\em et cetera},
preceding the proof of Part (i).  In addition,
replace all references in Part (ii') to
Proposition \ref{prop: active iff inactive true iff neutralized} with
references to 
Proposition \ref{prop: active iff inactive means neutralized for pseudomobile}.

Next, suppose there is a non-neutralized R-pseudomobile point,
say $z_0^{j+l}$ in $\left\langle z_{n_{j-1}}^{j-1}, z_0^j \right]$
for some $l\neq 0 \in \Z/d$.  Then its mirror pseudomobile point,
$z_{n_{j-1-l}}^{j-1-l}$, is also non-neutralized pseudomobile in
$\left\langle z_{n_{j-1}}^{j-1}, z_0^j \right]$.
A similar result holds if we start with a non-neutralized L-pseudomobile point.
Thus $\left\langle z_{n_{j-1}}^{j-1}, z_0^j \right]$ has precisely one
non-neutralized R-pseudomobile point and precisely one
non-neutralized L-pseudomobile point, namely,
$z_0^{j+l}$ and $z_{n_{j-1-l}}^{j-1-l}$ for some nonzero $l \in \Z/d$.
\end{proof}

\begin{proof}[Proof of (iii)]
First, recall that since $(\mu,\gamma)=(1,1)$, we have
$\left[dq\right]_{k^2} = (m\!+\!c)k\!+\!\alpha$, and
$\psi = \left[(m\!+\!c)k\!+\!\alpha - \frac{ck + \alpha}{d}k\right]_{k^2}$,
where we recall that 
$\psi:= \left[z_0^{j+1} - z_{n_j}^j\right]_{k^2} = \left[dq-kq\right]_{k^2}$.

Consider first the case in which 
$\psi > 2\left[dq\right]_{k^2}$,
which, as proven in Part (iv), implies that
all mobile points are non-neutralized.
This means in particular that Part (i)---as
opposed to merely Part (i')---holds.

Since $v_q\!\left(z_{n_{j-1}}^{j-1}, z_0^j\right) = -v_q\!\left(z_0^j, z_{n_{j-1}}^{j-1}\right)$,
it is sufficient to show that $\# \left( Q_q \cap \left\langle z_0^j, z_{n_{j-1}}^{j-1}\right] \right)$
is constant in $j\in \Z/d$.  To do this, our strategy will be to cover
$\left\langle z_0^j, z_{n_{j-1}}^{j-1}\right]$ with sets for which we understand
how their intersection with $Q_q$ changes as $j$ varies in $\Z/d$.
We begin by examining the length of $\left\langle z_0^j, z_{n_{j-1}}^{j-1}\right]$.
Since $\psi > 2\left[dq\right]_{k^2}$,
we know that $(m+c)k+\alpha \,-\, \frac{ck + \alpha}{d}k < 0$.  Thus,
for all $j\in\Z/d$, we have
\begin{align}
     \left[z_{n_{j-1}}^{j-1} - z_0^j\right]_{k^2}
&= \textstyle{\frac{ck+\alpha}{d}}k - ((m\!+\!c)k\!+\!\alpha)
            \\ \nonumber
&=((m\!+\!c)k\!+\!\alpha) \!\left(\textstyle{\frac{k}{d}} - 1\right)  \,-\, m\textstyle{\frac{k^2}{d}}
            \\ \nonumber
&< \textstyle{\left\lfloor \frac{k}{d} \right\rfloor} \left[dq\right]_{k^2}.
\end{align}
This implies that
\begin{equation}
\left[ z^{j-1}_{n_{j-1} - 1} - z_0^j\right]_{k^2} = \left[z^{j-1}_{n_{j-1}} - z_1^j\right]_{k^2}
< \left(\textstyle{\left\lfloor \frac{k}{d}\right\rfloor} -1\right) \left[dq\right]_{k^2},
\end{equation}
so that
\begin{align}
\left\langle z_0^j, z^{j-1}_{n_{j-1} - 1} \right] 
&\subset 
\left\langle z_0^j, z_{{\left\lfloor \frac{k}{d}\right\rfloor} -1}^j \right],
          \\
\left\langle z_1^j, z^{j-1}_{n_{j-1}} \right] 
&\subset 
\left\langle z_{n_{j-1} - \left({\left\lfloor \frac{k}{d}\right\rfloor} -1\right)}^{j-1}, z_{n_{j-1}}^{j-1} \right].
\end{align}
Thus, since
$\left\langle z_0^j, z_{n_{j-1}}^{j-1}\right] =
\left\langle z_0^j, z^{j-1}_{n_{j-1} - 1} \right] \cup
\left\langle z_1^j, z^{j-1}_{n_{j-1}} \right]$
for all $j \in \Z/d$, we know that
\begin{equation}
\label{eq: main prop: part (iii), covering}
   \left\langle z_0^j, z_{n_{j-1}}^{j-1}\right]
\;\;\subset\;\;
   \left\langle z_0^j, z_{{\left\lfloor \frac{k}{d}\right\rfloor} -1}^j \right]
   \cup
   \left\langle z_{n_{j-1} - \left({\left\lfloor \frac{k}{d}\right\rfloor} -1\right)}^{j-1}, z_{n_{j-1}}^{j-1} \right]
\end{equation}
for all $j \in \Z/d$.

Suppose that $v_q\!\left( z_{n_{j-1}}^{j-1}, z_0^j \right)$ is not constant
in $j \in \Z/d$.  Then since $q$ is genus-minimizing, we know that
for any $i \in \left\{0, \ldots, \left\lfloor\frac{k}{d}\right\rfloor - 2\right\}$,
both $v_q\!\left( z_i^j, z_{i+1}^j \right)$ and
$v_q\!\left( z_{n_j - (i+1)}^j, z_{n_j - i}^j \right)$ are constant
in $j\in \Z/d$.  Thus by Part (i), neither 
$\left\langle z_i^j, z_{i+1}^j \right]$ nor
$\left\langle z_{n_j - (i+1)}^j, z_{n_j - i}^j \right]$
has any mobile points.
Thus, for all $j \in \Z/d$ and $i \in \left\{0, \ldots, \left\lfloor\frac{k}{d}\right\rfloor - 2\right\}$,
we know that
\begin{align}
      Q_q \cap \left\langle z_i^j, z_{i+1}^j \right]
&=  Q_q \cap \left\langle z_i^0, z_{i+1}^0 \right]+ \left(z_0^j - z_0^0\right),
             \\
      Q_q \cap \left\langle z_{n_{j-1}-(i+1)}^{j-1}, z_{n_{j-1}-i}^{j-1} \right]
&= Q_q \cap \left\langle z_{n_{-1}-(i+1)}^{-1}, z_{n_{-1}-i}^{-1} \right] 
      +\left(z_{n_{j-1}}^{j-1} - z_{n_{-1}}^{-1}\right),
\end{align}
where we arbitrarily chose $j=0$ as a reference point.
Taking the union over  $i \in \left\{0, \ldots, \left\lfloor\frac{k}{d}\right\rfloor - 2\right\}$,
and using the fact that 
$z_0^j-z_0^0 = \left(z_{n_{j-1}}^{j-1}+\psi\right) - \left(z_{n_{-1}}^{-1} + \psi\right)
= z_{n_{j-1}}^{j-1} - z_{n_{-1}}^{-1}$, we then obtain
\begin{align}
      Q_q \cap \left\langle z_0^j, z_{{\left\lfloor \frac{k}{d}\right\rfloor} -1}^j \right]
&= Q_q \cap \left\langle z_0^0, z_{{\left\lfloor \frac{k}{d}\right\rfloor} -1}^0 \right]
      + \left(z_0^j - z_0^0\right)
             \\
      Q_q \cap \left\langle z_{n_{j-1} - \left({\left\lfloor \frac{k}{d}\right\rfloor} -1\right)}^{j-1}, 
                         z_{n_{j-1}}^{j-1} \right]
&= Q_q \cap \left\langle z_{n_{-1} - \left({\left\lfloor \frac{k}{d}\right\rfloor} -1\right)}^{-1},
                         z_{n_{-1}}^{-1} \right]
      + \left(z_0^j - z_0^0\right),
\end{align}
and so, by equation (\ref{eq: main prop: part (iii), covering}), we have
\begin{equation}
   Q_q \cap \left\langle z_0^j, z_{n_{j-1}}^{j-1}\right]
= Q_q \cap \left\langle z_0^{0}, z_{n_{-1}}^{-1}\right] + \left(z_0^j - z_0^0\right).
\end{equation}
Thus,
$\#\left(Q_q \cap \left\langle z_0^j, z_{n_{j-1}}^{j-1}\right]\right)$, and hence
$v_q\!\left(z_{n_{j-1}}^{j-1}, z_0^j\right)$,
is constant in $j\in\Z/d$.
              \\

We next show that it is algebraically impossible for $\psi$ to satisfy
$\left[dq\right]_{k^2} < \psi < 2\left[dq\right]_{k^2}$ when 
$(\mu,\gamma) = (1,1)$.  Suppose that 
$\left[dq\right]_{k^2} < \psi < 2\left[dq\right]_{k^2}$,
so that 
$\psi = (m+c)k+\alpha \;-\; \frac{ck+\alpha}{d}k + k^2$.
Then, since $0 < \psi - \left[dq\right]_{k^2} < \left[dq\right]_{k^2}$,
we have
\begin{equation}
0 \;<\; k^2 - \textstyle{\frac{ck+\alpha}{d}}k  \;<\; (m+c)k+\alpha.
\end{equation}
On the other hand, since $(m\!+\!c)k\!+\!\alpha \;-\; \frac{ck+\alpha}{d}k < 0$,
we know that
$(m\!+\!c)k\!+\!\alpha \;<\; \textstyle{\frac{ck+\alpha}{d}}k$.
Combining these two facts gives
\begin{equation}
k^2 - \textstyle{\frac{ck+\alpha}{d}}k   < \textstyle{\frac{ck+\alpha}{d}}k,
\end{equation}
which implies that
$\frac{ck+\alpha}{d} > \frac{k}{2}$.
This, however, contradicts the constraint $\frac{ck+\alpha\gamma}{d} < \frac{k}{2}$ from
Proposition \ref{prop: properties of parameters d, m, c, alpha, mu, gamma}.
Thus our initial supposition must be false, and so
$\psi \notin \left\langle \left[dq\right]_{k^2} , 2\left[dq\right]_{k^2}\right\rangle$.
        \\

This leaves us with the case in which
$\psi < \left[dq\right]_{k^2}$.
Suppose that
$v_q\!\left(z_{n_{j-1}}^{j-1}, z_0^j\right)$ is not constant in $j \in \Z/d$,
so that Part (ii$\psi$) guarantees the existence of 
a non-neutralized R-pseudomobile point, say, $z_0^{j+l}$ in
$\left\langle z_{n_{j-1}}^{j-1}, z_0^j \right]$,
for some $l \neq 0 \in \Z/d$.
Recall that this implies that
$\minq_{j\in\Z/d}\left(z_0^{j+l}- z_{n_{j-1}}^{j-1}\right) \in \left\langle 0, \psi\right\rangle$,
or equivalently, that
$\minq_{j\in\Z/d}\left(z_0^{j+l}- z_0^j\right) \in \left\langle -\psi, 0\right\rangle$.
This, in turn, implies that
\begin{align}
\label{eq: main prop (iii), max ineq for L-mobile}
     \maxq_{j\in\Z/d}\left(z_{n_{j+l-1}}^{j+l-1}- z_0^j\right) 
&\in \left\langle -\psi -\psi + dq,\; 0 -\psi + dq \right\rangle
\subset \left\langle -\psi, dq\right\rangle,\;\mathrm{and}
          \\
\label{eq: main prop (iii), max ineq for R-mobile}
     \maxq_{j\in\Z/d}\left(z_0^{j+l}- z_1^j\right)
&\in \left\langle -\psi - dq + dq,\; 0 -dq + dq\right\rangle
\subset \left\langle -dq, 0\right\rangle.
\end{align}
Line (\ref{eq: main prop (iii), max ineq for L-mobile}) then tells us that
either $z_{n_{j+l-1}}^{j+l-1}$ is L-pseudomobile in
$\left\langle z_{n_{j-1}}^{j-1}, z_0^j \right]$---contradicting
the supposition that $z_0^{j+l}$ is non-neutralized pseudomobile in
$\left\langle z_{n_{j-1}}^{j-1}, z_0^j \right]$---or 
$z_{n_{j+l-1}}^{j+l-1}$ is L-mobile in
$\left\langle z_0^j, z_1^j \right]$.
Line (\ref{eq: main prop (iii), max ineq for R-mobile}),
on the other hand, implies that
$\minq_{j\in\Z/d}\left(z_0^{j+l}- z_0^j\right)
\in \left\langle -dq, 0\right\rangle$,
which, since $2[dq]_{k^2} < k^2$, has no intersection
with $\left\langle 0, dq \right\rangle$, so that
$z_0^{j+l}$ is not R-mobile in $\left\langle z_0^j, z_1^j \right]$.
This means that if
$z_{n_{j+l-1}}^{j+l-1}$ is L-mobile in
$\left\langle z_0^j, z_1^j \right]$, then
it is non-neutralized L-mobile in
$\left\langle z_0^j, z_1^j \right]$,
contradicting our supposition that
$v_q\!\left(z_{n_{j-1}}^{j-1}, z_0^j\right)$ is not constant in $j \in \Z/d$.
Thus, our original supposition must be false, 
and so $v_q\!\left(z_{n_{j-1}}^{j-1}, z_0^j\right)$ must be constant in $j \in \Z/d$.
         \\

We have now shown that 
$v_q\!\left(z_{n_{j-1}}^{j-1}, z_0^j\right)$ is constant in $j \in \Z/d$
for all possible values of $\psi \in \Z/k^2$.
Thus, by Proposition \ref{prop: unique v_q = alpha(k - k^2), and the rest are v_q = alpha (k)},
$v_q\!\left(z_{n_{j-1}}^{j-1}, z_0^j\right) = \alpha(k)$ for all $j \in \Z/d$.
\end{proof}

\begin{proof}[Proof of (iv)]
Suppose that
$\psi < \left[dq\right]_{k^2}$.
Then, first of all, the conclusion of Part (iii) holds, 
because the $\psi < \left[dq\right]_{k^2}$ case of the proof of Part (iii)
does not use the hypothesis that $(\mu,\gamma) = (1,1)$.
Thus $v_q(z_{n_{j-1}}^{j-1}, z_0^j)$ is constant in $j \in \Z/d$,
and so by Part (ii'), we know that
there exist unique $l\neq 0 \in \Z/d$ and
$i_* \in \left\{ 0, \ldots, \left\lfloor \frac{k}{d} \right\rfloor - 2 \right\}$
for which $z_0^{j+l}$ is non-neutralized R-mobile in
$\left\langle z_{i_*}^j, z_{{i_*}+1}^j \right]$,
and $z_{n_{j-l}}^{j-l}$ is non-neutralized L-mobile in
$\left\langle z_{n_j - (i_*+1)}^j, z_{n_j - i_*}^j \right]$, with
$\left(z_{i_*}^{j_*}, z_{{i_*}+1}^{j_*}\right) 
= \left(z_{n_j - (i_*+1)}^{j_*}, z_{n_j - i_*}^{j_*}\right)
= (x_*,y_*)$, where $x_*, y_* \in Q_q$ are the unique elements of $Q_q$
satisfying $v_q(x_*,y_*) = \alpha(k-k^2)$.
Note that this implies $i_* = \frac{n_{j_*}-1}{2}$.

Since $z_0^{j+l}$ is R-mobile in
$\left\langle z_{i_*}^j, z_{{i_*}+1}^j \right]$, we know that
$\minq_{j\in\Z/d}\left(z_0^{j+l}-z_{i_*}^j\right)
\in \left\langle 0, \left[dq\right]_{k^2}\right\rangle$.
The fact that
$\psi < \left[dq\right]_{k^2}$ then implies
$\minq_{j\in\Z/d}\left(z_{n_{j+l-1}}^{j+l-1}-z_{i_*}^j\right)
\in \left\langle -[dq], \left[dq\right]_{k^2}\right\rangle$.
However, since $z_0^{j+l}$ is non-neutralized R-mobile in
$\left\langle z_{i_*}^j, z_{{i_*}+1}^j \right]$, we know that
$z_{n_{j+l-1}}^{j+l-1}$ is not L-mobile in
$\left\langle z_{i_*}^j, z_{{i_*}+1}^j \right]$, and so
$\minq_{j\in\Z/d}\left(z_{n_{j+l-1}}^{j+l-1}-z_{i_*}^j\right)
\notin \left\langle -[dq], 0\right\rangle$.  Thus
\begin{equation}
\label{eq: main prop (iv), type (1,1), chaperoned R-mobile, L min in}
\minq_{j\in\Z/d}\left(z_{n_{j+l-1}}^{j+l-1}-z_{i_*}^j\right)
\;\in\; \left\langle 0, \left[dq\right]_{k^2}\right\rangle.
\end{equation}

Suppose that ${i_*} < \left\lfloor\frac{k}{d}\right\rfloor - 2$.
Then (\ref{eq: main prop (iv), type (1,1), chaperoned R-mobile, L min in}) implies
$\maxq_{j\in\Z/d}\left(z_{n_{j+l-1}}^{j+l-1}-z_{{i_*}+2}^j\right)
\in \left\langle -\left[dq\right]_{k^2}, 0\right\rangle$,
so that $z_{n_{j+l-1}}^{j+l-1}$ is L-mobile in
$\left\langle z_{{i_*}+1}^j, z_{{i_*}+2}^j \right]$.
Moreover, the fact that
$\minq_{j\in\Z/d}\left(z_0^{j+l}-z_{i_*}^j\right)
\in \left\langle 0, \left[dq\right]_{k^2}\right\rangle$ implies
$\minq_{j\in\Z/d}\left(z_0^{j+l}-z_{i_*+1}^j\right)
\in  \left\langle -\left[dq\right]_{k^2}, 0\right\rangle$, which, since
$2[dq]_{k^2} < k^2$, has no intersection with
$\left\langle 0, \left[dq\right]_{k^2}\right\rangle$,
and so $z_0^{j+l}$ is {\em not} R-mobile in
$\left\langle z_{{i_*}+1}^j, z_{{i_*}+2}^j \right]$.
Thus $z_{n_{j+l-1}}^{j+l-1}$ is non-neutralized L-mobile in
$\left\langle z_{{i_*}+1}^j, z_{{i_*}+2}^j \right]$,
contradicting the fact that
$\left\langle z_{i_*}^j, z_{{i_*}+1}^j \right]$
already has a non-neutralized mobile point.
Thus, ${i_*} = \left\lfloor\frac{k}{d}\right\rfloor - 2$,
implying $\left\lfloor\frac{k}{d}\right\rfloor \in \{2,3\}$,
and $z_0^{j+l}$ is non-neutralized R-mobile in
$\left\langle z_{\left\lfloor\!\frac{k}{d}\!\right\rfloor - 2}^j, 
z_{\left\lfloor\!\frac{k}{d}\!\right\rfloor - 1}^j \right]$.

We next determine which configurations of ${\bf{z}}^j$
admit such an R-mobile point.  We begin by partitioning
the values of
$j^{\prime}\in\Z/d$ partition into four sets,
according to whether 
$n_{j^{\prime}}=\left\lfloor\frac{k}{d}\right\rfloor$
or $n_{j^{\prime}}=\left\lfloor\frac{k}{d}\right\rfloor - 1$,
and to whether the $z_0^{j+l}$ is active or inactive in
$\left\langle z_{\left\lfloor\!\frac{k}{d}\!\right\rfloor - 2}^j,
z_{\left\lfloor\!\frac{k}{d}\!\right\rfloor - 1}^j \right]$ at time $j=j^{\prime}$
(or equivalently, to whether
$z_0^{j^{\prime}+l}
\in \left\langle z_{\left\lfloor\!\frac{k}{d}\!\right\rfloor - 2}^{j^{\prime}}, 
z_{\left\lfloor\!\frac{k}{d}\!\right\rfloor - 1}^{j^{\prime}} \right]$ or
$z_0^{j^{\prime}+l}
\notin \left\langle z_{\left\lfloor\!\frac{k}{d}\!\right\rfloor - 2}^{j^{\prime}}, 
z_{\left\lfloor\!\frac{k}{d}\!\right\rfloor - 1}^{j^{\prime}} \right]$).
We begin by claiming that the set
\begin{equation}
\label{eq: part (iv), J_0 definition with i=k/d-2}
J_0 := \left\{j^{\prime} \in \Z/d \left|\;  
n_{j^{\prime}}\!=\!\textstyle{\left\lfloor\frac{k}{d}\right\rfloor}\!-\!1;\;
z_0^{j+l}\;\text{is inactive in}\;\!
\left\langle z_{\left\lfloor\!\frac{k}{d}\!\right\rfloor-2}^j, z_{\left\lfloor\!\frac{k}{d}\!\right\rfloor-1}^j\right]
\!\;\text{when}\;j=j^{\prime}
\right.\!\right\}
\end{equation}
is nonempty.
Suppose that $J_0$ is empty.
Then $\Z/d$ partitions
into the disjoint union of $J_1$, $J_2$, and $J_3$, defined as follows:
\begin{align}
\label{eq: part (iv), J_1 definition with i=k/d-2}
       J_1
&:= \left\{j^{\prime} \in \Z/d \left|\;  
       n_{j^{\prime}}\!=\!\textstyle{\left\lfloor\frac{k}{d}\right\rfloor}\!-\!1;\;
       z_0^{j+l}\;\text{is active in}\;\!
       \left\langle z_{\left\lfloor\!\frac{k}{d}\!\right\rfloor-2}^j, 
       z_{\left\lfloor\!\frac{k}{d}\!\right\rfloor-1}^j\right]
       \!\;\text{when}\;j=j^{\prime}
       \right.\!\right\},
             \\
\label{eq: part (iv), J_2 definition with i=k/d-2}
       J_2
&:= \left\{j^{\prime} \in \Z/d \left|\;  
       n_{j^{\prime}}\!=\!\textstyle{\left\lfloor\frac{k}{d}\right\rfloor};\;
       z_0^{j+l}\;\text{is inactive in}\;\!
       \left\langle z_{\left\lfloor\!\frac{k}{d}\!\right\rfloor-2}^j, 
       z_{\left\lfloor\!\frac{k}{d}\!\right\rfloor-1}^j\right]
       \!\;\text{when}\;j=j^{\prime}
       \right.\!\right\},
             \\
\label{eq: part (iv), J_3 definition with i=k/d-2}
       J_3
&:= \left\{j^{\prime} \in \Z/d \left|\;  
       n_{j^{\prime}}\!=\!\textstyle{\left\lfloor\frac{k}{d}\right\rfloor};\;
       z_0^{j+l}\;\text{is active in}\;\!
       \left\langle z_{\left\lfloor\!\frac{k}{d}\!\right\rfloor-2}^j, 
       z_{\left\lfloor\!\frac{k}{d}\!\right\rfloor-1}^j\right]
       \!\;\text{when}\;j=j^{\prime}
       \right.\!\right\}.
\end{align}
Note that $J_1$ and $J_2$ are nonempty.  $J_1$ is nonempty because
$n_{j^{\prime}}=\left\lfloor\frac{k}{d}\right\rfloor-1$ only if $j^{\prime} \in J_1$;
$J_2$ is nonempty because $z_0^{j+l}$ is inactive in
$\left\langle z_{\left\lfloor\!\frac{k}{d}\!\right\rfloor-2}^j, 
z_{\left\lfloor\!\frac{k}{d}\!\right\rfloor-1}^j\right]$
at time $j=j^{\prime}$ only if $j^{\prime} \in J_2$.
The question of whether or not $J_3$ is empty depends on whether or not $l=1$.
Since $J_1$ is nonempty, we know that for any $j_1 \in J_1$ we have
$z_0^{j_1+l} \in \left\langle z_{n_{j_1}-1}^{j_1}, z_{n_{j_1}}^{j_1}\right]$,
so that
\begin{align}
      z_0^{j_1+1}-z_0^{j_1+l}
&= \left(z_0^{j_1+1}-z_{n_{j_1}}^{j_1}\right) + \left(z_{n_{j_1}}^{j_1} - z_0^{j_1+l}\right)
                \\ \nonumber
&=\psi + \left(z_{n_{j_1}}^{j_1} - z_0^{j_1+l}\right)
                \\ \nonumber
&\in \left\langle \psi, \psi + dq \right\rangle.
\end{align}
The fact that $0< \psi + [dq]_{k^2} < k^2$ then implies that
$l \neq 1$.  Thus $J_3$ is nonempty, since otherwise
$z_{n_j}^j - z_0^{j+l}$ would be constant in $\Z/d$,
contradicting Corollary \ref{cor: q of positive type: combo of lemma and difference eq}.
The definitions of $J_1$, $J_2$, and $J_3$, along with
line (\ref{eq: main prop (iv), type (1,1), chaperoned R-mobile, L min in}),
then show that
$\maxq_{j\in\Z/d}\left(z_{n_{j+l-1}}^{j+l-1} - z_{n_j}^j\right) \in
\left\langle -dq, 0\right\rangle$ and
$\maxq_{j\in\Z/d}\left(z_0^{j+l} - z_{n_j}^j\right) \in
\left\langle -dq, 0\right\rangle$,
which tell us, respectively, that
$z_{n_{j+l-1}}^{j+l-1}$ is L-mobile in
$\left\langle z_{n_j - 1}^j, z_{n_j}^j\right]$,
and that
$\minq_{j\in\Z/d}\left(z_0^{j+l} - z_{n_j - 1}^j\right) \in
\left\langle -dq, 0\right\rangle$,
so that $\minq_{j\in\Z/d}\left(z_0^{j+l} - z_{n_j - 1}^j\right) \notin
\left\langle 0, dq\right\rangle$,
making $z_{n_{j+l-1}}^{j+l-1}$ L-mobile non-neutralized L-mobile in
$\left\langle z_{n_j - 1}^j, z_{n_j}^j\right]$.
Part (ii') then tells us that the non-neutralized 
R-mobile point $z_0^{j+l}$ in
$\left\langle z_{\left\lfloor\!\frac{k}{d}\!\right\rfloor - 2}^j, 
z_{\left\lfloor\!\frac{k}{d}\!\right\rfloor - 1}^j \right]$ is the mirror of the
non-neutralized L-mobile point $z_{n_{j+l-1}}^{j+l-1}$  in
$\left\langle z_{n_j - 1}^j, z_{n_j}^j\right]$.

Thus $\left\lfloor\!\frac{k}{d}\!\right\rfloor = 2$, and
$z_0^{j+l}$ is non-neutralized R-mobile in
$\left\langle z_0^j, z_1^j \right]$.
We claim $z_0^{j+l}$ is the only non-neutralized mobile point in
$\left\langle z_0^j, z_1^j \right]$.
That is, (\ref{eq: main prop (iv), type (1,1), chaperoned R-mobile, L min in})
implies that
the $\minq$ of $\left\{z^{j+l-1}_{n_{j+l-1}}-z_1^j\right\}_{j\in\Z/d}$
occurs when $j \in J_2$, but for any $j_2 \in J_2$,
(\ref{eq: main prop (iv), type (1,1), chaperoned R-mobile, L min in}) implies that
$z^{j_2+l-1}_{n_{j_2+l-1}}-z_1^{j_2} \in
\left\langle 0, dq\right\rangle$, which, since $2[dq]_{k^2} < k^2$,
has no intersection with
$\left\langle -dq, 0 \right\rangle$.
Thus $z^{j+l-1}_{n_{j+l-1}}$ is not L-mobile in
$\left\langle z_0^j, z_1^j \right]$, and so
$z_0^{j+l}$ is the only non-neutralized mobile point in
$\left\langle z_0^j, z_1^j \right]$.

Suppose that $\alpha = +1$.
Then by Part (i'), we know there is some  $j_* \in \Z/d$ such that
$v_q(z_0^{j_*}, z_1^{j_*}) = k - k^2$, and 
$v_q(z_0^{j^{\prime}}, z_1^{j^{\prime}}) = k$ for all $j^{\prime} \neq j_*$ in $\Z/d$.
This means that there is one more active non-neutralized mobile point in
$\left\langle z_0^j, z_1^j\right]$ at time $j_*$ than at any other time, which
in this case implies that the unique non-neutralized mobile point in
$\left\langle z_0^j, z_1^j\right]$, namely, $z_0^{j+l}$, must be active precisely once,
but this contradicts the fact that both $J_1$ and $J_3$ are nonempty.
Thus $J_0$ cannot be empty when $\alpha = +1$.

If $\alpha = -1$,
then Part (i') tells us there is some  $j_* \in \Z/d$ such that
$v_q(z_0^{j_*}, z_1^{j_*}) = -k + k^2$, and 
$v_q(z_0^{j^{\prime}}, z_1^{j^{\prime}}) = -k$ for all $j^{\prime} \neq j_*$ in $\Z/d$.
This means that there is one fewer active non-neutralized mobile point in
$\left\langle z_0^j, z_1^j\right]$ at time $j_*$ than at any other time,
and so we have $j_* \in J_2$.  But $j_* \in J_2$ implies that
$n_{j_*} = 2$, contradicting the fact that $i_* = \frac{n_{j_*}-1}{2}\in \Z$
implies $n_{j_*}$ is odd.
Thus $J_0$ cannot be empty when $\alpha = -1$,
and so $J_0 \neq \emptyset$.

We next observe that the nonemptyness of $J_0$ implies that the set
\begin{equation}
       J_3
:=   \left\{j^{\prime} \in \Z/d \left|\;  
       n_{j^{\prime}}\!=\!\textstyle{\left\lfloor\frac{k}{d}\right\rfloor};\;
       z_0^{j+l}\;\text{is active in}\;\!
       \left\langle \!z_{\left\lfloor\!\frac{k}{d}\!\right\rfloor-2}^j, 
       z_{\left\lfloor\!\frac{k}{d}\!\right\rfloor-1}^j\right]
       \!\;\text{at time}\;j=j^{\prime}
       \right.\!\right\}
\end{equation}
is empty, because if not, then for
arbitrary elements $j_0 \in J_0$ and $j_3 \in J_3$, we have
\begin{align}
\label{eq: part (iv), J_0 nonempty implies J_3 empty}
      z_0^{j_0+l} - z_{n_{j_0}}^{j_0}
&= \maxq_{j\in\Z/d}\left(z_0^{j+l}-z_{\left\lfloor\!\frac{k}{d}\!\right\rfloor - 2}^j\right) - dq
                          \\ \nonumber
&= \minq_{j\in\Z/d}\left(z_0^{j+l}-z_{\left\lfloor\!\frac{k}{d}\!\right\rfloor - 2}^j\right)
                          \\ \nonumber
&= z_0^{j_3+l}-z_{\left\lfloor\!\frac{k}{d}\!\right\rfloor - 2}^{j_3}
                          \\ \nonumber
&= z_0^{j_3+l} - z_{n_{j_3}}^{j_3} + 2dq,
\end{align}
contradicting Corollary \ref{cor: q of positive type: combo of lemma and difference eq}.

Note that we once again know only that
$i_* = \left\lfloor\frac{k}{d}\right\rfloor - 2$, with
$\left\lfloor\frac{k}{d}\right\rfloor \in \{2, 3\}$.
Writing $i_*$ instead of $\left\lfloor\!\frac{k}{d}\!\right\rfloor - 2$
to simplify notation, we can now express $\Z/d$ is the disjoint union of
\begin{align}
       J_0
&:= \left\{j^{\prime} \in \Z/d \left|\;  
       n_{j^{\prime}}\!=\!\textstyle{\left\lfloor\frac{k}{d}\right\rfloor}\!-\!1;\;
       z_0^{j+l}\;\text{is inactive in}\;\!
       \left\langle z_{i_*}^j, z_{i_* + 1}^j\right]
       \!\;\text{when}\;j=j^{\prime}
       \right.\!\right\},
           \\
       J_1
&:= \left\{j^{\prime} \in \Z/d \left|\;  
       n_{j^{\prime}}\!=\!\textstyle{\left\lfloor\frac{k}{d}\right\rfloor}\!-\!1;\;
       z_0^{j+l}\;\text{is active in}\;\!
       \left\langle z_{i_*}^j, z_{i_* + 1}^j\right]
       \!\;\text{when}\;j=j^{\prime}
       \right.\!\right\},\;\;\mathrm{and}
           \\
       J_2
&:= \left\{j^{\prime} \in \Z/d \left|\;  
       n_{j^{\prime}}\!=\!\textstyle{\left\lfloor\frac{k}{d}\right\rfloor};\;
       z_0^{j+l}\;\text{is inactive in}\;\!
       \left\langle z_{i_*}^j, z_{i_* + 1}^j\right]
       \!\;\text{when}\;j=j^{\prime}
       \right.\!\right\},
\end{align}
all of which are nonempty.  We have already proven that $J_0$ is nonempty.
$J_1$ is nonempty because $z_0^{j+l}$ is active
in $\left\langle z_{i_*}^j, z_{i_* + 1}^j\right]$ at time $j=j^{\prime}$
only if $j^{\prime} \in J_1$, and $J_2$ is nonempty because
$n_{j^{\prime}}= \left\lfloor\!\frac{k}{d}\!\right\rfloor$ only if $j^{\prime}\in J_2$.
The above definitions of $J_0$, $J_1$, and $J_2$ imply
that $z_0^{j+l}$ is R-mobile in
$\left\langle z_{n_j-1}^j, z_{n_j}^j \right]$,
and (\ref{eq: main prop (iv), type (1,1), chaperoned R-mobile, L min in})
implies that $z^{j+l-1}_{n_{j+l-1}}$ is not L-mobile in
$\left\langle z_{n_j-1}^j, z_{n_j}^j \right]$,
so that $z_0^{j+l}$ is in fact non-neutralized R-mobile in
$\left\langle z_{n_j-1}^j, z_{n_j}^j \right]$.
The mirror relation (\ref{eq: mirror relation 2})
then tells us that $z_{n_{j-l}}^{j-l}$ is non-neutralized L-mobile in
$\left\langle z_0^j, z_1^j\right]$.
Thus, $i_* = 0$ and $\left\lfloor\frac{k}{d}\right\rfloor =2$.

We next claim that
\begin{equation}
\label{eq: part (iv), type (1,1), min in (2psi, dq)}
2 \psi < \minq_{j\in\Z/d}\left(z_0^{j+l}-z_0^j\right) < [dq]_{k^2}.
\end{equation}
Line (\ref{eq: main prop (iv), type (1,1), chaperoned R-mobile, L min in})
already implies
$\minq_{j\in\Z/d}\left(z_0^{j+l}-z_0^j\right) 
\in \left\langle \psi, dq\right\rangle$.
Suppose
$\minq_{j\in\Z/d}\left(z_0^{j+l}-z_0^j\right)
\in \left\langle \psi, 2\psi \right\rangle$.
Then, for arbitrary $j_0 \in J_0$, we have
\begin{align}
\label{eq: part (iv), max = min - 2psi}
      \maxq_{j\in\Z/d}\left(z_{n_{j+l-1}}^{j+l-1} - z_0^{j+1}\right)
&=  z_{n_{j_0+l-1}}^{j_0+l-1} - z_0^{j_0+1}
                          \\ \nonumber
&=  \left(z_0^{j_0+l} - \psi\right) - \left(z_0^{j_0} + dq+\psi\right)
                          \\ \nonumber
&=  \maxq_{j\in\Z/d}\left(z_0^{j+l} - z_0^j\right)
       -dq - 2\psi
                          \\ \nonumber
&=  \minq_{j\in\Z/d}\left(z_0^{j+l} - z_0^j\right) - 2\psi
                          \\ \nonumber
&\in \left\langle -\psi, 0\right\rangle,
\end{align}
so that $z_{n_{j+l-1}}^{j+l-1}$ is L-pseudomobile in
$\left\langle z_{n_j}^j, z_0^{j+1} \right]$.
Moreover, for arbitrary $j_1 \in J_1$, we have
\begin{align}
      \minq_{j\in\Z/d}\left(z_0^{j+l}-z_{n_j}^j\right) 
&= z_0^{j_1+l}-z_1^{j_1}
                \\ \nonumber
&= \minq_{j\in\Z/d}\left(z_0^{j+l}-z_0^j\right) + dq
                \\ \nonumber
&\in \left\langle \psi + dq, 2dq\right\rangle,
\end{align}
which, since $2[dq]_{k^2} < k^2$, has no intersection with
$\left\langle 0, \psi \right\rangle$.  Thus 
$z_0^{j+l}$ is not R-pseudomobile in $\left\langle z_{n_j}^j, z_0^{j+1} \right]$,
but this means that $z_{n_{j+l-1}}^{j+l-1}$ is L-pseudomobile in
$\left\langle z_{n_j}^j, z_0^{j+1} \right]$, a contradiction.
Thus (\ref{eq: part (iv), type (1,1), min in (2psi, dq)}) must be true.

This, in turn, implies that
$z_{n_{j+l-1}}^{j+l-1}$ is L-mobile in $\left\langle z_0^{j+1}, z_1^{j+1}\right]$.
That is, 
\begin{align}
      \maxq_{j\in\Z/d}\left(z_{n_{j+l-1}}^{j+l-1} - z_1^{j+1}\right)
&= \maxq_{j\in\Z/d}\left(z_{n_{j+l-1}}^{j+l-1} - z_0^{j+1}\right) - dq
                          \\ \nonumber
&=  \minq_{j\in\Z/d}\left(z_0^{j+l} - z_0^j\right) - 2\psi - dq
                          \\ \nonumber
&\in \left\langle -dq, -2\psi\right\rangle,
\end{align}
where the second line used (\ref{eq: part (iv), max = min - 2psi})
and the third line used (\ref{eq: part (iv), type (1,1), min in (2psi, dq)}).
Moreover, since
\begin{align}
      \minq_{j\in\Z/d}\left(z_0^{j+l}-z_0^{j+1}\right)
&= \maxq_{j\in\Z/d}\left(z_0^{j+l}-z_1^{j+1}\right)
            \\ \nonumber
&= \maxq_{j\in\Z/d}\left(z_{n_{j+l-1}}^{j+l-1} - z_1^{j+1}\right) + \psi
            \\ \nonumber
&\in \left\langle -dq + \psi, -\psi\right\rangle,
\end{align}
we know that $z_0^{j+l}$ is not R-mobile in
$\left\langle z_0^{j+1}, z_1^{j+1}\right]$.
Thus, $z_{n_{j+l-1}}^{j+l-1}$ is non-neutralized
L-mobile in $\left\langle z_0^{j+1}, z_1^{j+1}\right]$,
or equivalently,
$z_{n_{j+l-2}}^{j+l-2}$ is non-neutralized L-mobile in
$\left\langle z_0^j, z_1^j\right]$.  We already know, however, that
$z_{n_{j-l}}^{j-l}$ is non-neutralized L-mobile in
$\left\langle z_0^j, z_1^j\right]$.  Thus,
$l-2 \equiv -l\; (\mod d)$, so that
\begin{equation}
\label{part iv: eq: 2l = 2}
2l \equiv 2\;(\mod d).
\end{equation}

We claim that this implies $c=1$.
That is, since $z_0^{j+l}$ and $z_{n_{j-l}}^{j-l}$ are mobile
in $\left\langle z_0^j, z_1^j\right]$, Part (i') tells us there is a unique
$j_* \in \Z/d$ such that
$v_q\!\left( z_0^{j_*}, z_1^{j_*}\right) = \alpha(k-k^2)$, and
$v_q\!\left( z_0^{j^{\prime}}, z_1^{j^{\prime}}\right) = \alpha(k)$
for all $j^{\prime}\neq j_* \in \Z/k^2$.
Thus, if we define $\chi_R(j^{\prime})$ (respectively 
$\chi_L(j^{\prime})$) to be equal to 1 if $z_0^{j+l}$
(respectively $z_{n_{j-l}}^{j-l}$) is active in 
$\left\langle z_0^j, z_1^j \right]$ at time $j = j^{\prime}$,
and equal to 0 otherwise, then for any $j^{\prime} \in \Z/d$,
\begin{equation}
\label{part iv: eq: how many mobile points are active?}
    \chi_R(j^{\prime}) + \chi_L(j^{\prime})
= \begin{cases}
       1   &   j^{\prime} \neq j_*
              \\
       2    &   j^{\prime} = j_*, \;\alpha = +1
              \\
       0   &   j^{\prime} = j_*,\;\alpha = -1
    \end{cases}.
\end{equation}
This is because a mobile point in
$\left\langle z_0^j, z_1^j \right]$ contributes
$-k^2$ to $v_q\!\left( z_0^{j^{\prime}}, z_1^{j^{\prime}}\right)$
if it is active at time $j=j^{\prime}$ and contributes zero otherwise.
Thus (\ref{part iv: eq: how many mobile points are active?})
implies that
\begin{equation}
\label{part iv: eq: total actives is d+alpha}
\sum_{j\in \Z/d}  \!\chi_R(j) \;+\; \sum_{j\in \Z/d}  \!\chi_L(j) = d+\alpha.
\end{equation}
On the other hand, 
Proposition \ref{prop: mobile point is active [lepsilon] times or [(l-1)epsilon] times}
tells us that
\begin{equation}
\label{part iv: eq: R active le, L active (l-1)e}
\sum_{j\in \Z/d}  \!\chi_R(j) = [l\epsilon]_d,\;\;\;\;\;\;
\sum_{j\in \Z/d}  \!\chi_L(j) = [(l-1)\epsilon]_d.
\end{equation}
Combining 
(\ref{part iv: eq: total actives is d+alpha}) and
(\ref{part iv: eq: R active le, L active (l-1)e}),
we then have
\begin{align}
      d+ \alpha 
&= [l\epsilon]_d + [(l-1)\epsilon]_d
              \\ \nonumber
       \alpha
&\equiv  l\epsilon + (l-1)\epsilon\;\; (\mod d)
              \\ \nonumber
      \gamma\left( \alpha\gamma\,\epsilon^{-1}\right) 
&\equiv  2l-1\;\; (\mod d)
              \\ \nonumber
        \gamma c
&\equiv  (2)-1\;\; (\mod d)
              \\ \nonumber
         c
& = [\gamma]_d,
\end{align}
where the second to last line used the facts that
$c = [\alpha\gamma\,{\epsilon}^{-1}]_d$ and that
$2l \equiv 2\;(\mod d)$, from (\ref{part iv: eq: 2l = 2}).
If $\gamma = -1$, then $c = d-1 \leq \frac{d}{2}$,
implying $d\leq 2$, contradicting the fact that
$\left\lfloor\frac{k}{d}\right\rfloor = 2$.  Thus $\gamma = +1$ and $c=1$,
This, in turn, implies that $(\mu,\gamma) = (1,1)$ and $\epsilon = [\alpha]_d$.

If $\alpha = +1$, then $\epsilon = 1$.  Now,
$n_{j^{\prime}} = \left\lfloor\frac{k}{d}\right\rfloor - {\theta}^{d, \epsilon}(j^{\prime})$
for all $j^{\prime}\in\Z/d$.  Thus, we have
\begin{align}
     1
&= \epsilon
          \\ \nonumber
&= \#\{j^{\prime}\in \Z/d\left\vert\; {\theta}^{d, \epsilon}(j^{\prime})=1  \right.\}
          \\ \nonumber
&= \#\{j^{\prime}\in \Z/d\left\vert\; n_{j^{\prime}}= 
        \textstyle{\left\lfloor\frac{k}{d}\right\rfloor} - 1\right.\}
          \\ \nonumber
&= \#\{J_0 \cup J_1\},
\end{align}
but this contradicts the fact that both $J_0$ and $J_1$ are nonempty.
Thus we are left with the case in which $\alpha = -1$.

The case of $\alpha = -1$ is somewhat more complicated.
We begin by computing $\psi$:
\begin{align}
      \psi
&= \left[(\mu m + \gamma c)k + \alpha - \gamma \textstyle{\frac{ck + \alpha\gamma}{d}}k\right]_d
         \\ \nonumber
&= \left[(m + 1)k -1 - \textstyle{\frac{k -1}{d}}k\right]_d
         \\ \nonumber
&= \left[(m + 1)k -1 - 2k\right]_d
         \\ \nonumber
&= \left[(m - 1)k -1\right]_d,
\end{align}
where the third line used the fact that $\left\lfloor\frac{k}{d}\right\rfloor = 2$.
If $m=1$, then $\psi = k^2-1$, contradicting the fact
that $\psi < [dq]_k^2 < \frac{k^2}{2}$.  Thus $m > 1$ and
$\psi = (m-1)k-1$.
Now,  (\ref{eq: part (iv), type (1,1), min in (2psi, dq)}) also tells us that
$2\psi < [dq]_{k^2}$.  Thus,
\begin{align}
                    \nonumber
         2\psi
&<    (m+1)k -1
                   \\ \nonumber
         2\left((m-1)k-1\right)
&<    (m+1)k -1
                   \\ \nonumber
         mk
&<    3k +1
                   \\ 
         m
&\le   3.
\end{align}
Combining this with the fact that $m>1$ tells us that $m \in \{2,3\}$.

First consider the case in which $m=3$, so that $q = -\frac{k-1}{d}(3k-1) = -2(3k-1)$.
Note that the fact that $\frac{k-1}{d} = 2$ implies that $k$ is odd.
We claim that $q$ is not genus-minimizing, which we shall prove
by showing that $q^{\prime} := -q = 2(3k-1)$ is not genus-minimizing.
Recall that $\tilde{Q}_{q^{\prime}}$ denotes the lift to the integers of the set
$Q_{q^{\prime}} := \{a{q^{\prime}}\left\vert\;a\in\{0, \ldots, k-1\}\right.\}$.
If $k \equiv 0 \; (\mod 3)$, then
\begin{equation}
    \left(\textstyle{\frac{2k}{3}}{q^{\prime}}, \textstyle{\frac{k}{3}}{q^{\prime}}, 0{q^{\prime}}, \textstyle{\frac{5k+3}{6}}{q^{\prime}}, 
    \textstyle{\frac{3k+3}{6}}{q^{\prime}}, \textstyle{\frac{k+3}{6}}{q^{\prime}}\right)
= \left(-\textstyle{\frac{4k}{3}}, -\textstyle{\frac{2k}{3}}, 0, \textstyle{\frac{4k}{3}}\!-\!1, 
    \textstyle{\frac{6k}{3}}\!-\!1, \textstyle{\frac{8k}{3}}\!-\!1\right)
\end{equation}
in $\left(\Z/k^2\right)^6$, and so the interval
$\left\langle -\frac{4k}{3}, \frac{8k}{3} - 1\right]$ contains at least 
5 elements of $\tilde{Q}_{q^{\prime}}$.  Thus
\begin{align}
         -v_{q^{\prime}}\left(\textstyle{\frac{2k}{3}}{q^{\prime}},\textstyle{\frac{k+3}{6}}{q^{\prime}}\right)
&= -\left(\left(\textstyle{\frac{8k}{3}}\!-\!1\right)- \left(-\textstyle{\frac{4k}{3}}\right)\right)k
         \;\;+\;\;\#\!\left(\left\langle -\textstyle{\frac{4k}{3}}, 
          \textstyle{\frac{8k}{3}} \!-\! 1\right]\cap\tilde{Q}_{q^{\prime}}\right) k^2
                     \\ \nonumber
&\ge     -(4k-1)k + 5k^2
                     \\ \nonumber
&=     k(k+1),
\end{align}
and so Proposition \ref{prop: not minimizing if v >= k(k+1)} implies
that ${q^{\prime}}$ is not genus-minimizing.
If $k \equiv -1 \; (\mod 3)$, then
\begin{equation}
    \left(0{q^{\prime}}, \textstyle{\frac{k+1}{6}}{q^{\prime}}, \textstyle{\frac{k+1}{3}}{q^{\prime}}, \textstyle{\frac{k+1}{2}}{q^{\prime}}\right)
= \left(0, \textstyle{\frac{2k-1}{3}}, \textstyle{\frac{4k-2}{3}}, 2k-1\right)
\end{equation}
in $\left(\Z/k^2\right)^4$, and so the interval
$\left\langle 0, 2k-1\right]$ contains at least 
3 elements of $\tilde{Q}_{q^{\prime}}$.  Thus
\begin{align}
         -v_{q^{\prime}}\left(0{q^{\prime}},\textstyle{\frac{k+1}{2}}{q^{\prime}}\right)
&=   -\left((2k-1) - (0)\right)k
         \;\;+\;\;\#\!\left(\left\langle 0, 2k-1\right] \cap \tilde{Q}_{q^{\prime}}\right) k^2
                     \\ \nonumber
&\ge     -(2k-1)k + 3k^2
                     \\ \nonumber
&=     k(k+1),
\end{align}
and so ${q^{\prime}}$ is not genus-minimizing.  Lastly, if $k \equiv 1 \; (\mod 3)$, then
\begin{equation}
    \left(\textstyle{\frac{k-1}{6}}{q^{\prime}}, \textstyle{\frac{5k+1}{6}}{q^{\prime}}, 0{q^{\prime}}, \textstyle{\frac{2k+1}{3}}{q^{\prime}}\right)
= \left(\textstyle{\frac{-4k+1}{3}}, \textstyle{\frac{-2k-1}{3}}, 0,\textstyle{\frac{2k-2}{3}}\right)
\end{equation}
in $\left(\Z/k^2\right)^4$, and so the interval
$\left\langle \textstyle{\frac{-4k+1}{3}}, \textstyle{\frac{2k-2}{3}}\right]$ contains at least 
3 elements of $\tilde{Q}_{q^{\prime}}$.  Thus
\begin{align}
         -v_{q^{\prime}}\left(\textstyle{\frac{k-1}{6}}{q^{\prime}}, \textstyle{\frac{2k+1}{3}}{q^{\prime}}\right)
&=   -\left(\left(\textstyle{\frac{2k-2}{3}}\right) - \left(\textstyle{\frac{-4k+1}{3}}\right)\right)k
         \;\;+\;\;\#\!\left(\left\langle \textstyle{\frac{-4k+1}{3}}, \textstyle{\frac{2k-2}{3}}\right] 
         \cap \tilde{Q}_{q^{\prime}}\right) k^2
                     \\ \nonumber
&\ge     -(2k-1)k + 3k^2
                     \\ \nonumber
&=     k(k+1),
\end{align}
and so ${q^{\prime}}$ is not genus-minimizing.  We have checked all three equivalence classes
of $k$ modulo 3, and so $q= -q^{\prime}$ is not genus-minimizing for any $k$.

This leaves us with the case in which $m=2$, so that
$q = -\frac{k-1}{d}(2k-1)= -2(2k-1)$.
Since $\frac{k-1}{d}=2$, we know that $k \equiv 1\;(\mod 2)$.
Suppose that $k \equiv 3\;(\mod 4)$, and set $q^{\prime} := -q = 4k-2$.
Then
\begin{equation}
    \left(0{q^{\prime}}, \textstyle{\frac{k+1}{4}}{q^{\prime}}, \textstyle{\frac{k+1}{2}}{q^{\prime}}\right)
= \left(0, \textstyle{\frac{k-1}{2}}, k-1 \right)
\end{equation}
in $\left(\Z/k^2\right)^3$, and so the interval
$\left\langle 0, k-1\right]$ contains at least 
2 elements of $\tilde{Q}_{q^{\prime}}$.  Thus
\begin{align}
         -v_{q^{\prime}}\left(0{q^{\prime}},\textstyle{\frac{k+1}{2}}{q^{\prime}}\right)
&=   -\left((k-1) - (0)\right)k
         \;\;+\;\;\#\!\left(\left\langle 0, k-1\right] \cap \tilde{Q}_{q^{\prime}}\right) k^2
                     \\ \nonumber
&\ge     -(k-1)k + 2k^2
                     \\ \nonumber
&=     k(k+1),
\end{align}
and so ${q^{\prime}}$ is not genus-minimizing.

At last, this leaves us with the case in which
$(\mu, \gamma) = (1,1)$, $\alpha = -1$,
$m=2$, $c=1$, and $d=\frac{k-1}{2} \equiv 0\; (\mod 2)$,
so that $q = -2(2k-1)$.  As we shall show later,
in Proposition \ref{prop: bookkeeping for q genus-minimizing},
$q$ is actually genus-minimizing in this case,
so there is no argument we can make to explain it away.

We can still, however, prove that there are no
non-neutralized mobile points in this case.
We do this by showing that
$\left\langle z_0^j, z_1^j\right]$ has only one R-mobile point.
Now, for any $l \neq 0\in\Z/d$, $z_0^{j+l}$ is R-mobile in
$\left\langle z_0^j, z_1^j\right]$ if and only if
$\minq_{j\in\Z/d}\left(z_0^{j+l}-z_0^j\right) \in \left\langle 0, dq\right\rangle$,
We therefore proceed by using
Corollary \ref{cor: q of positive type: combo of lemma and difference eq}
to compute $\minq_{j\in\Z/d}\left(z_0^{j+l}-z_0^j\right) \in \Z/k^2$.
Note that $\epsilon = \left[\alpha\gamma\, c^{-1}\right]_d = d-1$, implying
$[l\epsilon]_d = d-[l]_d$.  We therefore have
\begin{align}
      \minq_{j\in\Z/d}\left(z_0^{j+l}-z_0^j\right)
&= [ml]_d\textstyle{\frac{k^2}{d}} + \left(\textstyle{\frac{[l\epsilon]_d}{d}}-1\right)[dq]_{k^2}
                 \\ \nonumber
&= [ml]_d\textstyle{\frac{k^2}{d}} - \textstyle{\frac{[l]_d}{d}}((m+c)k-1)
                 \\ \nonumber
&= m[l]_dk\left(\textstyle{\frac{k-1}{d}}\right) - [l]_d\textstyle{\frac{ck-1}{d}}
                 \\ \nonumber
&=\left(\textstyle{\frac{k-1}{d}}\right)[l]_d ( mk - 1)
                 \\ \nonumber
&= [l]_d(4k-2).
                 \\ \nonumber
\end{align}
This leaves us with the task of determining which
$[l]_d \in \{1, \ldots, d-1\} = \left\{1, \ldots, \frac{k-3}{2}\right\}$ satisfies
$[l]_d(4k-2) \in \left\langle 0, 3k-1\right\rangle$
(since $[dq]_{k^2} = 3k-1$).
Now, as integers,
\begin{equation}
3k-1 < 4k-2 \leq \;\;(4k-2)x \;\;\leq k^2 - \textstyle{\frac{3k+1}{2}} < k^2
\end{equation}
for all $x \in \left\{1, \ldots, \frac{k+3}{4}-1\right\}$, and 
\begin{equation}
k^2 + 3k-1 < k^2 + 6k + \textstyle{\frac{k-7}{2}} 
\leq \;\;(4k-2)x \;\;\leq  2k^2 - 7k + 3 < 2k^2
\end{equation}
for all $x \in \left\{\frac{k+3}{4}+1, \ldots, \frac{k-3}{2}\right\}$.
Thus, as elements of $\Z/k^2$, $[l]_d (4k-2) \notin \left\langle 0, 3k-1 \right\rangle$
for all $l \in \Z/d \setminus \left\{0, \frac{k+3}{4}\right\}$.
On the other hand, $\frac{k+3}{4} (4k-2) = \frac{5k-3}{2} \in \left\langle 0, 3k-1 \right\rangle$.
Thus $z_0^{j+l}$ is genus-minimizing if and only if
$l = \frac{k+3}{4}$.
This is the answer for $l$ we should have expected, since
by (\ref{part iv: eq: 2l = 2}), we know that
$2l \equiv 2\;(\mod d)$, whose solutions are $l \in \left\{1, \frac{d}{2}+1\right\}$.
We already know that $l\neq 1$, and $\frac{d}{2} + 1 = \frac{k+3}{4}$.
More to the point, we have shown that
$\left\langle z_0^j, z_1^j\right]$ has only one R-mobile point,
and so all mobile points are non-neutralized in this case.

On the other hand, we have shown that
when $\psi < [dq]_{k^2}$ and we do {\em not} have
$(\mu, \gamma) = (1,1)$, $\alpha = -1$,
$m=2$, $c=1$, and $d=\frac{k-1}{2} \equiv 0\; (\mod 2)$,
then $q$ is not genus-minimizing.
             \\

We therefore assume for the remainder of the proof that
$[dq]_{k^2} < \psi < 2[dq]_{k^2}$.
Suppose first that $(\mu,\gamma) \in \{(1,1), (-1,1)\}$, so that $\gamma = +1$.
Then
\begin{equation}
        \psi
\;=\; \left[dq - \gamma\textstyle{\frac{ck+\alpha\gamma}{d}}k\right]_{k^2}
\;=\; \left[dq - \textstyle{\frac{ck+\alpha}{d}}k\right]_{k^2}
\end{equation}
and so the fact that $\psi > [dq]_{k^2}$ implies 
$\psi = [dq]_{k^2} - \frac{ck+\alpha}{d}k + k^2$.
Now, Proposition \ref{prop: properties of parameters d, m, c, alpha, mu, gamma}
tells us that $[dq]_{k^2} < \frac{k^2}{2}$ and $\frac{ck+\alpha}{d} < \frac{k^2}{2}$.
Thus 
\begin{align}
      \psi
&= [dq]_{k^2} - \textstyle{\frac{ck+\alpha}{d}}k + k^2
             \\ \nonumber
&> [dq]_{k^2}  + \textstyle{\frac{k^2}{2}}
             \\ \nonumber
&> 2[dq]_{k^2}.
\end{align}

This leaves us with the case in which 
$[dq]_{k^2} < \psi < 2[dq]_{k^2}$ and
$(\mu,\gamma) = (1,-1)$.
We first claim that $v_q(z_{n_{j-1}}^{j-1}, z_0^j)$ is constant in $j \in \Z/d$.
Suppose this is not the case.
Then by Part (i$\psi$) and Part (ii$\psi$), there exists a non-neutralized 
R-pseudomobile point, say  $z_0^{j+l}$,
in $\left\langle z_{n_{j-1}}^{j-1}, z_0^j\right]$.
If $\minq_{j\in\Z/d}\left(z_0^{j+l} - z_{n_{j-1}}^{j-1}\right) \in
\left\langle \psi - dq, \psi \right\rangle$,
then $z^{j+l-1}_{n_{j+l-1}}$ is L-pseudomobile 
in $\left\langle z_{n_{j-1}}^{j-1}, z_0^j\right]$, contradicting the supposition
that $z_0^{j+l}$ is non-neutralized as an R-pseudomobile point
in $\left\langle z_{n_{j-1}}^{j-1}, z_0^j\right]$.
Thus $\minq_{j\in\Z/d}\left(z_0^{j+l} - z_{n_{j-1}}^{j-1}\right) \in
\left\langle0, \psi - dq \right\rangle$.
This, however, implies
$z^{j+l-1}_{n_{j+l-1}}$ is L-mobile in
$\left\langle z_{n_{j-1} - 1}^{j-1}, z_{n_{j-1}}^{j-1}\right]$, since
$\maxq_{j\in\Z/d}\left(z_{n_{j+l-1}}^{j+l-1} - z_{n_{j-1}}^{j-1}\right)
\in \left\langle -\psi + dq , 0\right\rangle$.
Moreover, since 
$\minq_{j\in\Z/d}\left(z_0^{j+l} - z_{n_{j-1}-1}^{j-1}\right)
= \minq_{j\in\Z/d}\left(z_0^{j+l} - z_{n_{j-1}}^{j-1}\right) + dq
\in \left\langle dq, \psi \right\rangle$,
we know that
$\minq_{j\in\Z/d}\left(z_0^{j+l} - z_{n_{j-1}-1}^{j-1}\right) \notin
\left\langle0, dq\right\rangle$,
and so $z_0^{j+l}$ is {\em not} R-mobile in 
$\left\langle z_{n_{j-1} - 1}^{j-1}, z_{n_{j-1}}^{j-1}\right]$.
Thus $z^{j+l-1}_{n_{j+l-1}}$ is in fact non-neutralized L-mobile in
$\left\langle z_{n_{j-1} - 1}^{j-1}, z_{n_{j-1}}^{j-1}\right]$, implying that
$v_q(z_{n_{j-1}-1}^{j-1}, z_{n_{j-1}}^{j-1})$ is not constant in $j \in \Z/d$.
But this contradicts our initial supposition that
$v_q(z_{n_{j-1}}^{j-1}, z_0^j)$ is not constant in $j \in \Z/d$.
Thus our initial supposition must have been false, and so
$v_q(z_{n_{j-1}}^{j-1}, z_0^j)$ is constant in $j \in \Z/d$.

Part (ii') therefore tells us that there exist unique $l \in \Z/d$ and
$i_* \in \left\{ 0, \ldots, \left\lfloor \frac{k}{d} \right\rfloor - 2 \right\}$
for which $z_0^{j+l}$ is non-neutralized R-mobile in
$\left\langle z_{i_*}^j, z_{{i_*}+1}^j \right]$,
and $z_{n_{j-l}}^{j-l}$ is non-neutralized L-mobile in
$\left\langle z_{n_j - (i_*+1)}^j, z_{n_j - i_*}^j \right]$, with
$\left(z_{i_*}^{j_*}, z_{{i_*}+1}^{j_*}\right) 
= \left(z_{n_j - (i_*+1)}^{j_*}, z_{n_j - i_*}^{j_*}\right)
= (x_*,y_*)$, for some unique $j_* \in \Z/d$,
where $x_*, y_* \in Q_q$ are the unique elements of $Q_q$
satisfying $v_q(x_*,y_*) = \alpha(k-k^2)$.
In particular, $i_* = \frac{n_{j_*}-1}{2}$.
Now, if $\minq_{j\in\Z/d}\left(z_0^{j+l}-z_{i_*}^j\right)
\in \left\langle \psi - dq,\, dq \right\rangle$, then
\begin{align}
      \maxq_{j\in\Z/d}\left(z_{n_{j+l-1}}^{j+l-1} - z_{{i_*}+1}^j\right)
&= dq + \minq_{j\in\Z/d}\left(z_0^{j+l}-z_{i_*}^j\right) - \psi - dq
             \\ \nonumber
&\in \left\langle - dq,\, dq-\psi \right\rangle
             \\ \nonumber
&\subset \left\langle - dq,\, 0\right\rangle,
\end{align}
making $z_{n_{j+l-1}}^{j+l-1}$ L-mobile in
$\left\langle z_{i_*}^j, z_{{i_*}+1}^j \right]$, so that
$z_0^{j+l}$ is in fact neutralized R-mobile in $\left\langle z_{i_*}^j, z_{{i_*}+1}^j \right]$, 
a contradiction.
Thus $\minq_{j\in\Z/d}\left(z_0^{j+l} - z_{i_*}^j\right) \in
\left\langle 0, \psi - dq\right\rangle$, which implies
\begin{align}
\label{eq: part (iv), type (1,-1), max in (-dq, 0)}
      \maxq_{j\in\Z/d}\left(z_{n_{j+l-1}}^{j+l-1} - z_{i_*}^j\right)
&= dq + \minq_{j\in\Z/d}\left(z_0^{j+l}-z_{i_*}^j\right) - \psi
             \\ \nonumber
&\in \left\langle  dq - \psi, 0 \right\rangle
             \\ \nonumber
&\subset \left\langle - dq,\, 0\right\rangle.
\end{align}
If $\left\lfloor \frac{k}{d} \right\rfloor > 2$, so that ${i_*} \neq 0$,
then this makes $z_{n_{j+l-1}}^{j+l-1}$ non-neutralized L-mobile in
$\left\langle z_{{i_*}-1}^j, z_{i_*}^j \right]$ (a contradiction), since
$z_{n_{j+l-1}}^{j+l-1}$ is L-mobile in $\left\langle z_{{i_*}-1}^j, z_{i_*}^j \right]$
and since
$\minq_{j\in\Z/d}\left(z_0^{j+l}-z_{{i_*}-1}^j\right)
=\minq_{j\in\Z/d}\left(z_0^{j+l} - z_{i_*}^j\right) + dq
\in \left\langle dq, \psi \right\rangle$
implies that $z_0^{j+l}$ is {\em{not}} R-mobile in 
$\left\langle z_{{i_*}-1}^j, z_{i_*}^j \right]$.
Thus $\left\lfloor \frac{k}{d} \right\rfloor = 2$ and $i_*=0$.

Equation (\ref{eq: part (iv), type (1,-1), max in (-dq, 0)})
then tells us that
$\maxq_{j\in\Z/d}\left(z_{n_{j+l-1}}^{j+l-1} - z_0^j\right) \in
\left\langle dq-\psi,\, 0\right\rangle
\subset \left\langle -\psi, 0 \right\rangle$,
so that $z_{n_{j+l-1}}^{j+l-1}$ is
L-pseudomobile in
$\left\langle z_{n_{j-1}}^{j-1}, z_0^j \right]$.
We then have
\begin{align}
      \minq_{j\in\Z/d}\left(z_0^{j+l}-z_{n_{j-1}}^{j-1}\right)
&= \minq_{j\in\Z/d}\left(\left(z_{n_{j+l-1}}^{j+l-1}+\psi\right)
      -\left(z_0^j - \psi\right)\right)
             \\ \nonumber
&= \maxq_{j\in\Z/d}\left(z_{n_{j+l-1}}^{j+l-1} -z_0^j\right) + 2\psi - dq
             \\ \nonumber
&\in \left\langle  \psi, 2\psi -dq\right\rangle.
\end{align}
If $2\psi -[dq]_{k^2} < k^2$,
then $\left\langle 0, \psi \right\rangle \cap \left\langle  \psi, 2\psi -dq\right\rangle = \emptyset$,
and so $z_0^{j+l}$ is not R-pseudomobile in
$\left\langle z_{n_{j-1}}^{j-1}, z_0^j \right]$,
which means that 
$z_{n_{j+l-1}}^{j+l-1}$ is
non-neutralized L-pseudomobile in
$\left\langle z_{n_{j-1}}^{j-1}, z_0^j \right]$, a contradiction.
Thus $2\psi -[dq]_{k^2} > k^2$, but this, in turn, implies that
\begin{equation}
k^2 < \psi + \psi - [dq]_{k^2} \;\;<\;\; \psi + [dq]_{k^2} \;\;<\;\; k^2 + [dq]_{k^2},
\end{equation}
so that $\psi + dq \in \left\langle 0, dq \right\rangle$.
Now, since $\left\lfloor \frac{k}{d} \right\rfloor = 2$,
$\minq_{j\in\Z/d}\left(z_0^{j+1}-z_0^j\right) = \psi + dq$,
and so the fact that $\psi + dq \in \left\langle 0, dq \right\rangle$
means that $z_0^{j+1}$ is R-mobile in
$\left\langle z_0^j, z_1^j \right]$.
On the other hand,
$\maxq_{j\in\Z/d}\left(z_{n_j}^j-z_1^j\right) = dq \notin \left\langle -dq, 0 \right\rangle$,
and so $z_{n_j}^j$ is not L-mobile in
$\left\langle z_0^j, z_1^j \right]$.
Thus $z_0^{j+1}$ is non-neutralized R-mobile in
$\left\langle z_0^j, z_1^j \right]$.

Since $z_0^{j+1}-z_{n_j}^j$ is constant in $j\in\Z/d$,
we can then express $\Z/d$ as the disjoint union of $J_1$ and $J_2$, where
\begin{align}
       J_1
&:= \left\{j^{\prime} \in \Z/d \left|\;  
       n_{j^{\prime}}=1;\;
       z_0^{j^{\prime}+1} \in
       \left\langle z_0^{j^{\prime}}, z_1^{j^{\prime}}\right]
       \right.\!\right\},
           \\
       J_2
&:= \left\{j^{\prime} \in \Z/d \left|\;  
       n_{j^{\prime}} = 2;\;
       z_0^{j^{\prime}+1} \in
       \left\langle z_1^{j^{\prime}}, z_2^{j^{\prime}}\right]
       \right.\!\right\}.
\end{align}
In this case, $n_j := 2 - {\theta}^{d, \epsilon}(j)$,
and the definition of ${\theta}^{d, \epsilon}(j)$, or alternatively, the $l=1$
case of Lemma \ref{lemma: q of positive type, xi lemma}, implies that
$\#\left\{ j^{\prime}\in\Z/d\left|\; {\theta}^{d, \epsilon}(j^{\prime})=1\right.\right\} = \epsilon$.
Thus $|J_1| = \epsilon$.
Since $z_{n_{j-1}}^{j-1} - z_0^j$ is constant in $j\in\Z/d$,
$z_{n_{j-1}}^{j-1}$ is not L-mobile in 
$\left\langle z_0^j, z_0^j\right]$.
Thus $z_0^{j+1}$ is the only mobile point in
$\left\langle z_0^j, z_1^j\right]$,
and it is active at time $j=j^{\prime}\in \Z/d$
if and only if $j^{\prime} \in J_1$.
Thus, if $\alpha = +1$, then
$z_0^{j+l}$ is active in $\left\langle z_0^j, z_1^j \right]$
precisely once, so that $\epsilon = |J_1| = 1$.
On the other hand, if $\alpha = -1$, then
$z_0^{j+l}$ is inactive in $\left\langle z_0^j, z_1^j \right]$
precisely once, so that $\epsilon = |J_1| = d-1$;

In either case, 
$\epsilon = [\alpha]_d$, and so
$c = \left[\alpha\gamma\, {\epsilon}^{-1}\right]_d = [\gamma]_d = d-1$.
The fact that $c \leq \frac{d}{2}$ then implies $d=2$,
but this contradicts the fact that
$\left\lfloor \frac{k}{d} \right\rfloor = 2$,
and so $\psi \notin  \left\langle dq, 2dq\right\rangle$.
             \\

Now that we have shown that 
$\psi > 2[dq]_{k^2}$ except in the special case mentioned
in the statement of Part (iv) (in which we have already shown that ${\mathbf{z}}^j$ has
no neutralized mobile points), it remains to show that
$\psi > 2[dq]_{k^2}$ implies
that all mobile points are non-neutralized.

Suppose that $\psi > 2[dq]_{k^2}$, and that
there exist $l \neq 0 \in \Z/d$ and
$i \in \left\{0, \ldots, \left\lfloor\frac{k}{d}\right\rfloor - 2\right\}$
for which $z_0^{j+l}$ is R-mobile in
$\left\langle z_i^j, z_{i+1}^j\right]$.
Then $\minq_{j\in\Z/d}\left(z_0^{j+l}-z_i^j\right) \in \left\langle 0, dq\right\rangle$, and so
\begin{align}
      \maxq_{j\in\Z/d}\left(z_{n_{j+l-1}}^{j+l-1}-z_{i+1}^j\right)
&=  \minq_{j\in\Z/d}\left(\left(z_0^{j+l}-\psi\right)-\left(z_i^j + dq\right)\right)+dq
                   \\ \nonumber
&= \minq_{j\in\Z/d}\left(z_0^{j+l}-z_i^j\right) -\psi
                   \\ \nonumber
&\in \left\langle -\psi, dq - \psi \right\rangle,
\end{align}
which, since $[dq]_{k^2}-\psi < -[dq]_{k^2}$,
has no intersection with $\left\langle -dq, 0 \right\rangle$.
Thus $z_{n_{j+l-1}}^{j+l-1}$ is not L-mobile in
$\left\langle z_i^j, z_{i+1}^j\right]$, and so
$z_0^{j+l}$ is non-neutralized R-mobile in
$\left\langle z_i^j, z_{i+1}^j\right]$.
A completely analogous argument shows that
if $\psi > 2[dq]_{k^2}$, and
$z_0^{j+l}$ is R-mobile in 
$\left\langle z_{n_j-(i+1)}^j, z_{n_j-i}^j \right]$
for some $l \neq 1 \in \Z/d$ and
$i \in \left\{0, \ldots, \left\lfloor\frac{k}{d}\right\rfloor - 2\right\}$,
then $z_0^{j+l}$ is non-neutralized R-mobile in 
$\left\langle z_{n_j-(i+1)}^j, z_{n_j-i}^j \right]$.
Thus ${\mathbf{z}}^j$ has no neutralized R-mobile points,
which, in turn, implies that ${\mathbf{z}}^j$ has no
neutralized L-mobile points,
and so all mobile points are non-neutralized.

\end{proof}

\begin{cor}
\label{cor: if z^j has mobile points, then (k/d)dq < k^2}
$\left\lfloor\frac{k}{d}\right\rfloor\! [dq]_{k^2} < k^2$.
\end{cor}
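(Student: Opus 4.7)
The plan is to split the proof into the two scenarios delineated in Proposition \ref{prop: positive type, main prop}(iv): the \emph{exceptional case}, where $(\mu,\gamma)=(1,1)$, $\alpha=-1$, $m=2$, $c=1$, $d=(k-1)/2$ with $d$ even, and the \emph{generic case}, where $\psi>2[dq]_{k^2}$.

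In the exceptional case, one computes directly: $[dq]_{k^2}=(m+c)k+\alpha=3k-1$ and $\lfloor k/d\rfloor=\lfloor 2k/(k-1)\rfloor=2$, so $\lfloor k/d\rfloor[dq]_{k^2}=6k-2$, which is far less than $k^2$ since $k>100$.

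In the generic case, I would argue by contradiction. Assume $\lfloor k/d\rfloor[dq]_{k^2}\geq k^2$ and set $N=\lfloor k/d\rfloor$, $M=\mu m+\gamma c$, so that $[dq]_{k^2}=Mk+\alpha$. Because $\sum_{j\in\Z/d}\theta^{d,\epsilon}(j)=\epsilon<d$, there exists some $j\in\Z/d$ with $\theta^{d,\epsilon}(j)=0$, hence $n_j=N$; for this $j$, the integer interval $\langle \tilde z_0^j,\tilde z_N^j]$ has length $N[dq]_{k^2}\geq k^2$, so it contains at least $k$ integer lifts of elements of $Q_q$. On the other hand, Proposition \ref{prop: unique v_q = alpha(k - k^2), and the rest are v_q = alpha (k)} forces each $v_q(z_i^j,z_{i+1}^j)$ to lie in $\{\alpha k,\alpha(k-k^2)\}$, which (unpacking the definition of $v_q$) translates to $\#(\tilde Q_q\cap\langle\tilde z_i^j,\tilde z_{i+1}^j])\in\{M,M+\alpha\}$, with the value $M+\alpha$ attained at most once over all of $\mathbf z$. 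Summing therefore yields $NM+\mathbf 1_{\{j=j_*\}}\alpha\geq k$, hence $NM\geq k-1$.

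The final step is to translate $\psi>2[dq]_{k^2}$ into a parameter bound incompatible with $NM\geq k-1$. For $\gamma=+1$, writing $[kq]_{k^2}=sk$ with $s=(ck+\alpha)/d$, the inequality $\psi>2[dq]_{k^2}$ together with $\psi<k^2$ forces the sign determination $\psi=[dq]_{k^2}-[kq]_{k^2}+k^2$, giving $sk+[dq]_{k^2}<k^2$ and so $s+M\leq k-1$ when $\alpha=+1$ (respectively $s+M\leq k$ when $\alpha=-1$). Using $Nd=k-[k]_d$ and $sd=ck+\alpha$, one then multiplies through and compares with $NM\geq k-1$ to reach the contradiction. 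An analogous computation handles $\gamma=-1$, where $\psi=(s+M)k+\alpha$ with $s=(ck-\alpha)/d$ yields $s\geq M+1$ combined with $s+M\leq k-1$. The case $(\mu,\gamma)=(-1,1)$ is immediate, since $M=c-m<c<d/2$ gives $NM<Nd/2\leq k/2<k$ without any appeal to $\psi$.

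The main obstacle is closing the $(\mu,\gamma)=(1,1)$ subcase cleanly, where $M=m+c$ is bounded only by $k/2$ and the contradiction between $NM\geq k-1$ and $s+M\leq k-1$ is delicate. I would expect to need the refined information that the ``special'' interval (with $\#=M+\alpha$) is unique across all of $\mathbf z$, in order to tighten the bound on $\sum_i\#(\tilde Q_q\cap\langle\tilde z_i^j,\tilde z_{i+1}^j])$ beyond the crude $NM+1$ and thereby extract a strict numerical contradiction from the interplay of $s$, $N$, $d$, and $[k]_d$.
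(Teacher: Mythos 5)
Your split via Proposition \ref{prop: positive type, main prop}.(iv) and your direct check of the exceptional case are fine, but the gap you flag in the generic case is not merely a matter of ``closing cleanly''---the inequalities you derive genuinely fail to conflict, so the route cannot be pushed through as written. Concretely, take $(\mu,\gamma)=(1,1)$, $\alpha=-1$, $d=2$, $c=1$, $m=2$ (not the exceptional case, since $d=2\neq(k-1)/2$ for $k>100$), with $k$ odd so $d\vert k-1$. Then $M=m+c=3$, $N=\lfloor k/2\rfloor=(k-1)/2$, $[dq]_{k^2}=3k-1$, $s=(ck+\alpha)/d=(k-1)/2$. One has $N[dq]_{k^2}=\tfrac{(k-1)(3k-1)}{2}>k^2$, and your two bounds read $NM=\tfrac{3(k-1)}{2}\geq k-1$ and $s+M=\tfrac{k+5}{2}\leq k$---both hold, with no contradiction. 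What rules this tuple out is that such a $q$ is not actually genus-minimizing, but that fact is only established downstream by the mobile-point machinery (Propositions \ref{prop: q of positive type. If mobile points exist, then l=1}--\ref{prop: mobile points implies mu m + gamma c = 2}), and those proofs cite the present corollary, so invoking them would be circular. Your proposed refinement (uniqueness of the special interval) does not help either: it only re-establishes $\sum_i \#\leq NM+1$, which is the bound you already used.

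The paper's own argument is shorter and structural, avoiding the numerics entirely. Supposing $\lfloor k/d\rfloor[dq]_{k^2}>k^2$, the inequality $2[dq]_{k^2}<k^2$ forces $z_0^{j_0}\in\langle z_{n_{j_0}-(i+1)}^{j_0}, z_{n_{j_0}-i}^{j_0}]$ for some $j_0$ with $n_{j_0}=\lfloor k/d\rfloor$ and some $i\in\{0,\ldots,\lfloor k/d\rfloor-2\}$, so $z_0^j$ is R-mobile rel $z_{n_j}^j$. But Proposition \ref{prop: positive type, main prop}.(ii) identifies the R-mobile point as $z_0^{j+l}$ for a \emph{nonzero} $l$, R-mobile rel $z_0^j$; since $z_0^j-z_0^j$ is constant, $z_0^j$ can never be mobile. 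Contradiction, done. If you want to salvage your counting approach, you would effectively need to re-derive the structural content of \ref{prop: positive type, main prop}.(ii), at which point the direct appeal to it is both shorter and already available.
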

\begin{proof}
Suppose that $\left\lfloor\frac{k}{d}\right\rfloor \![dq]_{k^2} > k^2$.
Then, since $2[dq]_{k^2} < k^2$, this implies that
$z_0^{j^{\prime}} \in
\left\langle z_0^{j^{\prime}} + 2dq,\,
z_0^{j^{\prime}} + \left\lfloor\frac{k}{d}\right\rfloor\! dq \right\rangle$
for all $j^{\prime} \in \Z/d$.
Thus, if we choose any $j_0 \in \Z/d$ for which 
$n_{j_0} = \left\lfloor\frac{k}{d}\right\rfloor$, then
there exists $i \in \left\{0, \ldots, \left\lfloor\frac{k}{d}\right\rfloor -2\right\}$
for which
$z_0^{j_0} \in
\left\langle z_{n_{j_0}-(i+1)}^{j_0}, z_{n_{j_0}-i}^{j_0} \right]$.
This, in turn, implies that
$\min_{j\in\Z/d}\left(z_0^j - z_{n_j-(i+1)}^j\right)
\in \left\langle 0, dq \right\rangle$, so that 
$z_0^j$ is R-mobile in 
$\left\langle z_{n_j-(i+1)}^j, z_{n_j-i}^j \right]$.
Proposition \ref{prop: positive type, main prop}.(ii)
then implies that
$z_0^j$ is R-mobile rel $z_0^j$, but this is a contradiction,
so our supposition that $\left\lfloor\frac{k}{d}\right\rfloor \![dq]_{k^2} > k^2$
must have been false.
\end{proof}

Before preceding to other results, we pause to introduce one more
item of terminology, which we have postponed until now in order
to keep Proposition \ref{prop: positive type, main prop}
from becoming any more unwieldy than it already is.

So far, we have only described pseudomobile points
in terms of the interval $\left\langle z_{n_{j-1}}^{j-1}, z_0^j\right]$.
In some cases, however,
it turns out to be more convenient to
consider the interval $\left\langle z_0^j, z_{n_{j-1}}^{j-1}\right]$.
We therefore fix
\begin{equation}
   \psibar
:= \left[z_{n_{j-1}}^{j-1} - z_0^j\right]_{k^2}
= k^2 - \psi,
\end{equation}
and introduce the notion of an {\em antipseudomobile} point.
For any $l \neq 0 \in \Z/d$, we say that
$z_0^{j+l}$ is R-antipseudomobile (or R$\psibar$-mobile) in
$\left\langle z_0^j, z_{n_{j-1}}^{j-1} \right]$ if
\begin{equation}
      0
\;<\; \minq_{j\in\Z/d} \left( z_0^{j+l} - z_0^j \right)
\;<\; \psibar,
\end{equation}
and that $z_{n_{j-1-l}}^{j-1-l}$ is
L-pseudomobile (or L$\psibar$-mobile) in
$\left\langle z_0^j, z_{n_{j-1}}^{j-1} \right]$ if
\begin{equation}
      -\psibar
\;<\; \maxq_{j\in\Z/d} \left( z_{n_{j-1-l}}^{j-1-l}- z_{n_{j-1}}^{j-1} \right)
\;<\; 0.
\end{equation}
We say that a point is antipseudomobile (or $\psibar$-mobile) in
$\left\langle z_0^j, z_{n_{j-1}}^{j-1} \right]$ if it is
R$\psibar$-mobile or L$\psibar$-mobile in
$\left\langle z_0^j, z_{n_{j-1}}^{j-1} \right]$.
(To be consistent in our method of abbreviation, we shall also sometimes say
$\psi$-mobile instead of pseudomobile.)
Corollary \ref{cor: q of positive type: combo of lemma and difference eq}
tells us that
\begin{equation}
\label{eq: psibar mirror relation}
    \minq_{j\in\Z/d} \left( z_0^{j+l} - z_0^j \right)
= -\maxq_{j\in\Z/d} \left( z_{n_{j-1-l}}^{j-1-l} - z_{n_{j-1}}^{j-1}\right)
\end{equation}
for all nonzero $l\in\Z/d$.
We therefore say that $z_0^{j+l}$ and
$z_{n_{j-1-l}}^{j-1-l}$ are mirror $\psibar$-mobile points
in $\left\langle z_0^j, z_{n_{j-1}}^{j-1} \right]$.

If $z_0^{j+l}$ is R$\psibar$-mobile in
$\left\langle z_0^j, z_{n_{j-1}}^{j-1} \right]$,
then we say that $z_0^{j+l}$ is active
at time $j=j^{\prime} \in \Z/d$ if 
\begin{equation}
      z_0^{j^{\prime}+l} - z_0^{j^{\prime}}
\;=\; \minq_{j\in\Z/d} \left( z_0^{j+l} - z_0^j\right),
\end{equation}
and is inactive otherwise.
If $z_{n_{j-1-l}}^{j-1-l}$ is L$\psibar$-mobile in
$\left\langle z_0^j, z_{n_{j-1}}^{j-1} \right]$,
then we say $z_{n_{j-1-l}}^{j-1-l}$ is active at time $j = j^{\prime}\in\Z/d$ if
\begin{equation}
      z_{n_{j^{\prime}-1-l}}^{j^{\prime}-1-l} -  z_{n_{j^{\prime}-1}}^{j^{\prime}-1}
\;=\; \maxq_{j\in\Z/d} \left( z_{n_{j-1-l}}^{j-1-l} - z_{n_{j-1}}^{j-1} \right),
\end{equation}
and is inactive otherwise.
We say that an R$\psibar$-mobile point $z_0^{j+l}$ in
$\left\langle z_0^j, z_{n_{j-1}}^{j-1} \right]$
is {\em neutralized} if
$z_{n_{j+l-1}}^{j+l-1}$ is L$\psibar$-mobile in
$\left\langle z_0^j, z_{n_{j-1}}^{j-1} \right]$,
and that an L$\psibar$-mobile point $z_{n_{j+l}}^{j+l}$ in
$\left\langle z_0^j, z_{n_{j-1}}^{j-1} \right]$
is neutralized if
$z_0^{j+l+1}$ is R$\psibar$-mobile in
$\left\langle z_0^j, z_{n_{j-1}}^{j-1} \right]$.
We say a $\psibar$-mobile point is {\em non-neutralized}
if it is not neutralized.

All of our main results for pseudomobile points have analogs for
antipseudomobile points.

\begin{prop}
\label{prop: antipseudomobile point is active [lepsilon] times}
Suppose that $q$ of positive type is genus-minimizing.
If $z_0^{j+l}$ is R-antipseudomobile (and hence
$z_{n_{j-1-l}}^{j-1-l}$ is L-antipseudomobile)
in $\left\langle z_0^j , z_{n_{j-1}}^{j-1} \right]$,
then each of the two antipseudomobile points is active precisely
$[l\epsilon]_d$ times.
\end{prop}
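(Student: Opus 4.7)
The plan is to mirror the proof of Proposition~\ref{prop: pseudomobile point is active [lepsilon] times} almost verbatim, replacing the interval $\left\langle z_{n_{j-1}}^{j-1}, z_0^j\right]$ with $\left\langle z_0^j, z_{n_{j-1}}^{j-1}\right]$ throughout. The engine is again equation~(\ref{eq: z_0^j+l - z_0^j = mu ml k^2/d + xi dq}) combined with Lemma~\ref{lemma: q of positive type, xi lemma}, and the only bookkeeping difference is that the relevant differences are now $z_0^{j+l} - z_0^j$ and $z_{n_{j-1-l}}^{j-1-l} - z_{n_{j-1}}^{j-1}$.

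First I apply (\ref{eq: z_0^j+l - z_0^j = mu ml k^2/d + xi dq}) directly to write
\begin{equation*}
z_0^{j+l} - z_0^j = \left[\mu m l\right]_d \frac{k^2}{d} + \Xi^{d,\epsilon}_l(j)\,[dq]_{k^2},
\end{equation*}
and then use the identity $z_{n_{j-1}}^{j-1} = z_0^j - \psi$ (which is constant in $j$) to obtain
\begin{equation*}
z_{n_{j-1-l}}^{j-1-l} - z_{n_{j-1}}^{j-1} = z_0^{j-l} - z_0^j = -\left[\mu m l\right]_d \frac{k^2}{d} - \Xi^{d,\epsilon}_l(j-l)\,[dq]_{k^2}.
\end{equation*}
By Lemma~\ref{lemma: q of positive type, xi lemma}, $\Xi^{d,\epsilon}_l$ takes only the two values $\frac{[l\epsilon]_d}{d}$ and $\frac{[l\epsilon]_d}{d}-1$, with the smaller value $\frac{[l\epsilon]_d}{d}-1$ attained precisely $[l\epsilon]_d$ times as $j$ ranges over $\Z/d$.

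By definition, $z_0^{j+l}$ is active in $\left\langle z_0^j, z_{n_{j-1}}^{j-1}\right]$ at time $j=j'$ iff $z_0^{j'+l} - z_0^{j'}$ attains $\minq_{j\in\Z/d}(z_0^{j+l} - z_0^j)$, which by the first display is exactly the condition $\Xi^{d,\epsilon}_l(j') = \frac{[l\epsilon]_d}{d}-1$; likewise $z_{n_{j-1-l}}^{j-1-l}$ is active at time $j=j'$ iff the corresponding difference attains its $\maxq$, which by the second display is the condition $\Xi^{d,\epsilon}_l(j'-l) = \frac{[l\epsilon]_d}{d}-1$. Since $j \mapsto j-l$ is a bijection of $\Z/d$, both counts equal $[l\epsilon]_d$. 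I do not anticipate any genuine obstacle: all the analytical work lives in Lemma~\ref{lemma: q of positive type, xi lemma} and in equation~(\ref{eq: z_0^j+l - z_0^j = mu ml k^2/d + xi dq}); the only mild care required is to track the index shift $j \mapsto j-l$ in the L-antipseudomobile case and to check that the smaller value of $\Xi^{d,\epsilon}_l$, not the larger, is what gets counted as $[l\epsilon]_d$, which is precisely what Lemma~\ref{lemma: q of positive type, xi lemma} asserts.
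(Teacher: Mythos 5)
Your proof is correct and follows exactly the same route as the paper's: both apply equation~(\ref{eq: z_0^j+l - z_0^j = mu ml k^2/d + xi dq}) to the two differences $z_0^{j+l}-z_0^j$ and $z_{n_{j-1-l}}^{j-1-l}-z_{n_{j-1}}^{j-1}$, invoke Lemma~\ref{lemma: q of positive type, xi lemma} to count how often $\Xi^{d,\epsilon}_l$ takes its smaller value, and observe that this value characterizes activity. The only cosmetic difference is that you make explicit the intermediate cancellation $z_{n_{j-1-l}}^{j-1-l}-z_{n_{j-1}}^{j-1}=z_0^{j-l}-z_0^j$ via the constancy of $\psi$, which the paper leaves implicit.
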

\begin{proof}
For all $j\in \Z/d$, 
(\ref{eq: z_0^j+l - z_0^j = mu ml k^2/d + xi dq}) implies that
\begin{align}
\label{prop: antipseudomobile active le times. eq: explicit equation for z_0^j+l -z_0^j}
      z^{j+l}_0 - z_0^j
&= \left[{\mu}ml\right]_d\frac{k^2}{d}
       \;+\; \left(\Xi^{d, \epsilon}_l(j)\right) [dq]_{k^2},
                  \\ \nonumber
      z_{n_{j-1-l}}^{j-1-l} - z_{n_{j-1}}^{j-1}
&= -\left[{\mu}ml\right]_d\frac{k^2}{d}
       \;-\; \left(\Xi^{d, \epsilon}_l(j-l)\right) [dq]_{k^2},
\end{align}
where, by
Lemma \ref{lemma: q of positive type, xi lemma},
$\Xi^{d, \epsilon}_l(j) \in \left\{ \frac{[l\epsilon]_d}{d},  \frac{[l\epsilon]_d}{d}-1 \right\}$
for all $j\in\Z/d$, with $\frac{[l\epsilon]_d}{d}$ occurring
$[-l\epsilon]_d$ times and $\frac{[l\epsilon]_d}{d} - 1$
occurring $[l\epsilon]_d$ times.
Since (\ref{prop: antipseudomobile active le times. eq: explicit equation for z_0^j+l -z_0^j})
implies $z_0^{j+l}$ (respectively $z_{n_{j-1-l}}^{j-1-l}$)
is active in $\left\langle z_0^j , z_{n_{j-1}}^{j-1} \right]$
at time $j = j^{\prime}$ if and only if
$\Xi^{d, \epsilon}_l(j^{\prime}) = \frac{[l\epsilon]_d}{d}-1$
(respectively $\Xi^{d, \epsilon}_l(j^{\prime}-l) = \frac{[l\epsilon]_d}{d}-1$),
we conclude that each of the two antipseudomobile points is active
precisely $[l\epsilon]_d$ times.
\end{proof}

\begin{prop}
\label{prop: active iff inactive means neutralized for antipseudomobile}
Suppose that $q$ of positive type is genus-minimizing, and
that $z_{n_{j+l_1}}^{j+l_1}$ and $z_0^{j+l_2}\!$ are antipseudomobile in
$\left\langle z_0^j, z_{n_{j-1}}^{j-1} \right]$
for some $l_1, l_2 \in \Z/d$.  Then
$z_{n_{j+l_1}}^{j+l_1}$ and $z_0^{j+l_2}$ form a neutralized pair
({\em i.e.}, $l_1 + 1 = l_2$) if and only if they satisfy
\begin{equation}
\nonumber
z_{n_{j+l_1}}^{j+l_1}\;\text{is active}
\;\;\;\;\;\Leftrightarrow\;\;\;\;\;
z_0^{j+l_2}\;\text{is inactive}
\end{equation}
in $\left\langle z_0^j, z_{n_{j-1}}^{j-1} \right]$ at all times $j=j^{\prime} \in \Z/d$.
\end{prop}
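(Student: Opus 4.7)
The plan is to transcribe, almost verbatim, the proof of Proposition \ref{prop: active iff inactive means neutralized for pseudomobile}, with the single structural substitution that exchanges the roles of $\psi$ and $\psibar$. The bridge between the two settings is the identity $\psi + \psibar = k^2$ in $\Z$, together with the elementary integer computations
\begin{equation*}
z_{n_{j+l_1}}^{j+l_1} \;=\; z_0^{j+l_1+1} - \psi
\qquad\text{and}\qquad
z_{n_{j-1}}^{j-1} \;=\; z_0^{j} + \psibar.
\end{equation*}

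For the ``only if'' direction I would assume $l_2 = l_1 + 1$ and compute, for any $j^{\prime} \in \Z/d$,
\begin{equation*}
z_{n_{j^{\prime}+l_1}}^{j^{\prime}+l_1} - z_{n_{j^{\prime}-1}}^{j^{\prime}-1}
\;=\; \left(z_0^{j^{\prime}+l_2} - z_0^{j^{\prime}}\right) \,-\, (\psi+\psibar)
\;=\; \left(z_0^{j^{\prime}+l_2} - z_0^{j^{\prime}}\right) \,-\, k^2.
\end{equation*}
Since the shift $-k^2$ vanishes in $\Z/k^2$, the two families $\left\{z_{n_{j+l_1}}^{j+l_1} - z_{n_{j-1}}^{j-1}\right\}_{j\in\Z/d}$ and $\left\{z_0^{j+l_2} - z_0^{j}\right\}_{j\in\Z/d}$ determine the same two-element subset of $\Z/k^2$, so their $\minq$ and $\maxq$ coincide as elements of $\Z/k^2$. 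The equivalence
\begin{equation*}
z_{n_{j^{\prime}\!+l_1}}^{j^{\prime}\!+l_1} - z_{n_{j^{\prime}\!-1}}^{j^{\prime}\!-1} \,=\, \maxq_{j\in\Z/d}\!\left(z_{n_{j+l_1}}^{j+l_1}\! - z_{n_{j-1}}^{j-1}\right)
\;\;\Leftrightarrow\;\;
z_0^{j^{\prime}\!+l_2} - z_0^{j^{\prime}} \,=\, \maxq_{j\in\Z/d}\!\left(z_0^{j+l_2}\! - z_0^{j}\right)
\end{equation*}
then follows immediately. Recalling that ``active'' for an L$\psibar$-mobile point corresponds to its $\maxq$ while ``active'' for an R$\psibar$-mobile point corresponds to its $\minq$, this directly produces the complementary dichotomy asserted.

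For the ``if'' direction I would mimic the pseudomobile argument by writing
\begin{equation*}
z_0^{j^{\prime}\!+l_2} - z_{n_{j^{\prime}\!+l_1}}^{j^{\prime}\!+l_1}
\;=\; \left(z_0^{j^{\prime}\!+l_2} - z_0^{j^{\prime}}\right)
      \,-\, \left(z_{n_{j^{\prime}\!+l_1}}^{j^{\prime}\!+l_1} - z_{n_{j^{\prime}\!-1}}^{j^{\prime}\!-1}\right)
      \,-\, \psibar
\end{equation*}
and splitting into the two cases permitted by the hypothesis. When $z_0^{j+l_2}$ is active and $z_{n_{j+l_1}}^{j+l_1}$ inactive, both bracketed differences attain their $\minq$ values; when the activities are reversed, both attain their $\maxq$ values. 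Since $\maxq - \minq = dq$ for each family, the two contributions shift by $dq - dq = 0$, so $z_0^{j+l_2} - z_{n_{j+l_1}}^{j+l_1}$, and hence $z_0^{j+l_2} - z_0^{j+l_1+1}$, is constant in $j$. Corollary \ref{cor: q of positive type: combo of lemma and difference eq} then forces $(j+l_2)-(j+l_1+1) \equiv 0\;(\mod d)$, i.e.\ $l_2 = l_1 + 1$.

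The main obstacle is cosmetic rather than substantive: one must confirm that the integer intervals in which $\minq$ and $\maxq$ naturally live for the two families (namely $\left\langle 0,\psibar\right\rangle$ and $\left\langle dq,\psibar+dq\right\rangle$ for the R$\psibar$-family, versus $\left\langle -\psibar-dq,-dq\right\rangle$ and $\left\langle -\psibar,0\right\rangle$ for the L$\psibar$-family) really do differ by integer multiples of $k^2$ consistent with the $\Z/k^2$-level identification, so that the ``active $\Leftrightarrow$ inactive'' translation is lossless. This reduces to the tautology $\psi+\psibar=k^2$ and presents no genuine difficulty; all nontrivial content is already packaged in Corollary \ref{cor: q of positive type: combo of lemma and difference eq}.
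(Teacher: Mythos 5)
Your proof is correct and is exactly the ``obvious adaptation'' of the pseudomobile argument (Proposition \ref{prop: active iff inactive means neutralized for pseudomobile}) that the paper invokes, with the pleasant simplification that the shift $\psi+\psibar=k^2\equiv0\pmod{k^2}$ makes the two difference families literally coincide in $\Z/k^2$ rather than differ by a nonzero constant. Both directions close the same way as the paper's, via the constancy criterion and Corollary \ref{cor: q of positive type: combo of lemma and difference eq}.
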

\begin{proof}
The result is proved by adapting
Proposition \ref{prop: active iff inactive means neutralized for pseudomobile}---the
analogous result for pseudomobile points---in an obvious manner.
\end{proof}

\begin{prop}
\label{prop: antipseudomobile analog of main prop}
Suppose $q$ of positive type is genus-minimizing,
and let $x_*, y_*$ denote the unique elements of $Q_q$ for which
$v_q(x_*, y_*) = {\alpha}(k-k^2)$.  Then the following are true:
\begin{itemize}
\item[(i$\psibar$)]
If the interval
$\left\langle z_0^j, z_{n_{j-1}}^{j-1} \right]$
has any non-neutralized antipseudomobile points, then
there exists a unique $j_* \in \Z/d$ such that
$\left(z_0^{j_*}, z_{n_{j_*-1}}^{j_*-1} \right) = \left(y_*, x_*\right)$.

\item[(ii$\psibar$)]
If $v_q(z_0^j, z_{n_{j-1}}^{j-1})$ is nonconstant in $j\in\Z/d$, then
$\left\langle z_0^j, z_{n_{j-1}}^{j-1} \right]$ has precisely one
non-neutralized R$\psibar$-mobile point and precisely one
non-neutralized L$\psibar$-mobile point, namely,
$z_0^{j+l}$ and $z_{n_{j-1-l}}^{j-1-l}$ for some nonzero $l\in \Z/d$.
\end{itemize}
\end{prop}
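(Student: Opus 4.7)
The plan is to mirror the proofs of Parts (i$\psi$) and (ii$\psi$), transferring every structural ingredient developed for the interval $\left\langle z_{n_{j-1}}^{j-1}, z_0^j\right]$ to its complementary interval $\left\langle z_0^j, z_{n_{j-1}}^{j-1}\right]$. The key point is that the antipseudomobile framework has all the same formal features as the pseudomobile framework: Proposition \ref{prop: antipseudomobile point is active [lepsilon] times} provides the activation count, Proposition \ref{prop: active iff inactive means neutralized for antipseudomobile} characterizes neutralized pairs by the active/inactive dichotomy, and the mirror relation (\ref{eq: psibar mirror relation}) identifies mirror R$\psibar$-mobile and L$\psibar$-mobile points. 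Thus the combinatorial scaffolding is in place to imitate the earlier arguments essentially verbatim.

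For Part (i$\psibar$), I would first set up the ordered lists $L_{\mathrm{R}}$ and $L_{\mathrm{L}}$ of R$\psibar$-mobile and L$\psibar$-mobile labels in $\left\langle z_0^j, z_{n_{j-1}}^{j-1}\right]$, with orderings $<_{\mathrm{R}}$ and $<_{\mathrm{L}}$ defined by the ``active when the other is inactive'' criterion, exactly as in the discussion preceding the proof of Part (i). The proofs that $<_{\mathrm{R}}$ and $<_{\mathrm{L}}$ are well defined and transitive carry over mutatis mutandis, invoking Corollary \ref{cor: q of positive type: combo of lemma and difference eq} at the appropriate step. Then, assuming $v_q(z_0^j, z_{n_{j-1}}^{j-1})$ is constant in $j \in \Z/d$, the sum $|L_{\mathrm{R}}^{j^{\prime}}| + |L_{\mathrm{L}}^{j^{\prime}}|$ is constant, which by Claim \ref{claim: structure of L_R^j and L_L^j} and the extremal-time argument used in (i') forces $|L_{\mathrm{R}}| = |L_{\mathrm{L}}|$ and establishes a pairing under which $l_s^{\mathrm{L}}$ and $l_s^{\mathrm{R}}$ are active at complementary times. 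Proposition \ref{prop: active iff inactive means neutralized for antipseudomobile} then identifies every such pair as neutralized. Contrapositively, if $\left\langle z_0^j, z_{n_{j-1}}^{j-1}\right]$ has any non-neutralized antipseudomobile points, then $v_q(z_0^j, z_{n_{j-1}}^{j-1})$ must be nonconstant, and Proposition \ref{prop: unique v_q = alpha(k - k^2), and the rest are v_q = alpha (k)} supplies the unique $j_* \in \Z/d$ with $v_q(z_0^{j_*}, z_{n_{j_*-1}}^{j_*-1}) = \alpha(k - k^2)$, forcing $(z_0^{j_*}, z_{n_{j_*-1}}^{j_*-1}) = (y_*, x_*)$ by antisymmetry of $v_q$.

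For Part (ii$\psibar$), I would run exactly the casework in the proof of Part (ii') applied to $\left\langle z_0^j, z_{n_{j-1}}^{j-1}\right]$, treating $\alpha = +1$ and $\alpha = -1$ separately. In each case the counting gives three possibilities for $(|L_{\mathrm{R}}|, |L_{\mathrm{L}}|)$, and Claim \ref{claim: structure of L_R^j and L_L^j} combined with Proposition \ref{prop: active iff inactive means neutralized for antipseudomobile} pins down at most one non-neutralized R$\psibar$-mobile point and at most one non-neutralized L$\psibar$-mobile point. The existence direction then follows because a non-neutralized R$\psibar$-mobile point $z_0^{j+l}$ automatically produces, via the mirror relation (\ref{eq: psibar mirror relation}), a non-neutralized mirror L$\psibar$-mobile point $z_{n_{j-1-l}}^{j-1-l}$, and by (i$\psibar$), the hypothesis that $v_q(z_0^j, z_{n_{j-1}}^{j-1})$ is nonconstant guarantees at least one such point exists.

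The main obstacle I anticipate is purely bookkeeping: making sure that every index shift and every endpoint convention transfers correctly from the $\psi$-setting to the $\psibar$-setting, since the two intervals are complementary in a single period and the roles of the endpoints $z_0^j$ and $z_{n_{j-1}}^{j-1}$ are swapped. In particular, the definitions of R$\psibar$-mobile and L$\psibar$-mobile use $z_0^j$ as the left endpoint and $z_{n_{j-1}}^{j-1}$ as the right endpoint, whereas the pseudomobile definitions used these in reverse; one must therefore verify that ``active'' still corresponds to ``lying in the interval'' and not to its complement, and that the neutralization condition $l_1 + 1 = l_2$ retains the correct form. Once these conventions are fixed, the proof becomes a mechanical replay of the (i$\psi$) and (ii$\psi$) arguments with every ``pseudomobile'' replaced by ``antipseudomobile,'' every ``$\psi$'' replaced by ``$\psibar$,'' and each reference to Propositions \ref{prop: pseudomobile point is active [lepsilon] times} and \ref{prop: active iff inactive means neutralized for pseudomobile} redirected to Propositions \ref{prop: antipseudomobile point is active [lepsilon] times} and \ref{prop: active iff inactive means neutralized for antipseudomobile}.
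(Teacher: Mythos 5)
Your approach is exactly the paper's: one takes the proofs of Parts (i$\psi$) and (ii$\psi$), replaces ``pseudomobile'' with ``antipseudomobile,'' swaps $z_{n_{j-1}}^{j-1}$ with $z_0^j$, and redirects references to Proposition~\ref{prop: active iff inactive means neutralized for pseudomobile} to Proposition~\ref{prop: active iff inactive means neutralized for antipseudomobile}. You did, however, miss one of the bookkeeping substitutions you anticipated might be tricky. Since $\psi \equiv \alpha\;(\mod k)$ while $\psibar = k^2 - \psi \equiv -\alpha\;(\mod k)$, we have $v_q\!\left(z_0^j, z_{n_{j-1}}^{j-1}\right) \equiv -\alpha k\;(\mod k^2)$ rather than $+\alpha k$, so every occurrence of $\alpha$ in the (i$\psi$)/(ii$\psi$) arguments must be replaced by $-\alpha$ in the $\psibar$-setting. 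Concretely, the displayed value in your argument for (i$\psibar$) should be $v_q\!\left(z_0^{j_*}, z_{n_{j_*-1}}^{j_*-1}\right) = -\alpha(k-k^2)$, not $\alpha(k-k^2)$; as written your intermediate claim contradicts your own conclusion $\left(z_0^{j_*}, z_{n_{j_*-1}}^{j_*-1}\right) = (y_*, x_*)$, since antisymmetry gives $v_q(y_*, x_*) = -v_q(x_*, y_*) = -\alpha(k-k^2)$. Likewise in (ii$\psibar$), the relation $\left|L_{\mathrm{R}}^{j_*}\right| + \left|L_{\mathrm{L}}^{j_*}\right| = M + \alpha$ from the proof of (ii') must become $M - \alpha$, so the two $\alpha$-cases of (ii') play swapped roles. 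Because that casework is symmetric under $\alpha \mapsto -\alpha$, the final conclusion is unaffected, but the intermediate steps as you wrote them carry a sign error.
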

\begin{proof}
The results of Parts (i$\psibar$) and (ii$\psibar$) are proved by taking the
respective proofs of Proposition \ref{prop: positive type, main prop},
Parts (i$\psi$) and (ii$\psi$), and making the following adaptations.
Mostly, one must replace the word ``pseudomobile'' with the word ``antipseudomobile''
and replace $z_{n_{j-1}}^{j-1}$ with $z_0^j$ and {\em vice versa}.
Since $\psibar \equiv -dq \equiv -\alpha\;(\mod k^2)$,
one must also replace $\alpha$ with $-\alpha$.
Lastly, one must replace all references to
Proposition \ref{prop: active iff inactive means neutralized for pseudomobile}.
with references to
Proposition \ref{prop: active iff inactive means neutralized for antipseudomobile}.
\end{proof}

Now that we have introduced the antipseudomobile point, 
we resume our task of tabulating results useful for the
classfication of genus-minimizing $q$ of positive type.
We begin with a new result about antipseudomobile points.

\begin{prop}
\label{prop: psibar < k^2/2 implies psibar-mobile are nn}
Suppose $q$ of positive type is genus-minimizing.
If $\psibar < \frac{k^2}{2}$, then all antipseudomobile points
are non-neutralized.
\end{prop}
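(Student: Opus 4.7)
The plan is to emulate the last paragraph of the proof of Proposition \ref{prop: positive type, main prop}.(iv), which ruled out neutralization of ordinary mobile points once $\psi > 2[dq]_{k^2}$ was known, and adapt its interval-intersection bookkeeping to the antipseudomobile setting.

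First I would suppose for contradiction that $z_0^{j+l}$ is a neutralized R$\psibar$-mobile point in $\left\langle z_0^j, z_{n_{j-1}}^{j-1}\right]$, so that $z_{n_{j+l-1}}^{j+l-1}$ is L$\psibar$-mobile in the same interval.  Setting $u_0 := \minq_{j\in\Z/d}(z_0^{j+l} - z_0^j)$, the R$\psibar$-mobile hypothesis gives $u_0 \in \left\langle 0, \psibar\right\rangle$.  Applying the identity $z_{n_{j'-1}}^{j'-1} = z_0^{j'} - \psi$ for each $j' \in \Z/d$ yields $z_{n_{j+l-1}}^{j+l-1} - z_{n_{j-1}}^{j-1} = z_0^{j+l} - z_0^j$ in $\Z/k^2$, so
\begin{equation*}
\maxq_{j\in\Z/d}\left(z_{n_{j+l-1}}^{j+l-1} - z_{n_{j-1}}^{j-1}\right) = u_0 + [dq]_{k^2}.
\end{equation*}

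Since $\psibar < k^2/2$ and $[dq]_{k^2} < k^2/2$ by Proposition \ref{prop: properties of parameters d, m, c, alpha, mu, gamma}, the integer $u_0 + [dq]_{k^2}$ lies in $\left\langle 0, k^2\right\rangle$ and is its own canonical representative.  The L$\psibar$-mobile requirement $\maxq \in \left\langle -\psibar, 0\right\rangle$, equivalently a representative in $\left\langle k^2 - \psibar, k^2\right\rangle = \left\langle \psi, k^2\right\rangle$, then forces $u_0 + [dq]_{k^2} > \psi$, i.e., $u_0 > \psi - [dq]_{k^2}$; combining with $u_0 < \psibar$ yields $\psi - [dq]_{k^2} < \psibar$, i.e.\ $\psibar > (k^2 - [dq]_{k^2})/2$.

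The main obstacle is that the elementary intersection argument above immediately contradicts the hypothesis only when $\psibar \leq (k^2 - [dq]_{k^2})/2$, leaving the narrow residual window $\psibar \in \left\langle (k^2 - [dq]_{k^2})/2,\, k^2/2\right\rangle$ to be handled.  I expect to close this gap by combining the mirror relation (\ref{eq: psibar mirror relation}) --- which produces a second R$\psibar$-mobile point $z_0^{j-l}$ that, by the symmetric version of the calculation, must likewise be neutralized --- with the bound $\psi > 2[dq]_{k^2}$ from Proposition \ref{prop: positive type, main prop}.(iv) and with the explicit formula $u_0 = [\mu ml]_d \frac{k^2}{d} + \left(\frac{[l\epsilon]_d}{d} - 1\right)[dq]_{k^2}$ from Corollary \ref{cor: q of positive type: combo of lemma and difference eq}.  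The plan is to show that the pair of constraints on $u_0$ from $z_0^{j+l}$ and on its symmetric counterpart $k^2 - u_0 - [dq]_{k^2}$ from $z_0^{j-l}$ are incompatible with both sitting in the narrow band $\left\langle \psi - [dq]_{k^2},\, \psibar\right\rangle$ together with the discrete quantization of $u_0$ by $\frac{k^2}{d}$, thereby delivering the required contradiction throughout the residual window.
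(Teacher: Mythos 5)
Your opening reduction is sound, and in fact slightly sharper than the paper's: neutralization of the R$\psibar$-mobile point $z_0^{j+l}$ does force $u_0 > \psi - [dq]_{k^2}$ (the paper works with the weaker $u_0 > \psibar - [dq]_{k^2}$), and your observation that $z_0^{j-l}$ is then also R$\psibar$-mobile checks out, since $\minq_{j\in\Z/d}(z_0^{j-l} - z_0^j) = k^2 - u_0 - [dq]_{k^2}$ lands in $\left\langle 0, \psibar\right\rangle$ exactly when $u_0 \in \left\langle \psi - dq, \psibar\right\rangle$. But at that point you describe an expectation rather than an argument, and the residual window you identify is precisely where all of the content of the proposition lives: the elementary interval bookkeeping has already been exhausted, and nothing you write afterward engages with what makes the window hard.

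What is missing is the mechanism the paper uses to close it. The decisive observation is that $u_0 > \psibar - [dq]_{k^2}$ makes $z_0^{j+l}$ R-\emph{mobile} (not merely $\psibar$-mobile) in the terminal interval $\left\langle z_{n_j-1}^j, z_{n_j}^j\right]$; combined with its mirror partner and Proposition \ref{prop: positive type, main prop}.(i), this forces $\lfloor k/d\rfloor = 2$. From there the paper pins $\psibar$ into $\left\langle [dq]_{k^2}, 2[dq]_{k^2}\right\rangle$, runs a winding-number computation on an integer lift $M(l)$ of $\maxq_j(z_{n_{j-l}}^{j-l} - z_0^j)$ to force $m$ into $\{1\}$ or $\{d-1,d+1\}$, and then needs two further nontrivial claims — a lower bound on $c$ that excludes $l \in \{0,\pm1,\pm2\}$ via the counting identity $[l\epsilon]_d + [(l-1)\epsilon]_d = d+\alpha$, and an additivity statement that non-R-mobility of $z_0^{j+\mu m t_1}$ and $z_0^{j+\mu m t_2}$ in $\left\langle z_0^j, z_1^j\right]$ forces R-mobility of $z_0^{j+\mu m(t_1+t_2)}$ — before a contradiction emerges. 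Your plan of ``combining the mirror relation with the quantization by $\frac{k^2}{d}$'' encodes none of this: it says nothing about $\lfloor k/d\rfloor$, nothing about the permitted values of $m$ or $c$, and does not explain why the explicit formula for $u_0$ from Corollary \ref{cor: q of positive type: combo of lemma and difference eq} — which still has the free parameters $l$ and $[l\epsilon]_d$ in it — should be incompatible with the narrow band. As written this is a genuine gap, not a detail to be filled in.
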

\begin{proof}
We begin by reducing the problem to the question of whether
$\left\langle z_{n_j-1}^j, z_{n_j}^j\right]$
has R-mobile points.

Suppose, for some $l\neq 0\in \Z/d$, that $z_0^{j+l}$ is R$\psibar$-mobile in
$\left\langle z_0^j, z_{n_{j-1}}^{j-1} \right]$, or in other words, that
$\minq_{j\in\Z/d}\left(z_0^{j+l} - z_0^j\right) \in \left\langle 0, \psibar\right\rangle$.
If $\minq_{j\in\Z/d}\left(z_0^{j+l} - z_0^j\right) \in \left\langle 0, \psibar-dq\right\rangle$,
then
\begin{align}
\label{psibar < k^2/2 implies NN, eq: min in <0,psibar-dq> implies NN}
      \maxq_{j\in\Z/d}\left(z_{n_{j+l-1}}^{j+l-1} - z_{n_{j-1}}^{j-1}\right)
&= \minq_{j\in\Z/d}\left(\left(z_0^{j+l} + \psibar\right) - \left(z_0^j + \psibar\right)\right) + dq
                 \\ \nonumber
&= \minq_{j\in\Z/d}\left(z_0^{j+l} - z_0^j \right) + dq
                 \\ \nonumber
&\in \left\langle dq, \psibar \right\rangle,
\end{align}
which, since $2\psibar < k^2$, has no intersection with
$\left\langle -\psibar, 0\right\rangle$, so that
$z_{n_{j+l-1}}^{j+l-1}$ is not L$\psibar$-mobile in
$\left\langle z_0^j, z_{n_{j-1}}^{j-1} \right]$.
Thus $z_0^{j+l}$ is non-neutralized R$\psibar$-mobile in
$\left\langle z_0^j, z_{n_{j-1}}^{j-1} \right]$ if
$\minq_{j\in\Z/d}\left(z_0^{j+l} - z_0^j\right) \in \left\langle 0, \psibar-dq\right\rangle$.
This leaves us with the case in which
$\minq_{j\in\Z/d}\left(z_0^{j+l} - z_0^j\right) \in 
\left\langle \psibar-dq, \psibar\right\rangle$,
but this implies that
$\minq_{j\in\Z/d}\left(z_0^{j+l}-z_{n_{j-1}-1}^{j-1}\right) \in
\left\langle 0, dq \right\rangle$,
so that
$z_0^{j+l}$ is R-mobile in
$\left\langle z_{n_{j-1}-1}^{j-1}, z_{n_{j-1}}^{j-1} \right]$.
Thus, if we can show that 
$\left\langle z_{n_j-1}^j, z_{n_j}^j \right]$
has no R-mobile points when $\psibar < \frac{k^2}{2}$,
then we shall have proved the proposition.
           \\

We therefore assume, for the remainder of the proof,
that $\left\langle z_{n_j-1}^j, z_{n_j}^j \right]$ has an R-mobile point.
It is important to note that this implies
\begin{equation}
\label{prop: psibar NN, eq: floor(k/d) = 2}
\textstyle{\left\lfloor \frac{k}{d} \right\rfloor} = 2.
\end{equation}
That is, the mirror relation (\ref{eq: mirror relation 1}) tells us that
$\left\langle z_0^j, z_1^j \right]$ has an L-mobile point, and so
Proposition \ref{prop: positive type, main prop}.(i)
implies that there exists $j_* \in \Z/d$ for which
$\left\langle z_0^{j_*}, z_1^{j_*} \right] = \left\langle z_{n_{j_*}-1}^{j_*}, z_{n_{j_*}}^{j_*} \right]$.
In particular, such $j_*$ must satisfy $n_{j_*} = 1$, implying
$\left\lfloor \frac{k}{d} \right\rfloor = 2$.
Note as well that Proposition \ref{prop: positive type, main prop}.(i)
tells us that the R-mobile point in $\left\langle z_{n_j-1}^j, z_{n_j}^j \right]$,
say, $z_0^{j+l}$, for some $l \neq 1 \in \Z/d$, must be R-mobile rel $z_0^j$,
and so (\ref{prop: psibar NN, eq: floor(k/d) = 2}) implies that such
$z_0^{j+l}$ is also R-mobile in $\left\langle z_0^j, z_1^j \right]$.

We next claim that
$\psibar \in \left\langle [dq]_{k^2}, 2[dq]_{k^2} \right\rangle$.
Recalling that $\psibar :\equiv z_{n_{j-1}}^{j-1}-z_0^j$ for all $j \in \Z/d$,
suppose first that
$\psibar \in \left\langle 0, dq \right\rangle$.
We then have
\begin{align}
      \minq_{j\in\Z/d}\left(z_0^{j+1}-z_0^j\right)
&= \minq_{j\in\Z/d}\left(\left(z_{n_j}^j - \psibar\right) - z_0^j\right)
                  \\ \nonumber
&= z_1^j - \psibar - z_0^j
                  \\ \nonumber
&= dq - \psibar
                  \\ \nonumber
&\in \left\langle 0, dq \right\rangle,
\end{align}
where the second line used the fact that $\left\lfloor \frac{k}{d} \right\rfloor = 2$.
This means that $z_0^{j+1}$ is R-mobile in $\left\langle z_0^j, z_1^j \right]$,
but in the preceding paragraph, we showed that
$\left\langle z_0^j, z_1^j \right]$ already has an R-mobile point
$z_0^{j+l}$ with $l \neq 1$.
Since Proposition \ref{prop: positive type, main prop}.(ii) precludes the existence of
two distinct R-mobile points in $\left\langle z_0^j, z_1^j \right]$,
our supposition that $\psibar \in \left\langle 0, dq \right\rangle$
must have been false.
On the other hand, if $2[dq]_{k^2} < \psibar < \frac{k^2}{2}$,
then this R-mobile point 
$z_0^{j+l}$ in $\left\langle z_0^j, z_1^j \right]$ must satisfy
$\minq_{j\in\Z/d}\left(z_0^{j+l}-z_0^j\right) 
\in \left\langle 0, dq \right\rangle
\subset \left\langle 0, \psibar - dq \right\rangle$,
and so the argument surrounding
(\ref{psibar < k^2/2 implies NN, eq: min in <0,psibar-dq> implies NN})
implies that $z_0^{j+l}$ is non-neutralized
R$\psibar$-mobile in $\left\langle z_0^j, z_{n_{j-1}}^{j-1}\right]$,
another contradiction.  Thus $\psibar \in \left\langle [dq]_{k^2}, 2[dq]_{k^2} \right\rangle$.

We next attempt to determine $m$ by interpreting $\mu m$ as a sort of winding number
of $\maxq_{j\in\Z/d}\left(z_{n_{j-l}}^{j-l}-z_0^j\right)$ around $\Z/k^2$ as $l$ varies.
To make this notion more precise, we define the function
$M : \Z \rightarrow \Z$,
\begin{align}
\label{eq: M(l) definition}
       M(l) 
&:= -\mu m(l-1)\textstyle{\frac{k^2}{d}}
       -\left(\textstyle{\frac{(l-1)\epsilon}{d} - \left\lceil\!\frac{(l-1)\epsilon}{d}\!\right\rceil}\right)[dq]_{k^2}
       + \psibar
                 \\ \nonumber
                &\;\;\;\;\;\;\;\;\;\;\;\;\;\;\;\;\;
       + l\left(\psibar - \left(\textstyle{\frac{\gamma c k + \alpha}{d}}k - [dq]_{k^2}\right)\right).
\end{align}
Since $\psibar = \left[\frac{\gamma c k + \alpha}{d}k - [dq]_{k^2}\right]_{k^2}$,
the term on the second line vanishes when $\gamma = +1$ and is equal to either zero
or $lk^2$ when $\gamma = -1$.
Thus, Corollary \ref{cor: q of positive type: combo of lemma and difference eq}
tells us that
$M(l)$ is an integer lift of
$\maxq_{j\in\Z/d}\left(z_{n_{j-l}}^{j-l}-z_0^j\right) \in \Z/k^2$
whenever $l \not\equiv 1 \; (\mod d)$,
and $M(l)$ is an integer lift of $z_{n_{j-1}}^{j-1}-z_0^j \in \Z/k^2$
when $l \equiv 1 \; (\mod d)$.
For any $l \in \Z$, we can calculate $M(l+1)-M(l)$:
\begin{align}
\label{eq: M(l+1) - M(l)}
        M(l+1)-M(l)
&= -\mu m\textstyle{\frac{k^2}{d}}
       +\left(-\textstyle{\frac{\epsilon}{d} + \left\lceil\frac{l\epsilon}{d}\right\rceil
                                                  - \left\lceil\!\frac{(l-1)\epsilon}{d}\!\right\rceil}\right)[dq]_{k^2}
                 \\ \nonumber
                &\;\;\;\;\;\;\;\;\;\;\;\;\;\;\;\;\;\;\;\;\;\;\;\;\;\;\;\;
       + \psibar - \left(\textstyle{\frac{\gamma c k + \alpha}{d}}k - [dq]_{k^2}\right)
                                      \\ \nonumber
&= -\textstyle{\frac{k+\epsilon}{d}}\left((\mu m + \gamma c)k+\alpha\right)
        + \left( \left\lceil\frac{l\epsilon}{d}\right\rceil 
       - \left\lceil\!\frac{(l-1)\epsilon}{d}\!\right\rceil + 1\right)[dq]_{k^2} + \psibar
                                      \\ \nonumber
&= \psibar  +  \textstyle{\left( \left\lceil\frac{l\epsilon}{d}\right\rceil 
       - \left\lceil\!\frac{(l-1)\epsilon}{d}\!\right\rceil\;-\;2\right)} [dq]_{k^2}
                                      \\ \nonumber
&\in \left\{ \psibar - 2[dq]_{k^2}, \psibar - [dq]_{k^2} \right\}.
\end{align}
Since $[dq]_{k^2} < \psibar < 2[dq]_{k^2}$, (\ref{eq: M(l+1) - M(l)})
implies in particular that
\begin{equation}
\left\vert M(l+1)-M(l)\right\vert \;<\; [dq]_{k^2}\;\;\text{for all}\; l \in \Z.
\end{equation}
Since $[M(1)]_{k^2} = \psi \notin \left\langle 0, [dq]_{k^2} \right\rangle$,
this means that the constraint
\begin{equation}
[M(l)]_{k^2} \in \left\langle 0, [dq]_{k^2} \right\rangle
\end{equation}
has at least $n$ solutions
in $l \neq 1 \in \{0, \ldots, d-1\} \subset \Z$,
where $\left| M(d) - M(0) \right| = nk^2$.
Since $z_{n_{j-l}}^{j-l}$ is L-mobile in
$\left\langle z_0^j, z_1^j\right]$ if and only if
$[M(l)]_{k^2} \in \left\langle 0, [dq]_{k^2} \right\rangle$,
this means that
$M$ must satisfy
$\left|M(d)-M(0)\right| \leq (1)k^2$.
Using (\ref{eq: M(l) definition}), we calculate that
\begin{align}
\label{eq: M(d) - M(0)}
        M(d)-M(0)
&= -\mu m(d)\textstyle{\frac{k^2}{d}}
       -\left(\textstyle{\frac{(d)\epsilon}{d}
       - \left\lceil\!\frac{(d-1)\epsilon}{d}\!\right\rceil
       + \left\lceil\!\frac{(0-1)\epsilon}{d}\!\right\rceil}\right)[dq]_{k^2}
                 \\ \nonumber
                &\;\;\;\;\;\;\;\;\;\;\;\;\;\;\;\;\;
       + (d)\left(\psibar - \left(\textstyle{\frac{\gamma c k + \alpha}{d}}k - [dq]_{k^2}\right)\right)
                                      \\ \nonumber
&= -\mu mk^2  - (\epsilon - \epsilon + 0)[dq]_{k^2}
                 \\ \nonumber
                &\;\;\;\;\;\;\;\;\;\;\;\;\;\;\;\;\;
       + d\left(\psibar - \left(\textstyle{\frac{\gamma c k + \alpha}{d}}k - [dq]_{k^2}\right)\right)
                                      \\ \nonumber
&=  -\mu mk^2 + d\left(\psibar - \left(\textstyle{\frac{\gamma c k + \alpha}{d}}k - [dq]_{k^2}\right) \right)
                                      \\ \nonumber
&= \begin{cases}
         -\mu m k^2
              & \psibar = \textstyle{\frac{\gamma c k + \alpha}{d}}k - [dq]_{k^2}
                        \\
          (-\mu m + d)k^2
              & \psibar = \textstyle{\frac{\gamma c k + \alpha}{d}}k - [dq]_{k^2} + k^2
      \end{cases}.
\end{align}
Thus, if $\gamma = +1$, then
$\psibar = \frac{\gamma c k + \alpha}{d}k - [dq]_{k^2}$, and so
$|\mu m| \leq 1$, implying $m=1$.
If $\gamma = -1$, so that $\mu = +1$, then $m > c > 0$ implies $m \neq 1$, 
so we must have $|d - m| \leq 1$, implying
$m \in \{d-1, d, d+1\}$.  If $m = d$, however, then
$\maxq_{j\in\Z/d}\left(z_{n_{j-l}}^{j-l}-z_0^j\right)
= \frac{[-(l-1)\epsilon]_d}{d}[dq]_{k^2} + \psibar
\in \left\langle 2dq, 3dq \right\rangle$ for all $l \neq 1 \in \Z/d$.
(Here, again, we must first interpret $\frac{[-(l-1)\epsilon]_d}{d}[dq]_{k^2} + \psibar$
as an integer---noting that in this case,
$[dq]_{k^2} = dk - (ck + \alpha\gamma)$ is divisible by $d$---and
only then take its image in $\Z/k^2$.)
Since Proposition \ref{prop: positive type, main prop}.(iv)
tells us that $\psi > 2[dq]_{k^2}$, we know that
$[dq]_{k^2} < \psibar < k^2 - 2[dq]_{k^2}$, implying
$3[dq]_{k^2} < k^2$, so that
$\left\langle 2dq, 3dq \right\rangle \cap  \left\langle 0, dq \right\rangle = \emptyset$,
which means that $\left\langle z_0^j, z_1^j \right]$ has no L-mobile points,
and so $\left\langle z_{n_j-1}^j, z_{n_j}^j \right]$ has no R-mobile points,
a contradiction.
Thus,
\begin{equation}
\label{eq: m = 1 or m in d-1, d+1}
m \in \begin{cases}
             \{1\}
             & \gamma = +1
                     \\
             \{d-1, d+1\}
             & \gamma = -1
          \end{cases},
\end{equation}
and for either value of $\gamma$, we have
\begin{equation}
\label{eq: (mu m)^2 equiv 1(mod d)}
(\mu m)^2 \equiv 1\; (\mod d).
\end{equation}

Now that we have almost determined $m$,
we turn our attention to $c$.
We shall momentarily do away with the case in which $\gamma = -1$,
but when $\gamma = +1$, we can find a lower bound for $c$
that helps us to prove the following claim.

\begin{claim}
\label{claim: k^2/d < dq and l not 1,-1, 2, -2}
Suppose that $\gamma = +1$.
Then $\frac{k^2}{d} < [dq]_{k^2}$, and if
$z_0^{j+l}$ is R-mobile in
$\left\langle  z_{n_j-1}^j, z_{n_j}^j\right]$,
then $l \notin \{0, \pm 1, \pm 2\}$.
\end{claim}

\begin{proof}
Suppose, for some $l \neq 1 \in \Z/d$, that
$z_0^{j+l}$ is R-mobile in
$\left\langle z_{n_j-1}^j, z_{n_j}^j\right]$.
As discussed in the paragraph surrounding (\ref{prop: psibar NN, eq: floor(k/d) = 2}),
this implies both that $\left\lfloor \frac{k}{d} \right\rfloor = 2$ and that
$z_0^{j+l}$ is R-mobile in
$\left\langle z_0^j, z_1^j \right]$.
We can therefore partition $\Z/d$ as the disjoint union of $J_0$, $J_1$, and $J_2$,
where 
\begin{align}
       J_0
&:= \left\{j^{\prime} \in \Z/d \left|\;  
       n_{j^{\prime}} = 1;\;
       z_0^{j+l}\;\text{is inactive in}\;\!
       \left\langle z_0^j, z_1^j\right]
       \!\;\text{when}\;j=j^{\prime}
       \right.\!\right\}\!,
           \\
       J_1
&:= \left\{j^{\prime} \in \Z/d \left|\;  
       n_{j^{\prime}} = 1;\;
       z_0^{j+l}\;\text{is active in}\;\!
       \left\langle z_0^j, z_1^j\right]
       \!\;\text{when}\;j=j^{\prime}
       \right.\!\right\}\!,
           \\
       J_2
&:= \left\{j^{\prime} \in \Z/d \left|\;  
       n_{j^{\prime}} = 2;\;
       z_0^{j+l}\;\text{is inactive in}\;\!
       \left\langle z_0^j, z_1^j\right]
       \!\;\text{when}\;j=j^{\prime}
       \right.\!\right\}\!.
\end{align}
Note that we omitted the only other possibility,
\begin{equation}
       J_3
:= \left\{j^{\prime} \in \Z/d \left|\;  
       n_{j^{\prime}} = 2;\;
       z_0^{j+l}\;\text{is active in}\;\!
       \left\langle z_0^j, z_1^j\right]
       \!\;\text{when}\;j=j^{\prime}
       \right.\!\right\}\!,
\end{equation}
because the nonemptiness of $J_3$ would imply that 
$z_0^{j+l}$ was not R-mobile in 
$\left\langle z_{n_j-1}^j, z_{n_j}^j\right]$.
Observe that $z_0^{j+l}$ is active in
$\left\langle z_0^j, z_1^j\right]$
when and only when $j \in J_1$.  Thus,
Proposition \ref{prop: mobile point is active [lepsilon] times or [(l-1)epsilon] times}
tells us that
$|J_1| = [l\epsilon]_d$.
Similarly, $z_0^{j+l}$ is active in
$\left\langle z_{n_j-1}^j, z_{n_j}^j\right]$
when and only when $j \in \left\{J_1 \cup J_2 \right\}$.  Thus, by
Proposition \ref{prop: mobile point is active [lepsilon] times or [(l-1)epsilon] times},
we have
\begin{align}
\label{eq: [(l-1)e] = [le] + d-e}
      [(l-1)\epsilon]_d 
&= |J_1| + |J_2|
           \\ \nonumber
&= [l\epsilon]_d + d-\epsilon,
\end{align}
where the second line can be deduced either from the fact that
$[(l-1)\epsilon]_d > [l\epsilon]_d$, or from the fact that
$d-\epsilon = \#\left\{j^{\prime}\in\Z/d\left\vert\, 
n_{j^{\prime}} = \left\lfloor\frac{k}{d}\right\rfloor\right.\right\} = |J_2|$.

The fact that $z_0^{j+l}$ is R-mobile in 
$\left\langle z_{n_j-1}^j, z_{n_j}^j\right]$ implies
that its mirror mobile point,
$z_{n_{j-l}}^{j-l}$, is L-mobile in
$\left\langle z_0^j, z_1^j \right]$.
Now, by Proposition \ref{prop: positive type, main prop}, we know that
there is a unique $j_* \in \Z/d$ such that
$v_q\!\left(z_0^{j_*}, z_1^{j_*}\right) = \alpha(k-k^2)$ and
$v_q\!\left(z_0^{j^{\prime}}, z_1^{j^{\prime}}\right) = \alpha(k)$
for all $j^{\prime} \neq j_* \in \Z/d$.
Thus, if we define $\chi_R(j^{\prime})$ (respectively 
$\chi_L(j^{\prime})$) to be equal to 1 if $z_0^{j+l}$
(respectively $z_{n_{j-l}}^{j-l}$) is active in 
$\left\langle z_0^j, z_1^j \right]$ at time $j = j^{\prime}$,
and equal to 0 otherwise, then for any $j^{\prime} \in \Z/d$, we must have
\begin{equation}
\label{eq: chi for mobile points in < z_0^j, z_1^j ]}
    \chi_R(j^{\prime}) + \chi_L(j^{\prime})
= \begin{cases}
        1  &   j^{\prime} \neq j_*
              \\
        2   &   j^{\prime} = j_*, \;\alpha = +1
              \\
        0 &   j^{\prime} = j_*,\;\alpha = -1
    \end{cases}.
\end{equation}
Here, the relative values of $\chi_R(j^{\prime}) + \chi_L(j^{\prime})$ are determined
by the fact that a mobile point in
$\left\langle z_0^j, z_1^j \right]$ contributes
$-k^2$ to $v_q\!\left( z_0^{j^{\prime}}, z_1^{j^{\prime}} \right)$
if it is active at time $j=j^{\prime}$ and contributes zero otherwise.
The exact values are then determined by the fact that
$\chi_R(j^{\prime}), \chi_L(j^{\prime}) \in \{0,1\}$, implying 
$\chi_R(j^{\prime}) + \chi_L(j^{\prime}) \in \{0,1,2\}$.
Equation (\ref{eq: chi for mobile points in < z_0^j, z_1^j ]})
then implies that
\begin{equation}
\label{eq: total active mobile points is d+alpha}
\sum_{j\in \Z/d}  \!\chi_R(j) \;+\; \sum_{j\in \Z/d}  \!\chi_L(j) = d+\alpha.
\end{equation}
On the other hand, Proposition \ref{prop: mobile point is active [lepsilon] times or [(l-1)epsilon] times}
tells us that
\begin{equation}
\label{eq: one active le times, the other active (l-1)e times}
\sum_{j\in \Z/d}  \!\chi_R(j) = [l\epsilon]_d,\;\;\;\;
\sum_{j\in \Z/d}  \!\chi_L(j) = [(l-1)\epsilon]_d.
\end{equation}
Combining 
(\ref{eq: total active mobile points is d+alpha}) and
(\ref{eq: one active le times, the other active (l-1)e times}) yields
\begin{align}
\label{eq: le + (l-1)e = d+ alpha}
      d + \alpha
&= [l\epsilon]_d + [(l-1)\epsilon]_d 
         \\ \nonumber
               \alpha 
&\equiv (2l-1)\epsilon\; (\mod d)
                    \\ \nonumber
               \alpha\gamma \,\epsilon^{-1} 
&\equiv \gamma(2l-1)\; (\mod d)
                     \\
\label{eq: c = 2l-1}
                c
&= [2l-1]_d.
\end{align}
(Recall that $\gamma = +1$.)
Alternatively, we could use (\ref{eq: [(l-1)e] = [le] + d-e})
to solve (\ref{eq: le + (l-1)e = d+ alpha}) for $[l\epsilon]_d$,
obtaining
\begin{align}
            \nonumber
      [l\epsilon]_d + [(l-1)\epsilon]_d 
&= d + \alpha
         \\ \nonumber
      [l\epsilon]_d + [l\epsilon]_d + d-\epsilon
&= d + \alpha
          \\
\label{eq: le = (e+a)/2}
       [l\epsilon]_d
& = \textstyle{\frac{\epsilon + \alpha}{2}}.
\end{align}

Now that we have derived 
(\ref{eq: c = 2l-1})
and
(\ref{eq: le = (e+a)/2}),
we proceed with the task of finding a lower bound for $c$.
Since $\gamma = +1$, we have
$\psibar = \frac{ck+\alpha}{d}k - [dq]_{k^2}$, which means that the constraint
$\psibar \in \left\langle [dq]_{k^2}, 2[dq]_{k^2}\right\rangle$
implies $\frac{ck+\alpha}{d}k \in \left\langle 2[dq]_{k^2}, 3[dq]_{k^2}\right\rangle$.
We can then reexpress $\frac{ck+\alpha}{d}$ as
$\frac{ck+\alpha}{d}
= \left(\frac{k+\epsilon}{d}\right)c - \frac{c\epsilon - \alpha}{d}
= 3c - \frac{c\epsilon - \alpha}{d}$,
where the fact that $\frac{ck+\alpha}{d} \in \Z$ implies
$\frac{c\epsilon - \alpha}{d} \in \Z$, with
$0 \leq \frac{c\epsilon - \alpha}{d} \leq c$.
Since (\ref{eq: m = 1 or m in d-1, d+1}) tells us $m=1$,
implying $[dq]_{k^2} = (c+ \mu)k + \alpha$, we then have
\begin{equation}
\label{eq: 2dq < psibar < 3dq}
(2c+2\mu)k + 2\alpha
\;\;<\;\;
\left(3c - \textstyle{\frac{c\epsilon - \alpha}{d}}\right)k
\;\;<\;\;
(3c+3\mu)k + 3\alpha,
\end{equation}
which, when $\mu = +1$, implies
$c > 2\mu + \frac{c\epsilon - \alpha}{d} + \frac{2\alpha}{k} >1$,
and when $\mu = -1$, implies
$\frac{c\epsilon - \alpha}{d} > -3\mu -\frac{3\alpha}{k} > 2$,
so that $c \geq \frac{c\epsilon - \alpha}{d} > 2$.
Thus, in either case, $c> 1$. 
This means that $\frac{cd-1}{c} > d-1 \geq \epsilon$,
so that $c >  \frac{c\epsilon + 1}{d}  \geq \frac{c\epsilon - \alpha}{d}$,
implying $c \geq \frac{c\epsilon - \alpha}{d} + 1$.
In addition, $c > 1$ implies $\frac{c\epsilon - 1}{d} > 0$,
which, since $\frac{c\epsilon - \alpha}{d} \in \Z$, implies
$\frac{c\epsilon - \alpha}{d} \geq 1$.
Thus, when $\mu = +1$, the left-hand inequality in (\ref{eq: 2dq < psibar < 3dq})
tells us that
\begin{align}
\label{prop: psibar NN, claim 2, eq: mu = +1, c > 2mu + ...}
      c
&> 2\mu + \textstyle{\frac{c\epsilon - \alpha}{d}} \;+\; \textstyle{\frac{2\alpha}{k}}
                 \\ \nonumber
&\geq 3 - \textstyle{\frac{2}{k}},
\end{align}
which, since $\frac{2}{k} < 1$, implies $c \geq 3$.
When $\mu = -1$, the right-hand inequality in (\ref{eq: 2dq < psibar < 3dq}),
along with the fact that $c \geq \frac{c\epsilon - \alpha}{d} + 1$ (proved a few lines ago),
tells us that
\begin{align}
\label{prop: psibar NN, claim 2, eq: mu = -1, c >= 4}
            c
&\geq \textstyle{\frac{c\epsilon - \alpha}{d}} + 1
                 \\ \nonumber
&>       \left( - 3\mu \;-\; \textstyle{\frac{3\alpha}{k}}\right) + 1
                 \\ \nonumber
&\geq       4 \;-\; \textstyle{\frac{3}{k}},
\end{align}
which, since $\frac{3}{k} < 1$, implies $c \geq 4$.
Thus, for either value of $\mu$, we have
\begin{align}
            [dq]_{k^2}
&=       (c+ \mu)k + \alpha
                 \\ \nonumber
&\geq \begin{cases}
                     (3 + 1)k + \alpha     & \mu = +1
                                    \\
                     (4  - 1)k + \alpha     & \mu = -1
            \end{cases}
                 \\ \nonumber
&\geq 3k + \alpha
                 \\ \nonumber
&= \textstyle{\frac{k+\epsilon}{d}}k + \alpha
                 \\ \nonumber
&>         \textstyle{\frac{k^2}{d}}.
\end{align}

It therefore remains to show that $l \notin \{0, \pm 1, \pm 2\}$,
recalling that $z_0^{j+l}$ is R-mobile in
$\left\langle  z_{n_j-1}^j, z_{n_j}^j\right]$.
The R-mobility of  $z_0^{j+l}$ in
$\left\langle  z_{n_j-1}^j, z_{n_j}^j\right]$
requires, by definition, that $l \neq 1$.
It also implies, as shown in (\ref{prop: psibar NN, eq: floor(k/d) = 2}), that
$\left\lfloor\frac{k}{d}\right\rfloor = 2$, which, together
with Proposition \ref{prop: positive type, main prop}.(ii), implies
that $z_0^{j+l}$ is R-mobile in
$\left\langle z_0^j, z_1^j \right]$, requiring that $l \neq 0$.
Since we are taking $k > 100$, the fact that
$\left\lfloor\frac{k}{d}\right\rfloor = 2$ also implies that
$d > \frac{k}{3} > 33$.  If $l = -1$, then by (\ref{eq: c = 2l-1}),
$c =  [2l-1]_d = 2(-1)-1 + d = d-3 > \frac{d}{2}$, a contradiction.
If $l = -2$, then $c = 2(-2)-1 + d = d-5 > \frac{d}{2}$, another contradiction.
Thus, $l \notin  \{0, 1,-1, -2\}$.

This leaves the case of $l = 2$,
with $c = 2(2) -1 = 3$.
As shown in (\ref{prop: psibar NN, claim 2, eq: mu = -1, c >= 4}),
$c \geq 4$ when $\mu = -1$, so we must have $\mu = +1$.
The first line of (\ref{prop: psibar NN, claim 2, eq: mu = +1, c > 2mu + ...})
then gives
\begin{align}
      \textstyle{\frac{c\epsilon - \alpha}{d}}
&< c - 2\mu - \textstyle{\frac{2\alpha}{k}}
                 \\ \nonumber
&= 1 - \textstyle{\frac{2\alpha}{k}},
\end{align}
which, since $\frac{c\epsilon - \alpha}{d} > 0$ is an integer,
implies $\frac{c\epsilon - \alpha}{d} = 1$ and $\alpha = -1$.
This, in turn, implies that $\epsilon = \frac{d-1}{3}$, so that
$[l\epsilon]_d = \frac{2d-2}{3}$ and $\frac{\epsilon+\alpha}{2} = \frac{d-4}{6}$,
contradicting (\ref{eq: le = (e+a)/2}).  Thus, $l \neq 2$.

\end{proof}

Lastly, we attempt to further constrain the values of 
$l \notin  \{0, \pm1, \pm2\} \in \Z/d$ for which
$z_0^{j+l}$ can be R-mobile in both
$\left\langle z_0^j, z_1^j \right]$
and $\left\langle z_{n_j-1}^j, z_{n_j}^j \right]$.
Let $s$ denote the smallest positive integer such that
\begin{equation}
\label{eq: q++, k/d = 2, definition of s}
s\textstyle{\frac{k^2}{d}} > [dq]_{k^2}.
\end{equation}
Since $[dq]_{k^2} < \frac{k^2}{2}$, we know that
$\left\lceil\frac{d}{2}\right\rceil \frac{k^2}{d} > [dq]_{k^2}$,
and so $s \leq \left\lceil\frac{d}{2}\right\rceil$.

\begin{claim}
\label{claim: m mu t_1 and m mu t_2 not R-mob implies m mu(t_1+t_2) R-mob}
Suppose that $t_1$ and $t_2$ are positive integers such that
$t_1< s$, $t_2 < s$, and $s \leq t_1+t_2$.  (Note that this implies $t_1 + t_2 < d$.)
If neither $z_0^{j+\mu mt_1}$
nor $z_0^{j + \mu mt_2}$ is R-mobile in 
$\left\langle z_0^j, z_1^j \right]$, then
$z_0^{j + \mu m(t_1+t_2)}$ is R-mobile in 
$\left\langle z_0^j, z_1^j \right]$.
\end{claim}

\begin{proof}
Suppose that $t_1$ and $t_2$ are positive integers such that
$t_1< s$, $t_2 < s$, and $s \leq t_1+t_2$, and that
neither $z_0^{j+\mu m t_1}$
nor $z_0^{j + \mu m t_2}$ is R-mobile in 
$\left\langle z_0^j, z_1^j \right]$.
Recalling that (\ref{eq: (mu m)^2 equiv 1(mod d)})
tells us that $(\mu m)^2 \equiv 1\; (\mod d)$,
we fix the lift
\begin{align}
      \minq_{j\in\Z/d}\left(z_0^{j+\mu mt}-z_0^j\right)
&= [\mu m(\mu m t)]_d \frac{k^2}{d} + \left(\frac{[(\mu mt)\epsilon]_d}{d}-1\right)[dq]_{k^2}
             \\ \nonumber
&= t \frac{k^2}{d} - \frac{[-(\mu mt)\epsilon]_d}{d}[dq]_{k^2}
\end{align}
of $\minq_{j\in\Z/d}\left(z_0^{j+\mu mt}-z_0^j\right)$ to the integers
for any positive integer $t < d$.  Thus, for any $t < s$,
so that $t\frac{k^2}{d} < [dq]_{k^2}$, we have
$-[dq]_{k^2} <  t \frac{k^2}{d} - \frac{[-(\mu mt)\epsilon]_d}{d}[dq]_{k^2} < [dq]_{k^2}$,
and for any $t\geq s$, so that $t\frac{k^2}{d} > [dq]_{k^2}$, we have
$0 <   t \frac{k^2}{d} - \frac{[-(\mu mt)\epsilon]_d}{d}[dq]_{k^2} < k^2$.

For $i \in \{1,2\}$, we know in addition that $z_0^{j+\mu mt_i}$
is not R-mobile in $\left\langle z_0^j, z_1^j \right]$, and so
\begin{equation}
-[dq]_{k^2} < t_i \frac{k^2}{d} - \frac{[-(\mu mt_i)\epsilon]_d}{d}[dq]_{k^2} < 0.
\end{equation}
Using the fact that
\begin{equation}
   [\mu m(t_1\!+t_2)\epsilon]_d
= \begin{cases}
        [-(\mu mt_1)\epsilon]_d \!+\!  [-(\mu mt_2)\epsilon]_d
           & [-(\mu mt_1)\epsilon]_d \!+\!  [-(\mu mt_2)\epsilon]_d < d
                          \\
        [-(\mu mt_1)\epsilon]_d \!+\!  [-(\mu mt_2)\epsilon]_d \!-\!d
           & [-(\mu m t_1)\epsilon]_d \!+\!  [-(\mu mt_2)\epsilon]_d \geq d
   \end{cases},
\end{equation}
we then obtain that
\begin{align}
          (t_1 + t_2)\frac{k^2}{d} - \frac{[-(\mu m(t_1+t_2))\epsilon]_d}{d}[dq]_{k^2}
&\leq   \left( t_1 \frac{k^2}{d} - \frac{[-(\mu mt_1)\epsilon]_d}{d}[dq]_{k^2}\right)
                     \\ \nonumber
                     &\;\;\;\;
          +\left( t_2 \frac{k^2}{d} - \frac{[-(\mu mt_2)\epsilon]_d}{d}[dq]_{k^2}\right)
          + \frac{d}{d}[dq]_{k^2}
                      \\ \nonumber
&<     0 \;+\; 0 \;+\; 1\!\cdot\![dq]_{k^2}
                      \\ \nonumber
&=    [dq]_{k^2}.
\end{align}
Thus, $\minq_{j\in\Z/d}\left(z_0^{j+\mu m( t_1+t_2)}-z_0^j\right) \in
\left\langle 0, dq\right\rangle$, and so
$z_0^{j + \mu m(t_1+t_2)}$ is R-mobile in 
$\left\langle z_0^j, z_1^j \right]$.
\end{proof}

Suppose that $s \geq 4$.  Then
$\left\lceil \frac{s}{2} \right\rceil  < s$,
$s \leq 2\!\left\lceil \frac{s}{2} \right\rceil < d$,
$\left\lceil \frac{s}{2} \right\rceil + 1 < s$, and
$s \leq 2\!\left(\left\lceil \frac{s}{2} \right\rceil+1\right) < d$,
and so Claim \ref{claim: m mu t_1 and m mu t_2 not R-mob implies m mu(t_1+t_2) R-mob}
implies that there exist
$l_0 \in \left\{ \mu m\! \left\lceil \frac{s}{2} \right\rceil,\, 
2\mu m\! \left\lceil \frac{s}{2} \right\rceil \right\}$
and
$l_1 \in \left\{ \mu m\!\left(\left\lceil \frac{s}{2} \right\rceil +1\right),\, 
2\mu m\!\left(\left\lceil \frac{s}{2} \right\rceil+1\right) \right\}$
for which both
$z_0^{j+l_0}$ and $z_0^{j+l_1}$ are R-mobile in
$\left\langle z_0^j, z_1^j \right]$, a contradiction.
Thus $s \leq 3$.
This, in turn, implies that $3\frac{k^2}{d} > [dq]_{k^2}$.
Thus, if $\gamma = -1$, so that $\mu = +1$ and $[dq]_{k^2} = (m-c)k + \alpha$, 
then we have
\begin{align}
\label{prop: psibar NN, claim 3, eq: mk < (d-1)k}
      mk 
&= [dq]_{k^2} + ck - \alpha
             \\ \nonumber
&< 3\textstyle{\frac{k}{d}k} + ck - \alpha
             \\ \nonumber
&= \left(3\left(3 - \textstyle{\frac{\epsilon}{d}}\right) + c\right)k - \alpha
             \\ \nonumber
&< \left(9 + \textstyle{\frac{d}{2}}\right)k
             \\ \nonumber
&< (d-1)k.
\end{align}
Here, the third line uses the fact that
$\left\lfloor \frac{k}{d} \right\rfloor = 2$, implying $\frac{k+\epsilon}{d} = 3$;
the fourth line uses the fact that $c < \frac{d}{2}$; and the fifth line
uses the fact that $\left\lfloor \frac{k}{d} \right\rfloor = 2$ and $k > 100$
imply $d > 33$.
Since (\ref{prop: psibar NN, claim 3, eq: mk < (d-1)k})
contradicts
(\ref{eq: m = 1 or m in d-1, d+1}),
our supposition that $\gamma = -1$ must have been false,
and so we are left with the case in which $\gamma = +1$,
which, by (\ref{eq: m = 1 or m in d-1, d+1}), implies $m = 1$.

If $s=3$, then since
Claim \ref{claim: k^2/d < dq and l not 1,-1, 2, -2} tells us that
neither $z_0^{j + \mu}$ nor $z_0^{j+2\mu}$ is R-mobile in
$\left\langle z_0^j, z_1^j \right]$,
Claim \ref{claim: m mu t_1 and m mu t_2 not R-mob implies m mu(t_1+t_2) R-mob}
implies that both
$z_0^{j+3\mu}$ and $z_0^{j+4\mu}$ are R-mobile in 
$\left\langle z_0^j, z_1^j \right]$, a contradiction.
If $s=2$, then the fact that $z_0^{j+\mu}$ is not R-mobile in
$\left\langle z_0^j, z_1^j \right]$ implies that
 $z_0^{j+2\mu}$ is R-mobile in
$\left\langle z_0^j, z_1^j \right]$, again a contradiction.
Thus $s=1$, implying that $\frac{k^2}{d} > [dq]_{k^2}$,
but this contradicts Claim \ref{claim: k^2/d < dq and l not 1,-1, 2, -2}.

Thus, it is impossible for 
$\left\langle z_{n_j-1}^j, z_{n_j}^j \right]$
to have R-mobile points, and so,
by the argument at the beginning of the
proof of this proposition,
all $\psibar$-mobile points are non-neutralized.

\end{proof}

\begin{prop}
\label{prop: q of positive type. If mobile points exist, then l=1}
Suppose $q$ of positive type is genus-minimizing,
and $\psi > 2[dq]_{k^2}$.
If $z_0^{j+l}$, and hence $z_{n_{j-l}}^{j-l}$, are mobile in ${\mathbf{z}}^j$
for some nonzero $l \in \Z/d$, then $\psibar < \frac{k^2}{2}$,
$l=1$, $c=1$, and $2 \!\!\not\vert\, \frac{k+\alpha}{d}$,
and either $(\mu,\gamma)=(1,-1)$ with $d=2$ and $\alpha=+1$,
or $(\mu,\gamma)=(1,1)$.
\end{prop}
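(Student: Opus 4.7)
The strategy is to reduce to the structural setting of Proposition \ref{prop: positive type, main prop}.(ii) using the non-neutralized hypothesis $\psi > 2[dq]_{k^2}$, and then to extract the parameter restrictions via the explicit formula from Corollary \ref{cor: q of positive type: combo of lemma and difference eq} for $\minq_{j\in\Z/d}\!\left(z_0^{j+l'} - z_0^j\right)$.

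First, I would show $v_q(z_{n_{j-1}}^{j-1}, z_0^j)$ is constant in $j \in \Z/d$. If not, by Proposition \ref{prop: unique v_q = alpha(k - k^2), and the rest are v_q = alpha (k)} the exceptional value $\alpha(k-k^2)$ is taken on the boundary interval $\left\langle z_{n_{j-1}}^{j-1}, z_0^j \right]$, making every interior $v_q(z_i^j, z_{i+1}^j)$ constant; by Part (i') this neutralizes every mobile point, contradicting Part (iv) (applicable since $\psi > 2[dq]_{k^2}$). Part (ii) then identifies $z_0^{j+l}$ and $z_{n_{j-l}}^{j-l}$ as the unique non-neutralized mirror pair of R/L-mobile points in $\mathbf{z}^j$.

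Second, I would establish $\psibar < \frac{k^2}{2}$. The R-mobility of $z_0^{j+l'}$ is equivalent (via Corollary \ref{cor: q of positive type: combo of lemma and difference eq}) to the integer lift of $\minq_{j\in\Z/d}\!\left(z_0^{j+l'} - z_0^j\right)$ landing in $\left\langle 0, \left(\left\lfloor k/d \right\rfloor - 1\right)[dq]_{k^2} \right\rangle$ modulo $k^2$. Adapting the winding argument from the proof of Proposition \ref{prop: psibar < k^2/2 implies psibar-mobile are nn} (with consecutive $l'$-differences in $\{\psibar - 2[dq]_{k^2},\; \psibar - [dq]_{k^2}\}$), if $\psibar \geq \frac{k^2}{2}$ the resulting sequence is too dense to satisfy the uniqueness obtained in Step~1; hence $\psibar < \frac{k^2}{2}$, and Proposition \ref{prop: psibar < k^2/2 implies psibar-mobile are nn} then additionally rules out neutralized antipseudomobile points.

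Third, I would pin down $l$, $c$, and the allowed sign patterns. Using the integer-lift formula together with Proposition \ref{prop: mobile point is active [lepsilon] times or [(l-1)epsilon] times} and the $v_q$-structure of Proposition \ref{prop: unique v_q = alpha(k - k^2), and the rest are v_q = alpha (k)}, I would derive the relation $c \equiv \gamma(2l - 1) \pmod{d}$ in the spirit of the derivation of $c = [2l-1]_d$ inside Claim \ref{claim: k^2/d < dq and l not 1,-1, 2, -2}. Combined with $1 \leq c < d/2$ and the uniqueness constraint from Step~2, this forces $l = 1$ and $c = 1$. The three sign patterns are then dispatched: $(\mu,\gamma)=(-1,1)$ contradicts $m < c$ from Proposition \ref{prop: properties of parameters d, m, c, alpha, mu, gamma}.(vi); $(\mu,\gamma)=(1,-1)$ together with $c = 1$ and the bound $\frac{ck + \alpha\gamma}{d} \geq 2$ forces $d = 2$ and $\alpha = +1$; and the parity condition $2 \nmid \frac{k + \alpha}{d}$ arises as the primitivity requirement $\gcd(d, \epsilon) = 1$ translated through $\epsilon \equiv \alpha\gamma \pmod{d}$ combined with the divisibility $d \mid k + \alpha$. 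The main obstacle will be rigorously executing the winding-number step of Step~2, where the discrete structure of $[\mu m l']_d$ and the two-valued increments must be coordinated with the target mobility interval tightly enough to simultaneously rule out $\psibar \geq \frac{k^2}{2}$ and the existence of any additional R-mobile $l'$ beyond $l = 1$.
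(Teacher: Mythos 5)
Your Step 1 is sound and matches what the paper leaves implicit: existence of a mobile point, combined with Part (i') and Part (iv) of Proposition \ref{prop: positive type, main prop}, forces $v_q\!\left(z_{n_{j-1}}^{j-1}, z_0^j\right)$ to be constant, so that Part (ii) applies and identifies $z_0^{j+l}$, $z_{n_{j-l}}^{j-l}$ as the unique non-neutralized mirror pair.

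Step 2 has a genuine gap. The winding argument in the proof of Proposition \ref{prop: psibar < k^2/2 implies psibar-mobile are nn} rests on the increments $M(l+1)-M(l) \in \{\psibar - 2[dq]_{k^2},\ \psibar - [dq]_{k^2}\}$ having absolute value $<[dq]_{k^2}$, which holds because that proof has already reduced to $\psibar \in \left\langle [dq]_{k^2}, 2[dq]_{k^2}\right\rangle$ and $\left\lfloor k/d\right\rfloor = 2$. In the present setting neither of those hypotheses is available, and in the regime you are trying to rule out ($\psibar \geq \frac{k^2}{2}$, hence $[dq]_{k^2} < \frac{k^2}{4}$ since $\psi > 2[dq]_{k^2}$) the increment $\psibar - [dq]_{k^2}$ exceeds $[dq]_{k^2}$, so the steps of $M$ are not small and the density/uniqueness contradiction does not manifest. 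The paper instead argues directly: assuming $\psi < \frac{k^2}{2}$, it computes a window for $\minq_{j\in\Z/d}\!\left(z_0^{j-l}-z_{n_{j-1}}^{j-1}\right)$ and shows that in every subcase one produces either a non-neutralized R$\psi$-mobile point in $\left\langle z_{n_{j-1}}^{j-1}, z_0^j\right]$ (impossible by Step 1) or an L-mobile point at a second index $i\neq i_*$ (impossible by Part (i)). You would need an argument of that shape rather than an adapted winding count. Separately, the paper's route to $l=1$ is not the congruence $c\equiv\gamma(2l-1)$: it first shows $\psibar < \left(\left\lfloor k/d\right\rfloor - 1\right)[dq]_{k^2}$ (otherwise $z_0^{j+l}$ would be a non-neutralized antipseudomobile point, contradicting Proposition \ref{prop: psibar < k^2/2 implies psibar-mobile are nn}), concludes that $z_{n_{j-1}}^{j-1}$ is L-mobile rel $z_{n_j}^j$, and then invokes the uniqueness of the non-neutralized L-mobile point from Part (ii) to deduce $l=1$. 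Also, the parity condition $2\nmid\frac{k+\alpha}{d}$ does not come from $\gcd(d,\epsilon)=1$; it comes from the constraint $i_* = \frac{n_{j_*}-1}{2}\in\Z$ forcing $n_{j_*}$ odd, together with $n_{j_*} = \frac{k+\alpha}{d}-2$ when $\alpha=+1$ and $n_{j_*}=\frac{k+\alpha}{d}$ when $\alpha=-1$.
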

\begin{proof}
Parts (i) and (ii) of Proposition \ref{prop: positive type, main prop}
tell us that $z_0^{j+l}$ and $z_{n_{j-l}}^{j-l}$ are respectively mobile in
$\left\langle z_{i_*}^j, z_{i_*+1}^j\right]$ and
$\left\langle z_{n_j-(i_*+1)}^j, z_{n_j-i_*}^j \right]$,
where $i_*$ is the unique element of 
$\left\{ 0, \ldots, \left\lfloor \frac{k}{d} \right\rfloor - 2 \right\}$
satisfying
$\left(z_{i_*}^{j_*}, z_{{i_*}+1}^{j_*}\right) 
= \left(z_{n_j - (i_*+1)}^{j_*}, z_{n_j - i_*}^{j_*}\right)
= (x_*,y_*)$, for some unique $j_* \in \Z/d$,
where $x_*, y_* \in Q_q$ are the unique elements of $Q_q$
satisfying $v_q(x_*,y_*) = \alpha(k-k^2)$.
Note that this implies $i_* = \frac{n_{j_*}-1}{2}$.
We begin by showing that $\psibar < \frac{k^2}{2}$.

Suppose that $\psi < \frac{k^2}{2}$.
Since $z_0^{j+l}$ is R-mobile in
$\left\langle z_{i_*}^j, z_{i_*+1}^j\right]$,
we know that
$z_0^j$ is R-mobile in
$\left\langle z_{i_*}^{j-l}, z_{i_*+1}^{j-l}\right]$.
Thus, 
$\minq_{j\in\Z/d}\left(z_0^j - z_0^{j-l}\right) \in
\left\langle i_* dq, (i_* + 1)dq \right\rangle \subset
\left\langle 0, (i_* + 1)dq \right\rangle$,
since Corollary \ref{cor: if z^j has mobile points, then (k/d)dq < k^2}
tells us that
$\left\lfloor\frac{k}{d}\right\rfloor \! [dq]_{k^2} < k^2$.
We therefore have       
\begin{align}
      \minq_{j\in\Z/d}\left(z_0^{j-l}-z_{n_{j-1}}^{j-1}\right)
&= \minq_{j\in\Z/d}\left(z_0^{j-l}-z_0^j\right) + \psi
                \\ \nonumber
&= -\maxq_{j\in\Z/d}\left(z_0^j - z_0^{j-l}\right) + \psi
                \\ \nonumber
&= -\minq_{j\in\Z/d}\left(z_0^j - z_0^{j-l}\right) -dq + \psi
                \\ \nonumber
&\in \left\langle  - \left(i_*+1\right)dq,\, 0 \right\rangle + \psi-dq
                \\ \nonumber
&= \left\langle \psi - \left(i_*+2\right)dq,\, \psi-dq \right\rangle.
\end{align}
This means that if
$\minq_{j\in\Z/d}\left(z_0^{j-l}-z_{n_{j-1}}^{j-1}\right)
\in \left\langle 0, \psi \right\rangle$,
so that $z_0^{j-l}$ is R$\psi$-mobile in
$\left\langle z_{n_{j-1}}^{j-1}, z_0^j \right]$,
then $\minq_{j\in\Z/d}\left(z_0^{j-l}-z_{n_{j-1}}^{j-1}\right)
\in \left\langle 0, \psi - dq \right\rangle$, implying that
\begin{align}
      \maxq_{j\in\Z/d}\left(z_{n_{j-l-1}}^{j-l-1}-z_0^j\right)
&= \maxq_{j\in\Z/d}\left(\left(z_0^{j-l} - \psi\right)- \left(z_{n_{j-1}}^{j-1}+\psi\right)\right)
               \\ \nonumber
&= \minq_{j\in\Z/d}\left(z_0^{j-l} - z_{n_{j-1}}^{j-1}\right) -2\psi +dq
               \\ \nonumber
&\in \left\langle -2\psi +dq, -\psi \right\rangle,
 \end{align}
which, since $2\psi < k^2$, has no intersection with
$\left\langle -\psi, 0\right\rangle$, so that
$z_{n_{j-l-1}}^{j-l-1}$ is not L$\psi$-mobile in
$\left\langle z_{n_{j-1}}^{j-1}, z_0^j \right]$, making 
$z_0^{j-l}$ non-neutralized R$\psi$-mobile in
$\left\langle z_{n_{j-1}}^{j-1}, z_0^j \right]$, a contradiction.
Thus
$\minq_{j\in\Z/d}\left(z_0^{j-l}-z_{n_{j-1}}^{j-1}\right) \in
\left\langle \psi - \left(i_*+2\right)dq,\, \psi-dq \right\rangle
\setminus \left\langle 0, \psi \right\rangle$.
The fact that $\psi > 2[dq]_{k^2}$ implies that
$\psi - [dq]_{k^2} > 0$, which means that
$\left\langle \psi - \left(i_*+2\right)dq,\, \psi-dq \right\rangle
\subset \left\langle 0, \psi \right\rangle$
unless $\psi - \left(i_*+2\right)[dq]_{k^2} < 0$.
Thus 
\begin{equation}
\psi - \left(i_*+2\right)[dq]_{k^2}
\;<\; \minq_{j\in\Z/d}\left(z_0^{j-l}-z_{n_{j-1}}^{j-1}\right) \;<\; 0.
\end{equation}
This, in turn, implies that
\begin{align}
      \maxq_{j\in\Z/d}\left(z_{n_{j-1}}^{j-1}-z_0^{j-l}\right)
&= -\minq_{j\in\Z/d}\left(z_0^{j-l}-z_{n_{j-1}}^{j-1}\right)
                \\ \nonumber
&\in \left\langle  0, \left(i_*+2\right)[dq]_{k^2} - \psi \right\rangle
                \\ \nonumber
&\subset \left\langle 0, i_*[dq]_{k^2} \right\rangle,
\end{align}
where the last line uses the fact that
$\psi > 2[dq]_{k^2}$.
Thus $z_{n_{j-1}}^{j-1}$ is L-mobile in
$\left\langle z_i^{j-l}, z_{i+1}^{j-l}\right]$
for some $i \in \{0, \ldots, i_*-1\}$,
but this means that both
$\left\langle z_i^j, z_{i+1}^j\right]$ and
$\left\langle z_{i_*}^j, z_{i_*+1}^j\right]$
have mobile points, even though $i \neq i_*$.
This contradicts Proposition \ref{prop: positive type, main prop}.(i),
and so our supposition that $\psi < \frac{k^2}{2}$ must have been false.

Thus $\psibar < \frac{k^2}{2}$,
and so Proposition \ref{prop: psibar < k^2/2 implies psibar-mobile are nn}
tells us that all $\psibar$-mobile points are non-neutralized.
Since $z_0^{j+l}$ is R-mobile rel $z_0^j$, we know that
\begin{equation}
\minq_{j\in\Z/d}\left(z_0^{j+l} - z_0^j\right)
\in \left\langle 0, \left(\textstyle{\left\lfloor\frac{k}{d}\right\rfloor} -1\right)[dq]_{k^2} \right\rangle.
\end{equation}
Thus, if $\psibar > \left(\left\lfloor\frac{k}{d}\right\rfloor -1\right)[dq]_{k^2}$,
then $\minq_{j\in\Z/d}\left(z_0^{j+l} - z_0^j\right) \in
\left\langle 0, \psibar\right\rangle$, so that
$z_0^{j+l}$ is non-neutralized R$\psibar$-mobile in
$\left\langle z_0^j, z_{n_{j-1}}^{j-1} \right]$, a contradiction.
Thus, $\psibar < \left(\left\lfloor\frac{k}{d}\right\rfloor -1\right)[dq]_{k^2}$,
but this implies that
$z_{n_{j^{\prime}-1}}^{j^{\prime}-1} \in 
\left\langle z_0^{j^{\prime}}, z_{\left\lfloor\!\frac{k}{d}\!\right\rfloor-1}^{j^{\prime}} \right]$
for all $j^{\prime} \in \Z/d$,
and so
$z_{n_{j-1}}^{j-1}$ is L-mobile rel $z_{n_j}^j$, and $l=1$.

Since $z_0^{j+1}-z_{n_j}^j$ is constant in $j\in\Z/d$,
we can express $\Z/d$ as the disjoint union of $J_1$ and $J_2$, where
\begin{align}
       J_1
&:= \left\{j^{\prime} \in \Z/d \left|\;  
       n_{j^{\prime}}\!=\!\textstyle{\left\lfloor\frac{k}{d}\right\rfloor}\!-\!1;\;
       z_0^{j^{\prime}+1} \in
       \left\langle z_{i_*}^{j^{\prime}}, z_{i_* + 1}^{j^{\prime}}\right]
       \right.\!\right\},
           \\
       J_2
&:= \left\{j^{\prime} \in \Z/d \left|\;  
       n_{j^{\prime}}\!=\!\textstyle{\left\lfloor\frac{k}{d}\right\rfloor};\;
       z_0^{j^{\prime}+1} \in
       \left\langle z_{i_*+1}^{j^{\prime}}, z_{i_* + 2}^{j^{\prime}}\right]
       \right.\!\right\}.
\end{align}
Recall that $n_j := \left\lfloor\frac{k}{d}\right\rfloor - {\theta}^{d, \epsilon}(j)$,
and that the definition of ${\theta}^{d, \epsilon}(j)$, or alternatively, the $l=1$
case of Lemma \ref{lemma: q of positive type, xi lemma}, implies that
$\#\left\{ j^{\prime}\in\Z/d\left|\; {\theta}^{d, \epsilon}(j^{\prime})=1\right.\right\} = \epsilon$.
Thus $|J_1| = \epsilon$.
Now, since $z_{n_{j-1}}^{j-1} - z_0^j$ is constant in $j\in\Z/d$,
$z_{n_{j-1}}^{j-1}$ is not L-mobile in 
$\left\langle z_{i_*}^j, z_{i_*+1}^j\right]$.
Thus $z_0^{j+1}$ is the only mobile point in
$\left\langle z_{i_*}^j, z_{i_*+1}^j\right]$,
and it is active at time $j=j^{\prime}\in \Z/d$
if and only if $j^{\prime} \in J_1$.
Thus, if $\alpha = +1$, then
$z_0^{j+l}$ is active in $\left\langle z_{i_*}^j, z_{i_*+1}^j \right]$
precisely once, and so $\epsilon = |J_1| = 1$; moreover, since $j_* \in J_1$, we have
$n_{j_*} =   \left\lfloor\frac{k}{d}\right\rfloor - 1 = \left(\frac{k+\epsilon}{d}-1\right) -1
= \frac{k+1}{d}-2$.
On the other hand, if $\alpha = -1$, then
$z_0^{j+l}$ is inactive in $\left\langle z_{i_*}^j, z_{i_*+1}^j \right]$
precisely once, and so $\epsilon = |J_1| = d-1$;
moreover, since $j_* \in J_2$, we have
$n_{j_*} =   \left\lfloor\frac{k}{d}\right\rfloor = \frac{k+\epsilon}{d}-1
= \frac{k-1}{d}$.
Lastly, note that the equation $i_* = \frac{n_{j_*} - 1}{2}$ implies that
$n_{j_*} \equiv 1\;(\mod 2)$.  Thus, regardless of the value of $\alpha$,
we have $\epsilon = [\alpha]_d$ and $2 \!\!\not\vert\, \frac{k+\alpha}{d}$.

Since $\epsilon = [\alpha]_d$,
we know that $c = \left[\alpha\gamma\, {\epsilon}^{-1}\right]_d = [\gamma]_d$.
If $(\mu,\gamma) = (-1,1)$,
then $c>m>0$ implies $c>1$, contradicting the fact that
$c = [\gamma]_d = 1$.  Thus $(\mu,\gamma) \neq (-1,1)$.
If $(\mu,\gamma) = (1,-1)$, then
$c = [\gamma]_d = d-1$ contradicts the fact that $c \leq \frac{d}{2}$
unless $d=2$.  If $d=2$, then $c=d-1=1$, and the fact that
$\frac{ck-\alpha}{d} < \frac{k}{2}$ implies $\alpha = +1$.
If $(\mu,\gamma)=(1,1)$, then $c=[\gamma]_d = 1$.

\end{proof}

\begin{prop}
\label{prop: type (1,1), k/d = 2 or 3}
Suppose that $q$ of positive type is genus-minimizing
and $\psi > 2[dq]_{k^2}$.
If $(\mu,\gamma) = (1,1)$,
and $\left\lfloor\frac{k}{d}\right\rfloor \in \{2,3\}$, 
then $m=c=1$.
\end{prop}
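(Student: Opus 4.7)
The plan is to apply Proposition \ref{prop: q of positive type. If mobile points exist, then l=1} to derive $c=1$ and several other constraints, reduce the problem to a two-case analysis of $\psi$, and then eliminate the one remaining non-trivial subcase by a direct $v_q$ computation that contradicts Part (iii) of Proposition \ref{prop: positive type, main prop}.

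First I would note that, because $(\mu,\gamma)=(1,1)$, Part (iii) of Proposition \ref{prop: positive type, main prop} tells us $v_q(z_{n_{j-1}}^{j-1}, z_0^j) = \alpha k$ is constant in $j\in\Z/d$, so Part (ii) of the same proposition yields a pair of non-neutralized mobile points in $\mathbf{z}^j$. Proposition \ref{prop: q of positive type. If mobile points exist, then l=1} then applies and gives $c=1$, $l=1$, $\epsilon = [\alpha]_d$, and $2 \nmid \tfrac{k+\alpha}{d}$. Since $k + \epsilon = d \lceil k/d \rceil$, the four a priori configurations of $(k, \alpha, \lfloor k/d\rfloor)$, namely $(3d-1,+1,2)$, $(2d+1,-1,2)$, $(4d-1,+1,3)$, and $(3d+1,-1,3)$, reduce after imposing $2 \nmid \tfrac{k+\alpha}{d}$ to just two: (A) $\alpha = +1$ with $k = 3d-1$, and (B) $\alpha = -1$ with $k = 3d+1$. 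In both, $\tfrac{ck+\alpha\gamma}{d}k = 3k$, so $[dq]_{k^2} = (m+1)k + \alpha$ and $\psi \equiv (m-2)k + \alpha \pmod{k^2}$.

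Next I would apply the hypothesis $2[dq]_{k^2} < \psi < k^2$ in each case. In case (A), the inequality $(m-2)k + 1 > 2(m+1)k + 2$ is impossible for $m \geq 0$, so we must instead have $(m-2)k + 1 < 0$ and $\psi = k^2 + (m-2)k + 1$; the range constraint then forces $m = 1$ (giving $\psi = k^2-k+1$), which easily satisfies $\psi > 2[dq]_{k^2} = 4k+2$ for $k>100$. In case (B), the analogous inequality rules out $m \geq 3$ but permits both $m=1$ (yielding $\psi = k^2-k-1$) and $m=2$ (yielding $\psi = k^2-1$).

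The main obstacle is therefore to eliminate $m=2$ in case (B). My plan here is a direct computation: when $m=2$, $\psi = k^2 - 1$ implies $z_{n_{d-1}}^{d-1} = k^2 - \psi = 1 \in Q_q$. Using the lift $\tilde{x} = 1$, $\tilde{y} = k^2$ of the pair $(z_{n_{d-1}}^{d-1}, z_0^0)$, the count $\#(\tilde{Q}_q \cap (1, k^2]) = k-1$ (since all $k$ elements of $Q_q \subset \{0,\ldots,k^2-1\}$ except $0$ and $1$ contribute lifts in $(1,k^2)$, while the lift $k^2$ of $0$ lies in $(1,k^2]$) yields
\[
v_q(z_{n_{d-1}}^{d-1}, z_0^0) = (k^2-1)k - (k-1)k^2 = k^2 - k,
\]
contradicting the value $\alpha k = -k$ required by Part (iii) of Proposition \ref{prop: positive type, main prop}. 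Hence $m = 2$ is impossible in case (B), and we conclude $m = c = 1$ throughout.
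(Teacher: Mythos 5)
Your proof is correct. The first two paragraphs --- the reduction via Propositions \ref{prop: positive type, main prop} and \ref{prop: q of positive type. If mobile points exist, then l=1} to the two configurations (A) $\alpha=+1$, $k=3d-1$ and (B) $\alpha=-1$, $k=3d+1$, and the bound on $m$ from the $\psi$-constraint --- track the paper's argument closely. One cosmetic difference: the paper derives $m<2-\alpha/k$ from the R-mobility inequality $\left[\minq_{j}\left(z_0^{j+1}-z_{i_*}^j\right)\right]_{k^2} = 2[dq]_{k^2}-3k < [dq]_{k^2}$, whereas you derive it directly from $\psi > 2[dq]_{k^2}$; these yield the identical inequality, so the content is the same.

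Where your proof genuinely departs from the paper is the elimination of the subcase $\alpha=-1$, $m=2$. The paper writes out $q = -6k+3$, splits on $k \bmod 2$ (which, since $3 \mid k-1$, pins down $k \bmod 6$), and in each branch constructs explicit elements of $Q_q$ whose $|v_q|$ value exceeds $k(k+1)$, invoking Proposition \ref{prop: not minimizing if v >= k(k+1)} to conclude non-genus-minimization. You instead note that $\psi = k^2-1$ forces $z_{n_{d-1}}^{d-1} = z_0^0 - \psi \equiv 1$, compute $v_q(z_{n_{d-1}}^{d-1}, z_0^0) = v_q(1,0) = k(k-1) = \alpha(k-k^2)$, and observe this contradicts Proposition \ref{prop: positive type, main prop}.(iii), which forces $v_q(z_{n_{j-1}}^{j-1},z_0^j) = \alpha k = -k$ for every $j$. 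I verified the count: with lifts $\tilde{x}=1$, $\tilde{y}=k^2$, the interval $(1,k^2]$ contains $k^2-1$ integers and $k-1$ elements of $\tilde{Q}_q$ (the lift $k^2$ of $0$ plus the $k-2$ elements of $Q_q$ lying in $\{2,\ldots,k^2-1\}$; the element $1$ contributes nothing), giving $(k^2-1)k - (k-1)k^2 = k^2-k$ as claimed. This is a shorter argument that avoids the residue case split and makes better use of the machinery already in place; both routes are valid.
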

\begin{proof}
Since $(\mu, \gamma) = (1,1)$,
Proposition \ref{prop: positive type, main prop}
tells us that ${\mathbf{z}}^j$ has mobile points, and so
the conclusions of
Proposition \ref{prop: q of positive type. If mobile points exist, then l=1} hold.
As we just saw in the last paragraph of the proof of 
Proposition \ref{prop: q of positive type. If mobile points exist, then l=1},
$n_{j_*}$ is odd and is equal to
$\left\lfloor\frac{k}{d}\right\rfloor - 1$ (respectively $\left\lfloor\frac{k}{d}\right\rfloor$)
when $\alpha = +1$ (respectively $\alpha = -1$).
This means that
$\left\lfloor\frac{k}{d}\right\rfloor \neq 3$ when
$\alpha = +1$, and $\left\lfloor\frac{k}{d}\right\rfloor \neq 2$ when
$\alpha = -1$.  Thus if $\alpha = +1$, then
$\left\lfloor\frac{k}{d}\right\rfloor = 2$, $n_{j_*}=1$, and $i_*:= \frac{n_{j_*}-1}{2} = 0$,
whereas if $\alpha = -1$, then
$\left\lfloor\frac{k}{d}\right\rfloor = 3$, $n_{j_*}=3$, and  $i_*:= \frac{n_{j_*}-1}{2} = 1$.
Note that in both cases, $\left\lfloor\frac{k}{d}\right\rfloor - i_* = 2$ and
$\frac{k+\alpha}{d} = 3$.
Proposition \ref{prop: q of positive type. If mobile points exist, then l=1} 
also tells us that $c=1$, and that
$z_0^{j+1}$ is R-mobile in $\left\langle z_{i_*}^j, z_{i_*+1}^j\right]$.

We begin by calculating  $\minq_{j\in\Z/d}\left(z_0^{j+1} - z_{i_*}^j\right)$.
We could use Corollary \ref{cor: q of positive type: combo of lemma and difference eq}
to do this, but it is easier to perform the calculation directly:
\begin{align}
      \minq_{j\in\Z/d}\left(z_0^{j+1} - z_{i_*}^j\right)
&= \left(\textstyle{\left\lfloor\frac{k}{d}\right\rfloor}\!-\!1 - i_*\right)dq \;+\; \psi
               \\ \nonumber
&= \left(\textstyle{\left\lfloor\frac{k}{d}\right\rfloor}\!-\!1 - i_*\right)dq
      \;+\; dq - \textstyle{\frac{ck+\alpha}{d}}k
               \\ \nonumber
&= \left(\textstyle{\left\lfloor\frac{k}{d}\right\rfloor} - i_*\right)dq
       - \textstyle{\frac{k+\alpha}{d}}k
               \\ \nonumber
&= 2dq - 3k.
\end{align}
Now, the fact that $[dq]_{k^2}= (m+c)k+\alpha \geq 2k+\alpha$ implies that
$2[dq]_{k^2}-3k > 0$, and the fact that $[dq]_{k^2} < \frac{k^2}{2}$
implies that $2[dq]_{k^2}-3k < k^2$.  Thus, we in fact have
$\left[\minq_{j\in\Z/d}\left(z_0^{j+1} - z_{i_*}^j\right)\right]_{k^2}
=2[dq]_{k^2} - 3k$.
The fact that $z_0^{j+l}$ is
R-mobile in $\left\langle z_{i_*}^j, z_{i_*+1}^j\right]$ then implies that
\begin{align}
       \left[\minq_{j\in\Z/d}\left(z_0^{j+1} - z_{i_*}^j\right)\right]_{k^2}
&<  [dq]_{k^2}
               \\ \nonumber
      2[dq]_{k^2} - 3k
&<  [dq]_{k^2}
               \\ \nonumber
      [dq]_{k^2}
&<  3k
               \\ \nonumber
     (m+1)k+\alpha
&<  3k
               \\ \nonumber
     m
&<  2 - \textstyle{\frac{\alpha}{k}}.
\end{align}
Thus, $m=1$ unless $\alpha = -1$ and $m=2$.

Suppose that $\alpha = -1$ and $m=2$.  Then
\begin{align}
      q 
&= \alpha\gamma\mu \textstyle{\frac{ck+\alpha\gamma}{d}}(mk + \alpha\mu)
                \\ \nonumber
&= - \textstyle{\frac{k-1}{d}}(2k -1)
                \\ \nonumber
&= -3(2k-1)
                \\ \nonumber
&= -6k+3.
\end{align}
Consider the case in which $k \equiv 1\;(\mod 2)$.  Since $\frac{k-1}{3} = d \in \Z$,
we know that $k \equiv 1\; (\mod 6)$, and so
$\frac{2k+1}{3}, \frac{k-1}{6}, \frac{5k+1}{6} \in \Z$.  Observe that
\begin{equation}
    \left(0q, \textstyle{\frac{2k+1}{3}}q, \textstyle{\frac{k-1}{6}}q, \textstyle{\frac{5k+1}{6}}q \right)
=  \left(0, 1, \textstyle{\frac{3k-1}{2}}, \textstyle{\frac{3k+1}{2}}\right) \in \left(\Z/k^2\right)^4,
\end{equation}
which implies
\begin{align}
       \left| v_q \left(0q, \textstyle{\frac{5k+1}{6}}q\right) \right|
&=  \left|\left(\textstyle{\frac{3k+1}{2}} - 0\right)k \;\;-\;\;
       \#\left({\tilde{Q}}_q \cap \left\langle 0,  \textstyle{\frac{3k+1}{2}}\right] \right)k^2\right|
           \\ \nonumber
&\geq  -\textstyle{\frac{3k+1}{2}}k\;\;+\;\; 3k^2
           \\ \nonumber
&>     k(k+1),
\end{align}
and so Proposition \ref{prop: not minimizing if v >= k(k+1)} tells us that
$q$ is not genus-minimizing.
This leaves us with the case in which $k \equiv 0\; (\mod 2)$.
In this case, the fact that  $\frac{k-1}{3} \in \Z$ implies that
$k \equiv 4\; (\mod 6)$, and so
$\frac{k+2}{6}, \frac{5k+4}{6}, \frac{2k+1}{3} \in \Z$.  Observe that
\begin{equation}
    \left( \textstyle{\frac{k+2}{6}}q, \textstyle{\frac{5k+4}{6}}q, 0q,  \textstyle{\frac{2k+1}{3}}q\right)
=  \left(-\textstyle{\frac{3k}{2}}+1, -\textstyle{\frac{3k}{2}}+2, 0, 1\right) \in \left(\Z/k^2\right)^4,
\end{equation}
which implies
\begin{align}
       \left| v_q \left(\textstyle{\frac{k+2}{6}}q, \textstyle{\frac{2k+1}{3}}q\right) \right|
&=  \left|\left(1 - \left(-\textstyle{\frac{3k}{2}}+1\right)\right)k \;\;-\;\;
       \#\left({\tilde{Q}}_q \cap \left\langle -\textstyle{\frac{3k}{2}}+1, 1\right] \right)k^2\right|
           \\ \nonumber
&\geq  -\textstyle{\frac{3k}{2}}k\;\;+\;\; 3k^2
           \\ \nonumber
&>     k(k+1),
\end{align}
and so $q$ is not genus-minimizing.
Since $q$ is not genus-minimizing when $\alpha = -1$ and $m=2$,
we must have $m=1$.

\end{proof}

\begin{prop}
\label{prop: mobile points implies mu m + gamma c = 2}
Suppose that $q$ of positive type is genus-minimizing.
If ${\mathbf{z}}^j$ has mobile points and 
$\left\lfloor\frac{k}{d}\right\rfloor > 3$, then
$[dq]_{k^2}=2k+\alpha$, so that $\mu m + \gamma c = 2$.
\end{prop}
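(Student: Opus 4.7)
The plan is to use Proposition~\ref{prop: q of positive type. If mobile points exist, then l=1} to pin down the algebraic form of $q$, then read off the value of $m$ from the R-mobility condition on the single mobile point $z_0^{j+1}$. First, since $\lfloor k/d\rfloor>3$ excludes the exceptional triple of Proposition~\ref{prop: positive type, main prop}.(iv) (there $d=(k-1)/2$, forcing $\lfloor k/d\rfloor=2$), we have $\psi>2[dq]_{k^2}$; combining this with the hypothesis that ${\mathbf{z}}^j$ has mobile points, Proposition~\ref{prop: q of positive type. If mobile points exist, then l=1} gives $l=1$, $c=1$, $\epsilon=[\alpha]_d$, and $n_{j_*}$ odd, with either (A)~$(\mu,\gamma)=(1,1)$ or (B)~$(\mu,\gamma)=(1,-1)$ with $d=2$, $\alpha=+1$. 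Write $i_*=(n_{j_*}-1)/2$ and $s:=\lfloor k/d\rfloor-i_*\ge(\lfloor k/d\rfloor+1)/2\ge 3$.

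Next I compute the key quantity directly by telescoping through $z_{n_j}^j$:
\begin{equation*}
    N \;:=\; \psi + (\lfloor k/d\rfloor - 1 - i_*)[dq]_{k^2}
    \;=\; s[dq]_{k^2} \;-\; \gamma\textstyle{\frac{k+\alpha\gamma}{d}}k,
\end{equation*}
where the second equality uses $\psi\equiv [dq]_{k^2}-\gamma\frac{ck+\alpha\gamma}{d}k$ and $c=1$. Because the minimum over $j\in\Z/d$ of $(z_0^{j+1}-z_{i_*}^j)$ is achieved when $n_j=\lfloor k/d\rfloor-1$, we have $\minq_{j\in\Z/d}(z_0^{j+1}-z_{i_*}^j)=[N]_{k^2}$, and R-mobility of $z_0^{j+1}$ in $\langle z_{i_*}^j,z_{i_*+1}^j]$ is exactly the condition $[N]_{k^2}\in(0,[dq]_{k^2})$.

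In Case~(A) both terms of $N$ are positive; Corollary~\ref{cor: if z^j has mobile points, then (k/d)dq < k^2} gives $s[dq]_{k^2}<k^2$, and $\frac{k+\alpha}{d}k<k^2/2$, so $N\in(-k^2/2,k^2)$. Either $N\in(0,[dq]_{k^2})$ (Subcase~A1) or $N+k^2\in(0,[dq]_{k^2})$ with $N<0$ (Subcase~A2). Using the closed form $\frac{k+\alpha}{d}=\lfloor k/d\rfloor$ or $\lfloor k/d\rfloor+1$ according as $\alpha=-1$ or $+1$, Subcase~A2 would force $s(m+1)<\frac{k+\alpha}{d}+O(1/k)$, hence $m<1$, contradicting $m\ge 1$. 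Subcase~A1 yields $(s-1)[dq]_{k^2}<\frac{k+\alpha}{d}k<s[dq]_{k^2}$; dividing by $k$ and using $s\ge(\lfloor k/d\rfloor+1)/2$ together with $\lfloor k/d\rfloor>3$ squeezes $m+1$ between two values both equal to $2$ up to an $O(1/\lfloor k/d\rfloor)$ error smaller than $1$. Therefore $m+1=2$ and $\mu m+\gamma c=1+1=2$.

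In Case~(B), $[dq]_{k^2}=(m-1)k+1$ (requiring $m\ge 2$) and $\psi\equiv(m-1)k+1+\frac{k-1}{2}k$. The parity of $m$ interacts with the half-integer summand $[m]_2\frac{k^2}{2}$ appearing when one instead uses Corollary~\ref{cor: q of positive type: combo of lemma and difference eq}; after absorbing this the representative $[N]_{k^2}$ is an honest integer, and $k$ is odd so the two possible values of $n_{j_*}$ (namely $(k-1)/2$ or $(k-3)/2$) select $i_*$ according to $k\bmod 4$. In each of the resulting four parity subcases, $[N]_{k^2}$ is an explicit quadratic in $k$ whose magnitude, compared with $(m-1)k+1$, rules out every value of $m\ge 2$ except $m=3$; for $m=3$ one checks directly that $[N]_{k^2}=(k+1)/4$ or $(5k+3)/4$ lies in $(0,2k+1)$. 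This gives $\mu m+\gamma c=3-1=2$ and $[dq]_{k^2}=2k+1$, completing both cases.

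The main obstacle is the bookkeeping of the modular reduction in Case~(B): one must track the half-integer contributions carefully so that the representative of $N$ in $[0,k^2)$ is computed correctly in each subcase of parities, and verify that the resulting inequality $0<[N]_{k^2}<[dq]_{k^2}$ admits no integer value of $m$ other than $3$. This is delicate but becomes routine once the four parity subcases are enumerated.
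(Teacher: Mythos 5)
Your Case~(A) argument is correct, and it is a genuinely different route from the paper: the paper establishes $[dq]_{k^2}=2k+\alpha$ by proving $\#\!\left({\tilde{Q}}_q \cap \left\langle z_0^{j'}, z_1^{j'}\right]\right)=2$ through a careful accounting of every $z_i^{j'+l}$ that could land in $\langle z_0^{j'},z_1^{j'}\rangle$, whereas you extract $m=1$ directly from the single inequality $0<\minq_{j\in\Z/d}(z_0^{j+1}-z_{i_*}^j)<[dq]_{k^2}$. That squeeze works because, once Corollary~\ref{cor: if z^j has mobile points, then (k/d)dq < k^2} guarantees $N=s[dq]_{k^2}-\frac{k+\alpha}{d}k$ lies in $(-k^2,k^2)$ and the $N<0$ branch is ruled out, the resulting two-sided bound on $(s-1)(m+1)$ and $s(m+1)$ has a unique integer solution for $\lfloor k/d\rfloor>3$.

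Case~(B) has a real gap as written. There, $[dq]_{k^2}=(m-1)k+1$, and for any $m\equiv 3\ (\mod 4)$ in the admissible range (not just $m=3$) one computes $[N]_{k^2}=\frac{(3m-4)k+3}{4}$, which does satisfy $0<[N]_{k^2}<(m-1)k+1$: the quadratic term $\frac{(m+1)k^2}{4}$ is an exact multiple of $k^2$ and vanishes under reduction, so the claim that ``$[N]_{k^2}$ is an explicit quadratic in $k$ whose magnitude rules out every value of $m\ge 2$ except $m=3$'' fails for $m=7,11,\ldots$. The R-mobility condition on the one mobile point is therefore not, by itself, enough to pin down $m$ in Case~(B). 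What rescues the argument is Corollary~\ref{cor: if z^j has mobile points, then (k/d)dq < k^2} again, now applied with the full $\lfloor k/d\rfloor=\frac{k-1}{2}$: $\frac{k-1}{2}((m-1)k+1)<k^2$ forces $m\le 3$, and then R-mobility eliminates $m=2$. You invoke this corollary for Case~(A) but not for Case~(B), where it is essential. A smaller slip: the constraint $2\!\nmid\frac{k+\alpha}{d}=\frac{k+1}{2}$ from Proposition~\ref{prop: q of positive type. If mobile points exist, then l=1} already forces $k\equiv 1\ (\mod 4)$, so $n_{j_*}=\frac{k-3}{2}$ is the only possibility; the ``two possible values of $n_{j_*}$'' and the four parity subcases (in particular the alternative $[N]_{k^2}=\frac{k+1}{4}$) should not appear.
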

\begin{proof}
Since $\left\lfloor\frac{k}{d}\right\rfloor > 3$,
Proposition \ref{prop: positive type, main prop}.(iv) tells us that
$\psi > 2[dq]_{k^2}$.
Since, in addition, $q$ of positive type is genus-minimizing
and ${\mathbf{z}}^j$ has mobile points, we know that the conclusions of
Proposition \ref{prop: q of positive type. If mobile points exist, then l=1} hold,
so that $z_0^{j+1}$ is R-mobile in $\left\langle z_{i_*}^j, z_{i_*+1}^j\right]$
and $z_{n_{j-1}}^{j-1}$ is L-mobile in $\left\langle z_{n_j - (i_*+1)}^j, z_{n_j - i_*}^j \right]$,
where $i_* = \frac{n_{j_*}-1}{2}$, and $j_*\in \Z/d$ is the unique element of $\Z/d$
satisfying $\left( z_{i_*}^{j_*}, z_{i_*+1}^{j_*}\right) = 
\left( z_{n_{j_*} - (i_*+1)}^{j_*}, z_{n_{j_*} - i_*}^{j_*} \right) = (x_*,y_*)$,
where $x_*,y_* \in Q_q$ are the unique elements of $Q_q$ satisfying
$v_q(x_*,y_*) = \alpha(k-k^2)$.
In particular, the last paragraph of the proof of 
Proposition \ref{prop: q of positive type. If mobile points exist, then l=1}
tells us that $n_{j_*}$ is odd and is equal to
$\left\lfloor\frac{k}{d}\right\rfloor - 1$ (respectively $\left\lfloor\frac{k}{d}\right\rfloor$)
when $\alpha = +1$ (respectively $\alpha = -1$).
Thus, if $\left\lfloor\frac{k}{d}\right\rfloor = 4$, then
$\alpha = +1$ and $n_{j_*} = 3$, so that
$i_*:= \frac{n_{j_*}-1}{2} = 1$ and
$\left\lfloor\frac{k}{d}\right\rfloor - i_* = 3$.
On the other hand, if $\left\lfloor\frac{k}{d}\right\rfloor \geq 5$,
then $n_{j_*} \geq 5$, and so
$i_*:= \frac{n_{j_*}-1}{2} \geq \frac{5-1}{2} = 2$, and
$\left\lfloor\frac{k}{d}\right\rfloor - i_*
\geq n_{j_*} - i_* = \frac{n_{j_*}+1}{2} \geq \frac{5+1}{2} = 3$.
Thus, in all cases,
$i_* \in \left\{1, \ldots, \left\lfloor\frac{k}{d}\right\rfloor - 3\right\}$.
In particular, $i_* \notin \left\{0, \left\lfloor\frac{k}{d}\right\rfloor - 2\right\}$.
The fact that $z_0^{j+1} - z_{n_j}^j$ and $z_{n_{j-1}}^{j-1} - z_0^j$
are constant in $j\in \Z/d$ then tells us that $z_0^{j+1}$ is not R-mobile rel $z_{n_j}^j$
and $z_{n_{j-1}}^{j-1}$ is not L-mobile rel $z_0^j$.  Thus, 
there are no R-mobile points rel $z_{n_j}^j$, and there are no L-mobile points rel $z_0^j$.

Since $i_* > 0$, the interval
$\left\langle z_0^j, z_1^j\right]$ has no mobile points.
Thus, for any $j^{\prime} \in \Z/d$,
Proposition \ref{prop: positive type, main prop}.(i) tells us that
$v_q\!\left(z_0^{j^{\prime}}, z_1^{j^{\prime}}\right) = \alpha(k)$.
It is therefore sufficient to show that
\begin{equation}
\label{eq: mu m + gamma c = 2, Q_q cap blah has 2 elts}
\#\left({\tilde{Q}}_q \cap 
\left\langle z_0^{j^{\prime}}, z_1^{j^{\prime}} \right]\right)
= 2
\end{equation}
for some $j^{\prime}\in\Z/d$,
because this would imply that
\begin{align}
                       \nonumber
      v_q\!\left(z_0^{j^{\prime}}, z_1^{j^{\prime}}\right)
&= \alpha(k)
                     \\ \nonumber
      \left[ z_1^{j^{\prime}} - z_0^{j^{\prime}} \right]_{k^2}\!\!k
      \;-\;
      \#\left({\tilde{Q}}_q \cap 
     \left\langle z_0^{j^{\prime}}, z_1^{j^{\prime}} \right]\right) k^2
&= \alpha(k)
                     \\ \nonumber
     [dq]_{k^2}k - 2k^2
&= \alpha(k)
                     \\
\label{eq: mu m + gamma c = 2, 2 elts of Q_q implies dq = 2k + alpha}
      [dq]_{k^2} 
&= 2k + \alpha.
\end{align}
We therefore devote the remainder of the proof to showing that
(\ref{eq: mu m + gamma c = 2, Q_q cap blah has 2 elts}) is true.

First, observe that
Corollary \ref{cor: if z^j has mobile points, then (k/d)dq < k^2},
which says that
$\left\lfloor \frac{k}{d}\right\rfloor \! [dq]_{k^2} < k^2$,
implies that
\begin{equation}
\label{eq: mu m + gamma c = 2,  z_i^j notin <z_0, z_1> for all i}
z_i^{j^{\prime}} \notin
\left\langle z_0^{j^{\prime}}, z_1^{j^{\prime}} \right\rangle
\text{for all}\; j^{\prime} \in \Z/d,\;
i \in \left\{0, \ldots, n_{j^{\prime}}\right\}.
\end{equation}

We next claim that
\begin{equation}
\label{eq: mu m + gamma c = 2, z_0^j+l notin for any l or j}
z_0^{j^{\prime}+l} \notin
\left\langle z_0^{j^{\prime}}, z_1^{j^{\prime}} \right]
\text{for all}\; l, j^{\prime} \in \Z/d.
\end{equation}
First, since $z_0^{j+1}$ is R-mobile in
$\left\langle z_{i_*}^j, z_{i_*+1}^j\right]$ and $i_* > 0$,
Corollary \ref{cor: if z^j has mobile points, then (k/d)dq < k^2} implies that
$z_0^{j^{\prime}+1} \notin
\left\langle z_0^{j^{\prime}}, z_1^{j^{\prime}} \right]$ for all
$j^{\prime} \in \Z/d$.  This, in turn, implies that
$z_0^{j^{\prime}-1} \notin
\left\langle z_0^{j^{\prime}}, z_1^{j^{\prime}} \right]$ for all
$j^{\prime} \in \Z/d$.  Combining these two facts with
(\ref{eq: mu m + gamma c = 2,  z_i^j notin <z_0, z_1> for all i}),
we then have that
\begin{equation}
z_0^{j^{\prime}+l} \notin
\left\langle z_0^{j^{\prime}}, z_1^{j^{\prime}} \right]
\text{for all}\; l\in\{-1,0,1\},\; j^{\prime} \in \Z/d.
\end{equation}
In addition, we know that
\begin{equation}
\minq_{j\in\Z/d}\left(z_0^{j+l} - z_0^j\right) \notin \left\langle 0, dq\right\rangle\;
\text{for all}\; l \in \Z/d \setminus \{-1,0,1\},\; j^{\prime} \in \Z/d,
\end{equation}
since $z_0^{j+1}$ is the only R-mobile point in ${\mathbf{z}}^j$.
Suppose there exists $l\in \Z/d$, $l \notin \{-1,0,1\}$, for which 
$\minq_{j\in\Z/d}\left(z_0^{j+l}-z_0^j\right) \in \left\langle -dq, 0\right\rangle$.
Since $z_0^{j+1}$ is R-mobile in
$\left\langle z_{i_*}^j, z_{i_*+1}^j\right]$, we also know that
$\minq_{j\in\Z/d}\left(z^{j+1}_0 - z^j_0\right)
\in \left\langle i_* dq, (i_* + 1)dq\right\rangle$.  Thus, for some $x \in \{0, dq\}$,
we have
\begin{align}
\label{eq: mu m + gamma c = 2, z^{j+l+1}_0 is a new R-mobile point}
      \minq_{j\in\Z/d}\left(z^{j+l+1}_0 - z^j_0\right)
&= \minq_{j\in\Z/d}\left(\left(z^{j+l+1}_0 - z^{j+1}_0\right) 
       + \left(z^{j+1}_0 - z^j_0\right)\right)
                   \\ \nonumber
&= \minq_{j\in\Z/d}\left(z^{j+l}_0 - z^j_0\right)
      + \minq_{j\in\Z/d}\left(z^{j+1}_0 - z^j_0\right) + x
                   \\ \nonumber
&\in \left\langle -dq + i_* dq + x,\; 0 + (i_*+1)dq + x\right\rangle 
                   \\ \nonumber
&\subset \left\langle (i_*-1) dq,\; (i_*+2)dq \right\rangle
                   \\ \nonumber
&\subset \left\langle 0,\; \left(\textstyle{\left\lfloor\frac{k}{d}\right\rfloor}-1\right)dq \right\rangle,
\end{align}
where the last line uses the fact that $1 \leq i_* \leq \left\lfloor\frac{k}{d}\right\rfloor-3$.
But (\ref{eq: mu m + gamma c = 2, z^{j+l+1}_0 is a new R-mobile point})
is impossible, because
it implies $z_0^{j+l+1}$ is R-mobile rel $z_0^j$, contradicting the
fact that $z_0^{j+1}$ is the unique R-mobile point rel $z_0^j$.
Thus, $\maxq_{j\in\Z/d}\left(z_0^{j+l}-z_0^j\right) \notin \left\langle 0, dq\right\rangle$
for all $l \in \Z/d$, $l \notin \{-1,0,1\}$, and so
(\ref{eq: mu m + gamma c = 2, z_0^j+l notin for any l or j}) must be true.
This, in turn, implies that
\begin{equation}
\label{eq: mu m + gamma c = 2, z_{n_{j-l}}^{j-l} not in z_n_j-1, z_n_j}
z_{n_{j^{\prime}-l}}^{j^{\prime}-l} \notin
\left\langle z_{n_{j^{\prime}}-1}^{j^{\prime}}, z_{n_{j^{\prime}}}^{j^{\prime}}\right]
\;\;\text{for all}\;l, j^{\prime}\in\Z/d.
\end{equation}

We next claim, for all $j^{\prime} \in \Z/d$ and $i \in \{0, \ldots, n_{j^{\prime}}\}$, that
\begin{equation}
\label{eq: mu m + gamma c = 2, l=1 implies just one element contained}
        z_{n_{j^{\prime}-1}-i}^{j^{\prime}-1}
\in \left\langle z_0^{j^{\prime}}, z_1^{j^{\prime}} \right]\;
        \text{if and only if}\,\; i = i_* + \textstyle{\frac{\alpha-1}{2}}.
\end{equation}
When $\alpha = +1$, $z_{n_{j-1}}^{j-1}$ is active in
$\left\langle z_{n_j-(i_*+1)}^j, z_{n_j-i_*}^j\right]$ at time $j=j_*$, and so
$z_{n_{j_*-1}}^{j_*-1} \in 
\left\langle z_{n_{j_*} - (i_*+1)}^{j_*}, z_{n_{j_*} - i_*}^{j_*} \right]
= \left\langle z_{i_*}^{j_*}, z_{i_*+1}^{j_*}\right]$,
implying that
$z_{n_{j_*-1}-i_*}^{j_*-1} \in \left\langle z_0^{j_*}, z_1^{j_*}\right]$.
When $\alpha = -1$, $z_{n_{j-1}}^{j-1}$ is inactive in
$\left\langle z_{n_j-(i_*+1)}^j, z_{n_j-i_*}^j\right]$ at time $j=j_*$, and so
$z_{n_{j_*-1}}^{j_*-1} \in 
\left\langle z_{i_*-1}^{j_*}, z_{i_*}^{j_*}\right]$,
implying that
$z_{n_{j_*-1}-(i_*-1)}^{j_*-1} \in \left\langle z_0^{j_*}, z_1^{j_*}\right]$.
We can summarize these two facts by saying that
$z_{n_{j_*-1}-\left(i_*+\frac{\alpha-1}{2}\right)}^{j_*-1}
 \in \left\langle z_0^{j_*}, z_1^{j_*}\right]$, which, since
 $z_{n_{j-1}}^{j-1} - z_0^j$ is constant in $j\in\Z/d$, implies that
 $z_{n_{j^{\prime}-1}-\left(i_*+\frac{\alpha-1}{2}\right)}^{j^{\prime}-1}
 \in \left\langle z_0^{j^{\prime}}, z_1^{j^{\prime}}\right]$ for all $j^{\prime} \in \Z/d$.
The fact that $\left\lfloor\frac{k}{d}\right\rfloor [dq]_{k^2} < k^2$ then implies that
$z_{n_{j^{\prime}-1}-i}^{j^{\prime}-1}
\notin \left\langle z_0^{j^{\prime}}, z_1^{j^{\prime}}\right]$ for all $j^{\prime} \in \Z/d$
and $i \in \left\{0, \ldots, n_{j^{\prime}}\right\} \setminus 
\left\{i_* + \textstyle{\frac{\alpha-1}{2}}\right\}$.

Next, we claim that
\begin{equation}
\label{eq: mu m + gamma c = 2, z_{n_{j-l}-k/d-1}^{j-l} not in <z_0,z_1>}
        z_{n_{j^{\prime}-l}-\left(\left\lfloor\frac{k}{d}\right\rfloor-1\right)}^{j^{\prime}-l}
\notin \left\langle z_0^{j^{\prime}}, z_1^{j^{\prime}} \right]\;
\text{for all}\; l \neq 1, j^{\prime} \in \Z/d.
\end{equation}
Suppose
(\ref{eq: mu m + gamma c = 2, z_{n_{j-l}-k/d-1}^{j-l} not in <z_0,z_1>}) fails
for some $l \in \Z/d$, $l \notin \{0,1\}$.  Then
$\minq_{j\in\Z/d}\left(z_{n_{j-l}-\left(\left\lfloor\frac{k}{d}\right\rfloor-1\right)}^{j-l}-z_0^j\right)
\notin \left\langle 0,dq\right\rangle$,
since otherwise
(\ref{eq: mu m + gamma c = 2, z_0^j+l notin for any l or j}) is contradicted,
and so $\maxq_{j\in\Z/d}\left(z_{n_{j-l}-\left(\left\lfloor\frac{k}{d}\right\rfloor-1\right)}^{j-l}-z_0^j\right)
\in \left\langle 0,dq\right\rangle$, implying that
$\maxq_{j\in\Z/d}\left(z_{n_{j-l}}^{j-l}-z_{\left\lfloor\frac{k}{d}\right\rfloor-1}^j\right)
\in \left\langle 0,dq\right\rangle$.  But this, in turn, implies that
$\maxq_{j\in\Z/d}\left(z_{n_{j-l}}^{j-l}-z_{n_j}^j\right)
\in \left\langle -dq,dq\right\rangle$,
contradicting (\ref{eq: mu m + gamma c = 2, z_{n_{j-l}}^{j-l} not in z_n_j-1, z_n_j}).
Thus (\ref{eq: mu m + gamma c = 2, z_{n_{j-l}-k/d-1}^{j-l} not in <z_0,z_1>})
must be true.

Lastly, we claim that
\begin{equation}
\label{eq: mu m + gamma c = 2, z_{n_{j-l}-i} notin <z_0,z_1>}
        z_{n_{j^{\prime}-l}-i}^{j^{\prime}-l}
\notin \left\langle z_0^{j^{\prime}}, z_1^{j^{\prime}} \right]\;
\text{for all}\; l \neq 1, j^{\prime} \in \Z/d,\;
i \in \left\{0, \ldots, \textstyle{\left\lfloor\frac{k}{d}\right\rfloor}-2\right\}.
\end{equation}
First of all, we know that
\begin{equation}
\label{eq: mu m + gamma c = 2, max z_{n_{j-l}-i}-z_0^j notin}
\maxq_{j\in\Z/d}\left(z_{n_{j-l}-i}-z_0^j\right)
\notin \left\langle 0,dq \right\rangle\;
\text{for all}\; l\neq 1 \in \Z/d,\;
i \in \left\{0, \ldots, \textstyle{\left\lfloor\frac{k}{d}\right\rfloor}-2\right\},
\end{equation}
since otherwise we would have
$\maxq_{j\in\Z/d}\left(z_{n_{j-l}}-z_{i+1}^j\right)
\in \left\langle -dq,0 \right\rangle$,
making $z_{n_{j-l}}$ L-mobile in 
$\left\langle z_i^j, z_{i+1}^j\right]$.
Line (\ref{eq: mu m + gamma c = 2, max z_{n_{j-l}-i}-z_0^j notin}),
in turn, implies that
\begin{equation}
\minq_{j\in\Z/d}\left(z_{n_{j-l}-i}-z_0^j\right)
\notin \left\langle 0,dq \right\rangle\;
\text{for all}\; l\neq 1 \in \Z/d,\;
i \in \left\{0, \ldots, \textstyle{\left\lfloor\frac{k}{d}\right\rfloor}-3\right\},
\end{equation}
and  (\ref{eq: mu m + gamma c = 2, z_{n_{j-l}-k/d-1}^{j-l} not in <z_0,z_1>})
implies that
$\minq_{j\in\Z/d}\left(z_{n_{j-l}-\left(\left\lfloor\frac{k}{d}\right\rfloor-2\right)}-z_0^j\right)
\notin \left\langle 0,dq \right\rangle$
for all $l\neq 1 \in \Z/d$.  Thus
(\ref{eq: mu m + gamma c = 2, z_{n_{j-l}-i} notin <z_0,z_1>})
must be true.

Together, lines
(\ref{eq: mu m + gamma c = 2, z_0^j+l notin for any l or j}),
(\ref{eq: mu m + gamma c = 2, z_{n_{j-l}-k/d-1}^{j-l} not in <z_0,z_1>}),
(\ref{eq: mu m + gamma c = 2, z_{n_{j-l}-i} notin <z_0,z_1>}), and
(\ref{eq: mu m + gamma c = 2, l=1 implies just one element contained})
tell us that
\begin{equation}
Q_q \cap \left\langle z_0^{j^{\prime}}, z_1^{j^{\prime}}\right\rangle
=  z_{n_{j^{\prime}-1}-\left(i_*+\frac{\alpha-1}{2}\right)}^{j^{\prime}-1}\;
\text{for every}\;j^{\prime} \in \Z/d.
\end{equation}
Since $Q_q \cap \left\langle z_0^{j^{\prime}}, z_1^{j^{\prime}}\right]$
also contains $z_1^{j^{\prime}}$, this implies that
\begin{equation}
\#\left({\tilde{Q}}_q \cap \left\langle z_0^{j^{\prime}}, z_1^{j^{\prime}}\right] \right)
=2
\end{equation}
for every $j^{\prime} \in \Z/d$.
Thus (\ref{eq: mu m + gamma c = 2, 2 elts of Q_q implies dq = 2k + alpha})
tells us that $[dq]_{k^2} = 2k + \alpha$,
so that ${\mu}m + {\gamma}c = 2$.

\end{proof}


This concludes our study of the case in which
${\mathbf{z}}^j$ has mobile points,
since we have now learned everything we need to know to
classify when such $q$ are genus-minimizing.
For the remainder of this section, we therefore focus
on the case in there are no mobile points, which means that
$v_q\!\left(z_{n_{j-1}}^{j-1}, z_0^j\right)$
is nonconstant in $j\in \Z/d$,
and so there are non-neutralized pseudomobile points
and non-neutralized antipseudomobile points.

\begin{prop}
\label{prop: no mobile points, psibar < k^2/2, implies type (-1,1), m=1, c=2, d odd}
Suppose that $q$ of positive type is genus-minimizing.
If ${\mathbf{z}}^j$ has no mobile points
and $\psibar < \frac{k^2}{2}$, then
$(\mu, \gamma) = (-1,1)$, $m=1$, $c=2$, and $2\!\! \not\vert \,d$.
\end{prop}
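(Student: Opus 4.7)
The plan is to combine the structural results of this section to pin down $(\mu,\gamma,m,c,d)$ essentially uniquely. First, I rule out $(\mu,\gamma)=(1,1)$: Proposition~\ref{prop: positive type, main prop}(iii) would force $v_q(z_{n_{j-1}}^{j-1},z_0^j)\equiv \alpha k$ constant in $j$, and then Part (ii') of the same proposition would produce two non-neutralized mobile points in ${\mathbf{z}}^j$, contradicting the hypothesis that ${\mathbf{z}}^j$ has no mobile points. Since the discussion following Definition~\ref{def: parameters assigned to q} excludes $(\mu,\gamma)=(-1,-1)$, only $(\mu,\gamma)\in\{(1,-1),(-1,1)\}$ remains.

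Second, I will show that the pair $(x_*,y_*)$ with $v_q(x_*,y_*)=\alpha(k-k^2)$ guaranteed by Proposition~\ref{prop: G = 2k(k-1) and unique} must lie in the ``boundary'' interval $\left\langle z_{n_{j_*-1}}^{j_*-1},z_0^{j_*}\right]$ for a unique $j_*\in\Z/d$. Indeed, since ${\mathbf{z}}^j$ has no mobile points (hence no non-neutralized ones), Part (i') of Proposition~\ref{prop: positive type, main prop} forces $v_q$ to be constant on every standard interval of the form $\left\langle z_i^j,z_{i+1}^j\right]$ or $\left\langle z_{n_j-(i+1)}^j,z_{n_j-i}^j\right]$, leaving only the boundary interval. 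Consequently $v_q(z_{n_{j-1}}^{j-1},z_0^j)$ is nonconstant in $j$, and Parts (ii$\psi$) and (ii$\psibar$) of Propositions~\ref{prop: positive type, main prop} and \ref{prop: antipseudomobile analog of main prop} then yield exactly one non-neutralized R-pseudomobile point $z_0^{j+l}$ and one non-neutralized R$\psibar$-mobile point $z_0^{j+l'}$. Because $\psibar<k^2/2$, Proposition~\ref{prop: psibar < k^2/2 implies psibar-mobile are nn} upgrades the antipseudomobile statement to the stronger conclusion that there is \emph{exactly one} R$\psibar$-mobile point.

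Third, I will run a ``winding-number'' bookkeeping analogous to the function $M(\lambda)$ in the proof of Proposition~\ref{prop: psibar < k^2/2 implies psibar-mobile are nn}, tracking an integer lift $N(\lambda)$ of $\minq_{j\in\Z/d}(z_0^{j+\lambda}-z_0^j)$ as $\lambda$ runs from $0$ to $d$, computed via Corollary~\ref{cor: q of positive type: combo of lemma and difference eq} and Lemma~\ref{lemma: q of positive type, xi lemma}. A given $\lambda$ indexes an R-pseudomobile point exactly when $N(\lambda)$ lies in a sub-interval of $(0,k^2)$ of length $\psi$, and indexes an R$\psibar$-mobile point exactly when it lies in the complementary sub-interval of length $\psibar$. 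The uniqueness counts from Step~2 bound the net winding $|N(d)-N(0)|$ by $k^2$, and the resulting identity---parallel to (\ref{eq: M(d) - M(0)})---forces $|\mu m|\leq 1$, hence $m=1$. Under $(\mu,\gamma)=(1,-1)$ this would give $c<m=1$ by Proposition~\ref{prop: properties of parameters d, m, c, alpha, mu, gamma}(vi), contradicting $c\geq 1$; so $(\mu,\gamma)=(-1,1)$. A sharp reading of the same winding identity, combined with $\psibar<k^2/2$ and $[dq]_{k^2}<k^2/2$, then singles out $c=2$. Finally, $2\!\!\not\vert\, d$ follows automatically: with $\gamma=1$ the integrality of $(ck+\alpha\gamma)/d$ from Definition~\ref{def: parameters assigned to q} forces $d\mid 2k+\alpha$, and $d$ even would contradict the oddness of $2k+\alpha$.

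The main obstacle will be Step~3's bookkeeping: unlike in Proposition~\ref{prop: psibar < k^2/2 implies psibar-mobile are nn}, where one tracks a single flavor of R-mobile point concentrated at an end cap of ${\mathbf{z}}^j$, here I must simultaneously count R-pseudomobile points in $\left\langle z_{n_{j-1}}^{j-1},z_0^j\right]$ and R$\psibar$-mobile points in $\left\langle z_0^j,z_{n_{j-1}}^{j-1}\right]$---which together partition the region of $\Z/k^2$ ``between'' ${\mathbf{z}}^{j-1}$ and ${\mathbf{z}}^j$---and keep careful track of the boundary cases $\lambda\in\{\pm 1\}$ where $z_0^{j+\lambda}$ can coincide with an endpoint. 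Getting the combinatorial counts right and extracting the sharp value $c=2$ from the winding identity is where the bulk of the work will lie.
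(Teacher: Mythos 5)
Your Steps 1 and 2 are sound and track the paper's own opening moves: ruling out $(\mu,\gamma)=(1,1)$ via parts (ii) and (iii) of Proposition~\ref{prop: positive type, main prop}, observing that $v_q\!\left(z_{n_{j-1}}^{j-1},z_0^j\right)$ is therefore nonconstant, and invoking Propositions~\ref{prop: psibar < k^2/2 implies psibar-mobile are nn} and \ref{prop: antipseudomobile analog of main prop}.(ii$\psibar$) to get a \emph{unique} R$\psibar$-mobile point. Your observation that, for $l\neq 0$, $z_0^{j+l}$ is R-pseudomobile in $\left\langle z_{n_{j-1}}^{j-1},z_0^j\right]$ precisely when it is not R$\psibar$-mobile in $\left\langle z_0^j,z_{n_{j-1}}^{j-1}\right]$ is also correct and is a nice way to see the two notions as complementary.

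Step 3, however, is where the argument is incomplete, and it is a genuinely different route from the paper's. The winding function analogous to $M(\lambda)$ in Proposition~\ref{prop: psibar < k^2/2 implies psibar-mobile are nn} will, as you say, produce the dichotomy $M(d)-M(0)\in\{-\mu m k^2, (-\mu m + d)k^2\}$ depending on $\gamma$; so bounding the winding by $1$ gives ``$m=1$'' only in the $\gamma=+1$ branch, and gives $m\in\{d-1,d,d+1\}$ in the $\gamma=-1$ branch. Your plan then uses $m=1$ to rule out $(\mu,\gamma)=(1,-1)$ — but that sign determination is exactly the thing the winding has not yet settled, so the logic is circular as written. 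More seriously, a winding count of this type controls the multiplier of $k^2$ (hence $\mu m$) but gives no mechanism for pinning down $c$; you acknowledge this as ``the bulk of the work,'' and I do not see how to extract $c=2$ from it. What the paper does instead is quite different and considerably more direct: it first proves, via the $\chi_R+\chi_L$ activity count (Proposition~\ref{prop: antipseudomobile point is active [lepsilon] times}), that \emph{any} antipseudomobile index $l$ must satisfy $[l\epsilon]_d=\frac{d-\alpha}{2}$ — which already forces $2\nmid d$ — and then proves the key geometric claim that $z_{n_j}^j$ is L$\psibar$-mobile in $\left\langle z_0^j,z_{n_{j-1}}^{j-1}\right]$, so that $l=-1$ is such an index. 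Plugging $l=-1$ into the identity gives $\epsilon^{-1}\equiv 2\alpha\pmod d$, whence $c=[\alpha\gamma\,\epsilon^{-1}]_d=[2\gamma]_d$; the case $\gamma=-1$ is then eliminated because $c=d-2\le d/2$ would force $d=3$, $c=1$, leading to $\psibar>k^2/2$ by direct estimate; and finally $m=1$ falls out of $1\le m<c$ from Proposition~\ref{prop: properties of parameters d, m, c, alpha, mu, gamma}.(vi). Your proposal would need to discover both the $[l\epsilon]_d=\frac{d-\alpha}{2}$ identity and the ``$z_{n_j}^j$ is L$\psibar$-mobile'' lemma (or some substitute) before it could close; the winding device alone does not do it.
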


\begin{proof}
Suppose that $q$ is genus-minimizing and of positive type,
that ${\mathbf{z}}^j$ has no mobile points,
and that $\psibar < \frac{k^2}{2}$,
so that, by
Proposition \ref{prop: psibar < k^2/2 implies psibar-mobile are nn},
all antipseudomobile points are non-neutralized.

We begin by claiming that if
$z_0^{j+l}$ (hence $z_{n_{j-1-l}}^{j-1-l}$) is
antipseudomobile in
$\left\langle z_0^j, z_{n_{j-1}}^{j-1} \right]$, then
$[l\epsilon]_d = \frac{d-\alpha}{2}$.
Suppose that $z_0^{j+l}$ and $z_{n_{j-1-l}}^{j-1-l}$
are $\psibar$-mobile in $\left\langle z_0^j, z_{n_{j-1}}^{j-1} \right]$.
Then the preceding paragraph tells us that they
are non-neutralized $\psibar$-mobile, and
Proposition \ref{prop: antipseudomobile analog of main prop}.(ii$\psibar$)
tells us that they are the only non-neutralized
$\psibar$-mobile points in $\left\langle z_0^j, z_{n_{j-1}}^{j-1} \right]$.
Now, by
Propositions \ref{prop: unique v_q = alpha(k - k^2), and the rest are v_q = alpha (k)}
and \ref{prop: antipseudomobile analog of main prop}.(i$\psibar$),
we know that there is a unique $j_* \in \Z/d$ such that
$v_q\!\left( z_0^{j_*}, z_{n_{j_*-1}}^{j_*-1}\right) = -\alpha(k-k^2)$ and
$v_q\!\left( z_0^{j^{\prime}}, z_{n_{j^{\prime}-1}}^{j^{\prime}-1}\right) = -\alpha(k)$
for all $j^{\prime}\neq j_* \in \Z/k^2$.
Thus, if we define $\chi_R(j^{\prime})$ (respectively 
$\chi_L(j^{\prime})$) to be equal to 1 if $z_0^{j+l}$
(respectively $z_{n_{j-1-l}}^{j-1-l}$) is active in 
$\left\langle z_0^j, z_{n_{j-1}}^{j-1} \right]$ at time $j = j^{\prime}$,
and equal to 0 otherwise, then for any $j^{\prime} \in \Z/d$,
\begin{equation}
\label{prop l=-1 in pseudomobile: eq: how many psibars are active?}
    \chi_R(j^{\prime}) + \chi_L(j^{\prime})
= \begin{cases}
       1   &   j^{\prime} \neq j_*
              \\
       2    &   j^{\prime} = j_*, \;\alpha = -1
              \\
       0   &   j^{\prime} = j_*,\;\alpha = +1
    \end{cases}.
\end{equation}
This is because a $\psibar$-mobile point in
$\left\langle z_0^j, z_{n_{j-1}}^{j-1} \right]$ contributes
$-k^2$ to $v_q\!\left( z_0^{j^{\prime}}, z_{n_{j^{\prime}-1}}^{j^{\prime}-1}\right)$
if it is active at time $j=j^{\prime}$ and contributes zero otherwise.
Line (\ref{prop l=-1 in pseudomobile: eq: how many psibars are active?})
then implies that
\begin{equation}
\label{prop l=-1 in pseudomobile: eq: total actives is d-alpha}
\sum_{j\in \Z/d}  \!\chi_R(j) \;+\; \sum_{j\in \Z/d}  \!\chi_L(j) = d-\alpha.
\end{equation}
Proposition \ref{prop: antipseudomobile point is active [lepsilon] times}
then tells us that
\begin{equation}
\label{prop l=-1 in pseudomobile: eq: each is active le times}
\sum_{j\in \Z/d}  \!\chi_R(j) =  \sum_{j\in \Z/d}  \!\chi_L(j) = [l\epsilon]_d.
\end{equation}
The combination of 
(\ref{prop l=-1 in pseudomobile: eq: total actives is d-alpha}) and
(\ref{prop l=-1 in pseudomobile: eq: each is active le times}) then yields
\begin{equation}
\label{prop l=-1 in pseudomobile: eq: le = (d-alpha)/2}
[l\epsilon]_d = \textstyle{\frac{d-\alpha}{2}}.
\end{equation}
Note that this implies $2 \!\! \not\vert \, d$.

We next claim that $z_{n_j}^j$ is L-antipseudomobile in
$\left\langle z_0^j, z_{n_{j-1}}^{j-1} \right]$.
Suppose not, so that
\begin{equation}
\label{eq: (k/d-1)dq < psibar < k/d dq}
\left(\textstyle{\left\lfloor\frac{k}{d}\right\rfloor}-1\right)[dq]_{k^2}
\;<\; \psibar \;<\; \textstyle{\left\lfloor\frac{k}{d}\right\rfloor}  [dq]_{k^2}.
\end{equation}
That is, if $\psibar > \left\lfloor\frac{k}{d}\right\rfloor  [dq]_{k^2}$,
then $z_{n_{j^{\prime}}}^{j^{\prime}}
\in \left\langle z_0^{j^{\prime}}, z_{n_{j^{\prime}-1}}^{j^{\prime}-1} \right]$
for all $j^{\prime} \in\Z/d$, making
$z_{n_j}^j$ L$\psibar$-mobile in
$\left\langle z_0^j, z_{n_{j-1}}^{j-1} \right]$.
On the other hand, if
$\left(\left\lfloor\frac{k}{d}\right\rfloor-1\right)[dq]_{k^2} > \psibar$, then
$z_{n_{j^{\prime}-1}}^{j^{\prime}-1} \in
\left\langle z_0^{j^{\prime}},
z_{\left\lfloor\!\frac{k}{d}\!\right\rfloor-1}^{j^{\prime}} \right]
\subset
\left\langle z_0^{j^{\prime}},
z_{n_{j^{\prime}}}^{j^{\prime}} \right]$ for all $j^{\prime} \in \Z/d$,
making $z_{n_{j-1}}^{j-1}$ L-mobile in ${\mathbf{z}}^j$, a contradiction,
so (\ref{eq: (k/d-1)dq < psibar < k/d dq}) must hold.
By Proposition \ref{prop: antipseudomobile analog of main prop}.(ii$\psibar$),
we know there exists,
for some $l \neq 0 \in \Z/d$, an L$\psibar$-mobile point 
$z_{n_{j-1-l}}^{j-1-l}$ in
$\left\langle z_0^j, z_{n_{j-1}}^{j-1} \right]$.
Now,  (\ref{eq: (k/d-1)dq < psibar < k/d dq}) implies that
$z_{n_{j^{\prime}}}^{j^{\prime}}
\in \left\langle z_{n_{j^{\prime}-1}}^{j^{\prime}-1} - dq,
z_{n_{j^{\prime}-1}}^{j^{\prime}-1} + dq \right\rangle$
for all $j^{\prime} \in \Z/d$.  Thus, if
$\maxq_{j\in\Z/d}\left(z_{n_{j-1-l}}^{j-1-l} - z_{n_{j-1}}^{j-1}\right)
\in \left\langle -\psibar, -dq\right\rangle$, then
$z_{n_{j^{\prime}-1-l}}^{j^{\prime}-1-l} \in
\left\langle z_0^{j^{\prime}}, z_{n_{j^{\prime}}}^{j^{\prime}} \right]$
for all $j^{\prime} \in \Z/d$, implying
that $z_{n_{j-1-l}}^{j-1-l}$ is L-mobile rel $z_{n_j}^j$, a contradiction.
This means that instead, we must have
$\maxq_{j\in\Z/d}\left(z_{n_{j-1-l}}^{j-l-l} - z_{n_{j-1}}^{j-1}\right)
\in \left\langle -dq, 0\right\rangle$.
The mirror relation, (\ref{eq: psibar mirror relation}), for
$\psibar$-mobile points
then tells us that the mirror $\psibar$-mobile point,
$z_0^{j+l}$ R$\psibar$-mobile in $\left\langle z_0^j, z_{n_{j-1}}^{j-1} \right]$,
satisfies 
\begin{align}
      \minq_{j\in\Z/d}\left(z_0^{j+l}-z_0^j\right)
&= -\maxq_{j\in\Z/d}\left(z_{n_{j-1-l}}^{j-1-l} - z_{n_{j-1}}^{j-1}\right)
                 \\ \nonumber
&\in \left\langle 0, dq\right\rangle,
\end{align}
so that $z_0^{j+l}$ is R-mobile in
$\left\langle z_0^j, z_1^j \right]$,
a contradiction.
Thus, $z_{n_j}^j$ is L$\psibar$-mobile in
$\left\langle z_0^j, z_{n_{j-1}}^{j-1} \right]$.
Note that this implies
$\left\lfloor\frac{k}{d}\right\rfloor \![dq]_{k^2} < \psibar$.
Since $\psibar < \frac{k^2}{2}$, we then have
$\left\lfloor\frac{k}{d}\right\rfloor \![dq]_{k^2} < \frac{k^2}{2}$,
implying
\begin{equation}
\label{prop l=-1 in pseudomobile: eq: dq < dk}
[dq]_{k^2} < dk.
\end{equation}

Now, since  $z_{n_j}^j$ and its mirror $z_0^{j-1}$ are $\psibar$-mobile in
$\left\langle z_0^j, z_{n_{j-1}}^{j-1} \right]$,
(\ref{prop l=-1 in pseudomobile: eq: le = (d-alpha)/2})
tells us that
\begin{equation}
(-1)\epsilon \equiv \textstyle{\frac{d-\alpha}{2}}\; (\mod d).
\end{equation}
Thus, since
$c = \left[\alpha\gamma {\epsilon}^{-1}\right]_d$, we have
\begin{align}
      c 
&= \left[\alpha\gamma (-1)\left(\textstyle{\frac{d-\alpha}{2}}\right)^{-1}\right]_d
                \\ \nonumber
&= \left[\alpha\gamma(-1)(-2\alpha)\right]_d
                \\ \nonumber
&= [2\gamma]_d
                \\ \nonumber
&= \begin{cases}
             2   & \gamma = +1
                       \\
             d-2   & \gamma = -1
       \end{cases}.
\end{align}
By Proposition \ref{prop: properties of parameters d, m, c, alpha, mu, gamma},
we know that $c \leq \frac{d}{2}$, with equality if and only if $d=2$ (which
does not occur here, since $d$ must be odd).  Thus if $\gamma = -1$,
then $d-2 < \frac{d}{2}$ implies $d=3$ and $c=1$.
Since $\gamma = -1$ implies $(\mu, \gamma) = (1,-1)$, we then have
\begin{align}
\label{prop l=-1 in pseudomobile: eq: psibar > k^2/2}
      \psibar
&= \left[-dq - \textstyle{\frac{ck-\alpha}{d}}k\right]_{k^2}
            \\ \nonumber
&= k^2 -\left([dq]_{k^2} + \textstyle{\frac{ck-\alpha}{d}}k\right)
            \\ \nonumber
&> k^2 -\left(dk + \textstyle{\frac{ck-\alpha}{d}}k\right)
            \\ \nonumber
&= k^2 -\left(3k + \textstyle{\frac{k-\alpha}{3}}k\right)
            \\ \nonumber
&> \textstyle{\frac{k^2}{2}},
\end{align}
where the second line uses the facts that
$[dq]_{k^2} < \frac{k^2}{2}$ and $\frac{ck-\alpha}{d} < \frac{k}{2}$,
the third line uses
(\ref{prop l=-1 in pseudomobile: eq: dq < dk}),
and the last line uses the fact that $k > 100$.
Since (\ref{prop l=-1 in pseudomobile: eq: psibar > k^2/2})
contradicts our supposition that $\psibar < \frac{k^2}{2}$,
we conclude that $\gamma \neq -1$.  Thus $\gamma = 1$ and $c=2$.
Moreover, since there are no mobile points, Proposition \ref{prop: positive type, main prop}
tells us that $(\mu, \gamma) \neq (1,1)$.
Thus $(\mu, \gamma) = (-1, 1)$, and so $c > m > 0$
implies that $m=1$.

\end{proof}

\begin{prop}
\label{prop: no mobile points, psi < k^2/2, implies type (1,-1), m=2, c=1, d odd}
Suppose that $q$ of positive type is genus-minimizing.
If $\psi < \frac{k^2}{2}$ and $\psi > 2[dq]_{k^2}$, then
${\mathbf{z}}^j$ has no mobile points,
all pseudomobile points are non-neutralized,
$(\mu, \gamma) = (1,-1)$, $m=2$, $c=1$, and $2\!\! \not\vert \,d$.
\end{prop}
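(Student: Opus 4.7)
The plan is to mirror the proof of Proposition~\ref{prop: no mobile points, psibar < k^2/2, implies type (-1,1), m=1, c=2, d odd}, swapping the roles of $\psibar$ and antipseudomobile points with $\psi$ and pseudomobile points throughout.

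First I would dispose of mobile points: Proposition~\ref{prop: q of positive type. If mobile points exist, then l=1} states that any mobile point in ${\mathbf{z}}^j$, under the hypothesis $\psi > 2[dq]_{k^2}$, forces $\psibar < \frac{k^2}{2}$. Since our hypothesis $\psi < \frac{k^2}{2}$ gives $\psibar = k^2-\psi > \frac{k^2}{2}$, no such mobile point can exist.

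Next I would establish that all pseudomobile points are non-neutralized, by imitating the proof of Proposition~\ref{prop: psibar < k^2/2 implies psibar-mobile are nn}. For an R-pseudomobile $z_0^{j+l}$ in $\left\langle z_{n_{j-1}}^{j-1}, z_0^j\right]$, set $M:=\minq_{j\in\Z/d}\!\left(z_0^{j+l}-z_{n_{j-1}}^{j-1}\right) \in \left\langle 0, \psi\right\rangle$.  If $M \in \left\langle 0, \psi-dq\right\rangle$, then a direct calculation using $\psi > 2[dq]_{k^2}$ places $\maxq_{j\in\Z/d}\!\left(z_{n_{j+l-1}}^{j+l-1}-z_0^j\right)$ in $\left\langle dq-2\psi, -\psi\right\rangle$, which is disjoint from the L-pseudomobile range $\left\langle -\psi, 0\right\rangle$, yielding non-neutralization. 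If instead $M \in \left\langle \psi-dq, \psi\right\rangle$, a short shift of $j$ forces an ordinary R-mobile point in some interval $\left\langle z_i^{j'}, z_{i+1}^{j'}\right]$, which the previous step forbids.

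With these structural facts in hand, the counting argument of Proposition~\ref{prop: no mobile points, psibar < k^2/2, implies type (-1,1), m=1, c=2, d odd} transfers directly.  Since ${\mathbf z}^j$ has no mobile points, $v_q\!\left(z_{n_{j-1}}^{j-1}, z_0^j\right)$ must be nonconstant in $j$, for otherwise Proposition~\ref{prop: positive type, main prop}.(ii') would produce mobile points at the unique $j_*$ realizing $\alpha(k-k^2)$. By Proposition~\ref{prop: positive type, main prop}.(ii$\psi$) there is then precisely one non-neutralized R-pseudomobile $z_0^{j+l}$ paired with its L-pseudomobile mirror $z_{n_{j-1-l}}^{j-1-l}$, each active $[l\epsilon]_d$ times by Proposition~\ref{prop: pseudomobile point is active [lepsilon] times}. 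Since an active pseudomobile contributes $-k^2$ to $v_q\!\left(z_{n_{j-1}}^{j-1}, z_0^j\right)$ while $v_q$ takes the anomalous value $\alpha(k-k^2)$ at a single $j_*$ and $\alpha k$ at every other $j$, summation gives
\[
 2\,[l\epsilon]_d \;=\; d+\alpha,
\]
so $[l\epsilon]_d = \frac{d+\alpha}{2}$ and hence $2\!\!\not\vert\, d$.

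The main obstacle, and the bulk of the work, is pinning down the triple $(\mu,\gamma,m,c)=(1,-1,2,1)$. Mirroring Proposition~\ref{prop: no mobile points, psibar < k^2/2, implies type (-1,1), m=1, c=2, d odd}'s argument that $z_{n_j}^j$ is forced to be L-antipseudomobile, I would sandwich $\psi$ between $\left\lfloor k/d\right\rfloor[dq]_{k^2}$ (using Corollary~\ref{cor: if z^j has mobile points, then (k/d)dq < k^2}) and $\frac{k^2}{2}$ to isolate which $z_0^{j+l}$ can play the forced pseudomobile role. Substituting this $l$ into $c=[\alpha\gamma\epsilon^{-1}]_d$ produces a relation of the form $c \equiv 2l\gamma \pmod d$; combined with $1 \le c \le d/2$ from Proposition~\ref{prop: properties of parameters d, m, c, alpha, mu, gamma}.(iv) and a case split on $\gamma$, this forces $\gamma=-1$ and $c=1$, while the alternative $\gamma=+1$ must be ruled out by a size estimate using $k>100$, dual to equation~(3.106) in the proof of Proposition~\ref{prop: no mobile points, psibar < k^2/2, implies type (-1,1), m=1, c=2, d odd}.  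Then $\gamma=-1$ together with~(\ref{eq: parameter definitions, mu}) gives $\mu=+1$, and $(\mu,\gamma)=(1,1)$ is excluded by $\gamma=-1$, leaving $(\mu,\gamma)=(1,-1)$.  Finally, combining $\psi>2[dq]_{k^2}$ and $\psi<\frac{k^2}{2}$ with $[dq]_{k^2}=(\mu m+\gamma c)k+\alpha=(m-1)k+\alpha$ pins down $[dq]_{k^2}=k+\alpha$, so $m=2$. I expect the trickiest part to be locating the forced value of $l$ and eliminating the spurious $\gamma=+1$ branch, both of which require careful numerical estimates dual to those in the proof of Proposition~\ref{prop: no mobile points, psibar < k^2/2, implies type (-1,1), m=1, c=2, d odd}.
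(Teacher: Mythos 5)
Your first step — no mobile points, via Proposition~\ref{prop: q of positive type. If mobile points exist, then l=1} and $\psi<\frac{k^2}{2}\Rightarrow\psibar>\frac{k^2}{2}$ — is exactly what the paper does. Your counting argument giving $[l\epsilon]_d=\frac{d+\alpha}{2}$ (hence $2\nmid d$) is also correct. But there are three genuine gaps in the remainder, and they are not minor bookkeeping: they mark places where the proof does \emph{not} simply dualize the $\psibar$-argument.

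First, the attempt to prove ``all pseudomobile points are non-neutralized'' early, by imitating Proposition~\ref{prop: psibar < k^2/2 implies psibar-mobile are nn}, fails on the second branch. In the $\psibar$ case, $\minq(z_0^{j+l}-z_0^j)\in\left\langle\psibar-dq,\psibar\right\rangle$ forces $z_0^{j+l}$ to be R-mobile in $\left\langle z_{n_{j-1}-1}^{j-1}, z_{n_{j-1}}^{j-1}\right]$, a genuine interval because $z_{n_{j-1}}^{j-1}$ is the \emph{last} entry of $\mathbf z^{j-1}$. In your setting, $M=\minq(z_0^{j+l}-z_{n_{j-1}}^{j-1})\in\left\langle\psi-dq,\psi\right\rangle$ puts $z_0^{j+l}$ near $z_0^j$ from the left; $z_0^j-dq$ is not an element of $Q_q$, so there is no interval $\left\langle z_i^{j'},z_{i+1}^{j'}\right]$ to host the alleged mobile point. (One can try to exchange the roles and look for $z_0^{j'}$ mobile in $\left\langle z_0^{j'+l},z_1^{j'+l}\right]$, but the $\minq$/$\maxq$ dichotomy does not automatically land on the $\minq$ side.) The paper in fact cannot and does not establish full non-neutralizedness of pseudomobile points at this stage; it is deduced only near the \emph{end} of the proof, after $c=1$ is known, via the auxiliary fact $z_0^{j'+l_0}\notin\left\langle z_0^{j'},z_1^{j'}\right\rangle$ for all $l_0,j'$.

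Second, your plan to identify the forced $l$ by ``mirroring Proposition~\ref{prop: no mobile points, psibar < k^2/2, implies type (-1,1), m=1, c=2, d odd}'s argument that $z_{n_j}^j$ is L-antipseudomobile'' misses that the two propositions locate $l$ in completely different ways. In the $\psibar$ case the forced antipseudomobile point is $z_{n_j}^j$ itself, i.e.\ $l=-1$ is forced directly. In the present case the paper instead shows that the entire tuple $\mathbf z^{j+l}$ sits inside $\left\langle z_{n_{j-1}}^{j-1},z_0^j\right]$, from which $z_{n_{j+l}}^{j+l}$ is non-neutralized L$\psi$-mobile; then Proposition~\ref{prop: positive type, main prop}.(ii$\psi$) forces $z_{n_{j+l}}^{j+l}=z_{n_{j-1-l}}^{j-1-l}$, yielding $l=\frac{d-1}{2}$. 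This is the keystone of the proof and is not a mirror of any step in Proposition~\ref{prop: no mobile points, psibar < k^2/2, implies type (-1,1), m=1, c=2, d odd}. Without $l=\frac{d-1}{2}$, the relation $c\equiv 2l\gamma\ (\mathrm{mod}\ d)$ alone does not pin down $c$.

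Third, the claim that $\psi>2[dq]_{k^2}$ and $\psi<\frac{k^2}{2}$ combined with $[dq]_{k^2}=(m-1)k+\alpha$ force $m=2$ is false: those inequalities only give $[dq]_{k^2}<\frac{k^2}{4}$, so roughly $m<\frac{k}{4}$. What actually produces $m=2$ is a further counting argument showing $Q_q\cap\left\langle z_0^{j'},z_1^{j'}\right\rangle=\emptyset$ for all $j'$, so that $\#\!\left(\tilde Q_q\cap\left\langle z_0^{j'},z_1^{j'}\right]\right)=1$, which together with $v_q(z_0^{j'},z_1^{j'})=\alpha k$ forces $[dq]_{k^2}k-k^2=\alpha k$, i.e.\ $[dq]_{k^2}=k+\alpha$ and hence $m=2$.
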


\begin{proof}
Suppose that $q$ is genus-minimizing and of positive type,
and that $2[dq]_{k^2}< \psi < \frac{k^2}{2}$.
Proposition \ref{prop: q of positive type. If mobile points exist, then l=1}
tells us that if $\psi > 2[dq]_{k^2}$ and mobile points exist, then
$\psibar < \frac{k^2}{2}$.  Thus, the fact that $\psi < \frac{k^2}{2}$
implies that there are no mobile points.
Proposition \ref{prop: positive type, main prop} then tells us that
$v_q\!\left(z_{n_{j-1}}^{j-1}, z_0^j\right)$ is nonconstant in $j\in\Z/d$,
and that $\left\langle z_{n_{j-1}}^{j-1}, z_0^j \right]$ has precisely one
non-neutralized R$\psi$-mobile point and one
non-neutralized L$\psi$-mobile point, namely,
$z_0^{j+l}$ and $z_{n_{j-1-l}}^{j-1-l}$ for some $l\neq 0\in \Z/d$.

We begin by showing that
\begin{equation}
\label{prop: type (1,-1), c=1, eq: all of z^{j+l} fits in < z_0^j, z_n>}
z_0^{j^{\prime}+l}, \ldots,  
z_{n_{j^{\prime}+l}}^{j^{\prime}+l}
\in\left\langle z_{n_{j^{\prime}-1}}^{j^{\prime}-1}, z_0^{j^{\prime}} \right]\;
\text{for all}\; j^{\prime} \in \Z/d.
\end{equation}
First, note that if $\minq_{j\in\Z/d}\left(z_0^{j+l}-z_{n_{j-1}}^{j-1}\right)
\in \left\langle \psi-dq, \psi \right\rangle$, then
\begin{align}
      \maxq_{j\in\Z/d}\left(z_{n_{j+l-1}}^{j+l-1}-z_0^j\right)
&= \minq_{j\in\Z/d}\left(\left(z_0^{j+l}-\psi\right)-\left(z_{n_{j-1}}^{j-1}+\psi\right)\right) + dq
             \\ \nonumber
&= \minq_{j\in\Z/d}\left(z_0^{j+l} - z_{n_{j-1}}^{j-1}\right) + dq - 2\psi
             \\ \nonumber
&\in \left\langle -\psi, dq-\psi \right\rangle,
\end{align}
so that $z_0^{j+l}$ is neutralized by
$z_{n_{j+l-1}}^{j+l-1}$ L$\psi$-mobile in $\left\langle z_{n_{j-1}}^{j-1}, z_0^j \right]$,
a contradiction.  Thus
$\minq_{j\in\Z/d}\left(z_0^{j+l}-z_{n_{j-1}}^{j-1}\right)
\in \left\langle 0, \psi-dq \right\rangle$, which implies that
$z_0^{j^{\prime}+l} \in
\left\langle z_{n_{j^{\prime}-1}}^{j^{\prime}-1}, z_0^{j^{\prime}} \right]$
for all $j^{\prime} \in \Z/d$.
In addition, observe that if 
$\minq_{j\in\Z/d}\left(z_0^j - z_0^{j+l}\right)
\in \left\langle 0, \left(\left\lfloor\frac{k}{d}\right\rfloor -1 \right)dq \right\rangle$,
then $z_0^j$ is R-mobile in $\left\langle z_i^{j+l}, z_{i+1}^{j+l} \right]$,
for some $i \in \left\{0, \ldots, \left\lfloor\frac{k}{d}\right\rfloor-2\right\}$,
a contradiction.
Thus, for all $j^{\prime} \in \Z/d$, we have
$\left\langle z_0^{j^{\prime}+l} , 
z_0^{j^{\prime}+l} + \left(\left\lfloor\frac{k}{d}\right\rfloor -1 \right)dq \right\rangle
\subset
\left\langle z_{n_{j^{\prime}-1}}^{j^{\prime}-1}, z_0^{j^{\prime}} \right]$,
implying
$z_0^{j^{\prime}+l}, \ldots,  
z_{\left\lfloor\!\frac{k}{d}\!\right\rfloor -1}^{j^{\prime}+l}
\in\left\langle z_{n_{j^{\prime}-1}}^{j^{\prime}-1}, z_0^{j^{\prime}} \right]$.
Now, the fact that $z_{\left\lfloor\!\frac{k}{d}\!\right\rfloor -1}^{j^{\prime}+l}
\in\left\langle z_{n_{j^{\prime}-1}}^{j^{\prime}-1}, z_0^{j^{\prime}} \right]$
for all $j^{\prime} \in \Z/d$ and that $\psi,\, [dq]_{k^2} < \frac{k^2}{2}$
implies that $z_{n_{j^{\prime}+l}}^{j^{\prime}+l} \neq z_{n_{j^{\prime}-1}}^{j^{\prime}-1}$,
and so $l\neq -1$.  It is therefore safe to make our final observation that if
$\minq_{j\in\Z/d}\left(z_0^j - z_{n_{j+l}-1}^{j+l}\right)
\in \left\langle 0, dq \right\rangle$, then
$z_0^j$ is R-mobile in $\left\langle z_{n_{j+l}-1}^{j+l}, z_{n_{j+l}}^{j+l}\right]$,
a contradiction.  Thus 
$z_{n_{j^{\prime}+l}}^{j^{\prime}+l}
\in\left\langle z_{n_{j^{\prime}-1}}^{j^{\prime}-1}, z_0^{j^{\prime}} \right]$
for all $j^{\prime} \in \Z/d$, and so
(\ref{prop: type (1,-1), c=1, eq: all of z^{j+l} fits in < z_0^j, z_n>}) holds.

One implication of (\ref{prop: type (1,-1), c=1, eq: all of z^{j+l} fits in < z_0^j, z_n>})
is that $z_{n_{j+l}}^{j+l}$ is L$\psi$-mobile in
$\left\langle z_{n_{j-1}}^{j-1}, z_0^j \right]$.
We claim that in fact, $z_{n_{j+l}}^{j+l}$ is non-neutralized
L$\psi$-mobile in $\left\langle z_{n_{j-1}}^{j-1}, z_0^j \right]$.
That is, since $\psi < \frac{k^2}{2}$,
(\ref{prop: type (1,-1), c=1, eq: all of z^{j+l} fits in < z_0^j, z_n>})
implies
\begin{equation}
\label{prop: type (1,-1), c=1, k/d dq < psi}
\textstyle{\left\lfloor\frac{k}{d}\right\rfloor} [dq]_{k^2} <
\maxq_{j\in\Z/d}\left(z_{n_{j+l}}^{j+l}-z_{n_{j-1}}^{j-1}\right) < \psi,
\end{equation}
and so, recalling that $l\neq -1$, we have
\begin{align}
      \minq_{j\in\Z/d}\left(z_0^{j+l+1} - z_{n_{j-1}}^{j-1}\right)
&= \maxq_{j\in\Z/d}\left(\left(z_{n_{j+l}}^{j+l} + \psi\right) - z_{n_{j-1}}^{j-1}\right) - dq
                     \\ \nonumber
&= \maxq_{j\in\Z/d}\left(z_{n_{j+l}}^{j+l} - z_{n_{j-1}}^{j-1} \right) - dq + \psi
                     \\ \nonumber
&= \left\langle \left(\textstyle{\left\lfloor\frac{k}{d}\right\rfloor} - 1\right) dq  + \psi, 2\psi  - dq \right\rangle,
\end{align}
which has no intersection with $\left\langle 0, \psi \right\rangle$.
Thus $z_0^{j+l+1}$ is not R$\psi$-mobile in $\left\langle z_{n_{j-1}}^{j-1}, z_0^j \right]$,
and so $z_{n_{j+l}}^{j+l}$ is non-neutralized
L$\psi$-mobile in $\left\langle z_{n_{j-1}}^{j-1}, z_0^j \right]$.

Since $z_0^{j+l}$ and $z_{n_{j+l}}^{j+l}$ are non-neutralized
$\psi$-mobile in $\left\langle z_{n_{j-1}}^{j-1}, z_0^j \right]$,
Proposition \ref{prop: positive type, main prop}.(ii$\psi$) tells us that
$z_0^{j+l}$ and $z_{n_{j+l}}^{j+l}$ must be mirror $\psi$-mobile
in $\left\langle z_{n_{j-1}}^{j-1}, z_0^j \right]$, or in other words,
$z_{n_{j+l}}^{j+l} = z_{n_{j-1-l}}^{j-1-l}$.
Thus $j+l = j-1-l \in \Z/d$, and so
\begin{equation}
\label{prop: type (1,-1), c=1, eq: l = (d-1)/2}
l = \textstyle{\frac{d-1}{2}}\;\in\Z/d.
\end{equation}
Note that this requires that $2 \!\! \not\vert \, d$.

On the other hand, we can also show that
\begin{equation}
\label{prop: type (1,-1), c=1, eq: le = (d+alpha)/2}
[l\epsilon]_d = \textstyle{\frac{d+\alpha}{2}}.
\end{equation}
By Propositions \ref{prop: unique v_q = alpha(k - k^2), and the rest are v_q = alpha (k)}
and \ref{prop: positive type, main prop}.(i$\psi$),
we know that there is a unique $j_* \in \Z/d$ such that
$v_q\!\left(z_{n_{j_*-1}}^{j_*-1},  z_0^{j_*} \right) = \alpha(k-k^2)$ and
$v_q\!\left( z_{n_{j^{\prime}-1}}^{j^{\prime}-1}, z_0^{j^{\prime}} \right) = \alpha(k)$
for all $j^{\prime}\neq j_* \in \Z/k^2$.
Thus, if we define $\chi_R(j^{\prime})$ (respectively 
$\chi_L(j^{\prime})$) to be equal to 1 if $z_0^{j+l}$
(respectively $z_{n_{j-1-l}}^{j-1-l}$) is active in 
$\left\langle z_{n_{j-1}}^{j-1},  z_0^j \right]$ at time $j = j^{\prime}$,
and equal to 0 otherwise, then for any $j^{\prime} \in \Z/d$,
\begin{equation}
\label{prop: type (1,-1), c=1, eq: how many psis are active?}
    \chi_R(j^{\prime}) + \chi_L(j^{\prime})
= \begin{cases}
       1   &   j^{\prime} \neq j_*
              \\
       2    &   j^{\prime} = j_*, \;\alpha = +1
              \\
       0   &   j^{\prime} = j_*,\;\alpha = -1
    \end{cases}.
\end{equation}
This is because a $\psi$-mobile point in
$\left\langle z_{n_{j-1}}^{j-1}, z_0^j \right]$ contributes
$-k^2$ to $v_q\!\left(  z_{n_{j^{\prime}-1}}^{j^{\prime}-1}, z_0^{j^{\prime}}\right)$
if it is active at time $j=j^{\prime}$ and contributes zero otherwise.
Line (\ref{prop: type (1,-1), c=1, eq: how many psis are active?})
then implies that
\begin{equation}
\label{prop: type (1,-1), c=1, eq: total actives is d+alpha}
\sum_{j\in \Z/d}  \!\chi_R(j) \;+\; \sum_{j\in \Z/d}  \!\chi_L(j) = d+\alpha,
\end{equation}
and Proposition \ref{prop: pseudomobile point is active [lepsilon] times}
tells us that
\begin{equation}
\label{prop: type (1,-1), c=1, eq: each is active le times}
\sum_{j\in \Z/d}  \!\chi_R(j) =  \sum_{j\in \Z/d}  \!\chi_L(j) = [l\epsilon]_d.
\end{equation}
Lines
(\ref{prop: type (1,-1), c=1, eq: total actives is d+alpha}) and
(\ref{prop: type (1,-1), c=1, eq: each is active le times}) then tell us that
(\ref{prop: type (1,-1), c=1, eq: le = (d+alpha)/2}) holds, and so
lines 
(\ref{prop: type (1,-1), c=1, eq: l = (d-1)/2}) and
(\ref{prop: type (1,-1), c=1, eq: le = (d+alpha)/2})
tell us that 
\begin{equation}
\epsilon = [-\alpha]_d.
\end{equation}
This, in turn, allows us to compute $c$.
\begin{align}
      c
&= \left[\alpha \gamma \, {\epsilon}^{-1}\right]_d
               \\ \nonumber
&= \left[-\gamma\right]_d
               \\ \nonumber
&= \begin{cases}
           1     & \gamma = -1
                          \\
          d-1  & \gamma = +1
      \end{cases}.
\end{align}
Now, Proposition \ref{prop: properties of parameters d, m, c, alpha, mu, gamma}
tells us that $c \leq \frac{d}{2}$.  Thus, if $\gamma = +1$, then
$d-1 \leq \frac{d}{2}$, implying $d \leq 2$, but the fact that 
$d > 1$ and $2\!\!\not\vert\, d$ implies that $d \geq 3$.
Thus $\gamma = -1$, and so $(\mu,\gamma) = (1,-1)$
and $c=1$.

We have now proven everything we wanted to prove except that
all $\psi$-mobile points are non-neutralized and that $m=2$,
the latter of which is now equivalent to showing that $[dq]_{k^2} = k+ \alpha$.
Toward these ends, we claim that
\begin{equation}
\label{prop: type (1,-1), c=1, eq: z_0^j+l_0 not in <z_0, z_1>}
z_0^{j^{\prime}+l_0} \notin 
\left\langle z_0^{j^{\prime}}, z_1^{j^{\prime}} \right\rangle\;
\text{for all}\; l_0, j^{\prime} \in \Z/d.
\end{equation}
Suppose not, so that there exist $l_0, j_0 \in \Z/d$
for which $z_0^{j_0+l_0} \in 
\left\langle z_0^{j_0}, z_1^{j_0} \right\rangle$.
Then the fact that
$z_0^{j^{\prime}} \notin 
\left\langle z_0^{j^{\prime}}, z_1^{j^{\prime}} \right\rangle$
implies $l_0 \neq 0$.
Moreover, $l+l_0 \neq 0$, because otherwise,
setting $j_1 = j_0-l$, we would have
$z_0^{j_1} = z_0^{j_1 + l + l_0} \in 
\left\langle z_0^{j_1 +l}, z_1^{j_1 + l} \right\rangle$,
contradicting (\ref{prop: type (1,-1), c=1, eq: all of z^{j+l} fits in < z_0^j, z_n>}).
We also know that 
$\minq_{j\in \Z/d}\left(z_0^{j+l_0}-z_0^j\right) \notin \left\langle 0, dq\right\rangle$,
because otherwise $z_0^{j+l_0}$ would be R-mobile in
$\left\langle z_0^j, z_1^j\right]$.
Thus $\minq_{j\in \Z/d}\left(z_0^{j+l_0}-z_0^j\right) \in \left\langle -dq, 0\right\rangle$,
and so $\minq_{j\in \Z/d}\left(z_0^{j+l+l_0}-z_0^{j+l}\right) \in \left\langle -dq, 0\right\rangle$.
On the other hand, because of (\ref{prop: type (1,-1), c=1, eq: all of z^{j+l} fits in < z_0^j, z_n>}),
we know that
\begin{equation}
0 < \minq_{j\in \Z/d}\left(z_0^{j+l}-z_{n_{j-1}}^{j-1} \right)
< \psi -\textstyle{\left\lfloor\frac{k}{d}\right\rfloor} [dq]_{k^2}.
\end{equation}
Thus, for some $x \in \{0, dq\}$, we have
\begin{align}
      \minq_{j\in \Z/d}\left(z_0^{j+l+l_0}-z_{n_{j-1}}^{j-1} \right)
&= \minq_{j\in \Z/d}\left(\left(z_0^{j+l+l_0}-z_0^{j+l}\right) +
                 \left(z_0^{j+l}- z_{n_{j-1}}^{j-1}\right) \right)
                            \\ \nonumber
&= \minq_{j\in \Z/d}\left(z_0^{j+l+l_0}-z_0^{j+l}\right) +
      \minq_{j\in \Z/d}\left(z_0^{j+l}- z_{n_{j-1}}^{j-1}\right) + x
                            \\ \nonumber
&\in \left\langle -dq + x,\,  \psi -\textstyle{\left\lfloor\frac{k}{d}\right\rfloor} dq + x \right\rangle
                            \\ \nonumber
&\subset \left\langle -dq,\,  
       \psi -\left(\textstyle{\left\lfloor\frac{k}{d}\right\rfloor}\!-\!1\right)\!dq \right\rangle.
\end{align}
If
$\minq_{j\in \Z/d}\left(z_0^{j+l+l_0}-z_{n_{j-1}}^{j-1}\right) \in \left\langle -dq, 0\right\rangle$,
then 
$\minq_{j\in \Z/d}\left(z_0^{j+l+l_0}-z_{n_{j-1}-1}^{j-1}\right) \in \left\langle 0, dq\right\rangle$,
so that $z_0^{j+l+l_0}$ is R-mobile in
$\left\langle z_{n_{j-1}-1}^{j-1}, z_{n_{j-1}}^{j-1} \right]$, a contradiction.
Thus instead, we have $\minq_{j\in \Z/d}\left(z_0^{j+l+l_0}-z_{n_{j-1}}^{j-1}\right) 
\in \left\langle 0,\, \psi -\left(\textstyle{\left\lfloor\frac{k}{d}\right\rfloor}\!-\!1\right)\!dq \right\rangle$,
so that $z_0^{j+l+l_0}$ is R-pseudomobile in
$\left\langle z_{n_{j-1}}^{j-1}, z_0^j\right]$.
We then have
\begin{align}
      \maxq_{j\in\Z/d}\left(z_{n_{j+l+l_0-1}}^{j+l+l_0-1} - z_0^j\right)
&= \minq_{j\in\Z/d}\left(\left(z_0^{j+l+l_0}-\psi\right) - \left(z_{n_{j-1}}^{j-1}+\psi\right)\right) + dq
                        \\ \nonumber
&= \minq_{j\in\Z/d}\left(z_0^{j+l+l_0} - z_{n_{j-1}}^{j-1}\right) + dq -2\psi
                        \\ \nonumber
&=  \left\langle -2\psi + dq,\, 
        -\psi -\left(\textstyle{\left\lfloor\frac{k}{d}\right\rfloor}\!-\!2\right)\!dq \right\rangle,
\end{align}
which, since $\left\lfloor\frac{k}{d}\right\rfloor\!-\!2 \geq 0$ and $\psi, [dq]_{k^2} < \frac{k^2}{2}$,
has no intersection with $\left\langle -\psi, 0\right\rangle$.
Thus $z_{n_{j+l+l_0-1}}^{j+l+l_0-1}$ is not L$\psi$-mobile in
$\left\langle z_{n_{j-1}}^{j-1}, z_0^j\right]$, and so
$z_0^{j+l+l_0}$ is non-neutralized R-pseudomobile in
$\left\langle z_{n_{j-1}}^{j-1}, z_0^j\right]$,
contradicting the uniqueness of $z_0^{j+l}$ as a non-neutralized
R$\psi$-mobile point in $\left\langle z_{n_{j-1}}^{j-1}, z_0^j\right]$.
Thus (\ref{prop: type (1,-1), c=1, eq: z_0^j+l_0 not in <z_0, z_1>})
must be true.

We next claim that (\ref{prop: type (1,-1), c=1, eq: z_0^j+l_0 not in <z_0, z_1>})
implies that all pseudomobile points are non-neutralized.
Suppose that for some nonzero $l_0  \in \Z/d$,
we know that $z_0^{j+l_0}$ is R$\psi$-mobile in 
$\left\langle z_{n_{j-1}}^{j-1}, z_0^j\right]$.
If $\minq_{j\in\Z/d}\left(z_0^{j+l_0}-z_{n_{j-1}}^{j-1}\right)
\in \left\langle \psi - 2dq, \psi\right\rangle$, then there exists
$j^{\prime} \in \Z/d$ for which
$z_0^{j^{\prime}+ l_0} \in
\left\langle z_0^{j^{\prime}} - dq, z_0^{j^{\prime}} \right\rangle$, implying that
$z_0^{j^{\prime}} \in \left\langle z_0^{j^{\prime}+l_0}, z_1^{j^{\prime}+l_0} \right\rangle$,
but this contradicts (\ref{prop: type (1,-1), c=1, eq: z_0^j+l_0 not in <z_0, z_1>}).
Thus $\minq_{j\in\Z/d}\left(z_0^{j+l_0}-z_{n_{j-1}}^{j-1}\right)
\in \left\langle 0, \psi - 2dq \right\rangle$, and so
\begin{align}
      \maxq_{j\in\Z/d}\left(z_{n_{j+l_0-1}}^{j+l_0-1}-z_0^j\right)
&= \minq_{j\in\Z/d}\left(z_0^{j+l_0}-z_{n_{j-1}}^{j-1}\right) + dq - 2\psi
              \\ \nonumber
&\in \left\langle -2\psi + dq, -\psi - dq \right\rangle,
\end{align}
which, since $\psi, [dq]_{k^2}< \frac{k^2}{2}$, has no intersection with
$\left\langle -\psi, 0\right\rangle$.
Thus $z_{n_{j+l_0-1}}^{j+l_0-1}$ is not L$\psi$-mobile in
$\left\langle z_{n_{j-1}}^{j-1}, z_0^j\right]$,
and so $z_0^{j+l_0}$ is non-neutralized R$\psi$-mobile in 
$\left\langle z_{n_{j-1}}^{j-1}, z_0^j\right]$.
The fact that there are no neutralized R$\psi$-mobile points in
$\left\langle z_{n_{j-1}}^{j-1}, z_0^j\right]$ implies that there are no
neutralized L$\psi$-mobile points in
$\left\langle z_{n_{j-1}}^{j-1}, z_0^j\right]$.
Thus all pseudomobile points are non-neutralized.

Lastly, we claim that $[dq]_{k^2} = k+ \alpha$.
To prove this, we shall show that
$Q_q \cap \left\langle z_0^{j^{\prime}}, z_1^{j^{\prime}} \right\rangle = \emptyset$
for all $j^{\prime} \in\Z/d$.
First, since
(\ref{prop: type (1,-1), c=1, k/d dq < psi}) implies both that
$\left\lfloor \frac{k}{d} \right\rfloor [dq]_{k^2} < \psi$
and that $\left\lfloor \frac{k}{d} \right\rfloor [dq]_{k^2} + \psi < k^2$,
and since
(\ref{prop: type (1,-1), c=1, eq: z_0^j+l_0 not in <z_0, z_1>})
tells us in particular that
$z_0^{j^{\prime}-1} \notin \left\langle z_0^{j^{\prime}}, z_1^{j^{\prime}} \right]$,
we deduce that
\begin{equation}
\label{prop: type (1,-1), c=1, eq: l=0 and l=1 not in}
z_i^{j^{\prime}-l_0}
\notin \left\langle z_0^{j^{\prime}}, z_1^{j^{\prime}} \right\rangle\;
\text{for all}\; l_0 \in \{0, 1\},\;
j^{\prime} \in \Z/d,\;
i \in \left\{0, \ldots, n_{j^{\prime}}\right\}\!.
\end{equation}
Next, note that (\ref{prop: type (1,-1), c=1, eq: z_0^j+l_0 not in <z_0, z_1>}),
along with the mirror relation (\ref{eq: mirror relation 1}),
implies that
\begin{equation}
\label{prop: type (1,-1), c=1, eq: z_nj-l notin z_nj}
z_{n_{j^{\prime}-l_0}}^{j^{\prime}-l_0} \notin
\left\langle z_{n_{j^{\prime}}-1}^{j^{\prime}}, z_{n_{j^{\prime}}}^{j^{\prime}}\right\rangle\;
\text{for all}\;l_0, j^{\prime}\in\Z/d,
\end{equation}
which is useful for showing that
\begin{equation}
\label{prop: type (1,-1), c=1, eq: z_n_j - (k/d -1) not in}
z_{n_{j^{\prime}-l_0} - \left(\left\lfloor\!\frac{k}{d}\!\right\rfloor - 1\right)}^{j^{\prime}-l_0}
\notin \left\langle z_0^{j^{\prime}}, z_1^{j^{\prime}} \right\rangle\;
\text{for all}\; l_0, j^{\prime} \in \Z/d.
\end{equation}
Suppose
(\ref{prop: type (1,-1), c=1, eq: z_n_j - (k/d -1) not in}) fails
for some $l_0 \in \Z/d$, $l_0 \notin \{0,1\}$.  Then
$\minq_{j\in\Z/d}\left(z_{n_{j-l_0}-\left(\left\lfloor\frac{k}{d}\right\rfloor-1\right)}^{j-l_0}-z_0^j\right)
\notin \left\langle 0,dq\right\rangle$,
since otherwise
(\ref{prop: type (1,-1), c=1, eq: z_0^j+l_0 not in <z_0, z_1>}) is contradicted,
and so $\maxq_{j\in\Z/d}\left(z_{n_{j-l_0}-\left(\left\lfloor\frac{k}{d}\right\rfloor-1\right)}^{j-l_0}-z_0^j\right)
\in \left\langle 0,dq\right\rangle$, implying that
$\maxq_{j\in\Z/d}\left(z_{n_{j-l_0}}^{j-l_0}-z_{\left\lfloor\frac{k}{d}\right\rfloor-1}^j\right)
\in \left\langle 0,dq\right\rangle$.  But this, in turn, implies that
$\maxq_{j\in\Z/d}\left(z_{n_{j-l_0}}^{j-l_0}-z_{n_j}^j\right)
\in \left\langle -dq,dq\right\rangle$,
contradicting (\ref{prop: type (1,-1), c=1, eq: z_nj-l notin z_nj}).
Thus 
(\ref{prop: type (1,-1), c=1, eq: z_n_j - (k/d -1) not in})
must be true.
We next claim that
\begin{equation}
\label{prop: type (1,-1), c=1, eq: max n_j-i notin for  i  0, ..., k/d-2}
\maxq_{j\in\Z/d}\left(z_{n_{j-l_0} - i}^{j-l_0}
- z_1^j\right) \notin \left\langle -dq, 0\right\rangle\;
\text{for all}\;
i \in \left\{0, \ldots, \textstyle{\left\lfloor\frac{k}{d}\right\rfloor} - 2\right\}\!,\;
l_0 \neq 1 \in \Z/d.
\end{equation}
Suppose there exist
$i \in \left\{0, \ldots, \left\lfloor\frac{k}{d}\right\rfloor - 2\right\}$ and $l_0 \notin \{0,1\} \in \Z/d$
for which (\ref{prop: type (1,-1), c=1, eq: max n_j-i notin for  i  0, ..., k/d-2})
does not hold.  Then, since 
$\maxq_{j\in\Z/d}\left(z_{n_{j-l_0}}^{j-l_0}
- z_{i+1}^j\right) \in \left\langle -dq, 0\right\rangle$ and 
$n_j \in \left\{\left\lfloor\frac{k}{d}\right\rfloor -1, \left\lfloor\frac{k}{d}\right\rfloor \right\}$
for all $j \in \Z/d$, we know that either
$\maxq_{j\in\Z/d}\left(z_{n_{j-l_0}}^{j-l_0}
- z_{n_j - \left(\left\lfloor\!\frac{k}{d}\!\right\rfloor - (i+2)\right)}^j\right) 
\in \left\langle -dq, 0\right\rangle$, so that $z_{n_{j-l_0}}^{j-l_0}$
is L-mobile in
$\left\langle z_{n_j - \left(\left\lfloor\!\frac{k}{d}\!\right\rfloor - (i+1)\right)}^j,
z_{n_j - \left(\left\lfloor\!\frac{k}{d}\!\right\rfloor - (i+2)\right)}^j \right]$, or
$\maxq_{j\in\Z/d}\left(z_{n_{j-l_0}}^{j-l_0}
- z_{n_j - \left(\left\lfloor\!\frac{k}{d}\!\right\rfloor - (i+1)\right)}^j\right) 
\in \left\langle -dq, 0\right\rangle$, so that $z_{n_{j-l_0}}^{j-l_0}$
is L-mobile in
$\left\langle z_{n_j - \left(\left\lfloor\!\frac{k}{d}\!\right\rfloor - i\right)}^j,
z_{n_j - \left(\left\lfloor\!\frac{k}{d}\!\right\rfloor - (i+1)\right)}^j \right]$,
either of which is a contradiction.
Thus (\ref{prop: type (1,-1), c=1, eq: max n_j-i notin for  i  0, ..., k/d-2})
must be true.  This, combined with (\ref{prop: type (1,-1), c=1, eq: z_n_j - (k/d -1) not in}),
then implies that
\begin{equation}
\minq_{j\in\Z/d}\left(z_{n_{j-l_0} - i}^{j-l_0}
- z_1^j\right) \notin \left\langle -dq, 0\right\rangle\;
\text{for all}\;
i \in \left\{0, \ldots, \textstyle{\left\lfloor\frac{k}{d}\right\rfloor} - 2\right\}\!,\;
l_0 \neq 1 \in \Z/d,
\end{equation}
and so, taking (\ref{prop: type (1,-1), c=1, eq: l=0 and l=1 not in})
into account, we now know that
\begin{equation}
\label{prop: type (1,-1), c=1, eq: z_n_j-i notin for  i  0, ..., k/d-2}
z_{n_{j^{\prime}-l_0} - i}^{j^{\prime}-l_0}
\notin \left\langle z_0^{j^{\prime}}, z_1^{j^{\prime}} \right\rangle\;
\text{for all}\;
i \in \left\{0, \ldots, \textstyle{\left\lfloor\frac{k}{d}\right\rfloor} - 2\right\}\!,\;
l_0 , j^{\prime} \in \Z/d.
\end{equation}
Thus, the combination of
(\ref{prop: type (1,-1), c=1, eq: z_0^j+l_0 not in <z_0, z_1>}),
(\ref{prop: type (1,-1), c=1, eq: z_n_j - (k/d -1) not in}), and
(\ref{prop: type (1,-1), c=1, eq: z_n_j-i notin for  i  0, ..., k/d-2})
tells us that
\begin{equation}
z_i^{j^{\prime}+l_0}
\notin \left\langle z_0^{j^{\prime}}, z_1^{j^{\prime}} \right\rangle\;
\text{for all}\;
l_0 , j^{\prime} \in \Z/d,\;
i \in \left\{0, \ldots, n_{j^{\prime}} \right\},
\end{equation}
or in other words,
\begin{equation}
Q_q \cap  \left\langle z_0^{j^{\prime}}, z_1^{j^{\prime}} \right\rangle = \emptyset\;
\text{for all}\;
j^{\prime} \in \Z/d.
\end{equation}

Thus
$Q_q \cap  \left\langle z_0^{j^{\prime}}, z_1^{j^{\prime}} \right] = z_1^{j^{\prime}}$
for any $j^{\prime} \in \Z/d$,
and so we know in particular that
\begin{equation}
\#\left( {\tilde{Q}}_q \cap  \left\langle z_0^{j^{\prime}}, z_1^{j^{\prime}} \right] \right) = 1.
\end{equation}
Moreover, since $v_q\!\left(z_{n_{j-1}}^{j-1}, z_0^j\right)$ is not constant in $j\in\Z/d$,
we know that $v_q\!\left(z_0^{j^{\prime}}, z_1^{j^{\prime}} \right) = \alpha(k)$
for any $j^{\prime} \in \Z/d$.
Thus, for any $j^{\prime} \in \Z/d$, we have
\begin{align}
                       \nonumber
      v_q\!\left(z_0^{j^{\prime}}, z_1^{j^{\prime}} \right)
&= \alpha(k)
                     \\ \nonumber
      \left[z_1^{j^{\prime}} - z_0^{j^{\prime}}\right]_{k^2}\!\! k
       \;-\; \#\left( {\tilde{Q}}_q \cap  \left\langle z_0^{j^{\prime}}, z_1^{j^{\prime}} \right] \right) k^2
&= \alpha(k)
                     \\ \nonumber
      [dq]_{k^2}k - (1)k^2
&= \alpha(k)
                     \\
      [dq]_{k^2} 
&= k + \alpha.
\end{align}

\end{proof}

In addition to concluding our study of the properties of
genus-minimizing $q$ of positive type,
Proposition \ref{prop: no mobile points, psi < k^2/2, implies type (1,-1), m=2, c=1, d odd}
also completes our survey of the ``non-neutralizedness''
of (anti)(pseudo)mobile points, with the interesting result that all
(anti)(pseudo)mobile points are as non-neutralized as possible.
That is, Proposition \ref{prop: positive type, main prop}.(iv)
tells us that all mobile points are non-neutralized,
Proposition \ref{prop: psibar < k^2/2 implies psibar-mobile are nn}
states that all antipseudomobile points are non-neutralized
when $\psibar < \frac{k^2}{2}$, and 
Proposition \ref{prop: no mobile points, psi < k^2/2, implies type (1,-1), m=2, c=1, d odd}
tells us that all pseudomobile points are non-neutralized when
$2[dq]_{k^2}< \psi < \frac{k^2}{2}$.  (It is easy to check that in
the exceptional case in which $\psi < 2[dq]_{k^2}$, no pseudomobile points are present.)
Since it is algebraically impossible
for a pseudomobile (respectively antipseudomobile) point to be non-neutralized
when $\psi < \frac{k^2}{2}$ (respectively $\psibar < \frac{k^2}{2}$),
this is the most non-neutralizedness that could have occurred.
The classification of genus-minimizing $q$ does not make use of
this observation, but it seems an interesting observation nonetheless.

\subsection{Classification of Genus-Minimizing Solutions for $q$}

We have now done all the work necessary to
say what the genus-minimizing solutions for $q$ are.
It is mainly a matter of bookkeeping to collect them all.

\begin{prop}
\label{prop: bookkeeping for q genus-minimizing}
Suppose that $k$ is an integer $\geq 2$, and that
$q \in \Z/k^2$ is primitive.
Then the triple $(p=k^2, q, k)$ is genus-minimizing if and only if
$q \in \Z/k^2$ can be expressed in one or more of the following forms,
\begin{align*}
\text{0.}\;\;
&
\begin{cases}
   q = ik \pm 1, \;
       & \gcd (i,k)  \in \{1, 2\}
\end{cases}
                  \\
\text{1.}\;\;
&
\begin{cases}
   q = \pm \textstyle{\frac{k+1}{d}}(k+1), \; 
       & 2\!\! \not\vert\, \textstyle{\frac{k+1}{d}}
              \\
   q = \pm \textstyle{\frac{k-1}{d}}(k-1), \; 
       & 2\!\! \not\vert\, \textstyle{\frac{k-1}{d}}
\end{cases}
                  \\
\text{2.}\;\;
&
\begin{cases}
   q = \pm \textstyle{\frac{k-1}{d}}(2k+1), \; 
       & 2\!\! \not\vert\, d
            \\
   q = \pm \textstyle{\frac{k+1}{d}}(2k-1), \; 
       & 2\!\! \not\vert\, d
\end{cases}
                  \\
\text{3.}\;\;
&
\begin{cases}
   q = \pm \textstyle{\frac{2k+1}{d}}(k-1), \;
       & 2\!\! \not\vert\, d
             \\
   q = \pm \textstyle{\frac{2k-1}{d}}(k+1), \;
       & 2\!\! \not\vert\, d
\end{cases}
\end{align*}
for any positive integer $d$, where all fractions shown represent integers.
In case `3', the redundant condition $2 \!\! \not\vert \, d$ is listed for
aesthetic reasons.
\end{prop}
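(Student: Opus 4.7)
The plan is to prove both directions by splitting on whether $q$ is of type~0, positive type, or negative type, reducing the latter to positive type via Proposition~\ref{prop:G properties}, so that throughout we may assume $q = \xi q$.

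For the forward direction, type~0 yields family~0 directly from Proposition~\ref{prop: type 0 genus-minimizing classification}. For $q$ of positive type, the strategy is to fully enumerate the allowed parameter tuples $(d, c, m, \mu, \gamma, \alpha)$ by combining the structural propositions of Sections~\ref{ss: Notation and Definitions for q of Positive Type}--\ref{ss: Properties of z^j for Genus-Minimizing q of Positive Type}. First I separate based on whether $\mathbf{z}^j$ has mobile points. If it does, Proposition~\ref{prop: q of positive type. If mobile points exist, then l=1} forces $c=1$ and either $(\mu,\gamma)=(1,1)$ or the degenerate $d=2$, $\alpha=+1$ configuration, while Propositions~\ref{prop: type (1,1), k/d = 2 or 3} and~\ref{prop: mobile points implies mu m + gamma c = 2} together force $m=1$ in the $(\mu,\gamma)=(1,1)$ regime; the substitution $\xi q = \alpha\frac{k+\alpha}{d}(k+\alpha)$ with $2\nmid\frac{k+\alpha}{d}$ then lands in family~1. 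If $\mathbf{z}^j$ has no mobile points, Proposition~\ref{prop: positive type, main prop}(iii) rules out $(\mu,\gamma)=(1,1)$, and since $\psi + \psibar = k^2$ one of $\psi < k^2/2$ or $\psibar < k^2/2$ must hold; Propositions~\ref{prop: no mobile points, psi < k^2/2, implies type (1,-1), m=2, c=1, d odd} and~\ref{prop: no mobile points, psibar < k^2/2, implies type (-1,1), m=1, c=2, d odd} then pin down the parameters, yielding $\xi q = -\alpha\frac{k-\alpha}{d}(2k+\alpha)$ (family~2) and $\xi q = -\alpha\frac{2k+\alpha}{d}(k-\alpha)$ (family~3) respectively. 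The exceptional configuration noted in Proposition~\ref{prop: positive type, main prop}(iv) --- $(\mu,\gamma)=(1,1)$, $\alpha=-1$, $m=2$, $c=1$, $d=(k-1)/2$ even --- gives $q = -2(2k-1)$, which I will verify lands in family~2 by reparameterizing with $d' = (k+1)/2$ (odd in this subcase).

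For the converse, one must show each $q$ in the listed families satisfies $\bar G(q) < 2k^2$, which by Corollary~\ref{cor: intro defs, genus-minimizing if v < p + k} and Proposition~\ref{prop: not minimizing if v >= k(k+1)} amounts to $v_q(x,y) \le k(k-1)$ for all $x,y \in Q_q$. Family~0 is handled by Proposition~\ref{prop: type 0 genus-minimizing classification}. For families~1--3, I will realize each $q$ under the canonical parameterization of Definition~\ref{def: parameters assigned to q} and then construct the corresponding $\mathbf{z}^j$ decomposition. In family~1 ($m=c=1$, $(\mu,\gamma)=(1,1)$, $2\nmid\frac{k+\alpha}{d}$), I will exhibit a single non-neutralized mobile pair $z_0^{j+1}$, $z_{n_{j-1}}^{j-1}$ forcing all $v_q\!\left(z_r,z_{r+1}\right)$ except one to equal $\alpha k$, with the exceptional one equal to $\alpha(k-k^2)$, via Proposition~\ref{prop: unique v_q = alpha(k - k^2), and the rest are v_q = alpha (k)}. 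Family~2 is handled analogously via a single non-neutralized pseudomobile pair, and family~3 via a single non-neutralized antipseudomobile pair, each yielding $\bar G(q) = 2k(k-1)$ and hence genus-minimality.

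The main obstacle lies in the converse direction: for each family, one must verify uniformly that the decomposition $(\mathbf{z}^0,\dots,\mathbf{z}^{d-1})$ indeed admits exactly one escaping (anti)(pseudo)mobile point and no other configurations with $|v_q| \geq k(k+1)$, and that the various algebraic constraints (such as $2 \nmid d$ in families~2 and~3, $2\nmid\frac{k\pm 1}{d}$ in family~1) are exactly what is needed to avoid the failure modes identified in Propositions~\ref{prop: properties of parameters d, m, c, alpha, mu, gamma}--\ref{prop: positive type, main prop}. The bookkeeping of the exceptional Proposition~\ref{prop: positive type, main prop}(iv) case and its rewriting into family~2 is the most delicate piece, since under its natural $d = (k-1)/2$ parameterization it violates the parity condition of family~1 but it must still be shown to be captured by the family~2 form under a different $d$; I will verify this by a direct computation of $\frac{k+1}{d'}(2k-1) = 2(2k-1)$.
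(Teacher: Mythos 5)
Your forward direction matches the paper's proof essentially exactly: you split on type (0, positive, negative), reduce negative to positive via Proposition~\ref{prop:G properties}, use Proposition~\ref{prop: type 0 genus-minimizing classification} for type~0, and then case--split on mobile points (families 1, 2, 3 in the paper's numbering) using Propositions~\ref{prop: q of positive type. If mobile points exist, then l=1}, \ref{prop: type (1,1), k/d = 2 or 3}, \ref{prop: mobile points implies mu m + gamma c = 2}, \ref{prop: no mobile points, psi < k^2/2, implies type (1,-1), m=2, c=1, d odd}, \ref{prop: no mobile points, psibar < k^2/2, implies type (-1,1), m=1, c=2, d odd}. Your handling of the exceptional Proposition~\ref{prop: positive type, main prop}(iv) configuration (reparameterize $d'=(k+1)/2$, odd since $4\mid k-1$, so $q=\pm 2(2k-1)=\pm\frac{k+1}{d'}(2k-1)$ lands in family~2) is precisely what the paper does. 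Two things to flag. First, the entire forward case analysis relies on Definition~\ref{def: parameters assigned to q} and Propositions~\ref{prop: properties of parameters d, m, c, alpha, mu, gamma}--\ref{prop: no mobile points, psi < k^2/2, implies type (1,-1), m=2, c=1, d odd}, which assume $k>100$; the paper disposes of $2\le k\le 100$ by computer check, and you should say the same (or something equivalent), as the statement covers all $k\ge 2$.

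The real departure is in the converse. The paper simply invokes Berge's topological construction \cite{Berge} to assert that every $q$ in the four families is genus--minimizing, and explicitly declines to carry out the ``straightforward but tedious'' combinatorial verification. You instead propose to verify the mobile/pseudomobile/antipseudomobile structure directly for each family, then conclude via Proposition~\ref{prop: unique v_q = alpha(k - k^2), and the rest are v_q = alpha (k)}. This would make the argument self--contained, which is a genuine improvement, but beware of circularity: Propositions~\ref{prop: mobile point is active [lepsilon] times or [(l-1)epsilon] times}, \ref{prop: pseudomobile point is active [lepsilon] times}, \ref{prop: active iff inactive means neutralized for pseudomobile}, \ref{prop: positive type, main prop}, and most of Section~\ref{ss: Properties of z^j for Genus-Minimizing q of Positive Type} carry the hypothesis ``$q$ is genus--minimizing,'' so you cannot quote them here. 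You would have to rebuild the needed bookkeeping from the genus--minimality--free ingredients only --- Lemma~\ref{lemma: q of positive type, xi lemma}, Corollary~\ref{cor: q of positive type: combo of lemma and difference eq}, the unnamed Proposition after it, and the explicit parameter values --- and verify by hand that every $v_q(z_r,z_{r+1})$ lies in $\{\alpha k,\alpha(k-k^2)\}$ (the balance equation then forces exactly one exceptional value). This is feasible and is exactly the computation the paper waves away, but your proposal only sketches it; as written, the converse direction is a plan with the crucial estimates unverified, not a proof.
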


\begin{proof}
For $k \leq 100$, it is easy to check the proposition by computer.

We therefore take $k >100$ for the remainder of the proof.
For such $k$, Definition \ref{def: parameters assigned to q}
parameterizes all primitive $q$ in $\Z/k^2$, so that
\begin{equation}
\xi q = \alpha\gamma\mu \textstyle{\frac{ck+\alpha\gamma}{d}}(mk + \alpha\mu).
\end{equation}
If $q \equiv \pm 1\; (\mod k)$, or equivalently, if
the parameter $c$ satisfies $c=0$,
we say that $q$ is of type 0.
The sign $\xi \in \{\pm1\}$ is chosen in such a way as to make
$\xi q$ satisfy $[d\xi q]_{k^2} < \frac{k^2}{2}$.
When $q$ is not of type 0, we say that
$q$ is of positive type if $q = +\xi q$,
and that $q$ is of negative type if $q = -\xi q$.
In order to avoid carrying around an extra $\xi$ everywhere,
we restricted
Sections \ref{ss: Notation and Definitions for q of Positive Type}
and \ref{ss: Properties of z^j for Genus-Minimizing q of Positive Type}
to the case in which $q$ is of positive type.
However, it is easy to see that the definitions and results of those
sections also hold for $\xi q$, for $q$ of negative type.

Proposition \ref{prop: type 0 genus-minimizing classification}
shows that if $q$ is of type 0, then
$q$ is genus-minimizing if and only if
$q$ is of the form shown in ``0'' above.
This leaves us with the case in which
$q$ is of positive or negative type,
or, for brevity, of {\em nonzero type}.
For the reader's convenience, we pause to restate the
propositions we shall use to classify genus-minimizing
$q$ of nonzero type.  Propositions
\ref{prop: q of positive type. If mobile points exist, then l=1},
\ref{prop: type (1,1), k/d = 2 or 3}, and 
\ref{prop: mobile points implies mu m + gamma c = 2}
deal only with cases in which ${\mathbf{z}}^j$ has mobile points,
whereas Propositions
\ref{prop: no mobile points, psibar < k^2/2, implies type (-1,1), m=1, c=2, d odd} and
\ref{prop: no mobile points, psi < k^2/2, implies type (1,-1), m=2, c=1, d odd}
deal only with cases in which mobile points are not present.
\begin{itemize}

\item[\ref{prop: positive type, main prop}.(iv)]
{Suppose $q$ of positive type is genus-minimizing.
Then all mobile points are non-neutralized.
Moreover, $\psi > 2\left[dq\right]_{k^2}$
unless $(\mu,\gamma) = (1,1)$, $\alpha = -1$, $m=2$, $c=1$,
and $d= \frac{k-1}{2} \equiv 0\; (\mod 2)$.}

\item[\ref{prop: q of positive type. If mobile points exist, then l=1}]
{Suppose $q$ of positive type is genus-minimizing,
and $\psi > 2[dq]_{k^2}$.
If $z_0^{j+l}$, and hence $z_{n_{j-l}}^{j-l}$, are mobile in ${\mathbf{z}}^j$
for some nonzero $l \in \Z/d$, then $\psibar < \frac{k^2}{2}$,
$l=1$, $c=1$, and $2 \!\!\not\vert\, \frac{k+\alpha}{d}$,
and either $(\mu,\gamma)=(1,-1)$ with $d=2$ and $\alpha=+1$,
or $(\mu,\gamma)=(1,1)$.}

\item[\ref{prop: type (1,1), k/d = 2 or 3}]
{Suppose $q$ of positive type is genus-minimizing
and $\psi > 2[dq]_{k^2}$.
If $(\mu,\gamma) = (1,1)$,
and $\left\lfloor\frac{k}{d}\right\rfloor \in \{2,3\}$, 
then $m=c=1$.}

\item[\ref{prop: mobile points implies mu m + gamma c = 2}]
{Suppose $q$ of positive type is genus-minimizing.
If ${\mathbf{z}}^j$ has mobile points and 
$\left\lfloor\frac{k}{d}\right\rfloor > 3$, then
$[dq]_{k^2}=2k+\alpha$, so that $\mu m + \gamma c = 2$.}

\item[\ref{prop: no mobile points, psibar < k^2/2, implies type (-1,1), m=1, c=2, d odd}]
{Suppose $q$ of positive type is genus-minimizing.
If ${\mathbf{z}}^j$ has no mobile points
and $\psibar < \frac{k^2}{2}$, then
$(\mu, \gamma) = (-1,1)$, $m=1$, $c=2$, and $2\!\! \not\vert \,d$.}

\item[\ref{prop: no mobile points, psi < k^2/2, implies type (1,-1), m=2, c=1, d odd}]
{Suppose $q$ of positive type is genus-minimizing.
If $\psi < \frac{k^2}{2}$ and $\psi > 2[dq]_{k^2}$, then
${\mathbf{z}}^j$ has no mobile points,
all pseudomobile points are non-neutralized,
$(\mu, \gamma) = (1,-1)$, $m=2$, $c=1$, and $2\!\! \not\vert \,d$.}
\end{itemize}

Suppose that $q$ of nonzero type is genus-minimizing---so that
$\xi q$ is of positive type and, by Proposition \ref{prop:G properties},
is also genus-minimizing---and consider the case in which
$\psibar < \frac{k^2}{2}$ and 
${\mathbf{z}}^j$ has mobile points.
Then Proposition \ref{prop: q of positive type. If mobile points exist, then l=1}
tells us that $2 \!\!\not\vert\, \frac{k+\alpha}{d}$, and
either $(\mu,\gamma)=(1,1)$, or
$(\mu,\gamma)=(1,-1)$ with $c=1$, $d=2$, and $\alpha=+1$.
If $(\mu,\gamma) = (1,1)$, then either 
$\left\lfloor\frac{k}{d}\right\rfloor \leq 3$,
so that Proposition \ref{prop: type (1,1), k/d = 2 or 3} tells us that
$m=c=1$, or $\left\lfloor\frac{k}{d}\right\rfloor > 3$, so that
Proposition \ref{prop: mobile points implies mu m + gamma c = 2}
tells us that $\mu m + \gamma c = m+c = 2$, implying $m=c=1$.
Thus, in either case, $(\mu,\gamma)=(1,1)$ implies
$m=c=1$, so that
$\xi q = \alpha \frac{k + \alpha}{d}(k + \alpha)$, with
$3 \leq d \leq \frac{k+1}{3}$ when $\alpha = +1$,
and $2 \leq d \leq \frac{k-1}{3}$ when $\alpha = -1$.
(Note that the fact that $2 \!\!\not\vert\, \frac{k+\alpha}{d}$ implies that
$d \neq \frac{k+\alpha}{2}$.)
On the other hand, if $(\mu,\gamma)=(1,-1)$ with $c=1$, $d=2$, and $\alpha=+1$,
then Proposition \ref{prop: mobile points implies mu m + gamma c = 2}
tells us that $\mu m + \gamma c = m - c = 2$, implying $m = 3$, so that
$\xi q = - \frac{k - 1}{2}(3k + 1) =  +\frac{k+1}{2}(k+1)$.
Thus, if $\psibar < \frac{k^2}{2}$ and ${\mathbf{z}}^j$ has mobile points, then
\begin{equation}
q = \xi\alpha \textstyle{\frac{k + \alpha}{d}}(k + \alpha),\;\;
\text{with}\;
\textstyle{\frac{k+\alpha}{d}}\in \Z,\; 2 \!\!\not\vert\, \textstyle{\frac{k+\alpha}{d}},\;
2 \leq d \leq \frac{k+\alpha}{3}.
\end{equation}
These values of $q \in \Z/k^2$ constitute a subset of the
solutions listed in ``1'' above.  The complement of this subset consists of the
cases in which $d \in \{1, k+\alpha\}$, which are simply the cases in which
forms ``0'' and ``1'' intersect.  We already classified them as
genus-minimizing solutions of type 0.

Next, suppose that $q$ of nonzero type is genus-minimizing---so that
$\xi q$ is of positive type and is genus-minimizing---and consider the case in which
$\psi < \frac{k^2}{2}$.
Proposition \ref{prop: q of positive type. If mobile points exist, then l=1}
then tells us that ${\mathbf{z}}^j$ has no mobile points unless
$\psi < 2[dq]_{k^2}$, which only occurs as the exceptional case of 
Proposition \ref{prop: positive type, main prop}.(iv), in which
$(\mu,\gamma) = (1,1)$, $\alpha = -1$, $m=2$, $c=1$,
and $d= \frac{k-1}{2} \equiv 0\; (\mod 2)$.
In other words, 
$\xi q = -\frac{k-1}{\left(\frac{k-1}{2}\right)}(2k-1) = -2(2k-1)$,
with $4 \vert k-1$.
For reasons that will soon become clear, we choose to re-express this as
$\xi q = -\frac{k-\alpha}{\left(\frac{k-\alpha}{2}\right)}(2k+\alpha)$, with $\alpha=-1$ and
$2 \!\! \not\vert \frac{k-\alpha}{2}$.
This leaves us with the case in which $2[dq]_{k^2}< \psi < \frac{k^2}{2}$,
so that, by 
Proposition \ref{prop: no mobile points, psi < k^2/2, implies type (1,-1), m=2, c=1, d odd},
${\mathbf{z}}^j$ has no mobile points,
$(\mu, \gamma) = (1,-1)$, $m=2$, $c=1$, and $2\!\! \not\vert \,d$.
Combining this and the special case just mentioned yields
\begin{equation}
q = -\xi\alpha \textstyle{\frac{k - \alpha}{d}}(2k + \alpha),\;\;
\text{with}\;
\textstyle{\frac{k-\alpha}{d}}\in \Z,\; 2 \!\!\not\vert\, d,\;
3 \leq d \leq \frac{k-\alpha}{2}.
\end{equation}
These values of $q \in \Z/k^2$ constitute a subset of the
solutions listed in ``2'' above.  The complement of this subset consists of the
cases in which $d \in \{1, k-\alpha\}$, which are simply the cases in which
forms ``0'' and ``2'' intersect.  We already classified them as
genus-minimizing solutions of type 0.

Lastly, suppose that $q$ of nonzero type is genus-minimizing---so that
$\xi q$ is of positive type and is genus-minimizing---and
consider the only case that remains, namely, in which
$\psibar < \frac{k^2}{2}$ and ${\mathbf{z}}^j$ has no mobile points.
Proposition \ref{prop: no mobile points, psibar < k^2/2, implies type (-1,1), m=1, c=2, d odd}
then tells us that
$(\mu, \gamma) = (-1,1)$, $m=1$, $c=2$, and $2\!\! \not\vert \,d$, so that
\begin{equation}
q = -\xi\alpha \textstyle{\frac{2k + \alpha}{d}}(k - \alpha),\;\;
\text{with}\;
\textstyle{\frac{2k+\alpha}{d}}\in \Z,\; 2 \!\!\not\vert\, d,\;
3 \leq d \leq \frac{2k+\alpha}{3}.
\end{equation}
These values of $q \in \Z/k^2$ constitute a subset of the
solutions listed in ``3'' above.  The complement of this subset consists of the
cases in which $d \in \{1, 2k+\alpha\}$, which are simply the cases in which
forms ``0'' and ``3'' intersect.  We already classified them as
genus-minimizing solutions of type 0.
                 \\

We have now shown that all genus-minimizing $q$ can be expressed in
one or more of forms ``0'', ``1'', ``2'', or ``3'', as shown above.
It remains to show that all such forms of $q$ are genus-minimizing.
It is straightforward but tedious to use the tools so far introduced to
show, using the original description for each $q$,
that each of the intervals of length $[dq]_{k^2}$ or of length
$\psi$ contains the correct number of elements of $q$.
Fortunately, we are not obligated to perform this task, because
Berge has already provided us with a topological proof
\cite{Berge}
that all of the above forms of $q$ are genus-minimizing.

\end{proof}

We are almost done with this section, but it turns out that
we need to know the form of $q^{-1} \in \Z/k^2$,
rather than of $q \in \Z/k^2$,
for genus-minimizing $q$.

\begin{prop}
\label{prop: classification of q^-1 for genus-minimizing q}
Suppose that $k$ is an integer $\geq 2$,
and that $p \in \Z/k^2$ is primitive.
The triple $(k^2, p^{-1}, k)$ is genus-minimizing if and only if
$p \in \Z/k^2$ can be expressed in one or more of the following forms,
\begin{align*}
\text{0.}\;\;
&
\begin{cases}
   p = ik \pm 1, \;
       & \gcd (i,k) \in \{1, 2\}
\end{cases}
                  \\
\text{1.}\;\;
&
\begin{cases}
   p = \pm d(2k+1), \; 
       & d\vert k-1,\; 2\!\! \not\vert\, \textstyle{\frac{k-1}{d}}
            \\
   p = \pm d(2k-1), \; 
       & d\vert k+1,\; 2\!\! \not\vert\, \textstyle{\frac{k+1}{d}}
\end{cases}
                  \\
\text{2.}\;\;
&
\begin{cases}
   p = \pm d(k+1), \; 
       & d \vert k+1,\; 2\!\! \not\vert\, d
              \\
   p = \pm d(k-1), \; 
       & d \vert k-1,\; 2\!\! \not\vert\, d
\end{cases}
                  \\
\text{3.}\;\;
&
\begin{cases}
   p = \pm d(k-1), \;
       & d \vert 2k+1,\; 2\!\! \not\vert\, \textstyle{\frac{2k+1}{d}}
             \\
   p = \pm d(k+1), \;
       & d \vert 2k-1,\; 2\!\! \not\vert\, \textstyle{\frac{2k-1}{d}}
\end{cases}
\end{align*}
for any positive integer $d$ satisfying the above divisibility constraints.
The redundant oddness condition in case `3' is listed for
aesthetic reasons.
\end{prop}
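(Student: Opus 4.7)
The plan is to derive this proposition as a straightforward corollary of the preceding Proposition \ref{prop: bookkeeping for q genus-minimizing}. Since $(k^2,p^{-1},k)$ is genus-minimizing if and only if $p^{-1}$ is one of the forms ``0''--``3'' of that classification, it suffices to invert each such form in $\Z/k^2$ and verify that the resulting $p \equiv q^{-1}$ falls into one of the four forms listed here, with matching divisibility and parity constraints.

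The computation rests on the elementary identities
\begin{align*}
(k+1)^{-1} &\equiv -(k-1), & (k-1)^{-1} &\equiv -(k+1), \\
(2k+1)^{-1} &\equiv -(2k-1), & (2k-1)^{-1} &\equiv -(2k+1) \pmod{k^2},
\end{align*}
together with the congruences $(k\pm 1)^2 \equiv \pm 2k + 1$ and $(k\pm 1)(2k\mp 1) \equiv \mp(k\mp 1) \pmod{k^2}$. I would proceed one form at a time. For form ``0'', the identity $(ik\pm 1)(-ik\pm 1) = 1 - i^2 k^2 \equiv 1$ shows that $q = ik\pm 1$ inverts to $-ik \pm 1$, which has the same shape with $\gcd(-i,k) = \gcd(i,k) \in \{1,2\}$. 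For form ``1'', writing $q = \pm(k\pm 1)^2/d$ gives $q\cdot d \equiv \pm(2k\pm 1) \pmod{k^2}$, so $q^{-1} \equiv \mp d\cdot(2k\pm 1)^{-1} \equiv \pm d(2k\mp 1)$, and the hypothesis $d\mid k\pm 1$ with $2\!\not\vert \tfrac{k\pm 1}{d}$ translates verbatim into form ``1'' of the current statement. For forms ``2'' and ``3'', taking $q = \pm(k\pm 1)(2k\mp 1)/d$ yields $q\cdot d \equiv \mp(k\mp 1)$, hence $q^{-1} \equiv \mp d(k\mp 1)^{-1} \equiv \pm d(k\pm 1)$ or $\pm d(k\mp 1)$ depending on signs. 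The bookkeeping of which sub-form one lands in is determined by whether $d\mid k\pm 1$ (giving form ``2'') or $d\mid 2k\mp 1$ (giving form ``3'').

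The only subtlety is verifying that the parity/divisibility conditions transport correctly: the condition ``$2\!\not\vert\, d$'' in forms ``2''--``3'' of Proposition \ref{prop: bookkeeping for q genus-minimizing} matches the corresponding conditions here, and for form ``3'' the redundant clause $2\!\not\vert \tfrac{2k\pm 1}{d}$ is automatic since $2k\pm 1$ is odd. Conversely, every form appearing in the current proposition arises as $q^{-1}$ for some $q$ in Proposition \ref{prop: bookkeeping for q genus-minimizing}, which one checks by running the same identities in reverse. The hardest bookkeeping step will be to keep track of the eight sub-cases (two signs $\times$ four families) without double-counting or missing the exchanges between ``2'' and ``3'' produced by the dual role played by $d$ versus $\tfrac{k\pm 1}{d}$ or $\tfrac{2k\pm 1}{d}$; once this is done carefully, no further input beyond elementary modular arithmetic is needed.
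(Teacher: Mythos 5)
Your proposal is correct and takes essentially the same approach as the paper: both proofs reduce to inverting each form of $q$ from Proposition \ref{prop: bookkeeping for q genus-minimizing} using the observation that $(ak+\alpha)(-ak+\alpha) \equiv \alpha^2 = 1 \pmod{k^2}$, the paper phrasing this via its parameterization $\xi dq = (\mu m + \gamma c)k + \alpha$ while you spell out the same congruences directly. One small caveat: your stated identity $(k\pm 1)(2k\mp 1) \equiv \mp(k\mp 1) \pmod{k^2}$ has the wrong sign (e.g.\ $(k+1)(2k-1) = 2k^2+k-1 \equiv k-1$, not $-(k-1)$), so it should read $(k\pm 1)(2k\mp 1) \equiv \pm(k\mp 1)$; this is harmless to the final classification because every form carries a leading $\pm$ that absorbs the discrepancy, but the intermediate step as written is not literally correct.
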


\begin{proof}
The triple $(k^2, p^{-1},k)$ is genus-minimizing if and only if
$p^{-1}=q \in \Z/k^2$ (or equivalently, if and only if $p = q^{-1}$),
for one of the forms of $q$
listed in Proposition \ref{prop: bookkeeping for q genus-minimizing}.
If $q =  ik \pm 1$, for some 
$i \in \Z/k^2$ with $\gcd(i,k) \in \{1,2\}$,
then $q^{-1}=-ik \pm 1 = (k-i)k \pm 1$, with $\gcd(k-i,k) \in \{1,2\}$.
If, for some
$m, c \in \{1,2\}$, $\alpha, \gamma, \mu, \xi \in \{1,-1\}$,
and primitive $d \in \Z/k^2$, we have
\begin{equation}
\xi dq = \mu m k + \gamma c k + \alpha \;\in \Z/k^2,
\end{equation}
then
\begin{align}
      q^{-1}
&= \xi d \left(\xi d q\right)^{-1}
              \\ \nonumber
&= -\xi d ((\mu m + \gamma c)k - \alpha).
\end{align}
These rules for inverting $q$ establish a bijection
between the forms of $q$ listed in 
Proposition \ref{prop: bookkeeping for q genus-minimizing}
and the correspondingly numbered forms of $p$ listed above.
\end{proof}

The observant reader might notice that the set of solutions listed in 
Proposition \ref{prop: classification of q^-1 for genus-minimizing q}
coincides with the set of solutions listed in
Proposition \ref{prop: bookkeeping for q genus-minimizing}.
This is because
the set of genus-minimizing solutions for $q$ is closed under the
operation of taking inverses in $\Z/k^2$.

\section{Case $q \equiv k^{-2}(\mod p)$}
\label{s:q=k-2}

We have now classified all 
genus-minimizing triples of the form $(k^2, p^{-1}, k)$,
but in order to determine all simple knots in lens spaces
that have L-space homology sphere surgeries,
we need to classify all genus-minimizing triples of the form
$(p, k^{-2}, k)$.  The following proposition tells us
that these two classifications coincide when $p > k^2$.

\begin{prop}
\label{prop:section q=k^-2, (p,k^-2,k) is like (k^2, p^-1, k)}
If $p > k^2$, then the triple $(p, k^{-2}, k)$ is genus-minimizing if and only if
the triple $(k^2, p^{-1}, k)$ is genus-minimizing.
\end{prop}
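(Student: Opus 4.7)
The plan is to relate the two triples by an explicit, order-preserving bijection on their common indexing set $\{0, \ldots, k-1\}$, and then derive a linear relation between the two $v$-invariants that makes comparison of the two genus-minimizing criteria essentially mechanical.

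First, set $q := [k^{-2}]_p$ and $p' := [-p^{-1}]_{k^2}$; by Proposition \ref{prop:G properties}.(i), $\bar{G}(k^2, p', k) = \bar{G}(k^2, p^{-1}, k)$, so I may compare $(p, q, k)$ with $(k^2, p', k)$ in place of $(k^2, p^{-1}, k)$. For each $a \in \{0, \ldots, k-1\}$, set $R_a := [aq]_p$ and $N_a := [ap']_{k^2}$. From $k^2 R_a \equiv a \pmod{p}$ and $0 \leq R_a < p$, the integer $(k^2 R_a - a)/p$ lies in $\{0, \ldots, k^2 - 1\}$ and reduces mod $k^2$ to $-a p^{-1}$, so it equals $N_a$; hence
$$R_a = \frac{a + N_a\,p}{k^2}.$$
Because $|a_2 - a_1| \leq k - 1 < k^2 < p$, the orderings of $\{R_a\}$ in $[0,p)$ and $\{N_a\}$ in $[0,k^2)$ coincide, and for any pair with $N_{a_1} < N_{a_2}$ the interval counts $C := \#\{c : N_{a_1} < N_c \leq N_{a_2}\}$ defining $v_{(p,q,k)}(a_1 q, a_2 q)$ and $v_{(k^2, p', k)}(a_1 p', a_2 p')$ agree. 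Substituting $R_a = (a + N_a p)/k^2$ into Definition \ref{def:v} yields the key identity
$$k \cdot v_{(p,q,k)}(a_1 q, a_2 q) \;=\; (a_2 - a_1) + p \cdot w, \qquad w := v_{(k^2, p', k)}(a_1 p', a_2 p')/k \in \Z,$$
valid for every pair $a_1, a_2 \in \{0, \ldots, k-1\}$.

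Writing $u := v_{(p,q,k)}(a_1 q, a_2 q)$, I would then prove both implications contrapositively, using Corollary \ref{cor: intro defs, genus-minimizing if v < p + k} and Proposition \ref{prop: not minimizing if v >= k(k+1)}. If $(p, q, k)$ is not genus-minimizing, some pair has $|u| \geq p + k$; then $p|w| \geq k|u| - |a_2 - a_1| \geq k(p+k) - (k-1)$, so $|w| \geq k + (k^2 - k + 1)/p > k$, and integrality forces $|w| \geq k+1$, i.e.\ $|v_{(k^2, p', k)}| \geq k(k+1)$. Conversely, if $(k^2, p', k)$ is not genus-minimizing, some pair has $|w| \geq k+1$; then $k|u| \geq p|w| - (k-1) \geq p(k+1) - (k-1)$, so $|u| \geq p + (p - k + 1)/k$. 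For $p \geq k^2 + 1$ this is at least $p + (k-1) + 2/k$, which strictly exceeds $p + k - 1$, and integrality of $|u|$ then forces $|u| \geq p + k$.

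The main obstacle is the tightness of the second implication's endgame: the crude triangle-inequality bound only produces $|u| > p + k - 1$, not $|u| \geq p + k$ directly, and the slack $2/k$ collapses as $k$ grows. The argument closes only because $|u|$ is an integer and $2/k > 0$ for every $k \geq 2$; in particular, the hypothesis $p > k^2$ is sharp, with the boundary case $p = k^2 + 1$ saturating the inequality and motivating the role of integrality as the decisive ingredient.
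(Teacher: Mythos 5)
Your proof is correct, and it takes a genuinely different route from the paper's. The paper applies Proposition \ref{prop:G properties}.(iii) (together with (i) and (ii)) to both sides first, replacing $(k^2, p^{-1}, k)$ by the $\bar{G}$-equivalent triple $A = (k^2, \epsilon, nk)$ with $\epsilon = [-p]_{k^2}$, $n = [-p^{-1}]_k$, and replacing $(p, k^{-2}, k)$ by $B = (p, k^2, (np+1)/k)$; the marked-point sets of $A$ and $B$ each then have on the order of $k^2$ elements, and an explicit tuple decomposition shows that their integer preimages coincide inside $[0,p)$. This yields the identity $\bar{v}_B(x,y) = \bar{v}_A(x,y) + (y-x)/p$ for the normalized invariants, so the comparison of the two genus-minimizing criteria reduces to a short fractional-part argument. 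You, by contrast, invoke only Proposition \ref{prop:G properties}.(i), keep the original $k$ marked points on each side, construct the explicit order-preserving bijection $R_a = (a + N_a p)/k^2$, and from it derive the Bezout-type linear relation $ku = (a_2 - a_1) + pw$ between the two $v$-invariants directly. Your endgame is more delicate: the triangle inequality alone gives only $|u| > p + k - 1$ rather than $|u| \geq p + k$, so the argument closes only via integrality of $u$ together with the strictness of $p > k^2$ (tight at $p = k^2 + 1$). But it is correct, and it bypasses both the tuple construction and the double use of the symmetry (iii). Both routes ultimately exhibit a linear relation between the two $v$-invariants and compare the criteria from Corollary \ref{cor: intro defs, genus-minimizing if v < p + k} and Proposition \ref{prop: not minimizing if v >= k(k+1)}; the paper's normalization makes the final comparison mechanical at the cost of more setup, while yours is more elementary but needs a sharper closing estimate.
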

\begin{proof}
By Proposition \ref{prop:G properties}, we know that
\begin{equation}
  \bar{G}\!\left(k^2, p^{-1}, k\right)
= \bar{G}\!\left(k^2, p, p^{-1}k\right)
= \bar{G}\!\left(k^2, -p, -p^{-1}k\right).
\end{equation}
Likewise, Proposition \ref{prop:G properties} tells us that
\begin{equation}
  \bar{G}\!\left(p, k^{-2}, k\right)
= \bar{G}\!\left(p, k^2, k^{-2}\cdot k\right)
= \bar{G}\!\left(p, k^2, k^{-1}\right).
\end{equation}
Thus, it is sufficient to show that
the triple $\left(k^2, -p, -p^{-1}k\right)$ is genus-minimizing
if and only if the triple
$\left(p, k^2, k^{-1}\right)$ is genus-minimizing.

For brevity, let $A$ and $B$ denote the triples
\begin{align}
\label{eq: section q=k^-2, def of triple A}
A&:= \left(k^2, -p, -p^{-1}k\right) = \left(k^2, \epsilon, nk\right),
         \\
\label{eq: section q=k^-2, def of triple B}
B&:= \left(p, k^2, k^{-1}\right) = \left(p, k^2, \frac{np+1}{k}\right),
\end{align}
where $\epsilon := \left[-p\right]_{k^2}$ and $n := \left[-p^{-1}\right]_k$.
Note that the $n$ in
(\ref{eq: section q=k^-2, def of triple A})
is the same as the $n$ in
(\ref{eq: section q=k^-2, def of triple B}),
since $\left[k^{-1}\right]_p = \frac{\left[\!-\!p^{-1}\!\right]_{\!k} p \;+ 1}{k}$.

Furthermore, define
\begin{equation}
{\bar{v}}_A := \frac{1}{k}v_A
\;\;\;\;\mathrm{and}\;\;\;\;
{\bar{v}}_B := \frac{k^2}{p}\cdot \frac{1}{k}v_B,
\end{equation}
so that for any $x, y \in \Z$ with $x \leq y$, we have
\begin{align}
    {\bar{v}}_A(x,y) 
&:= \#\left(\Z\cap \left\langle x, y\right]\right) n
    \;\; - \;\;
    \#\left({\tilde{Q}}_A \cap \left\langle x, y\right]\right) k,
            \\
    {\bar{v}}_B(x,y) 
&:= \#\left(\Z\cap \left\langle x, y\right]\right) \left(n + 
     \textstyle{\frac{1}{p}}\right)
    \;\; - \;\;
    \#\left({\tilde{Q}}_B \cap \left\langle x, y\right]\right) k,
\end{align}
where
\begin{align}
Q_A &:= \left\{0\epsilon, \ldots, (nk-1)\epsilon\right\} \subset \Z/k^2,
         \\ 
Q_B &:= \left\{0k^2, \ldots, \left(\frac{np+1}{k}-1\right)\!k^2\right\}
        \subset \Z/p,
\end{align}
and ${\tilde{Q}}_A := {\pi}_A^{-1}(Q_A)$ and 
${\tilde{Q}}_B := {\pi}_B^{-1}(Q_B)$ are the preimages of
$Q_A$ and $Q_B$ under the respective quotient maps
$\Z \stackrel{{\pi}_A}{\rightarrow} \Z/k^2$ and
$\Z \stackrel{{\pi}_B}{\rightarrow} \Z/p$.
When $x > y$, one could take ${\bar{v}}_A$ and ${\bar{v}}_B$
to be defined by the identities 
${\bar{v}}_A(x,y) = -{\bar{v}}_A(y,x)$ and
${\bar{v}}_B(x,y) = -{\bar{v}}_B(y,x)$.

Recall that according to Corollary 
\ref{cor: intro defs, genus-minimizing if v < p + k},
an arbitrary triple $(p_0, q_0, k_0)$ is genus-minimizing
if and only if
\begin{equation}
v_{(p_0, q_0, k_0)}(x,y) < p_0 + k_0\;\;\;\;
\text{for all}\;x,y \in Q_0,
\end{equation}
where $Q_0 := \{0q_0, \ldots, (k_0-1)q_0)\} \subset \Z/{p_0}$.
Equivalently, this condition can be phrased in terms of
lifts of $x$ and $y$ to $\Z$. That is, $(p_0, q_0, k_0)$
is genus-minimizing if and only if
$v_{(p_0, q_0, k_0)}(x,y) < p_0 + k_0$
for all $x, y \in {\tilde{Q}}_0$, where
${\tilde{Q}}_0 := {\pi}_0^{-1}(Q_0)$ is the preimage of
$Q_0$ under the quotient map
$\Z \stackrel{{\pi}_0}{\rightarrow} \Z/{p_0}$.

We can therefore phrase the respective conditions for
the triples $A$ and $B$ to be genus-minimizing as follows:
\begin{align}
   A\;\text{is genus-minimizing}
   \;\;\;\;\Leftrightarrow\;\;\;\;
   {\bar{v}}_A(x,y)
&< k+n
   \;\;\;\;\;\;\;\;\;\:\forall\; x,y \in {\tilde{Q}}_A;
           \\
   B\;\text{is genus-minimizing}
   \;\;\;\;\Leftrightarrow\;\;\;\;
   {\bar{v}}_B(x,y)
&< k+n + \textstyle{\frac{1}{p}}
   \;\;\;\forall\; x,y \in {\tilde{Q}}_B.
\end{align}
Now, ${\bar{v}}_A \in \Z$, so ${\bar{v}}_A < k + n$ if and only if
${\bar{v}}_A \leq k + n - 1$.  On the other hand,
${\bar{v}}_B \in \frac{1}{p}\Z$, so
${\bar{v}}_B < k + n + \frac{1}{p}$ if and only if
${\bar{v}}_B \leq k + n$.   However, ${\bar{v}}_B(x,y) \in \Z$
only if $y-x$ is a multiple of $p$, which in turn implies that
${\bar{v}}_B(x,y) = 0$.  Thus it is impossible to have
${\bar{v}}_B = k + n$.  We therefore have the revised conditions,
\begin{align}
   A\;\text{is genus-minimizing}
   \;\;\;\;\Leftrightarrow\;\;\;\;
   {\bar{v}}_A(x,y)
&\leq k+n - 1
   \;\;\;\;\;\;\;\;\;\;\;\;\;\:\forall\; x,y \in {\tilde{Q}}_A;
           \\
   B\;\text{is genus-minimizing}
   \;\;\;\;\Leftrightarrow\;\;\;\;
   {\bar{v}}_B(x,y)
&\leq k+n - 1 + \textstyle{\frac{p-1}{p}}
   \;\;\;\forall\; x,y \in {\tilde{Q}}_B.
\end{align}
        \\

Let us next turn our attention to $Q_A$ and $Q_B$,
recalling that 
$Q_A = \left\{0\epsilon, \ldots, (nk-1)\epsilon\right\} \subset \Z/k^2$,
and
$Q_B = \left\{0k^2, \ldots, (k^{-1} -1)k^2\right\} \subset \Z/p$.
In a manner somewhat reminiscent of the construction
of the tuples $\{{\bf{z}}^j\}$ in Section \ref{s:p=k2},
we arrange the elements of $\left[ 0,p \right\rangle\cap {\tilde{Q}}_B$ into tuples
$\{{\bf{w}}^j\}$, defined by 
$w_i^j := \left[j\epsilon \right]_{k^2} + ik^2$,
where $\epsilon = \left[-p\right]_{k^2}$,
and for each $j$, we restrict $i$ to lie in 
$\{ i^{\prime} \in \Z_{\geq 0}\left\vert\; w_{i^{\prime}}^j < p \right.\}$.
Thus, for some $j_* \in \Z$, we have
\begin{equation}
  \left({\bf{w}}^0, \ldots, {\bf{w}}^{j_*-1}\right)
= \left(\left[0k^2\right]_p, \ldots,  \left[(k^{-1}-1)k^2\right]_p\right) 
\end{equation}
Of course, this also requires that when $j = j_*-1$,
we restrict $i$ to lie in $\{0, \ldots, i_*-1\}$,
where $w_{i_*-1}^{j_*-1} = \left[(k^{-1}-1)k^2\right]_p$.
Let us pause to determine $i_*$ and $j_*$.
\begin{align}
                 \nonumber
    w_{i_*-1}^{j_*-1}
&=  \left[(k^{-1}-1)k^2\right]_p
              \\ \nonumber
    \left[(j_*-1)\epsilon \right]_{k^2} + (i_*-1) k^2
&=  p + k - k^2
              \\ \nonumber
    i_*
&=  \frac{p + k - \left[(j_*-1)\epsilon \right]_{k^2}}{k^2}
              \\
    i_*
&=  \left\lceil\frac{p - \left[(j_*-1)\epsilon \right]_{k^2}}{k^2}\right\rceil,
\end{align}
where the second line uses the fact that $p > k^2$.
Taking the second line modulo $k^2$, we next determine $j_*$.
Recalling that $n = \left[-p^{-1}\right]_k$, we have
\begin{align}
                 \nonumber
        (j_*-1)\epsilon + (i_* - 1) k^2
&\equiv p + k - k^2\;\;(\mod k^2)
              \\ \nonumber
        (j_*-1)(-p)
&\equiv p + k \;\;(\mod k^2)
              \\ \nonumber
        j_*
&\equiv -p^{-1}k \;\;(\mod k^2)
              \\
        j_*
&=      nk.
\end{align}
This allows us to write out the tuples ${\bf{w}}^j$ as follows:
\begin{align}
   {\bf{w}}^0
&= \left(\;  \left[0\epsilon\right]_{k^2} + 0k^2\!,\;\;
             \left[0\epsilon\right]_{k^2} + 1k^2\!,\;\; \ldots,\;\;
             \left[0\epsilon\right]_{k^2}
              + \left({\left\lceil\!\frac{p - \left[0\epsilon\right]_{k^2}\!}{k^2}
                       \right\rceil} - 1 \right)\! k^2
   \;\right),
         \\ \nonumber
&\;\;\vdots
         \\ \nonumber
   {\bf{w}}^j
&= \left(\;  \left[j\epsilon\right]_{k^2} + 0k^2\!,\;\;
             \left[j\epsilon\right]_{k^2} + 1k^2\!,\;\; \ldots,\;\;
             \left[j\epsilon\right]_{k^2}
              + \left({\left\lceil\!\frac{p - \left[j\epsilon\right]_{k^2}\!}{k^2}
                       \right\rceil} - 1 \right)\! k^2
   \;\right),
         \\ \nonumber
&\;\;\vdots
         \\ \nonumber
   {\bf{w}}^{nk-1}
&= \left(\;  \left[(nk\!-\!1)\epsilon\right]_{k^2} + 0k^2\!,\;\;
             \left[(nk\!-\!1)\epsilon\right]_{k^2} + 1k^2\!,\;\; \ldots,
   \right.
                         \\ \nonumber
   &\left.\;\;\;\;\;\;\;\;\;\;\;\;\;\;\;\;\;\;\;
          \;\;\;\;\;\;\;\;\;\;\;\;\;\;\;\;\;\;\;
             \left[(nk\!-\!1)\epsilon\right]_{k^2}
              + \left({\left\lceil\!\frac{p - \left[(nk\!-\!1)\epsilon\right]_{k^2}\!}{k^2}
                       \right\rceil} - 1 \right)\! k^2
   \;\right).
\end{align}
Thus, for each
$i \in \left\{0, \ldots, \left\lfloor \frac{p}{k^2} \right\rfloor - 1\right\}$,
we have
\begin{align}
   \left\{w_i^j \left\vert\; j\in \{0, \ldots, nk-1 \} \right. \right\}
&= ik^2 + \left\{ \left[0\epsilon\right]_{k^2}, \ldots, 
                 \left[(nk-1)\epsilon\right]_{k^2} \right\}
            \\ \nonumber
&= \left[ ik^2, (i+1)k^2 \right\rangle \;\cap\; {\tilde{Q}}_A.
\end{align}
This, in turn, implies that
\begin{align}
   \left[ 0, \left\lfloor\frac{p}{k^2}\right\rfloor\! k^2 \right\rangle \cap {\tilde{Q}}_A
&= \left\{w_i^j \left\vert\;
   \begin{array}{c} j\in \{0, \ldots, nk-1 \},
                       \\
                    i \in \left\{0,\ldots, \left\lfloor\frac{p}{k^2}\right\rfloor - 1\right\}
   \end{array}
                    \right.\right\}
                             \\ \nonumber
&= \left\{\left[ 0, p \right\rangle \cap {\tilde{Q}}_B\right\}
   \setminus
   \left\{w_{\left\lfloor\!\frac{p}{k^2}\!\right\rfloor}^j \left\vert\;
          j \in J \right.\right\},
\end{align}
where
\begin{equation}
J = \left\{ j \in \{0, \ldots, nk-1\} \left\vert\;
    \left[j\epsilon\right]_{k^2} < [p]_{k^2} \right.\right\}.
\end{equation}
But this definition of $J$ implies that
\begin{equation}
  \left\{w_{\left\lfloor\!\frac{p}{k^2}\!\right\rfloor}^j \left\vert\;
          j \in J \right.\right\}
= \left[ \left\lfloor \frac{p}{k^2}\right\rfloor\! k^2, p \right\rangle 
  \cap {\tilde{Q}}_A,
\end{equation}
and thus, we have
\begin{equation}
  \left[ 0 , p \right\rangle \cap {\tilde{Q}}_A
= \left[ 0 , p \right\rangle \cap {\tilde{Q}}_B.
\end{equation}
            \\

We now return to the question of genus-minimization.
First, for brevity, set
\begin{equation}
  \tilde{Q}
:= \left[ 0 , p \right\rangle \cap {\tilde{Q}}_A
 = \left[ 0 , p \right\rangle \cap {\tilde{Q}}_B,
\end{equation}
so that, for any $x, y \in \left[ 0 , p \right\rangle$ with $x \leq  y$,
we have
\begin{align}
    {\bar{v}}_A(x,y) 
&:= \#\left(\Z\cap \left\langle x, y\right]\right) n
    \;\; - \;\;
    \#\left(\tilde{Q} \cap \left\langle x, y\right]\right) k,
            \\
    {\bar{v}}_B(x,y) 
&:= \#\left(\Z\cap \left\langle x, y\right]\right) \left(n + 
     \textstyle{\frac{1}{p}}\right)
    \;\; - \;\;
    \#\left(\tilde{Q} \cap \left\langle x, y\right]\right) k,
\end{align}
with ${\bar{v}}_A(y,x) := -{\bar{v}}_A(x,y)$ and ${\bar{v}}_B(y,x) := -{\bar{v}}_B(x,y)$.
Thus, for any $x, y \in \left[0, p\right\rangle$, we have
\begin{equation}
\label{eq: section p = k^-2, compare v_A to v_B}
    {\bar{v}}_B(x,y) = {\bar{v}}_A(x,y) + \frac{y-x}{p}.
\end{equation}

Suppose that $A$ is genus-minimizing.
Then for any $x, y \in \tilde{Q}$, we have
\begin{align}
      {\bar{v}}_B(x,y) 
&= {\bar{v}}_A(x,y) + \frac{y-x}{p}
           \\ \nonumber
&\leq (n + k - 1) + \frac{y-x}{p}
           \\ \nonumber
&\leq n + k - 1 + \frac{p-1}{p},
\end{align}
so that $B$ is genus-minimizing.

Conversely, suppose that $B$ is genus-minimizing.
Then for any $x, y \in \tilde{Q}$,
(\ref{eq: section p = k^-2, compare v_A to v_B}) implies
\begin{equation}
{\bar{v}}_B(x,y) \equiv \frac{y-x}{p}\;(\mod \Z).
\end{equation}
We therefore have
\begin{align}
      {\bar{v}}_A(x,y)
&= {\bar{v}}_B(x,y) - \frac{y-x}{p}
           \\ \nonumber
&\leq \left(n + k - 1 + \frac{y-x}{p}\right) -  \frac{y-x}{p}
           \\ \nonumber
&= n + k - 1,
\end{align}
so that $A$ is genus-minimizing.

\end{proof}

Combining Propositions 
\ref{prop: classification of q^-1 for genus-minimizing q} and
\ref{prop:section q=k^-2, (p,k^-2,k) is like (k^2, p^-1, k)}
then gives the result we have been seeking.

\begin{theorem}
When $p > k^2$, Conjecture \ref{conj: berge} is true.
\end{theorem}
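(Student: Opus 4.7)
The plan is to assemble the final theorem essentially as a concatenation and relabeling of results already established in Sections \ref{s:defs}, \ref{s:p=k2}, and \ref{s:q=k-2}, since by this point in the paper the genuinely hard analytic work is done. Recall from the introduction that, under the equivalence $\bar G(p,q,k) = G(p,q^{-1},k)$, a simple knot $K \subset L(p,\tilde q)$ of homology class $k$ admits an integer $S^3$ surgery if and only if the $\bar G$-triple $(p, {\tilde q}^{\,-1}, k)$ satisfies the two conditions (i$'$) $\bar G(p,{\tilde q}^{\,-1},k) < 2p$ and (ii$'$) ${\tilde q}^{\,-1} \equiv k^{-2}\,(\mod p)$. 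Thus the task is to classify all primitive $p \in \Z$ and $k \in \Z/p$ with $p > k^2$ such that the $\bar G$-triple $(p, k^{-2}, k)$ is genus-minimizing, and to match the resulting list against families I--VI in Conjecture~\ref{conj: berge}.

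First I would invoke Proposition~\ref{prop:section q=k^-2, (p,k^-2,k) is like (k^2, p^-1, k)}, which (using $p>k^2$) reduces the classification problem to the question of when $(k^2, p^{-1}, k)$ is genus-minimizing. Next I would apply Proposition~\ref{prop: classification of q^-1 for genus-minimizing q}, which enumerates exactly four families of primitive $p \in \Z/k^2$ for which $(k^2, p^{-1}, k)$ is genus-minimizing (labeled 0, 1, 2, 3 in that proposition). At this point the theorem is purely a matter of comparing the four families with the Berge families I--VI.

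The bookkeeping step is then to match the families one by one. Family 0, namely $p \equiv ik \pm 1 \,(\mod k^2)$ with $\gcd(i,k) \in \{1,2\}$, is precisely Berge families I and II. Family 1, namely $p \equiv \pm d(2k\pm 1) \,(\mod k^2)$ with $d \mid k\mp 1$ and $2\nmid \frac{k\mp 1}{d}$, matches Berge family III on the nose. Family 3, $p \equiv \pm d(k\mp 1) \,(\mod k^2)$ with $d\mid 2k\pm 1$ and $2\nmid \frac{2k\pm 1}{d}$, matches Berge family IV. Family 2, $p \equiv \pm d(k\pm 1) \,(\mod k^2)$ with $d \mid k\pm 1$ and $2 \nmid d$, matches Berge family V, which also subsumes VI as its special case. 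Since the lifts of these congruences to arbitrary $p > k^2$ are taken verbatim---and each family's parameter constraint ($d \mid (\cdot)$ and the associated parity condition on $d$ or on $(\cdot)/d$) is identical---the two lists coincide as sets of congruence classes modulo $k^2$.

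The only potential obstacle I anticipate is notational: the variable $d$ plays slightly different roles in Proposition~\ref{prop: classification of q^-1 for genus-minimizing q} versus Conjecture~\ref{conj: berge}, and one must verify that the parity conditions $2 \nmid d$ versus $2 \nmid \frac{(\cdot)}{d}$ line up correctly in each case (in particular for families 1 and 3, where the parity restriction pairs up with a divisibility condition that is not the obvious one). Once the correspondence is verified case-by-case---essentially by inspection, since the expressions match literally up to a relabeling $d \leftrightarrow (\cdot)/d$---the theorem follows. The argument is therefore a two-line appeal to Propositions~\ref{prop:section q=k^-2, (p,k^-2,k) is like (k^2, p^-1, k)} and~\ref{prop: classification of q^-1 for genus-minimizing q}, plus a short table comparing the four families to the six Berge families.
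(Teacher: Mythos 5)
Your proposal is correct and follows the paper's own proof exactly: the theorem is indeed obtained by combining Proposition~\ref{prop:section q=k^-2, (p,k^-2,k) is like (k^2, p^-1, k)} with Proposition~\ref{prop: classification of q^-1 for genus-minimizing q}, and your family-by-family matching (0 $\leftrightarrow$ I/II, 1 $\leftrightarrow$ III, 3 $\leftrightarrow$ IV, 2 $\leftrightarrow$ V/VI) is the intended correspondence. The paper itself leaves the table comparison implicit in a one-line statement, so your slightly more explicit accounting of the divisibility and parity constraints is, if anything, a harmless expansion of the same argument.
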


\bibliographystyle{plain}
\bibliography{pqk}
\end{document}